\newcommand*\wbar[1]{
  \hbox{ \kern-0.2em%
    \vbox{%
      \hrule height 0.5pt  
      \kern0.25ex
      \hbox{%
        \kern-0.15em
        \ensuremath{#1}%
        \kern-0.05em
      }%
    }%
  \kern0.05em}%
} 
\newcommand*\wbarnew[1]{
  \hbox{ \kern-0.2em%
    \vbox{%
      \hrule height 0.5pt  
      \kern0.25ex
      \hbox{%
        \kern-0.35em
        \ensuremath{#1}%
        \kern-0.05em
      }%
    }%
  \kern0.05em}%
} 
\newtheorem{theorem}{Theorem}[subsection]
\newtheorem{lemma}[theorem]{Lemma}
\newtheorem{proposition}[theorem]{Proposition}
\theoremstyle{definition}
\newtheorem{definition}[theorem]{Definition}
\newtheorem{example}[theorem]{Example}
\newtheorem{notation}[theorem]{Notation}
\theoremstyle{remark}
\newtheorem{remark}[theorem]{Remark}
\newcommand{\minitab}[2][l]{\begin{tabular}{#1}#2\end{tabular}}
\newcommand*\rel@kern[1]{\kern#1\dimexpr\macc@kerna}
\newcommand*\widebar[1]{%
  \begingroup
  \def\mathaccent##1##2{%
    \rel@kern{0.8}%
    \overline{\rel@kern{-0.8}\macc@nucleus\rel@kern{0.2}}%
    \rel@kern{-0.2}%
  }%
  \macc@depth\@ne
  \let\math@bgroup\@empty \let\math@egroup\macc@set@skewchar
  \mathsurround\z@ \frozen@everymath{\mathgroup\macc@group\relax}%
  \macc@set@skewchar\relax
  \let\mathaccentV\macc@nested@a
  \macc@nested@a\relax111{#1}%
  \endgroup
}
\title{A geometric foundation of virtual knot theory}
\author[M. Chrisman]{Micah Chrisman}
\address{Department of Mathematics, The Ohio State University, Columbus, Ohio, 43210}
\email{chrisman.76@osu.edu}
\urladdr{https://micah46.wixsite.com/micahknots}
\subjclass[2020]{Primary: 57K12, Secondary: 57K10,18B25,18F10}
\keywords{classical knots, virtual knots, geometric morphisms, sheaf theory}
\begin{document}
\begin{abstract} Virtual knots are defined diagrammatically as a collection of figures, called virtual knot diagrams, that are considered equivalent up to finite sequences of extended Reidemeister moves. By contrast, knots in $\mathbb{R}^3$ can be defined geometrically. They are the points of a space $\mathbb{K}$ of knots. The knot space has a topology so that equivalent knots lie in the same path component. The aim of this paper is to use sheaf theory to obtain a fully geometric model for virtual knots. The geometric model formalizes the intuitive notion that a virtual knot is an actual knot residing in a variable ambient space; the usual diagrammatic theory follows as in the classical case. To do this, it is shown that there exists a site $(\textbf{VK}, J_{\textbf{VK}})$ so that its category $\text{Sh}(\textbf{VK},J_{\textbf{VK}})$ of sheaves can be naturally interpreted as the ``space of virtual knots''. A point of this Grothendieck topos, that is a geometric morphism $\textbf{Sets} \to \text{Sh}(\textbf{VK})$, is a virtual knot. The virtual isotopy relation is generated by paths in this space, or more precisely, geometric morphisms $\text{Sh}([0,1]) \to \text{Sh}(\textbf{VK},J_{\textbf{VK}})$. Virtual knot invariants valued in a discrete topological space $\mathbb{G}$ are geometric morphisms $\text{Sh}(\textbf{VK},J_{\textbf{VK}}) \to \text{Sh}(\mathbb{G})$, just as classical knot invariants valued in $\mathbb{G}$ are continuous functions $\mathbb{K} \to \mathbb{G}$. The embedding of classical knots into virtual knots is also realized as a geometric morphism.  
\end{abstract}
\maketitle

\section{Introduction}
\subsection{Motivation} Quantum knot invariants can be derived in three distinct ways. Two of these are geometric and the other is diagrammatic. The first geometric method, using Chern-Simons theory, obtains a quantum invariant from a simply connected compact Lie group $G$ as an integral over the space of $G$-connections on the trivial $G$-bundle over $\mathbb{S}^3$. The integral is the vacuum expectation value of the Wilson loop, which is itself an integral along a framed knot \cite{atiyah,baez_muniain,witten}. The second geometric derivation uses the cohomology of the knot space. The knot space $\mathbb{K}$ is the set of smooth embeddings $\mathbb{S}^1 \to \mathbb{R}^3$ topologized so that its path components are the oriented knot types. One first constructs the finite-type invariants from a spectral sequence on the cohomology of $\mathbb{K}$ with coefficients in an abelian group \cite{vassiliev}. For every semisimple Lie algebra, a quantum invariant is then recovered as a weight system on the algebra of chord diagrams \cite{bn_vass}. The diagrammatic derivation instead uses quantum groups and solutions to the Yang-Baxter equation \cite{kassel}.  
\newline
\newline 
Virtual knot theory, instituted by Kauffman \cite{KaV}, provides a general framework for the study of knot diagrammatics. Beyond the usual planar knot diagrams, virtual knot theory expands to include a set of non-planar knot diagrams. All together, these new \emph{viritual knot diagrams} are considered equivalent up to a set of \emph{extended Reidemeister moves} (see Section \ref{sec_lang_diag} below). The value of this expansion is that virtual knot diagrams function as an ``algebraic closure'' of classical knot diagrams. For example, every Wirtinger group of deficiency one is the group of some virtual knot diagram \cite{kim} and there are non-classical virtual knot diagrams having unit Jones polynomial \cite{KaV}. Not every such Wirtinger group is the fundamental group of a knot in $\mathbb{S}^3$ and the existence of a non-trivial knot with unit Jones polynomial is famously unknown. Other instances of this algebraic realization occur in the theory of quandles \cite{CKS_geom,frs} and the Kashiwara-Vergne problem \cite{bd_II}. 
\newline
\newline 
Quantum knot invariants have many well-known applications to physics \cite{witten}, low-dimensional topology \cite{garoufalidis}, and number theory \cite{gelca,kreimer}. Since virtual knot diagrams are analogous to an ``algebraic closure'' of classical knot diagrams, it is reasonable to hope that advances in virtual knot theory will reflect new insights back into these areas. The coupling of knot theory with physics, topology, and number theory, employs a hybrid of geometric and diagrammatic methods. But contrary to the classical case, the diagrammatic model of virtual knot theory has no known fully geometric counterpart. In particular, there is no ``space of virtual knots''. This imbalance makes it difficult to detect any of the expected relations between virtual knot theory and its potential applications.
\newline
\newline 
A popular workaround is to use the minimal genus model of virtual knot theory (see e.g. \cite{adams_2, im_lee_lee, JKO, matveev_roots, sil_will_sat}). Recall that every virtual knot diagram can be represented (non-uniquely) by a knot diagram on a closed orientable surface. Invariants and geometric properties can then be defined by restricting to surfaces of smallest possible genus. Well-definedness follows from Kuperberg's theorem \cite{kuperberg}, which states that every virtual knot type has a minimal genus representative on such a surface $\Sigma$ that is unique up to isotopy and orientation preserving diffeomorphisms $\psi: \Sigma \to \Sigma$. The minimal genus model, however, lacks several desirable features that are present in the classical geometric model. For starters, the equivalence relation on knots in $\mathbb{S}^3$ is generated by isotopy, but the equivalence relation defined by the minimal genus model is not reducible to isotopy alone. Indeed, the mapping class group of any $\Sigma \ne \mathbb{S}^2$ is non-trivial. Secondly, the minimal genus model of virtual knot theory cannot recover the diagrammatic model as not all virtual knot diagrams are realizable on a surface of minimal genus. Lastly, it is generally a hard problem to determine when a given knot in $\Sigma \times \mathbb{R}$ is of minimal genus in the first place (see \cite{cart_sil_will_covering}). The minimal genus model thus becomes impractical for knots with either large minimal genus or large crossing number. 
\newline
\newline
Here we introduce a model of virtual knot theory that, from a logical point of view, is geometric in the same sense that classical knot theory is geometric. The model formalizes the intuitive notion that a virtual knot is an actual knot inhabiting a variable ambient space. The virtual knot types are recovered without using diagrammatics. Furthermore, the equivalence relation on virtual knots is generated solely by isotopy. All virtual knot invariants are also realized geometrically.  

\subsection{Overview of main results} \label{subsec_results} The geometric model is based upon a generalization of the knot space $\mathbb{K}$ to a ``virtual knot space''. Consider first the classical case and observe that the points of $\mathbb{K}$ are knots, each of which can be specified by a continuous function $\mathbbm{1} \to \mathbb{K}$, where $\mathbbm{1}=\{0\}$ is the one-point space.  The path components of $\mathbb{K}$ are the oriented knot types. Hence, the isotopy relation is determined by paths $\mathbb{I}=[0,1] \to \mathbb{K}$. If $\mathbb{G}$ is a discrete topological space, any continuous function $\mathbb{K} \to \mathbb{G}$ is a knot invariant. Geometric knot theory is thus interpreted in the category $\textbf{Top}$ of topological spaces and continuous functions.
\newline
\newline
To interpret ``geometric'' for virtual knot theory, we first reinterpret ``knot space'', ``knot'', ``isotopy'', and ``knot invariant'' in the 2-category $\mathfrak{Top}$ of topoi and geometric morphisms. For a topological space $\mathbb{X}$, denote the category of open sets of $\mathbb{X}$ by $\textbf{X}$ and the category of sheaves on $\mathbb{X}$ by $\text{Sh}(\textbf{X})$. The knot space $\mathbb{K}$ is reinterpreted as the Grothendieck topos $\text{Sh}(\textbf{K})$ of sheaves on $\mathbb{K}$. Knots are replaced by the points of $\text{Sh}(\textbf{K})$, i.e. geometric morphisms $\text{Sh}(\textbf{1})=\textbf{Sets} \to \text{Sh}(\textbf{K})$. Paths $\mathbb{I} \to \mathbb{K}$ are replaced with geometric morphisms $\text{Sh}(\textbf{I}) \to \text{Sh}(\textbf{K})$ and knot invariants are replaced with geometric morphisms $\text{Sh}(\textbf{K}) \to \text{Sh}(\textbf{G})$. The sheaf-theoretic model is equivalent to the geometric one. Hence, ``geometric'' and ``sheaf-theoretic'' may be used interchangeably. 
\newline
\newline
The role of ``virtual knot space'' will be played by a Grothendieck topos $\text{Sh}(\textbf{VK})=\text{Sh}(\textbf{VK},J_{\textbf{VK}})$ of sheaves on a site $(\textbf{VK},J_{\textbf{VK}})$. Virtual knots are the points of $\text{Sh}(\textbf{VK})$. Geometric morphisms $\text{Sh}(\textbf{I}) \to \text{Sh}(\textbf{VK})$ between the points of the virtual knot space generate the equivalence relation of virtual isotopy. The equivalence classes of virtual isotopy are in one-to-one correspondence with those of diagrammatic virtual knot theory. Geometric morphisms $\text{Sh}(\textbf{VK}) \to \text{Sh}(\textbf{G})$ correspond to virtual knot invariants. See Figure \ref{figure_compare} for a summary. Here  $J_{\textbf{K}}$ denotes open cover topology on $\textbf{K}$.

\begin{figure}[htb]
\begin{tabular}{|c|c||c|c|} \cline{3-4}
\multicolumn{2}{c}{} &  \multicolumn{2}{|c|}{Sites $(\textbf{C},J)$} \\ \cline{3-4}
\multicolumn{2}{c|}{} & \multirow[c]{2}*{$(\textbf{K},J_{\textbf{K}})$} & \multirow[c]{2}*{$(\textbf{VK},J_{\textbf{VK}})$} \\ \multicolumn{2}{c|}{} & & \\  \hhline{--=|=} 
\multirow[c]{3}*{\rotatebox{90}{\small Geometric Morphisms \hspace{.05cm}}}& \multirow[c]{2}*{$\textbf{Sets} \to \text{Sh}(\textbf{C},J)$} & \multirow[c]{2}*{Knots} & \multirow[c]{2}*{Virtual Knots} \\
 & & & \\ \cline{2-4}
& \multirow[c]{2}*{$\text{Sh}(\textbf{I}) \to \text{Sh}(\textbf{C},J)$} & \multirow[c]{2}*{Isotopy} & \multirow[c]{2}*{Virtual Isotopy} \\ & & & \\ \cline{2-4} 
& \multirow[c]{4}*{$\text{Sh}(\textbf{C},J) \to \text{Sh}(\textbf{G})$} & \multirow[c]{4}*{\minitab[c]{$\mathbb{G}$-valued \\ Knot Invariants}} & \multirow[c]{4}*{\minitab[c]{$\mathbb{G}$-valued  \\ Virtual Knot Invariants}} \\ & & & \\ & & & \\ & & & \\ \hline
\end{tabular}
\caption{Comparison between sheaf-theoretic knot theory and virtual knot theory.} \label{figure_compare}
\end{figure}

Alternatively, we also describe virtual knot invariants as cohomology classes. For $G$ an abelian group, the singular cohomology group $H^0(\mathbb{K},G)$ is the group of $G$-valued knot invariants. Here it will be shown that for any discrete abelian group $\mathbb{G}$, the group of $\mathbb{G}$-valued virtual knot invariants is $H^0(\text{Sh}(\textbf{VK}),\Delta_{\mathbb{G}})$, the $0$-th sheaf cohomology group with coefficients in the constant sheaf $\Delta_{\mathbb{G}}$. This result should be contrasted with the minimal genus model, where invariants are defined with constant coefficients on a minimal genus representative. In the sheaf-theoretic perspective, virtual knot invariants instead have locally constant coefficients. Local coefficients essentially keep track of how the value of an invariant behaves under stabilization. Hence, these invariants are defined for any representative rather than just those of minimal genus.
\newline
\newline  
Simultaneously lifting classical and virtual knot theory to the $2$-categorical setting allows for the relations between them to be studied more directly. The canonical example is the ``embedding of classical knots into virtual knots''.  It will be shown that this can be realized as a geometric morphism $\text{Sh}(\textbf{K})\to \text{Sh}(\textbf{VK})$. More generally, if $\mathbb{K}(\Sigma)$ is the space of knots in $\Sigma \times \mathbb{R}$ for $\Sigma$ an oriented surface, there is a geometric morphism $\text{Sh}(\textbf{K}(\Sigma)) \to \text{Sh}(\textbf{VK})$. This formalizes the commonly used expression ``the projection of knots in thickened surfaces to virtual knots''.
\newline
\newline
Formalization is another main objective of this paper. Recall that it is common in the literature to define a ``virtual knot'' as an ``equivalence class of virtual knot diagrams". The latter should be more accurately called a \emph{virtual knot type}. This means that ``virtual knots'' themselves are effectively undefined. Similarly, a ``virtual isotopy'' is often conflated with a ``finite sequence of extended Reidemeister moves''. To connect the geometric and diagrammatic models of virtual knot theory, more precision is required. Hence, we define intermediate notions of \emph{variable-space knots} and \emph{variable-space isotopies}. Intuitively, a variable-space knot is an actual knot in a thickened surface having non-constant ambient space. A variable-space isotopy is an actual isotopy of variable space knots. Variable-space knots and isotopies are then shown to be in one-to-one correspondence with geometric morphisms $\textbf{Sets} \to \text{Sh}(\textbf{VK})$ and $\text{Sh}(\textbf{I}) \to \text{Sh}(\textbf{VK})$, respectively.


\subsection{Organization} The geometric, diagrammatic, and sheaf-theoretic, models of knot theory are discussed in Section \ref{sec_languages}. The virtual knot space and the geometric model of virtual knot theory are introduced in Section \ref{sec_virtual_knot_space}. Variable-space knots, variable-space isotopies, and variable-space knot invariants are formalized in Sections \ref{sec_var_ambient}, \ref{sec_virtual_isotopy_var}, and \ref{sec_cohom}, respectively. These are then shown to be in one-to-one correspondence with our sheaf-theoretically defined virtual knots, virtual isotopies, and virtual knot invariants (Sections \ref{sec_points}, \ref{sec_virtual_isotopy_paths}, \ref{sec_invar_geom}, respectively). For the sheaf cohomology of the virtual knot space, see Section \ref{sec_cohom}. Geometric morphisms between sheaf-theoretic knot spaces are studied in Section \ref{sec_other}. Additional discussion of formalization in virtual knot theory appears in Section \ref{sec_further}, along with some questions and directions for further research. 

\section{The languages of knot theory} \label{sec_languages} 
 
\subsection{Geometric $\&$ sheaf-theoretic models} \label{sec_sheaf_theory_K} A \emph{knot} $K:\mathbb{S}^1 \to \mathbb{R}^3$ is a smooth embedding of the circle. The set $\mathbb{K}$ of knots is topologized by taking the Taylor series coefficients at $K(z)$ for each $z \in \mathbb{S}^1$ and $K \in 
\mathbb{K}$, and requiring the power series to vary continuously in $z$. More precisely, there is an injective map $j^{\infty}:C^{\infty}(\mathbb{S}^1, \mathbb{R}^3) \to C^0(\mathbb{S}^1,J^{\infty}(\mathbb{S}^1,\mathbb{R}^3))$, where $J^{\infty}(\mathbb{S}^1,\mathbb{R}^3)$ is the space of $\infty$-jets. For $A \in  C^{\infty}(\mathbb{S}^1, \mathbb{R}^3)$ and $z \in \mathbb{S}^1$,  $j^{\infty}(A)(z)$ is the $\infty$-jet of $A$ at $z$. Give $C^0(\mathbb{S}^1,J^{\infty}(\mathbb{S}^1,\mathbb{R}^3))$ the compact-open topology and $C^{\infty}(\mathbb{S}^1, \mathbb{R}^3)$, via the injection $j^{\infty}$, the subspace topology. Then $C^{\infty}(\mathbb{S}^1, \mathbb{R}^3)$ is an infinite-dimensional smooth manifold (\cite{kriegl_michor}, Theorem IX.42.1). In particular, it is Hausdorff. The subspace $\mathbb{K}\subseteq C^{\infty}(\mathbb{S}^1, \mathbb{R}^3)$ of knots is open and dense in $C^{\infty}(\mathbb{S}^1, \mathbb{R}^3)$ (see \cite{hirsch}, Theorems 2.1.4 and 2.2.13). An ambient isotopy of knots $K,\widebar{K}:\mathbb{S}^1 \to \mathbb{R}^3$ corresponds to a path $\sigma: \mathbb{I} \to \mathbb{K}$ between the corresponding points $K,\widebar{K}\in \mathbb{K}$. Conversely, if $K,\widebar{K} \in \mathbb{K}$ are in the same path component of $\mathbb{K}$, then $K$ and $\widebar{K}$ are ambient isotopic. Knot invariants are continuous functions $\nu: \mathbb{K} \to \mathbb{G}$, with $\mathbb{G}$ discrete. If $K \in \mathbb{K}$, denote also by $K:\mathbbm{1}=\{0\} \to \mathbb{K}$ the map sending $0$ to the knot $K$. The maps $K$, $\sigma$, $\nu$ will form the  \emph{standard geometric model} of knot theory defined below. 
\newline
\newline
Now, let $\mathbb{X},\mathbb{Y}$ be any topological spaces and $f:\mathbb{X} \to \mathbb{Y}$ a continuous function between them. Recall that $f$ induces the direct image functor $f_*:\text{Sh}(\textbf{X}) \to \text{Sh}(\textbf{Y})$ defined by $f_*(F)(U)=F(f^{-1}(U))$, where $F$ is a sheaf on $\mathbb{X}$ and $U \subseteq \mathbb{Y}$ is open. The left adjoint to $f_*$ is the inverse image functor $f^*:\text{Sh}(\textbf{Y}) \to \text{Sh}(\textbf{X})$. To define $f^*$, each sheaf $P$ on $\mathbb{Y}$ is first identified with an \'{e}tale space $p:\mathbb{E} \to \mathbb{Y}$. Pulling back over $f$ in the category $\textbf{Top}$ gives an \'{e}tale space $q:f^*(\mathbb{E}) \to \mathbb{X}$. This may in turn be identified with a sheaf $Q$ over $\mathbb{X}$. Define $f^*(P)=Q$. The direct and inverse image functors give an adjoint situation $f^*:\text{Sh}(\textbf{Y}) \rightleftarrows \text{Sh}(\textbf{X}):f_*$, where $f^*$ is left exact (i.e. preserves finite limits). This is called a \emph{geometric morphism from} $\text{Sh}(\textbf{X})$ \emph{to} $\text{Sh}(\textbf{Y})$, written $f:\text{Sh}(\textbf{X}) \to \text{Sh}(\textbf{Y})$.
\newline
\newline
Denote by $\mathfrak{Top}$ the $2$-category whose objects are topoi. An arrow $f:\mathscr{E} \to \mathscr{F}$ in $\mathfrak{Top}$ between topoi $\mathscr{E},\mathscr{F}$ is a geometric morphism, which again is any adjoint situation $f^*:\mathscr{F} \rightleftarrows \mathscr{E}:f_*$ with $f^*$ left exact. A $2$-cell $f \implies g$ in $\mathfrak{Top}$ between geometric morphisms $f,g$ is a natural transformation $f^* \to g^*$. The category of geometric morphisms from $\mathscr{E}$ to $\mathscr{F}$ is denoted $\underline{\text{Hom}}(\mathscr{E},\mathscr{F})$. The category of sheaves on a topological space is a Grothendieck topos, so the construction in the previous paragraph gives a functor $\mathscr{T}:\textbf{Top} \to \mathfrak{Top}$ defined by $\mathscr{T}(\mathbb{X})=\text{Sh}(\textbf{X})$ and $\mathscr{T}(f: \mathbb{X} \to \mathbb{Y})=(f:\text{Sh}(\textbf{X}) \to \text{Sh}(\textbf{Y}))$. 
\newline
\newline
Applying $\mathscr{T}(f: \mathbb{X} \to \mathbb{Y})=(f:\text{Sh}(\textbf{X}) \to \text{Sh}(\textbf{Y}))$ to knots $K: \mathbbm{1} \to \mathbb{K}$, isotopies $\sigma:\mathbb{I} \to \mathbb{K}$, and invariants $\nu:\mathbb{K} \to \mathbb{G}$ gives an interpretation of ``knot'', ``isotopy'', and ``knot invariant'' in $\mathfrak{Top}$. Conversely, knots, isotopies, and invariants can be uniquely recovered from their corresponding type of geometric morphism in $\mathfrak{Top}$. Indeed, the functor $\mathscr{T}:\textbf{Top} \to \mathfrak{Top}$, restricts to a 2-category equivalence between the full subcategory of sober spaces $\mathfrak{sob}$ and spatial topoi (see \cite{johnstone}, Corollary 7.27).  In $\mathfrak{sob}$,  there is a $2$-cell $f \implies g$ for $f,g: \mathbb{X} \to \mathbb{Y}$ whenever $f(x) \in \text{cl}(\{g(x)\})$ for all $x \in \mathbb{X}$. This defines a partial ordering on $\text{Hom}_{\mathfrak{sob}}(\mathbb{X},\mathbb{Y})$, where $f \le g$ if and only if there is a $2$-cell $f \implies g$. If $\mathbb{Y}$ is sober, and $f: \text{Sh}(\textbf{X})\to \text{Sh}(\textbf{Y})$ is a geometric morphism, the equivalence takes $f$ to a continuous function $f^{\mathbb{X}}_{\mathbb{Y}}:\mathbb{X} \to \mathbb{Y}$ such that $\mathscr{T}(f^{\mathbb{X}}_{\mathbb{Y}})\cong f$. Then we have the following result:

\begin{proposition} \label{prop_convert_to_2_cat} Let $f,\widebar{f}\in\underline{\text{Hom}}(\text{Sh}(\textbf{X}),\text{Sh}(\textbf{Y}))$ where $(\mathbb{X},\mathbb{Y})=(\mathbbm{1},\mathbb{K})$, $(\mathbb{I},\mathbb{K})$, or $(\mathbb{K},\mathbb{G})$. Let maps $f^{\mathbb{X}}_{\mathbb{Y}},\widebar{f}^{\,\mathbb{X}}_{\mathbb{Y}}:\mathbb{X} \to \mathbb{Y}$ correspond through the equivalence $\mathscr{T}$ to $f,\widebar{f}$. Then $f \cong \widebar{f}$ iff $f^{\mathbb{X}}_{\mathbb{Y}}=\widebar{f}^{\,\mathbb{X}}_{\mathbb{Y}}$.
\end{proposition}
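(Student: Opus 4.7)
My plan is to reduce the statement directly to the 2-category equivalence $\mathscr{T}|_{\mathfrak{sob}}$ between sober spaces and spatial topoi that was cited in the paragraph preceding the proposition. The key point will be that in each of the three cases the codomain $\mathbb{Y}$ is $T_1$, which collapses the specialization partial order on $\mathrm{Hom}_{\mathfrak{sob}}(\mathbb{X},\mathbb{Y})$ to equality.

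First I would check the sobriety hypotheses. The space $\mathbb{K} \subseteq C^\infty(\mathbb{S}^1,\mathbb{R}^3)$ is Hausdorff (the ambient jet space is given the compact-open topology, which is Hausdorff, and Hausdorffness is inherited by subspaces); the discrete space $\mathbb{G}$ is Hausdorff; and $\mathbbm{1}$ and $\mathbb{I}$ are Hausdorff. Since Hausdorff implies sober, all six spaces $\mathbb{X},\mathbb{Y}$ lie in $\mathfrak{sob}$, so the equivalence $\mathscr{T}|_{\mathfrak{sob}}$ applies and furnishes the continuous maps $f^{\mathbb{X}}_{\mathbb{Y}},\widebar{f}^{\,\mathbb{X}}_{\mathbb{Y}}:\mathbb{X}\to\mathbb{Y}$ with $\mathscr{T}(f^{\mathbb{X}}_{\mathbb{Y}})\cong f$ and $\mathscr{T}(\widebar{f}^{\,\mathbb{X}}_{\mathbb{Y}})\cong\widebar{f}$.

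For the $(\Rightarrow)$ direction, an isomorphism $f\cong\widebar{f}$ in $\underline{\mathrm{Hom}}(\mathrm{Sh}(\textbf{X}),\mathrm{Sh}(\textbf{Y}))$ is, by definition, an invertible $2$-cell, so in particular one has $2$-cells in both directions $f\Rightarrow\widebar{f}$ and $\widebar{f}\Rightarrow f$. Transporting along the equivalence $\mathscr{T}|_{\mathfrak{sob}}$ and using the description of $2$-cells via the specialization order, this gives $f^{\mathbb{X}}_{\mathbb{Y}}(x)\in\mathrm{cl}(\{\widebar{f}^{\,\mathbb{X}}_{\mathbb{Y}}(x)\})$ and $\widebar{f}^{\,\mathbb{X}}_{\mathbb{Y}}(x)\in\mathrm{cl}(\{f^{\mathbb{X}}_{\mathbb{Y}}(x)\})$ for every $x\in\mathbb{X}$. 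In each of the three cases the target $\mathbb{Y}\in\{\mathbb{K},\mathbb{G}\}$ is Hausdorff and hence $T_1$, so singletons are closed: $\mathrm{cl}(\{y\})=\{y\}$. The two inclusions then force $f^{\mathbb{X}}_{\mathbb{Y}}(x)=\widebar{f}^{\,\mathbb{X}}_{\mathbb{Y}}(x)$ for all $x$, i.e. $f^{\mathbb{X}}_{\mathbb{Y}}=\widebar{f}^{\,\mathbb{X}}_{\mathbb{Y}}$.

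For the $(\Leftarrow)$ direction, if $f^{\mathbb{X}}_{\mathbb{Y}}=\widebar{f}^{\,\mathbb{X}}_{\mathbb{Y}}$ then applying the functor $\mathscr{T}$ yields the same geometric morphism, and so $f\cong\mathscr{T}(f^{\mathbb{X}}_{\mathbb{Y}})=\mathscr{T}(\widebar{f}^{\,\mathbb{X}}_{\mathbb{Y}})\cong\widebar{f}$. There is no serious obstacle here: the entire argument is a short application of the already-cited equivalence together with the $T_1$ property of the codomains. The only subtlety to flag is verifying that the topology on $\mathbb{K}$ inherited from the compact-open topology on $C^0(\mathbb{S}^1,J^\infty(\mathbb{S}^1,\mathbb{R}^3))$ is Hausdorff; this was essentially noted in the setup (the containing manifold is Hausdorff), so it requires only a brief remark rather than real work.
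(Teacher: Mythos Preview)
Your proposal is correct and follows essentially the same route as the paper: both reduce to the $2$-categorical equivalence $\mathscr{T}|_{\mathfrak{sob}}$, observe that the codomain $\mathbb{Y}\in\{\mathbb{K},\mathbb{G}\}$ is Hausdorff so singletons are closed, and conclude that the specialization order on $\mathrm{Hom}_{\mathfrak{sob}}(\mathbb{X},\mathbb{Y})$ is discrete, forcing $2$-cells to witness equality. The only minor difference is that you invoke $2$-cells in both directions, whereas the paper notes that a single $2$-cell $g\Rightarrow h$ already gives $g(x)\in\mathrm{cl}(\{h(x)\})=\{h(x)\}$ and hence $g=h$; your version is slightly redundant but not incorrect.
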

\begin{proof}  Since $\mathbb{Y}=\mathbb{K}$ or $\mathbb{G}$, $\mathbb{Y}$ is Hausdorff and hence one point sets are closed. If $g,h:\mathbb{X} \to \mathbb{Y}$ are continuous, and $g \implies h$ is a $2$-cell, it follows that $g(x)=h(x)$ for all $x \in \mathbb{X}$. Hence, if $f,\widebar{f} \in \underline{\text{Hom}}(\text{Sh}(\textbf{X}),\text{Sh}(\textbf{Y}))$ are naturally isomorphic, their corresponding continuous functions $f^{\mathbb{X}}_{\mathbb{Y}},\widebar{f}^{\,\mathbb{X}}_{\mathbb{Y}}:\mathbb{X} \to \mathbb{Y}$ are equal. Conversely, if $f^{\mathbb{X}}_{\mathbb{Y}}=\wbar{f}^{\mathbb{X}}_{\mathbb{Y}}$, then $f \cong \mathscr{T}(f^{\mathbb{X}}_{\mathbb{Y}})=\mathscr{T}(\wbar{f}^{\mathbb{X}}_{\mathbb{Y}}) \cong \wbar{f}$. 
\end{proof}

Therefore, knot theory may be done in $\mathfrak{Top}$ in the same way it is done in $\textbf{Top}$, as long as naturally isomorphic geometric morphisms are identified. Let $\tau_0:\{0\} \to \mathbb{I}$, $\tau_1:\{1\} \to \mathbb{I}$ be the inclusion maps and set $0=\mathscr{T}(\tau_0)$, $1=\mathscr{T}(\tau_1)$. Two geometric morphisms $K,\widebar{K}:\textbf{Sets} \to \text{Sh}(\textbf{K})$ will be called \emph{isotopic} if there is a geometric morphism $\sigma:\text{Sh}(\textbf{I}) \to \text{Sh}(\textbf{K})$ such that $\sigma \circ 0 \cong K$ and $\sigma \circ 1 \cong \widebar{K}$. Proposition \ref{prop_convert_to_2_cat} implies that $K,\widebar{K}:\textbf{Sets} \to \text{Sh}(\textbf{K})$ are isotopic if and only if $K^{\mathbbm{1}}_{\mathbb{K}},\widebar{K}^{\,\mathbbm{1}}_{\mathbb{K}}:\mathbbm{1} \to \mathbb{K}$ are isotopic. For a geometric morphism $\nu:\text{Sh}(\textbf{K}) \to \text{Sh}(\textbf{G})$, $\nu \circ K\cong \nu \circ \widebar{K}$ if and only if $\nu^{\mathbb{K}}_{\mathbb{G}} \circ K^{\mathbbm{1}}_{\mathbb{K}}=\nu^{\mathbb{K}}_{\mathbb{G}} \circ \widebar{K}^{\,\mathbbm{1}}_{\mathbb{K}}$. Hence, if $K,\widebar{K}:\textbf{Sets} \to \text{Sh}(\textbf{K})$ are isotopic, $\nu \circ K \cong \nu \circ \widebar{K}$. If desired, the common value in $\mathbb{G}$ can be uniquely recovered as the point $\nu^{\mathbb{K}}_{\mathbb{G}} \circ K^{\mathbbm{1}}_{\mathbb{K}}=\nu^{\mathbb{K}}_{\mathbb{G}} \circ \widebar{K}^{\,\mathbbm{1}}_{\mathbb{K}}$. 
\newline
\newline
Hence we have two models of a first-order language $\mathfrak{L}_g$ (see \cite{marker}, Section 1.1 or \cite{mac_moer}, Section X.1) of \emph{geometric knot theory}. The language itself consists of a sort $\mathfrak{K}$ of \emph{knots}, a relation $\mathfrak{I} \subseteq \mathfrak{K} \times \mathfrak{K}$ called \emph{isotopy}, and a set of function symbols $\mathfrak{f}:\mathfrak{K} \to \mathfrak{S}$ called \emph{invariants}, where $\mathfrak{S}$ is a sort of \emph{values}. The standard geometric model $SG$, in $\textbf{Sets}$, assigns $\mathfrak{K}$ to $\mathfrak{K}^{(SG)}=\mathbb{K}$. The model assigns $\mathfrak{I}$ to the equivalence relation $\mathfrak{I}^{(SG)} \subseteq \mathfrak{K}^{(SG)} \times \mathfrak{K}^{(SG)}$ such that $(K,\widebar{K}) \in \mathfrak{I}^{(SG)}$ if and only if $K,\widebar{K}$ are in the same path component of $\mathbb{K}$. Each function symbol $\mathfrak{f}$ is assigned to a continuous function $\mathfrak{f}^{(SG)}:\mathfrak{K}^{(SG)} \to \mathfrak{S}^{(SG)}$, where $\mathfrak{S}^{(SG)}$ is some set with the discrete topology. The sheaf-theoretic model $ST$ is obtained from $SG$ by identifying each $K \in \mathbb{K}$ with a continuous function $K:\mathbbm{1} \to \mathbb{K}$ and applying the functor $\mathscr{T}$. This gives the set $\mathfrak{K}^{(ST)}$. The isotopy relation $\mathfrak{I}^{(ST)}$ and function symbols $\mathfrak{f}^{(ST)}$ are interpreted according to the preceding paragraph. These observations imply:

\begin{theorem} The sheaf-theoretic model $ST$ and standard geometric model $SG$ are isomorphic models of the language $\mathfrak{L}_g$ of geometric knot theory.
\end{theorem}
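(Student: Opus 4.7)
The plan is to exhibit an explicit isomorphism of $\mathfrak{L}_g$-models $\Phi: SG \to ST$ built pointwise from the functor $\mathscr{T}$, and then verify in turn that it is bijective on sorts and that it preserves the isotopy relation and the function symbols. The claim is essentially a formal repackaging of what has already been established in the preceding paragraphs; the work is in checking that each piece of data in the first-order language corresponds under the functor $\mathscr{T}$.

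First I would define $\Phi$ on the knot sort by sending $K \in \mathfrak{K}^{(SG)} = \mathbb{K}$ to $\mathscr{T}(K : \mathbbm{1} \to \mathbb{K}) \in \mathfrak{K}^{(ST)}$, regarded up to natural isomorphism, and similarly define $\Phi$ on the value sort $\mathfrak{S}^{(SG)}$. To show $\Phi_\mathfrak{K}$ is a bijection, I would argue as follows. Both $\mathbbm{1}$ and $\mathbb{K}$ are Hausdorff and hence sober. By the $2$-equivalence between sober spaces and spatial topoi (\cite{johnstone}, Corollary 7.27), every geometric morphism $f : \textbf{Sets} \to \text{Sh}(\textbf{K})$ is naturally isomorphic to some $\mathscr{T}(f^{\mathbbm{1}}_{\mathbb{K}})$ with $f^{\mathbbm{1}}_{\mathbb{K}} : \mathbbm{1} \to \mathbb{K}$, which yields surjectivity. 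Proposition \ref{prop_convert_to_2_cat} supplies injectivity: $\mathscr{T}(K) \cong \mathscr{T}(\widebar{K})$ forces $K = \widebar{K}$. The argument for $\Phi_\mathfrak{S}$ is identical since $\mathbb{G}$ is discrete, hence Hausdorff.

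Next I would verify that $\Phi$ preserves the isotopy relation. If $(K,\widebar{K}) \in \mathfrak{I}^{(SG)}$, then a path $\sigma : \mathbb{I} \to \mathbb{K}$ witnesses this, and $\mathscr{T}(\sigma) : \text{Sh}(\textbf{I}) \to \text{Sh}(\textbf{K})$ satisfies $\mathscr{T}(\sigma) \circ 0 \cong \mathscr{T}(K)$ and $\mathscr{T}(\sigma) \circ 1 \cong \mathscr{T}(\widebar{K})$ by functoriality of $\mathscr{T}$, hence $(\Phi(K),\Phi(\widebar{K})) \in \mathfrak{I}^{(ST)}$. Conversely, given $\sigma : \text{Sh}(\textbf{I}) \to \text{Sh}(\textbf{K})$ implementing an $ST$-isotopy, Proposition \ref{prop_convert_to_2_cat} applied to the triangles $\sigma \circ 0 \cong \mathscr{T}(K)$ and $\sigma \circ 1 \cong \mathscr{T}(\widebar{K})$ produces a continuous $\sigma^{\mathbb{I}}_\mathbb{K}$ with $\sigma^{\mathbb{I}}_\mathbb{K}(0) = K$ and $\sigma^{\mathbb{I}}_\mathbb{K}(1) = \widebar{K}$. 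To preserve function symbols, I would note that for any invariant $\mathfrak{f}^{(SG)} = \nu : \mathbb{K} \to \mathbb{G}$, the $ST$-interpretation is $\mathfrak{f}^{(ST)} = \mathscr{T}(\nu)$, and Proposition \ref{prop_convert_to_2_cat} gives $\mathscr{T}(\nu) \circ \mathscr{T}(K) \cong \mathscr{T}(\nu \circ K)$, so $\Phi_{\mathfrak{S}}(\mathfrak{f}^{(SG)}(K)) = \mathfrak{f}^{(ST)}(\Phi_\mathfrak{K}(K))$.

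The only subtle point, and the one I would treat most carefully, is the bookkeeping of the quotient by natural isomorphism used to define $\mathfrak{K}^{(ST)}$ and $\mathfrak{S}^{(ST)}$. Without this quotient the map $\Phi$ would only be well-defined up to $2$-cells and the function-symbol clause could fail strict equality; with it, Proposition \ref{prop_convert_to_2_cat} ensures every ambiguity collapses because the targets $\mathbb{K}$ and $\mathbb{G}$ are Hausdorff. Once this is spelled out, the remaining verifications are direct applications of functoriality of $\mathscr{T}$ together with the $2$-equivalence restricted to $\mathfrak{sob}$.
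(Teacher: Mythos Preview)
Your proposal is correct and follows essentially the same approach as the paper: the paper presents the theorem as an immediate consequence of the preceding discussion (introduced with ``These observations imply''), and your argument simply makes that discussion explicit by spelling out the bijection on sorts via the $2$-equivalence and Proposition~\ref{prop_convert_to_2_cat}, and checking preservation of $\mathfrak{I}$ and the function symbols. Your careful treatment of the quotient by natural isomorphism is a welcome clarification of a point the paper leaves implicit.
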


Lastly, it is worthwhile to remark that the topological structure of $\mathbb{K}$ can also be recovered from $\text{Sh}(\textbf{K})$, again due to the $2$-categorical equivalence between $\mathfrak{sob}$ and spatial topoi. Furthermore, the category of subobjects of the terminal object $1$ of $\text{Sh}(\textbf{K})$ is isomorphic as a Heyting algebra to the lattice of open sets of $\mathbb{K}$ (see \cite{mac_moer}, Proposition II.3.4). 

\begin{figure}[htb]
\begin{tabular}{|cccccc|} \hline 
\multirow[l]{5}*{\rotatebox{90}{$\leftarrow$ Extended Reidemeister Moves $\rightarrow$ \hspace{.15cm}}} & \multicolumn{1}{|c}{\multirow[l]{2}*{\rotatebox{90}{ \small Reidemeister Moves \hspace{.025cm}}}} & & & & \\
\multicolumn{1}{|c|}{} & & \multicolumn{4}{c|}{\begin{tabular}{cccc} \begin{tabular}{c} \def\svgwidth{.6in}
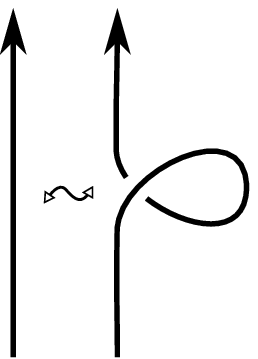 \\ $\Omega 1 a$ \end{tabular} & \begin{tabular}{c} \def\svgwidth{.6in}
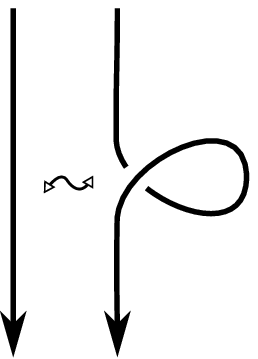 \\ $\Omega 1 b$\end{tabular} & \begin{tabular}{c} \def\svgwidth{.8in}
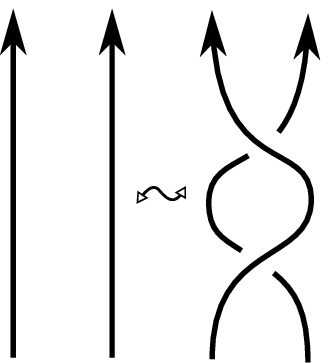 \\ $\Omega 2 a$ \end{tabular} & \begin{tabular}{c} \def\svgwidth{1.6in}
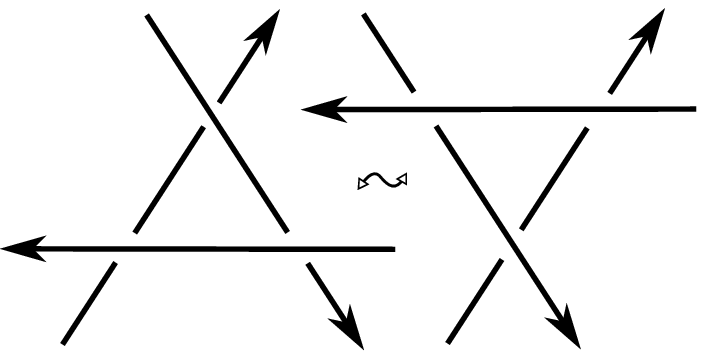 \\ $\Omega 3 a$ \end{tabular} \end{tabular}} \\ \hhline{~-----}
 & & & & & \\
& & \begin{tabular}{c} \def\svgwidth{.6in}
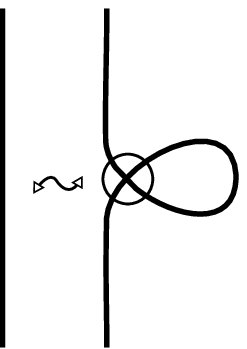 \\ $v\Omega 1$ \end{tabular} & \begin{tabular}{c} \def\svgwidth{.8in}
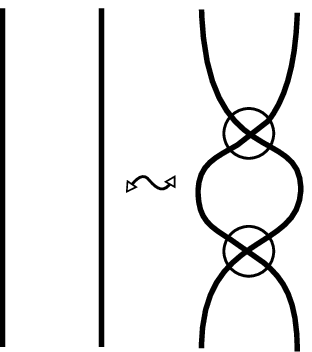 \\ $v\Omega 2$\end{tabular} & \begin{tabular}{c} \def\svgwidth{1.6in}
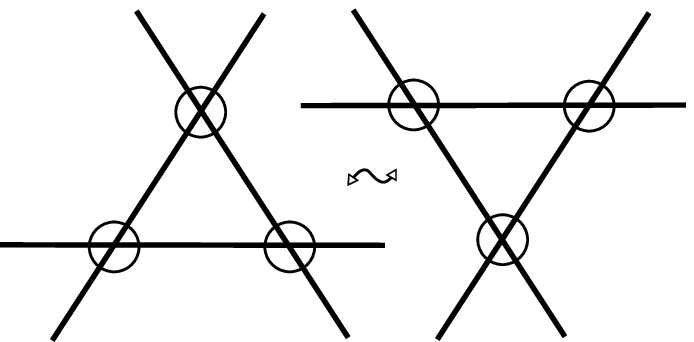 \\ $v\Omega 3$ \end{tabular} & \begin{tabular}{c} \def\svgwidth{1.6in}
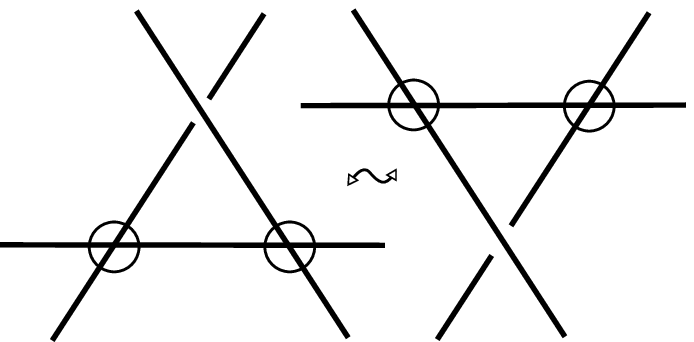 \\ $v\Omega 4$ \end{tabular} \\ \hline
\end{tabular}
\caption{The Reidemeister and extended Reidemeister moves.} \label{fig_moves}
\end{figure}

\subsection{Diagrammatic models} \label{sec_lang_diag} This section reviews several diagrammatic models of knot theory and introduces a modification of the Carter-Kamada-Saito model that will be needed ahead. The language $\mathfrak{L}_d$ of diagrammatic knot theory has a sort $\mathfrak{D}$ of \emph{knot diagrams}, a relation $\mathfrak{R} \subseteq \mathfrak{D} \times \mathfrak{D}$ called \emph{Reidemeister equivalence}, and a set of function symbols $\mathfrak{f}:\mathfrak{D} \to \mathfrak{S}$, called \emph{invariants} where $\mathfrak{S}$ is again a sort of \emph{values}. A \emph{diagrammatic knot theory} has an axiom of the form $(\forall x_1,x_2)(\mathfrak{R}(x_1,x_2) \implies \mathfrak{f}(x_1)=\mathfrak{f}(x_2))$ for all function symbols $\mathfrak{f}$. The standard diagrammatic model $SD$ assigns $\mathfrak{D}^{(SD)}$ to the set of oriented knot diagrams on $\mathbb{S}^2$. The relation $\mathfrak{R}^{(SD)} \subseteq \mathfrak{D}^{(SD)} \times \mathfrak{D}^{(SD)}$ is the smallest equivalence relation containing all the Reidemeister moves and planar isotopies (or equivalently, orientation preserving diffeomorphisms of $\mathbb{S}^2$). Polyak's \cite{polyak_minimal} minimal generating set of Reidemeister moves is depicted at the top of Figure \ref{fig_moves}. An equivalence class of the relation $\mathfrak{R}^{(SD)}$ is called an \emph{(oriented) knot type}. A function symbol $\mathfrak{f}$ is assigned to a function $\mathfrak{f}^{(SD)}:\mathfrak{D}^{(SD)} \to \mathfrak{S}^{(SD)}$ satisfying the axiom for $\mathfrak{f}$. Hence, it is an invariant of knot types.
\newline 


Kauffman's diagrammatic knot theory for virtual knot diagrams will here be called the \emph{Kauffman virtual diagram model ($KVD$)}. It assigns $\mathfrak{D}^{(KVD)}$ to the set of planar knot diagrams, where in addition to the two usual kinds of crossings, \emph{virtual crossings} are also allowed (see Figure \ref{fig_cross}). The relation $\mathfrak{R}^{(KVD)}\subseteq \mathfrak{D}^{(KVD)} \times \mathfrak{D}^{(KVD)}$ is the smallest equivalence relation generated by the extended Reidemeister moves (see Figure \ref{fig_moves}) and planar isotopies. For moves $v\Omega 1-v\Omega 4$, the arcs are oriented arbitrarily. Equivalence classes of $\mathfrak{R}^{(KVD)}$ are the \emph{(oriented) virtual knot types}. Each $\mathfrak{f}:\mathfrak{D} \to \mathfrak{S}$ is assigned a function $f^{(KVD)}:\mathfrak{D}^{(KVD)} \to \mathfrak{S}^{(KVD)}$ that is constant on each virtual knot type. 
\newline

\begin{figure}[htb]
\begin{tabular}{|c|cccc||c|c|} \hline
\multirow[l]{2}*{\rotatebox{90}{classical \hspace{.65cm}}} & & & & & & \multirow[c]{2}*{\rotatebox{-90}{\hspace{.35cm} virtual  }} \\
& \begin{tabular}{c} 
\def\svgwidth{.75in}
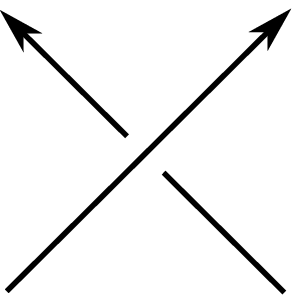 \end{tabular} & & \begin{tabular}{c} \def\svgwidth{.75in}
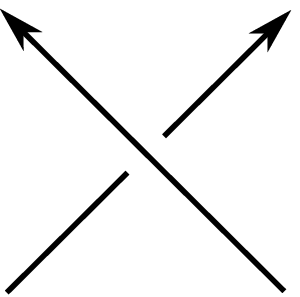 \end{tabular} & & \begin{tabular}{c} \def\svgwidth{.75in}
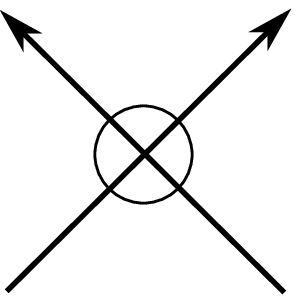 \end{tabular} & \\
& $\oplus$ & & $\ominus$ & & & \\\hline
\end{tabular}
\caption{Crossing types.} \label{fig_cross}
\end{figure}
 
Every virtual knot diagram $X$ can be represented by a diagram $D$ on some compact, connected, and oriented surface $\Xi$. Briefly, a $0$-handle (i.e. a disc $\mathbb{D}^2$) is placed at every classical crossing of $X$ and an untwisted $1$-handle (i.e. a band $\mathbb{D}^1 \times \mathbb{D}^1$) is placed along every arc of $D$.  At a virtual crossing, the bands along each of the arcs pass over and under one another. The union of $0$- and $1$-handles is a compact surface $\Xi$ supporting a knot diagram $D$. See Figure \ref{fig_virt_to_abs}. Such a pair $(\Xi,D)$, where $D$ is a deformation retract of $\Xi$, is called an \emph{abstract knot diagram} (Kamada-Kamada \cite{kamkam}).
\newline
\newline
This can be made into another model of diagrammatic knot theory by enlarging to the set of pairs $(\Xi,D)$ where $\Xi$ is any compact, connected, and oriented surface and $D$ is a knot diagram on $\Xi$. The \emph{Carter-Kamada-Saito model} $(CKS)$ assigns $\mathfrak{D}^{(CKS)}$ to be the set of such pairs. For pairs $(\Xi_1,D_1)$, $(\Xi_2,D_2) \in \mathfrak{D}^{(CKS)}$, write $(\Xi_1,D_1)\stackrel{s}{\squigarrowleftright} (\Xi_2,D_2)$ if there is a compact, connected, and oriented surface $\Xi_{1.5}$, and orientation preserving embeddings $\upsilon_1:\Xi_1 \to \Xi_{1.5},\delta_2:\Xi_2 \to \Xi_{1.5}$ such that $\upsilon_1(D_1)$ and $\delta_2(D_2)$ are related by a Reidemeister move or are equal. This will be called a \emph{stable Reidemeister move}. Assign $\mathfrak{R}^{(CKS)} \subseteq \mathfrak{D}^{(CKS)} \times \mathfrak{D}^{(CKS)}$ to be the smallest equivalence relation containing all stable Reidemeister moves. Function symbols are assigned to functions $\mathfrak{f}^{(CKS)}:\mathfrak{D}^{(CKS)} \to \mathfrak{S}^{(CKS)}$ that are invariant under stable Reidemeister moves. Carter, Kamada, and Saito \cite{CKS} proved that there is a bijection between the equivalence classes of $\mathfrak{R}^{(KVD)}$ and $\mathfrak{R}^{(CKS)}$ .
\newline

\begin{figure}[htb]
\[
\xymatrix{
\begin{array}{c} \def\svgwidth{2in}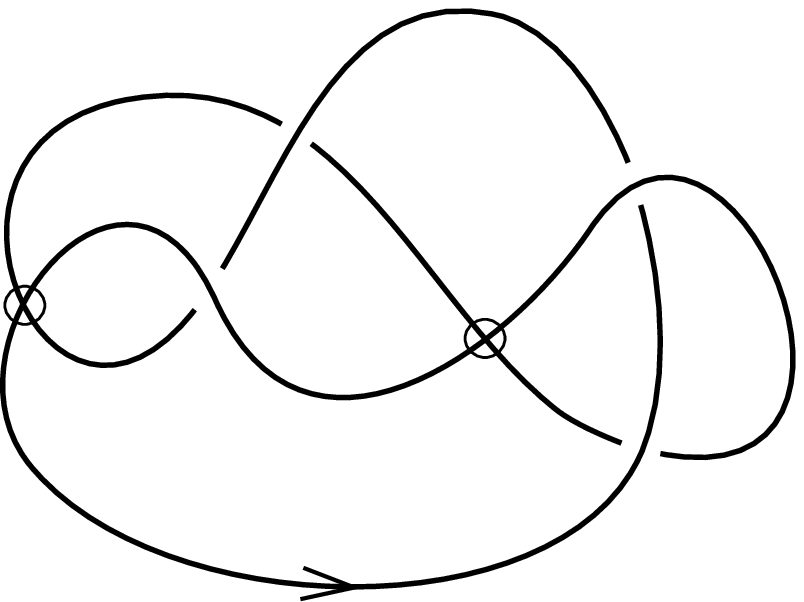  \end{array} \ar[r] & \begin{array}{c} \def\svgwidth{2in}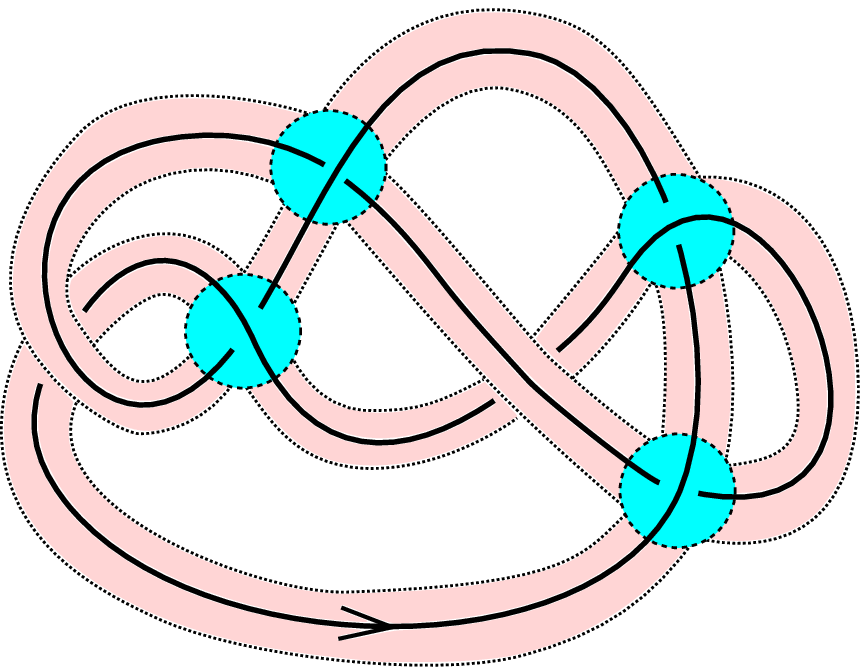 \end{array}}
\]
\caption{A virtual knot diagram $X$ (left) and its abstract knot diagram $(\Xi,D)$ (right).} \label{fig_virt_to_abs}
\end{figure} 

The diagrammatic model recovered from the virtual knot space ahead is close to the $CKS$ model. To ensure that the category $\textbf{VK}$ is well-behaved, a slightly modified collection of supporting surfaces is needed. The \emph{modified CKS model} ($CKS^*$) assigns $\mathfrak{D}^{(CKS^*)}$ to the set of pairs $(\Sigma,D)$ where $\Sigma$ is a smooth oriented surface with $\partial \Sigma=\varnothing$ and $D$ is a knot diagram on $\Sigma$. By surface, we mean a $2$-manifold that is second-countable and Hausdorff. Surfaces are not assumed to be connected. Note that a smooth orientation preserving embedding $\upsilon: \Sigma_1 \to \Sigma_2$ of such surfaces with $\partial \Sigma_1=\partial \Sigma_2=\varnothing$ is open. A stable Reidemeister move $(\Sigma_1,D_1) \stackrel{s}{\squigarrowleftright} (\Sigma_2,D_2)$ is defined in the same way as before, except that all maps are between oriented surfaces without boundary. As before, define $\mathfrak{R}^{(CKS^*)} \subseteq \mathfrak{D}^{(CKS^*)} \times \mathfrak{D}^{(CKS^*)}$ to be the smallest equivalence relation containing all stable Reidemeister moves.

\begin{proposition} \label{prop_no_boundary_OK} There is a bijection between the equivalence classes of $\mathfrak{R}^{(CKS)}$ and $\mathfrak{R}^{(CKS^*)}$.
\end{proposition}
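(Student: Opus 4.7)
The plan is to construct mutually inverse maps $\Phi$ and $\Psi$ between $\mathfrak{D}^{(CKS)}/\mathfrak{R}^{(CKS)}$ and $\mathfrak{D}^{(CKS^*)}/\mathfrak{R}^{(CKS^*)}$. First I would define $\Phi[(\Xi,D)]=[(\mathrm{int}(\Xi),D)]$, where $\mathrm{int}(\Xi)$ is the manifold interior of $\Xi$; after a preliminary ambient isotopy we may assume $D\subset\mathrm{int}(\Xi)$, and then $\mathrm{int}(\Xi)$ is a connected oriented surface without boundary. Well-definedness of $\Phi$ rests on the observation that an orientation-preserving smooth embedding of surfaces with boundary restricts to an orientation-preserving open embedding of manifold interiors. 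Hence, given a generating CKS move $(\Xi_1,D_1)\stackrel{s}{\squigarrowleftright}(\Xi_2,D_2)$ witnessed by $(\Xi_{1.5},\upsilon_1,\delta_2)$, the restricted data $(\mathrm{int}(\Xi_{1.5}),\upsilon_1|_{\mathrm{int}(\Xi_1)},\delta_2|_{\mathrm{int}(\Xi_2)})$ witnesses a CKS* move between $(\mathrm{int}(\Xi_1),D_1)$ and $(\mathrm{int}(\Xi_2),D_2)$.

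In the reverse direction, for $(\Sigma,D)\in\mathfrak{D}^{(CKS^*)}$, I would pick a closed regular neighborhood $N_D\subset\Sigma$ of $D$; since $D$ is a connected embedded $4$-valent graph, $N_D$ is a compact, connected, oriented surface with boundary, so I set $\Psi[(\Sigma,D)]=[(N_D,D)]$. Independence from the choice of $N_D$ follows from uniqueness of regular neighborhoods up to ambient isotopy: two choices $N_D,N_D'$ sit inside their union, a compact connected oriented surface providing a CKS move via equal diagrams. The hardest step is the descent of a generating CKS* move: given a witness $\Sigma_{1.5}$, the images $\upsilon_1(D_1)$ and $\delta_2(D_2)$ coincide outside a Reidemeister disk and so have connected union. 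A compact connected oriented regular neighborhood $N_{1.5}\subset\Sigma_{1.5}$ of this union, enlarged to $N_{1.5}\cup\upsilon_1(N_{D_1})\cup\delta_2(N_{D_2})$ (still compact, connected, and oriented), then contains both $\upsilon_1(N_{D_1})$ and $\delta_2(N_{D_2})$, and restricting $\upsilon_1,\delta_2$ produces the required CKS move $(N_{D_1},D_1)\stackrel{s}{\squigarrowleftright}(N_{D_2},D_2)$.

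Finally I would check that $\Phi$ and $\Psi$ are mutually inverse at the class level. For $\Psi\circ\Phi[(\Xi,D)]=[(N_D,D)]$ with $N_D$ a closed regular neighborhood of $D$ in $\mathrm{int}(\Xi)$, the inclusion $N_D\hookrightarrow\Xi$ with $\Xi_{1.5}=\Xi$ and the identity on $\Xi$ exhibits the trivial CKS move $(N_D,D)\stackrel{s}{\squigarrowleftright}(\Xi,D)$ via equal diagrams. Symmetrically, $\Phi\circ\Psi[(\Sigma,D)]=[(\mathrm{int}(N_D),D)]$, and $\mathrm{int}(N_D)\hookrightarrow\Sigma$ with $\Sigma_{1.5}=\Sigma$ yields the required CKS* equivalence, completing the bijection.
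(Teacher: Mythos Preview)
Your proposal is correct and follows essentially the same approach as the paper: the paper's maps $(\Xi,D)\mapsto(\Xi\smallsetminus\partial\Xi,D)$ and $(\Sigma,D)\mapsto(\Xi,D)$ (for $\Xi\subset\Sigma$ a compact connected subsurface containing $D$) are exactly your $\Phi$ and $\Psi$, with your choice of $N_D$ being a specific instance of such $\Xi$. Your argument is more carefully spelled out---in particular your explicit construction of the compact connected intermediate surface $N_{1.5}\cup\upsilon_1(N_{D_1})\cup\delta_2(N_{D_2})$ makes precise what the paper summarizes as ``every Reidemeister move also occurs in a compact neighborhood of the knot diagram''---but the underlying idea is identical.
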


\begin{proof} Suppose that $((\Xi,D),(\widebar{\Xi},\widebar{D})) \in \mathfrak{R}^{(CKS^*)}$. Then there is a finite sequence of stable Reidemeister moves:
\[
(\Xi,D)=(\Xi_1,D_1) \stackrel{s}{\squigarrowleftright} (\Xi_2,D_2)\stackrel{s}{\squigarrowleftright}\cdots \stackrel{s}{\squigarrowleftright} (\Xi_n,D_n)=(\widebar{\Xi},\widebar{D}).
\]
Each intermediate surface $\Xi_i$ can be replaced with $\Sigma_i=\Xi_i\smallsetminus \partial \Xi_i$. This has no effect on Reidemeister moves, so it follows that $((\Xi\smallsetminus \partial \Xi,D),(\widebar{\Xi}\smallsetminus \partial \widebar{\Xi},\widebar{D})) \in \mathfrak{R}^{(CKS^*)}$. Conversely, suppose that $((\Sigma,D),(\widebar{\Sigma},\widebar{D}))\in \mathfrak{R}^{(CKS^*)}$. Then there is a finite sequence of stable Reidemeister moves: 
\[
(\Sigma,D) = (\Sigma_1,D_1) \stackrel{s}{\squigarrowleftright} (\Sigma_2,D_2)\stackrel{s}{\squigarrowleftright} \cdots \stackrel{s}{\squigarrowleftright} (\Sigma_n,D_n)=(\widebar{\Sigma},\widebar{D}).
\]
Every knot diagram on $\Sigma_i$ is contained in a compact connected subsurface $\Xi_i$ on $\Sigma_i$. Every  Reidemeister move also occur in a compact neighborhood of the knot diagram. It follows that $((\Xi_i,D_i),(\widebar{\Xi}_{i+1},\widebar{D}_{i+1})) \in \mathfrak{R}^{(CKS)}$ for $1 \le i \le n-1$ and hence $((\Xi_1,D_1),(\widebar{\Xi}_n,\widebar{D}_n)) \in \mathfrak{R}^{(CKS)}$. This proves the result, since replacing $\Sigma_i$ with $\Xi_i \smallsetminus \partial \Xi_i$ also preserves the $\mathfrak{R}^{(CKS^*)}$-equivalence class.
\end{proof}

\section{The virtual knot space} \label{sec_virtual_knot_space}

\subsection{Preliminaries} \label{sec_neighborhoods} In analogy with the classical case, the objects of the category $\textbf{VK}$ should be open sets. These will be gathered from among all spaces $\mathbb{K}(\Sigma)$ of smooth embeddings $K:\mathbb{S}^1 \to \Sigma \times \mathbb{R}$, where $\Sigma$ an oriented surface with $\partial \Sigma=\varnothing$ (see Section \ref{sec_lang_diag}). Before defining the virtual knot space, we gather some facts about $\mathbb{K}(\Sigma)$. As for $\mathbb{K}$, first define a topology on $C^{\infty}(\mathbb{S}^1, \Sigma \times \mathbb{R})$ using the jet map $j^{\infty}:C^{\infty}(\mathbb{S}^1, \Sigma \times \mathbb{R}) \to C^0(\mathbb{S}^1,J^{\infty}(\mathbb{S}^1,\Sigma \times \mathbb{R}))$, where $C^0(\mathbb{S}^1,J^{\infty}(\mathbb{S}^1,\Sigma \times \mathbb{R}))$ has the compact-open topology. Again, $C^{\infty}(S^1, \Sigma \times \mathbb{R})$ is an infinite-dimensional smooth manifold and $\mathbb{K}(\Sigma)$ is open and dense in $C^{\infty}(\mathbb{S}^1, \Sigma \times \mathbb{R})$. The following result is included for completeness.

\begin{proposition} \label{prop_iso_iff_path} If knots $K, \widebar{K} \in \mathbb{K}(\Sigma)$ are in the same path component, then $K,\widebar{K}$ are ambient isotopic in $\Sigma \times \mathbb{R}$.
\end{proposition}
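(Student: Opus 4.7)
The plan is to apply the isotopy extension theorem after smoothing a given path in $\mathbb{K}(\Sigma)$. A continuous path $\gamma: \mathbb{I} \to \mathbb{K}(\Sigma)$ with $\gamma(0)=K$ and $\gamma(1)=\widebar{K}$ produces a map $F: \mathbb{S}^1 \times \mathbb{I} \to \Sigma \times \mathbb{R}$ by $F(z,t)=\gamma(t)(z)$ in which each $F_t := F(\cdot,t)$ is a smooth embedding of $\mathbb{S}^1$. However, continuity of $\gamma$ in the jet topology only guarantees that the $\infty$-jets $j^{\infty} F_t$ depend continuously on $t$, not smoothly, so $F$ itself need not be smooth in $t$.

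First I would replace $\gamma$ with a smooth path $\widetilde\gamma: \mathbb{I} \to \mathbb{K}(\Sigma)$ having the same endpoints. As noted in the excerpt, $C^{\infty}(\mathbb{S}^1, \Sigma \times \mathbb{R})$ is an infinite-dimensional smooth manifold (with charts modelled on spaces of sections of pullback bundles via an exponential map from a fixed Riemannian metric on $\Sigma \times \mathbb{R}$), and $\mathbb{K}(\Sigma)$ is an open subset. By compactness of $\mathbb{I}$, one can cover $\gamma(\mathbb{I})$ by finitely many such charts whose convex neighborhoods lie in $\mathbb{K}(\Sigma)$, and a standard partition-of-unity argument in the chart parameters produces a smooth $\widetilde\gamma$ that stays inside $\mathbb{K}(\Sigma)$ and agrees with $\gamma$ at $t=0,1$. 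This yields a smooth map $\widetilde F: \mathbb{S}^1 \times \mathbb{I} \to \Sigma \times \mathbb{R}$ with each $\widetilde F_t$ a smooth embedding, $\widetilde F_0 = K$, and $\widetilde F_1 = \widebar{K}$.

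Next, I would apply the isotopy extension theorem (Hirsch \cite{hirsch}, Chapter 8) to $\widetilde F$. Since $\mathbb{S}^1$ is compact, the image of $\widetilde F$ is compact in $\Sigma \times \mathbb{R}$, so the theorem yields a compactly supported smooth ambient isotopy $\Phi: (\Sigma \times \mathbb{R}) \times \mathbb{I} \to \Sigma \times \mathbb{R}$ with $\Phi_0 = \mathrm{id}$, each $\Phi_t$ an orientation-preserving diffeomorphism, and $\Phi_t \circ K = \widetilde F_t$ for all $t$. Evaluating at $t=1$ produces $\Phi_1 \circ K = \widebar{K}$, which is exactly the required ambient isotopy.

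The main obstacle is the smoothing step: one must produce $\widetilde\gamma$ in such a way that the perturbed path remains inside the open (but possibly thin) subset $\mathbb{K}(\Sigma)$ of embeddings and fixes the endpoints. This is where the smooth manifold structure on $C^{\infty}(\mathbb{S}^1,\Sigma\times\mathbb{R})$ recalled in the preceding paragraph, together with the openness of $\mathbb{K}(\Sigma)$, does the work. Once a smooth isotopy of embeddings is available, the isotopy extension theorem is a standard black box whose hypotheses follow from compactness of $\mathbb{S}^1$.
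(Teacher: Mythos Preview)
Your proposal is correct and follows essentially the same approach as the paper: smooth the continuous path using the smooth manifold structure of $\mathbb{K}(\Sigma)$, then invoke the isotopy extension theorem (Hirsch, Theorem 8.1.3). The paper's version is terser, omitting the chart-and-partition-of-unity details you supply for the smoothing step, but the strategy is identical.
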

\begin{proof} Since $\mathbb{K}(\Sigma)$ is a smooth manifold, a continuous path $\sigma:\mathbb{I} \to \mathbb{K}(\Sigma)$ between $K$ and $\widebar{K}$ can be approximated with a smooth one. A smooth path in $\mathbb{K}(\Sigma)$ yields a smooth isotopy $\mathbb{S}^1 \times \mathbb{I} \to \Sigma \times \mathbb{R}$. By the isotopy extension theorem  (\cite{hirsch}, Theorem 8.1.3), $K$ and $\widebar{K}$ are ambient isotopic.
\end{proof}


Now suppose that $\Sigma_1,\Sigma_2$ are oriented surfaces without boundary and $\psi: \Sigma_1 \to \Sigma_2$ is an orientation preserving embedding. Since $\partial \Sigma_1=\varnothing$, $\psi$ is open. Let $\text{id}:\mathbb{R} \to \mathbb{R}$ be the identity map. The map $\psi \times \text{id}: \Sigma_1 \times \mathbb{R} \to \Sigma_2 \times \mathbb{R}$ defines a map $\Psi:\mathbb{K}(\Sigma_1) \to \mathbb{K}(\Sigma_2)$ by $\Psi(K)=(\psi \times \text{id}) \circ K$. It is a straightforward exercise to show that, with the given topologies on $\mathbb{K}(\Sigma_1)$ and $\mathbb{K}(\Sigma_2)$, the map $\Psi$ is continuous, open, and injective. Note the significance of the hypothesis that our surfaces $\Sigma_1,\Sigma_2$ have empty boundary: it is needed so that all such maps $\psi$ and $\Psi$ are open.

\begin{notation}[Capitalized $\&$ decapitalized maps] \label{notate_cap} If $\psi:\Sigma_1 \to \Sigma_2$ is an embedding of surfaces, the capitalized letter $\Psi$ will be used to denote the open map $\Psi:\mathbb{K}(\Sigma_1) \to \mathbb{K}(\Sigma_2)$, where $\Psi(K)=(\psi \times \text{id}) \circ K$.  Conversely, a capital letter $\Psi$ denoting a map $\Psi:\mathbb{K}(\Sigma_1) \to \mathbb{K}(\Sigma_2)$ will be assumed to be of this form. The decapitalized letter $\psi$ will denote the corresponding embedding $\psi: \Sigma_1 \to \Sigma_2$.
\end{notation}

Capitalized maps $\Psi$ will be used in the next subsection to formally define the arrows of $\textbf{VK}$. Since $\Psi$ is open, continuous, and injective, it can be viewed intuitively as a \emph{generalized inclusion map}, analogous with the arrows of category of open sets $\textbf{K}$ of $\mathbb{K}$. If $U_1 \subseteq \mathbb{K}(\Sigma_1)$ and $K \in \Psi(U_1) \subseteq \mathbb{K}(\Sigma_2)$, then $U_1$ can then be viewed as a \emph{generalized neighborhood} of $K$.  
\newline
\newline
We conclude this subsection with a technical lemma. It will be used ahead (see Theorem \ref{thm_recovers}) to prove that the geometric model of virtual knot theory recovers its diagrammatic model $CKS^*$. 

\begin{lemma} \label{prop_no_more_diagrams} Suppose that $(\Sigma,D),(\widebar{\Sigma},\widebar{D}) \in \mathfrak{D}^{(CKS^*)}$ and let $K \in \mathbb{K}(\Sigma)$, $\widebar{K} \in \mathbb{K}(\widebar{\Sigma})$ be knots having diagrams $D,\widebar{D}$, respectively. Then $(\Sigma,D),(\widebar{\Sigma},\widebar{D})$ have the same virtual knot type if and only if there is a diagram in $\textbf{Top}$ of surfaces and capitalized maps of the following form:
\[
\xymatrix{
\mathbb{K}(\Sigma)=\mathbb{K}(\Sigma_1) \ar[d]_-{\Upsilon_1} & \mathbb{K}(\Sigma_2) \ar[dl]_-{\Delta_2} \ar[d]_-{\Upsilon_2} & \ar[dl]_-{\Delta_3} \cdots & \mathbb{K}(\Sigma_{n-1}) \ar[dl]_-{\Delta_{n-1}} \ar[d]^-{\Upsilon_{n-1}} & \mathbb{K}(\Sigma_n)=\mathbb{K}(\widebar{\Sigma}) \ar[dl]^-{\Delta_{n}} \\
\mathbb{K}(\Sigma_{1.5}) & \mathbb{K}(\Sigma_{2.5}) & \cdots & \mathbb{K}(\Sigma_{n-.5}) & 
},
\]
and path components $C_1 \subseteq \mathbb{K}(\Sigma_1)$, $C_{1.5} \subseteq \mathbb{K}(\Sigma_{1.5}),\ldots,C_n \subseteq \mathbb{K}(\Sigma_n)$ such that $K \in C_1$, $\widebar{K} \in C_n$, and for $1 \le i \le n-1$, $\Upsilon_i(C_i) \subseteq C_{i+.5}$, $\Delta_{i+1}(C_{i+1}) \subseteq C_{i+.5}$. 
\end{lemma}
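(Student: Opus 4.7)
The plan is to prove both directions directly, translating between stable Reidemeister sequences in $\mathfrak{D}^{(CKS^*)}$ and zigzags of capitalized maps with compatible path-component data.

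For the forward direction, I would unpack a chain of stable Reidemeister moves
\[
(\Sigma,D)=(\Sigma_1,D_1)\stackrel{s}{\squigarrowleftright}(\Sigma_2,D_2)\stackrel{s}{\squigarrowleftright}\cdots\stackrel{s}{\squigarrowleftright}(\Sigma_n,D_n)=(\widebar{\Sigma},\widebar{D}).
\]
Each step supplies the required surface $\Sigma_{i+.5}$ and embeddings $\upsilon_i:\Sigma_i\to\Sigma_{i+.5}$, $\delta_{i+1}:\Sigma_{i+1}\to\Sigma_{i+.5}$, producing (via Notation \ref{notate_cap}) the capitalized maps $\Upsilon_i,\Delta_{i+1}$. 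I would then select generic lifts $K_i\in\mathbb{K}(\Sigma_i)$ of the diagrams $D_i$, insisting that $K_1=K$ and $K_n=\widebar{K}$. Because $\upsilon_i(D_i)$ and $\delta_{i+1}(D_{i+1})$ are related by a single Reidemeister move (or are equal), the knots $\Upsilon_i(K_i)$ and $\Delta_{i+1}(K_{i+1})$---which carry precisely those diagrams---are ambient isotopic in $\Sigma_{i+.5}\times\mathbb{R}$, hence lie in a common path component $C_{i+.5}$ of $\mathbb{K}(\Sigma_{i+.5})$. Letting $C_i$ be the path component of $K_i$, continuity of $\Upsilon_i$ and $\Delta_{i+1}$ immediately yields $\Upsilon_i(C_i)\subseteq C_{i+.5}$ and $\Delta_{i+1}(C_{i+1})\subseteq C_{i+.5}$.

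For the converse, I would start from the given zigzag and reconstruct a stable Reidemeister chain. Choose $K_i\in C_i$ for each $1<i<n$ (keeping $K_1=K$, $K_n=\widebar{K}$), and let $D_i$ denote its diagram. For each index $i$, the knots $\Upsilon_i(K_i)$ and $\Delta_{i+1}(K_{i+1})$ both belong to $C_{i+.5}$, so Proposition \ref{prop_iso_iff_path} shows they are ambient isotopic in $\Sigma_{i+.5}\times\mathbb{R}$. The classical Reidemeister theorem on the oriented surface $\Sigma_{i+.5}$ then converts this ambient isotopy into a finite sequence of Reidemeister moves joining $\upsilon_i(D_i)$ and $\delta_{i+1}(D_{i+1})$; each such single Reidemeister move qualifies as a stable Reidemeister move on $\Sigma_{i+.5}$ with identity embeddings on both sides. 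Bracketing these with the two transitions $(\Sigma_i,D_i)\stackrel{s}{\squigarrowleftright}(\Sigma_{i+.5},\upsilon_i(D_i))$ and $(\Sigma_{i+.5},\delta_{i+1}(D_{i+1}))\stackrel{s}{\squigarrowleftright}(\Sigma_{i+1},D_{i+1})$, which use $\upsilon_i,\delta_{i+1}$ against identities, concatenation yields a finite chain of stable Reidemeister moves from $(\Sigma,D)$ to $(\widebar{\Sigma},\widebar{D})$.

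The principal technical input is the converse direction's invocation of the Reidemeister theorem for knot diagrams on a general oriented surface without boundary, guaranteeing that ambient isotopy in $\Sigma_{i+.5}\times\mathbb{R}$ translates into a finite sequence of Reidemeister moves of diagrams on $\Sigma_{i+.5}$. A small amount of bookkeeping is also needed to verify that, with the ``vertical'' projections $\Sigma\times\mathbb{R}\to\Sigma$ chosen compatibly, the diagram of $\Upsilon_i(K_i)=(\upsilon_i\times\text{id})\circ K_i$ is literally $\upsilon_i(D_i)$ rather than merely related to it by planar isotopy; that check is routine once the projections on $\Sigma_i$ and $\Sigma_{i+.5}$ are taken as the canonical ones and $\upsilon_i$ is used to transport between them.
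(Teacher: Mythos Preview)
Your proposal is correct and follows essentially the same approach as the paper: the forward direction unpacks the stable Reidemeister chain into capitalized maps and uses ambient isotopy to align path components, while the converse invokes the Reidemeister theorem on surfaces to convert path-component coincidence back into a stable Reidemeister chain. Your reverse direction is more explicit than the paper's (which simply cites the ``well-known fact'' that ambient isotopy in $\Sigma\times\mathbb{R}$ is equivalent to Reidemeister equivalence of diagrams), but the underlying argument is identical.
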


\begin{proof} For the forward direction, suppose there is a sequence $(\Sigma,D)=(\Sigma_1,D_1)  \stackrel{s}{\squigarrowleftright} (\Sigma_2,D_2)  \stackrel{s}{\squigarrowleftright} \cdots  \stackrel{s}{\squigarrowleftright} (\Sigma_n,D_n)=(\widebar{\Sigma},\widebar{D})$ of stable Reidemeister moves. For each integer $i$, $1 \le i \le n-1$, there is a surface $\Sigma_{i+.5}$ and embeddings $\upsilon_i:\Sigma_i \to \Sigma_{i+.5}$, $\delta_{i+1}:\Sigma_{i+1} \to \Sigma_{i+.5}$ such that $\upsilon_i(D_i)$ and $\delta_i(D_{i+1})$ are equal or Reidemeister equivalent. Let $K_i \in \mathbb{K}(\Sigma_i)$ be a knot having diagram $D_i$. If $\upsilon_i(D_i)$ and $\delta_i(D_{i+1})$ are Reidemeister equivalent, then $\Upsilon_i(K_i)$ and $\Delta_i(K_{i+1})$ are ambient isotopic and hence lie in the same path component of $\mathbb{K}(\Sigma_{i+.5})$. Then there are path components $C_i \subseteq \mathbb{K}(\Sigma_i)$, $C_{i+.5} \subseteq \mathbb{K}(\Sigma_{i+.5})$, $C_{i+1} \subseteq \mathbb{K}(\Sigma_{i+1})$ such that $K_i \in C_i$, $K_{i+1} \in C_{i+1}$ and $\Upsilon_i(C_i) \subseteq C_{i+.5}$, $\Delta_{i+1}(C_{i+1}) \subseteq C_{i+.5}$. The reverse implication follows from the well-known fact that two knots in the same thickened surface are ambient isotopic if and only if they have diagrams that are Reidemeister equivalent.
\end{proof}

\subsection{The virtual knot site} \label{sec_site} The category $\textbf{VK}$ can now be defined as the category whose objects are generalized neighborhoods and whose arrows are generalized inclusions. That is, an object of $\textbf{VK}$ is an open set $U \subseteq \mathbb{K}(\Sigma)$, where $\Sigma$ ranges over all oriented surfaces without boundary. An arrow from $U_1 \subseteq \mathbb{K}(\Sigma_1)$ to $U_2 \subseteq \mathbb{K}(\Sigma_2)$ in $\textbf{VK}$ corresponds to a capitalized map $\Psi=\psi \times \text{id}:\mathbb{K}(\Sigma_1) \to \mathbb{K}(\Sigma_2)$ such that $\Psi(U_1) \subseteq U_2$. The arrow $\Psi:U_1 \to U_2$ is to be considered a function from the set $U_1$ to the set $U_2$. Hence, if $\Psi,\Phi: U_1 \to U_2$ are parallel arrows such that $\Psi(K_1)=\Phi(K_1)$ for all $K_1 \in U_1$, then $\Psi$ and $\Phi$ are the same arrow in $\textbf{VK}$. In particular, the empty set $\varnothing$ is an initial object in $\textbf{VK}$. The composition of $\Phi:U \to V$ with $\Psi:V \to W$ is defined by $\Psi \circ \Phi=(\psi \circ \phi) \times \text{id}$. The next two results establish some useful properties of $\textbf{VK}$.

\begin{lemma} \label{lemma_factors} Let $U_1 \subseteq \mathbb{K}(\Sigma_1), U_2 \subseteq \mathbb{K}(\Sigma_2)$ be objects and $\Psi=\psi \times \text{id}:U_1 \to U_2$ an arrow in $\textbf{VK}$. Then $\Psi$ factors as: 
\[
\xymatrix{ \Psi: U_1 \ar[rr]^-{\Psi'=\psi' \times \text{id}} & & U_2' \ar[rr]^-{I=\iota \times \text{id}} & & U_2},
\]
where $U_2' \subseteq \mathbb{K}(\Sigma_2')$, $\Sigma_2' \subseteq \Sigma_2$ is a subsurface, $\psi':\Sigma_1 \to \Sigma_2'$ is an orientation preserving diffeomorphism, $\iota:\Sigma_2' \to \Sigma_2$ is inclusion, $\psi=\iota \circ \psi'$, and $\Psi':U_1 \to U_2'$ is an isomorphism in $\textbf{VK}$.
\end{lemma}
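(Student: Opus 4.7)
The plan is to factor $\psi$ geometrically at the level of surfaces and then lift the factorization to thickened-surface knot spaces via the capitalization convention.

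First I would set $\Sigma_2' := \psi(\Sigma_1)$. By the observation preceding Notation \ref{notate_cap}, the smooth orientation preserving embedding $\psi:\Sigma_1 \to \Sigma_2$ is open because $\partial\Sigma_1=\varnothing$, so $\Sigma_2'$ is an open subset of $\Sigma_2$. As an open subset of a smooth oriented surface without boundary, $\Sigma_2'$ is itself a smooth oriented surface without boundary (with the orientation inherited from $\Sigma_2$, which coincides with the pushforward orientation from $\Sigma_1$ since $\psi$ is orientation preserving). Let $\psi':\Sigma_1\to\Sigma_2'$ denote $\psi$ with codomain restricted to its image; it is a bijective smooth orientation preserving immersion with smooth inverse, hence an orientation preserving diffeomorphism. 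Let $\iota:\Sigma_2'\hookrightarrow\Sigma_2$ be the inclusion. By construction $\psi=\iota\circ\psi'$.

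Next I would pass to thickened-surface knot spaces using Notation \ref{notate_cap}. Set $\Psi':=\psi'\times\mathrm{id}:\mathbb{K}(\Sigma_1)\to\mathbb{K}(\Sigma_2')$ and $I:=\iota\times\mathrm{id}:\mathbb{K}(\Sigma_2')\to\mathbb{K}(\Sigma_2)$. A direct unpacking of the definition gives $I\circ\Psi'=(\iota\circ\psi')\times\mathrm{id}=\psi\times\mathrm{id}=\Psi$ as maps of sets, so the composition relation in $\textbf{VK}$ will automatically follow once we know the two maps are arrows there. I would then define $U_2':=\Psi'(U_1)$. Since $\psi'$ is an orientation preserving diffeomorphism, the associated capitalized map $\Psi'$ is a homeomorphism (continuous, open, injective, surjective onto $\mathbb{K}(\Sigma_2')$ as noted just before Notation \ref{notate_cap}), so $U_2'\subseteq\mathbb{K}(\Sigma_2')$ is open and hence an object of $\textbf{VK}$.

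Finally, I would verify the two maps are arrows in $\textbf{VK}$ and that $\Psi'$ is an isomorphism. By definition of $U_2'$, we have $\Psi'(U_1)\subseteq U_2'$, so $\Psi':U_1\to U_2'$ is an arrow. For $I:U_2'\to U_2$, compute $I(U_2')=I(\Psi'(U_1))=\Psi(U_1)\subseteq U_2$ using the hypothesis that $\Psi:U_1\to U_2$ is an arrow. To see that $\Psi':U_1\to U_2'$ is an isomorphism, consider the capitalized map $(\Psi')^{-1}:=(\psi')^{-1}\times\mathrm{id}:\mathbb{K}(\Sigma_2')\to\mathbb{K}(\Sigma_1)$ associated to the orientation preserving diffeomorphism $(\psi')^{-1}:\Sigma_2'\to\Sigma_1$. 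It restricts to a map $U_2'\to U_1$ since $\Psi'$ is a bijection from $U_1$ to $U_2'$, and the identity $(\Psi')^{-1}\circ\Psi'=\mathrm{id}_{U_1}$, $\Psi'\circ(\Psi')^{-1}=\mathrm{id}_{U_2'}$ holds on underlying sets, which is all that is needed given the remark that parallel arrows in $\textbf{VK}$ agreeing pointwise are equal.

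There is no real obstacle here; the lemma is essentially a bookkeeping exercise. The one substantive point, and the place I would be careful, is the justification that $\Sigma_2'=\psi(\Sigma_1)$ is an \emph{open} subsurface of $\Sigma_2$ with empty boundary — this is exactly where the standing assumption $\partial\Sigma_1=\partial\Sigma_2=\varnothing$ (needed to make $\psi$ open) does the work, and without it the target $\Sigma_2'$ would not be an admissible object in the $CKS^*$-style framework.
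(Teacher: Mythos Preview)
Your proof is correct and follows essentially the same approach as the paper's: set $\Sigma_2'=\psi(\Sigma_1)$, factor $\psi$ as inclusion composed with a diffeomorphism onto its image, capitalize, define $U_2'=\Psi'(U_1)$, and invert $\Psi'$ via $(\psi')^{-1}\times\mathrm{id}$. If anything, you are slightly more explicit than the paper about why $\Sigma_2'$ is an admissible surface and why the resulting maps are genuine arrows in $\textbf{VK}$.
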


\begin{proof} Since $\psi$ is an embedding and $\Sigma_1$ is a surface without boundary, $\Sigma_2'=\text{im}(\psi)$ is a subsurface of $\Sigma_1$. Define $\iota:\Sigma_2' \to \Sigma_2$ to be the inclusion and define $\psi':\Sigma_1 \to \Sigma_2'$ by $\psi'(z)=\psi(z)$ for all $z \in \Sigma_1$. Then $\psi'$ is an orientation preserving diffeomorphism and $\psi=\iota \circ \psi'$. Also, $\Psi':\mathbb{K}(\Sigma_1) \to \mathbb{K}(\Sigma_2')$ is a diffeomorphism. Set $U_2'=\Psi'(U_1)$. Then there is an arrow $\Psi':U_1 \to U_2'$ and an arrow $(\Psi')^{-1}:U_2' \to U_1$ corresponding the inverse diffeomorphism of $\Psi':\mathbb{K}(\Sigma_1) \to \mathbb{K}(\Sigma_2')$. These arrows satisfy $\Psi' \circ (\Psi')^{-1}=1_{U_2'}$ and $(\Psi')^{-1}\circ\Psi' =1_{U_1}$, so that $\Psi'$ is an isomorphism. 
\end{proof}

\begin{remark} Suppose that $I:U' \to U$ is an arrow where $I$ is capitalized from an inclusion map $\iota:\Sigma' \to \Sigma$ as in Lemma \ref{lemma_factors}. We will often abuse terminology and refer to $I$ itself as an inclusion.
\end{remark}

\begin{theorem} \label{lemma_vk_pullbacks} The category  $\textbf{VK}$ has pullbacks.
\end{theorem}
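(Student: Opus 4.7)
The plan is to construct pullbacks directly at the level of surfaces and then lift the construction to $\textbf{VK}$. Suppose I am given arrows $\Phi\colon V\to W$ and $\Psi\colon U\to W$ in $\textbf{VK}$ with $U\subseteq \mathbb{K}(\Sigma_U)$, $V\subseteq \mathbb{K}(\Sigma_V)$, $W\subseteq \mathbb{K}(\Sigma_W)$, where $\Psi=\psi\times\text{id}$ and $\Phi=\phi\times\text{id}$. Since $\psi$ and $\phi$ are orientation-preserving embeddings between boundaryless surfaces, the images $\psi(\Sigma_U)$ and $\phi(\Sigma_V)$ are open oriented subsurfaces of $\Sigma_W$. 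I propose to take as pullback surface
\[
\Sigma_P := \psi(\Sigma_U)\cap\phi(\Sigma_V)\subseteq \Sigma_W,
\]
which is again an open oriented surface without boundary. Because $\psi$ and $\phi$ are injective, their inverses restrict to orientation-preserving embeddings $\alpha_U:=\psi^{-1}|_{\Sigma_P}\colon \Sigma_P\to \Sigma_U$ and $\alpha_V:=\phi^{-1}|_{\Sigma_P}\colon \Sigma_P\to \Sigma_V$ satisfying $\psi\circ\alpha_U=\phi\circ\alpha_V=\iota$, where $\iota\colon \Sigma_P\hookrightarrow \Sigma_W$ is the inclusion.

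Next I lift to $\textbf{VK}$. Let $A_U$ and $A_V$ be the capitalized maps associated to $\alpha_U$ and $\alpha_V$ as in Notation \ref{notate_cap}, and define
\[
P := A_U^{-1}(U)\cap A_V^{-1}(V) \subseteq \mathbb{K}(\Sigma_P).
\]
This is open because $A_U$ and $A_V$ are continuous. The restrictions $\pi_U:=A_U|_P\colon P\to U$ and $\pi_V:=A_V|_P\colon P\to V$ are then arrows of $\textbf{VK}$, and the identity $\psi\circ\alpha_U=\phi\circ\alpha_V$ at the surface level shows at once that $\Psi\circ\pi_U=\Phi\circ\pi_V$, so the candidate square commutes.

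The step I expect to require the most care is the universal property. Given another object $Q\subseteq \mathbb{K}(\Sigma_Q)$ with arrows $F=f\times\text{id}\colon Q\to U$ and $G=g\times\text{id}\colon Q\to V$ satisfying $\Psi\circ F=\Phi\circ G$, I get the identity $\psi\circ f=\phi\circ g$ of embeddings $\Sigma_Q\to\Sigma_W$. Its common image therefore lies in $\psi(\Sigma_U)\cap\phi(\Sigma_V)=\Sigma_P$, so there is a unique orientation-preserving embedding $h\colon \Sigma_Q\to \Sigma_P$ with $\iota\circ h=\psi\circ f$; composing with $\psi^{-1}$ and $\phi^{-1}$ recovers $\alpha_U\circ h=f$ and $\alpha_V\circ h=g$. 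Setting $H:=h\times\text{id}$, one has $A_U\circ H=F$ and $A_V\circ H=G$, which forces $H(Q)\subseteq A_U^{-1}(U)\cap A_V^{-1}(V)=P$, giving the mediating arrow $H\colon Q\to P$. Uniqueness of $H$ follows from uniqueness of $h$, which is forced by the injectivity of $\iota$. The only edge case to check is $\Sigma_P=\varnothing$: then $\mathbb{K}(\Sigma_P)=\varnothing$ and $P=\varnothing$, but any $Q$ as above must also satisfy $\Sigma_Q=\varnothing$ by the same image argument, so $Q=\varnothing$ and the required arrow exists uniquely from the initial object.
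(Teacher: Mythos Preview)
Your construction is essentially the paper's: both take the pullback surface to be $\psi(\Sigma_U)\cap\phi(\Sigma_V)\subseteq\Sigma_W$ and lift the intersection to $\textbf{VK}$. The paper merely organizes the argument differently, first invoking Lemma~\ref{lemma_factors} to factor each arrow as an isomorphism followed by a subsurface inclusion and then treating the inclusion case, while you handle the general case in one pass; the underlying geometry is identical.

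One subtle step deserves more care (and the paper's own proof glosses over the same point). From $\Psi\circ F=\Phi\circ G$ you infer the identity $\psi\circ f=\phi\circ g$ of embeddings $\Sigma_Q\to\Sigma_W$, but by the conventions of Section~\ref{sec_site} equality of arrows in $\textbf{VK}$ is equality \emph{as functions on $Q$}, not equality of the underlying surface maps. If $\Sigma_Q$ has a component carrying no knot of $Q$, the chosen representative $f$ may send that component outside $\phi(\Sigma_V)$, so $\psi\circ f(\Sigma_Q)\not\subseteq\Sigma_P$ and your $h$ is undefined as stated. The mediating arrow still exists---its value on each $K\in Q$ is forced to be $A_U^{-1}(F(K))$, and this function is induced by an embedding once one restricts $\Sigma_Q$ to the components actually meeting knots of $Q$---so the fix is routine, but the inference as written is not quite justified.
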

\begin{proof} First consider the case of a diagram $\xymatrix{U_1 \ar[r]^-{I_1} & U & \ar[l]_-{I_2} U_2}$ in $\textbf{VK}$ where $U_1 \subseteq \mathbb{K}(\Sigma_1)$, $U_2 \subseteq \mathbb{K}(\Sigma_2)$, $U \subseteq \mathbb{K}(\Sigma)$, and $\Sigma_1,\Sigma_2 \subseteq \Sigma$ are subsurfaces. We also assume that $i=1,2$, $I_i=\iota_i \times \text{id}: \Sigma_i \times \mathbb{R} \to \Sigma \times \mathbb{R}$ where $\iota_i$ is the inclusion map for $\Sigma_i \subseteq \Sigma$. Note also that $\Sigma_i$ is an open subset of $\Sigma$. Hence $\Sigma_1 \cap \Sigma_2$ is a subsurface of $\Sigma_1$, $\Sigma_2$, and $\Sigma$. Now, if $I_1(U_1) \cap I_2(U_2) =\varnothing$, we can take $\varnothing$ as the pullback. Otherwise, the knots in $I_1(U_1) \cap I_2(U_2)$ all have image lying over $\Sigma_1 \cap \Sigma_2$. Denote by $W$ the set of these knots lying over $\Sigma_1 \cap \Sigma_2$, so that $W \subseteq \mathbb{K}(\Sigma_1 \cap \Sigma_2)$. Let $J_1:W \to U_1$, $J_2:W \to U_2$ denote the arrows induced by the inclusion maps. We claim that the following square is a pullback. Suppose that $V \subseteq \mathbb{K}(\Gamma)$ and $\Phi=\phi \times \text{id}:V \to U_2,\Psi=\psi \times \text{id}:V \to U_1$ are maps so that the diagram below commutes. 
\[
\xymatrix{
V \ar@{-->}[dr]^-{\Upsilon} \ar@/^2pc/[drr]_{\Phi} \ar@/_2pc/[ddr]^-{\Psi} & & \\
 & W \ar[d]_-{J_1} \ar[r]^-{J_2} & U_2 \ar[d]^-{I_2} \\
 & U_1 \ar[r]_-{I_1} & U
}
\]
Since $I_1$ and $I_2$ are inclusions, this implies $\Psi(K)$ and $\Phi(K)$ are the same knot lying over $\Sigma_1 \cap \Sigma_2$ for all $K \in V$ and that their common image is contained in $W \subseteq \mathbb{K}(\Sigma_1 \cap \Sigma_2)$. Set $\upsilon=\psi:\Gamma \to \Sigma_1 \cap \Sigma_2$. Then there is an arrow $\Upsilon:V \to W$ in $\textbf{VK}$ such that $J_1 \circ \Upsilon(K)=\Psi(K)$ and $J_2 \circ \Upsilon(K)=\Phi(K)$ for all $K \in V$. As any other arrow satisfying the universal property must also agree with $\Psi$ for every $K \in V$, $\Upsilon$ is unique by our convention on parallel arrows.  Thus, the diagram above is a pullback.
\newline
\newline
Now consider the general case of a diagram $\xymatrix{U_1 \ar[r]^-{\Psi_1} & U & \ar[l]_-{\Psi_2} U_2}$. Apply Lemma \ref{lemma_factors} to both $\Psi_1,\Psi_2$. Then $\Psi_i$ factors as $\xymatrix{U_i \ar[r]^-{\Psi_i'} & U_i' \ar[r]^-{I_i} &  U}$ for some open set $U_i' \subseteq \mathbb{K}(\Sigma_i')$ and $\Psi_i'$ is an isomorphism in $\textbf{VK}$. The previous case implies there is a pullback $W'$ of $I_1$ over $I_2$ as below:
\[
\xymatrix{
W' \ar[r]^-{J_2} \ar[d]_-{J_1} & U_2' \ar[ddr]^-{I_2} \ar[r]^-{(\Psi_2')^{-1}} & U_2 \ar[dd]^-{\Psi_2} \\
U_1' \ar[d]_-{(\Psi_1')^{-1}} \ar[drr]_-{I_1} & & \\
U_1 \ar[rr]_-{\Psi_1} & & U
}
\]
The universal property of the upper left pullback square then implies that the outside square is also a pullback. Thus, $\textbf{VK}$ has pullbacks.
\end{proof}

Next we define a Grothendieck topology $J_{\textbf{VK}}$ on $\mathbf{VK}$. Recall that a \emph{sieve} on an object $C$ in a small category $\textbf{C}$ is a set $S$ of arrows with common codomain $C$ that is closed under composition on the right: if $(f:B \to C) \in S$ and $g:A \to B$, then $f \circ g \in S$. A Grothendieck topology $J_{\textbf{C}}$ assigns a set of sieves $J_{\textbf{C}}(C)$, called \emph{covering sieves}, to each object $C$ of $\textbf{C}$ so that certain axioms are satisfied (see \cite{mac_moer}, III.2, Definition 1). The axioms generalize open coverings of topological spaces to categories. In particular, for a topological space one may take the open cover topology as a Grothendieck topology on its lattice of open sets. Hence, for the knot space $\mathbb{K}$, a sieve $S$ is in $J_{\textbf{K}}(U)$ if $U=\bigcup_{(V \subseteq U) \in S} V$. Since arrows $\Psi:V \to U$ of $\textbf{VK}$ are generalized inclusions, it is natural to define a sieve $S$ on an object $U$ of $\textbf{VK}$ to be covering if $U$ is the union of its images. Thus, for an object $U$ of $\mathbf{VK}$, a sieve $S$ on $U$ is in  $J_{\textbf{VK}}(U)$ if:
\[
U=\bigcup_{(\Psi: V \to U) \in S} \Psi(V).
\] 
\begin{lemma} \label{lemma_grothendieck_topology} $J_{\textbf{VK}}$ is a Grothendieck topology on $\mathbf{VK}$.
\end{lemma}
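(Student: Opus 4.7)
The plan is to verify the three axioms of a Grothendieck topology (Mac Lane-Moerdijk, Chapter III.2, Definition 1) for the assignment $U \mapsto J_{\textbf{VK}}(U)$: (i) the maximal sieve $t_U$ lies in $J_{\textbf{VK}}(U)$; (ii) stability: if $S \in J_{\textbf{VK}}(U)$ and $\Phi: W \to U$ is an arrow, then the pullback sieve $\Phi^*(S) = \{\Xi: X \to W \mid \Phi \circ \Xi \in S\}$ lies in $J_{\textbf{VK}}(W)$; and (iii) transitivity: if $S \in J_{\textbf{VK}}(U)$ and $R$ is a sieve on $U$ with $\Psi^*(R) \in J_{\textbf{VK}}(V)$ for every $(\Psi: V \to U) \in S$, then $R \in J_{\textbf{VK}}(U)$.

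Axiom (i) is immediate since the identity $1_U \in t_U$ and $1_U(U) = U$. Axiom (iii) is a routine knot-chase: given $K \in U$, use $S$ covering to write $K = \Psi(L)$ with $(\Psi: V \to U) \in S$ and $L \in V$; then use $\Psi^*(R)$ covering $V$ to write $L = \Xi(M)$ with $(\Xi: X \to V) \in \Psi^*(R)$ and $M \in X$. By definition of the pullback sieve, $\Psi \circ \Xi \in R$, and $(\Psi \circ \Xi)(M) = K$, so $K$ lies in the union of images of the arrows in $R$.

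The only step requiring genuine work is (ii), and this is where Theorem \ref{lemma_vk_pullbacks} is essential. Given $K \in W$, apply $\Phi$ to get $\Phi(K) \in U$; since $S$ covers $U$, there exist $(\Psi: V \to U) \in S$ and $L \in V$ with $\Psi(L) = \Phi(K)$. Form the pullback
\[
\xymatrix{
V \times_U W \ar[r]^-{\mathrm{pr}_W} \ar[d]_-{\mathrm{pr}_V} & W \ar[d]^-{\Phi} \\
V \ar[r]_-{\Psi} & U
}
\]
in $\textbf{VK}$. From the construction in the proof of Theorem \ref{lemma_vk_pullbacks}, the pullback consists of knots in a surface mapping simultaneously to $L$ and $K$, so there is some $N \in V \times_U W$ with $\mathrm{pr}_W(N) = K$. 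Since $\Phi \circ \mathrm{pr}_W = \Psi \circ \mathrm{pr}_V \in S$ (as $\Psi \in S$ and $S$ is a sieve), the arrow $\mathrm{pr}_W$ lies in $\Phi^*(S)$, and $K$ is in its image. Hence $W = \bigcup_{(\Xi: X \to W) \in \Phi^*(S)} \Xi(X)$.

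The main obstacle to be careful about is the stability axiom, specifically that the image of the $W$-projection from the pullback really hits every $K \in W$; this requires the explicit description of pullbacks in $\textbf{VK}$ as sets of knots lying over the intersection of the two supporting surfaces, together with the fact that composition of arrows is concretely $(\psi \times \mathrm{id}) \circ (\phi \times \mathrm{id}) = (\psi \circ \phi) \times \mathrm{id}$, so that pullback along $\Phi$ is compatible with the set-theoretic inverse image of $\Phi$ on underlying knot spaces. With this in hand, all three axioms are verified and $J_{\textbf{VK}}$ is a Grothendieck topology.
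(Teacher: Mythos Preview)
Your proposal is correct and follows essentially the same approach as the paper: both verify the maximal-sieve axiom directly, prove stability by forming the pullback of an arrow in $S$ along $\Phi$ and invoking the explicit construction in Theorem \ref{lemma_vk_pullbacks} to see that the relevant knot lies in the image of the projection, and handle transitivity by a straightforward image/union chase. The only cosmetic differences are that the paper verifies axiom (i) via inclusion arrows of an open cover rather than the identity, and writes the transitivity argument as a chain of unions rather than pointwise, but these are equivalent formulations.
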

\begin{proof} Suppose that $U \subseteq \mathbb{K}(\Sigma)$ is an object of $\textbf{VK}$. The maximal sieve $\{\Psi:V \to U|\, \text{cod}(\Psi)=U \}$ on $U$ is in $J_{\textbf{VK}}(U)$, since it contains all of the inclusion arrows for any open cover of $U$ in $\mathbb{K}(\Sigma)$. This is the first axiom. For the second axiom (stability), it must be shown that for any arrow $\Phi:V \to U$ and $S \in J_{\textbf{VK}}(U)$, we have that $\Phi^*(S) \in J_{\textbf{VK}}(V)$, where $\Phi^*(S)=\{\Psi| \Phi \circ \Psi \in S\}$. Then we need to show $\bigcup_{\Psi \in \Phi^*(S)} \Psi(\text{dom}(\Psi))=V$. Let $K \in V$. Since $S$ covers $U$, there is an $\Upsilon: O \to U$ in $S$ such that $\Phi(K) \in \Upsilon(O)$. Now pull back $\Upsilon$ back over $\Phi$ to obtain the following commutative diagram:
\[
\xymatrix{ W \ar[r]^{\Upsilon_1} \ar[d]_{\Phi_1} & O \ar[d]^{\Upsilon} \\
           V \ar[r]_{\Phi} & U
           } 
\]
The construction of the pullback in Lemma \ref{lemma_vk_pullbacks} implies that $K \in \Phi_1(W)$. Since $S$ is a sieve, $\Phi \circ \Phi_1 \in S$ and hence $\Phi_1 \in \Phi^*(S)$. Thus, $\bigcup_{\Psi \in \Phi^*(S)} \Psi(\text{dom}(\Psi))=V$ and the stability axiom is satisfied. Lastly, we must prove transitivity: if $S \in J_{\textbf{VK}}(U)$, $R$ is a sieve on $U$, and $\Phi^*(R) \in J_{\textbf{VK}}(V)$ for all $\Phi:V \to U$ in $S$, then $R \in J_{\textbf{VK}}(U)$. Since $\Phi^*(R) \in J_{\textbf{VK}}(V)$ for all $\Phi:V \to U$ in $S$, $V$ is a union of open sets $\Upsilon(W)$ where $\Upsilon:W \to V$ and $\Phi \circ \Upsilon \in R$. Then since $S$ covers $U$, $U=\bigcup_{\Phi \in S} \Phi(V)=\bigcup_{\Phi \in S} \Phi \left( \bigcup_{\Upsilon \in \Phi^*(R)} \Upsilon(W) \right)=\bigcup_{\Phi \in S, \Phi \circ \Upsilon \in R} \Phi \circ \Upsilon(W)$. Thus, $R \in J_{\textbf{VK}}(U)$.
\end{proof}

\subsection{The geometric model of virtual knot theory}  A model $VG$ of the language $\mathfrak{L}_g$ of geometric knot theory is obtained from the sheaf-theoretic model $ST$ of knot theory by simply replacing the sheaves on the site $(\textbf{K},J_{\textbf{K}})$ everywhere with sheaves on the site $(\textbf{VK},J_{\textbf{VK}})$. This yields the following interpretations of ``knot space'', ``knot'', ``isotopy'' and ``knot invariant''. 

\begin{definition}[Virtual knot space] \label{defn_virtual_knot_space} The \emph{virtual knot space} is the Grothendieck topos of sheaves on the site $(\textbf{VK},J_{\textbf{VK}})$. For simplicity, we will abbreviate  $\text{Sh}(\textbf{VK},J_{\textbf{VK}})$ by $\text{Sh}(\textbf{VK})$.
\end{definition}

\begin{definition}[Virtual knot] \label{defn_virtual_knot} A \emph{virtual knot} is a geometric morphism $K:\textbf{Sets} \to \text{Sh}(\textbf{VK})$. Two virtual knots of $K_1,K_0:\textbf{Sets} \to \text{Sh}(\textbf{VK})$ are considered the same if they are isomorphic in the category $\underline{\text{Hom}}(\textbf{Sets},\text{Sh}(\textbf{VK}))$ of such geometric morphisms. 
\end{definition}

\begin{definition}[Virtual isotopy] \label{defn_virtual_isotopy_paths} A \emph{virtual isotopy} is a geometric morphism $\sigma:\text{Sh}(\textbf{I}) \to \text{Sh}(\textbf{VK})$. Two virtual isotopies  $\sigma_1,\sigma_0:\text{Sh}(\textbf{I}) \to \text{Sh}(\textbf{VK})$ are considered the same if they are isomorphic in the category $\underline{\text{Hom}}(\text{Sh}(\textbf{I}),\text{Sh}(\textbf{VK}))$ of such geometric morphisms. If $0,1:\textbf{Sets} \to \text{Sh}(\textbf{I})$ are the geometric morphisms corresponding to the inclusions $t_0:\{0\}\rightarrow \mathbb{I}, t_1:\{1\} \rightarrow \mathbb{I}$, we will say $\sigma$ is a virtual isotopy between the virtual knots $\sigma \circ 0:\textbf{Sets} \to \text{Sh}(\textbf{VK})$ and $\sigma \circ 1:\textbf{Sets} \to \text{Sh}(\textbf{VK})$. 
\end{definition}

\begin{definition}[Virtual knot invariant] \label{defn_virtual_knot_invariant} For $\mathbb{G}$ a set with the discrete topology, a \emph{virtual knot invariant} is a geometric morphism $\nu:\text{Sh}(\textbf{VK}) \to \text{Sh}(\textbf{G})$. Two virtual knot invariants are considered the same if they are isomorphic in the category $\underline{\text{Hom}}(\text{Sh}(\textbf{VK}),\text{Sh}(\textbf{G}))$ of geometric morphisms.
\end{definition}

Then $VG$ assigns $\mathfrak{K}^{(VG)}$ to the set of virtual knots, $\mathfrak{I}^{(VG)} \subseteq \mathfrak{K}^{(VG)} \times \mathfrak{K}^{(VG)}$ to the equivalence relation generated by virtual isotopy, and function symbols are interpreted just as in Section \ref{sec_sheaf_theory_K}.

\section{Virtual knots} \label{sec_knots}

\subsection{Variable-space knots} \label{sec_var_ambient} Consider again the space $\mathbb{K}$ of knots in $\mathbb{R}^3$. A knot is a point in $\mathbb{K}$ and, since $\mathbb{K}$ is Hausdorff, it is the unique point in the intersection of all its neighborhoods. Thus, a knot $K$ in $\mathbb{R}^3$ can be identified as the subcategory of objects in $\textbf{K}$ that are neighborhoods of $K$ in $\mathbb{K}$. This can be conveniently expressed using two definitions from category theory: the category of elements and connected categories. If $F:\textbf{C} \to \textbf{Sets}$ is a covariant functor, the \emph{category of elements} $\smallint_{\textbf{C}} F$ of $F$ is the category whose objects are pairs $(C,c)$, where $c \in F(C)$, and whose arrows $(C_1,c_1) \to (C_2,c_2)$ correspond to arrows $f:C_1 \to C_2$ in $\textbf{C}$ such that $F(f)(c_1)=c_2$. Henceforth, we will often abbreviate $F(f)(c)$ as $f \cdot c$. A category $\textbf{C}$ is said to be \emph{connected} if any two pairs of objects $C,\widebar{C}$ of $\textbf{C}$, can be reached by a path of arrows. By a path of arrows, we mean a finite sequence of objects and arrows as follows: 
\[
\xymatrix{C=C_1 \ar[r]^-{u_1} & C_{1.5} & \ar[l]_-{d_2} C_2 \ar[r]^-{u_2}& \cdots & \ar[l]_-{d_{n-1}} C_{n-1} \ar[r]^-{u_{n-1}}& C_{n-.5} & \ar[l]_-{d_n} C_n=\widebar{C}}
\]
The direction of the arrows is immaterial, as any path can be made alternating by adding in appropriately oriented identity arrows. A \emph{connected component} of $\textbf{C}$ is a maximal connected subcategory.  
\newline
\newline
For the category $\textbf{K}$ of open sets of $\mathbb{K}$, let $F: \textbf{K} \to \textbf{Sets}$ be the forgetful functor, which maps an object $U$ to the set of knots it contains and each arrow $U \to V$ of $\textbf{K}$ to an inclusion of sets.

\begin{proposition} \label{prop_connected_comp} Knots $K \in \mathbb{K}$ correspond exactly with the connected components of $\smallint_{\textbf{K}} F$.
\end{proposition}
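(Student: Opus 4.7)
The plan is to unpack the definition of $\smallint_{\textbf{K}} F$ and exhibit a bijection $K \mapsto [(U,K)]$ between points of $\mathbb{K}$ and connected components. First I would observe that an arrow $(U_1, K_1) \to (U_2, K_2)$ in $\smallint_{\textbf{K}} F$ corresponds to an inclusion $U_1 \subseteq U_2$ in $\textbf{K}$ together with the condition $F(\iota)(K_1) = K_2$. Because $F$ sends inclusions to inclusions of underlying sets, this condition forces $K_1 = K_2$. Consequently, every arrow — and hence every zigzag of arrows — in the category of elements preserves the knot coordinate, so if $(U, K)$ and $(U', K')$ lie in the same connected component then $K = K'$.

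For the other direction, given a fixed knot $K \in \mathbb{K}$, I would define $\Phi(K)$ to be the full subcategory of $\smallint_{\textbf{K}} F$ on objects $(U, K)$ with $K \in U$, and show that $\Phi(K)$ is connected. Given any two such objects $(U_1, K)$ and $(U_2, K)$, the intersection $U_1 \cap U_2$ is open in $\mathbb{K}$ and contains $K$, so it is an object of $\textbf{K}$; the two inclusions $U_1 \cap U_2 \hookrightarrow U_1$ and $U_1 \cap U_2 \hookrightarrow U_2$ each satisfy $F(\iota)(K) = K$, giving a two-step zigzag
\[
(U_1, K) \longleftarrow (U_1 \cap U_2, K) \longrightarrow (U_2, K)
\]
in $\smallint_{\textbf{K}} F$. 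This shows $\Phi(K)$ is contained in a single connected component.

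Combining the two observations, every object $(U, K)$ lies in the same connected component as any other $(U', K)$ (surjectivity of $K \mapsto $ component), while distinct knots give distinct components (injectivity). I would therefore conclude that $\Phi$ is a bijection between $\mathbb{K}$ and the set of connected components of $\smallint_{\textbf{K}} F$. There is no substantive obstacle here: the proof is essentially a verification, and the only nontrivial step is the intersection argument, which relies crucially on $\textbf{K}$ being closed under binary intersections (so that $U_1 \cap U_2$ is a legitimate object) and on $F$ acting by set-theoretic inclusion (so that arrows rigidly preserve the second coordinate). The same scheme will be the template for the analogous statement about $\textbf{VK}$ later in the paper, where the role of intersections will be played by the pullbacks constructed in Theorem \ref{lemma_vk_pullbacks}.
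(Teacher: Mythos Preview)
Your proposal is correct and follows essentially the same approach as the paper: both arguments use the intersection $U_1 \cap U_2$ to connect $(U_1,K)$ and $(U_2,K)$ via a zigzag, and both observe that arrows in $\smallint_{\textbf{K}} F$ preserve the knot coordinate because $F$ acts by set-theoretic inclusion. Your remark anticipating the analogous argument for $\textbf{VK}$ via pullbacks is also on target.
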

\begin{proof} For a knot $K \in \mathbb{K}$, there is a full subcategory of $\smallint_{\textbf{K}} F$ consisting of those objects $(U,K)$ where $U$ is a neighborhood of $K$. This subcategory is connected since the objects $(U,K)$ and $(V,K)$ are connected by the path $\xymatrix{(U,K) \ar@{=}[r] &(U,K) & \ar[l] (U \cap V,K) \ar[r] & (V,K)& (V,K) \ar@{=}[l]}$. Also, the subcategory is maximal, since all arrows in $\textbf{K}$ are inclusions. Thus, it is a connected component of $\smallint_{\textbf{K}} F$. On the other hand, if $(U,K)$ and $(V,J)$ are in the same connected component of $\smallint_{\textbf{K}} F$, then it must be that $K=J$, again since all arrows of $\textbf{K}$ are inclusions.  
\end{proof}
  
Now, apply this construction to $\textbf{VK}$. Let $F:\textbf{VK} \to \textbf{Sets}$ denote the forgetful functor mapping an object of $\textbf{VK}$ to its underlying set of knots and each arrow $\Psi$ to its underlying injective function.

\begin{definition}[Variable-space knot] \label{defn_varible_space_knot} A \emph{variable-space knot} is a connected component of $\smallint_{\textbf{VK}} F$. For a knot $K \in U \subseteq \mathbb{K}(\Sigma)$, let $\textbf{V}_K$ denote the variable-space knot satisfying $(U,K) \in \text{obj}(\textbf{V}_K)$. 
\end{definition}

The observant reader will have noticed a formal similarity between the definition of a connected category and the definition of stable Reidemeister equivalence (see Section \ref{sec_lang_diag}). Suppose that we have a sequence of stable Reidemeister moves: 
\small
\[
\xymatrix{(\Sigma_1,D_1) \ar[r]^-{\upsilon_1} & (\Sigma_{1.5},D_{1.5}) & \ar[l]_-{\delta_2} (\Sigma_2,D_2) \ar[r]^-{\upsilon_2} & \cdots &\ar[l]_-{\delta_{n-1}} (\Sigma_{n-1},D_{n-1}) \ar[r]^-{\upsilon_{n-1}} & (\Sigma_{n-.5},D_{n-.5}) & \ar[l]_-{\delta_n} (\Sigma_n,D_n), }
\]
\normalsize
where $\upsilon_1(D_1)=D_{1.5}=\delta_2(D_2)$, $\upsilon_2(D_2)=D_{2.5}=\delta_3(D_3)$,$\ldots$, $\upsilon_{n-1}(D_{n-1})=D_{n-.5}=\delta_n(D_n)$. Then the only possible change at each step is to vary the underlying surface. Choosing knots $K_i$ having diagrams $D_i$ appropriately and setting $U_i=\mathbb{K}(\Sigma_i)$, we then have a path of arrows in $\smallint_{\textbf{VK}}F$:
\small
\[
\xymatrix{(U_1,K_1) \ar[r]^-{\Upsilon_1} & (U_{1.5},K_{1.5}) & \ar[l]_-{\Delta_2} (U_2,K_2) \ar[r]^-{\Upsilon_2} & \cdots &\ar[l]_-{\Delta_{n-1}} (U_{n-1},K_{n-1}) \ar[r]^-{\Upsilon_{n-1}} & (U_{n-.5},K_{n-.5}) & \ar[l]_-{\Delta_n} (U_n,K_n),}
\]
\normalsize
where $\Upsilon_1(K_1)=K_{1.5}=\Delta_2(K_2)$, $\Upsilon_2(K_2)=K_{2.5}=\Delta_3(K_3)$,$\ldots$, $\Upsilon_{n-1}(K_{n-1})=K_{n-.5}=\Delta_n(K_n)$. A variable-space knot is thus a fixed knot together with all the ways it appears in the various thickened surfaces. Furthermore, when viewed as a component of the category of elements $\smallint_{\textbf{VK}} F$, a variable-space knot has the identical description as a classical knot. Hence, variable-space knots link together three seemingly different concepts: geometric knots in $\mathbb{R}^3$, virtual knot diagrams, and, as will be shown in the next subsection, the points of the virtual knot space.

\subsection{Virtual knots $\&$ variable-space knots} \label{sec_points} Now we prove that variable-space knots (Definition \ref{defn_varible_space_knot}) are in one-to-one correspondence with virtual knots (Definition \ref{defn_virtual_knot}). This will be done using the equivalence between the category $\underline{\text{Hom}}(\textbf{Sets},\text{Sh}(\textbf{VK}))$ of geometric morphisms $K:\textbf{Sets} \to \text{Sh}(\textbf{VK})$ and the category of continuous filtering functors $A:\textbf{VK} \to \textbf{Sets}$ (see \cite{mac_moer}, Sections VII.1, VII.2, VII.5, and VII.6). Recall that a covariant functor $A:\textbf{C} \to \textbf{Sets}$ is \emph{filtering} (\cite{mac_moer}, Section VII.6, Definition 2), if the following conditions are satisfied. 
\begin{enumerate}
\item[$(i)$] $A(C) \ne \varnothing$ for some object $C$ of $\textbf{C}$.
\item[$(ii)$] If $b \in A(B)$ and $c \in A(C)$, then there is a diagram $\xymatrix{B & \ar[l]_-{f} D \ar[r]^-{g} & C}$ in $\textbf{C}$ and a $d \in A(D)$ such that  $f \cdot d=b$ and $g \cdot d=c$.
\item[$(iii)$] If $f,g:C \to D$ satisfy $f \cdot c=g \cdot c$ for some $c \in A(C)$, then there is an arrow $e:B \to C$ and a $b \in A(B)$ such that $f\circ e=g \circ e$ and $e \cdot b=c$. The arrow $e$ is called an \emph{equalizer} of $f,g$.
\end{enumerate}
Furthermore, if $(\textbf{C},J_{\textbf{C}})$ is a site, a filtering functor $A:\textbf{C} \to \textbf{Sets}$ is \emph{continuous} if $A$ maps every covering sieve to a jointly epimorphic family of functions in $\textbf{Sets}$. Hence, if $S \in J_{\textbf{C}}(U)$ and $u \in A(U)$, there is an $(f:V \to U) \in S$ and $v \in A(V)$ such that $A(f)(v)=u$.
\newline
\newline
Suppose that $A:\textbf{VK} \to \textbf{Sets}$ is continuous and filtering. The first task is to show that $A$ determines a variable-space knot. The semantics of the word ``determines'' will be made clear in the following lemma and definition. First we set some notation. Suppose $A(U) \ne \varnothing$ for some object $U\subseteq \mathbb{K}(\Sigma)$ of $\textbf{VK}$. For every $u \in A(U)$, set:
\[
S_u=\{\Psi: V\to U|\Psi\cdot v=A(\Psi)(v)=u \text{ for some } v \in A(V)\}.
\]
Since $A$ maps covering sieves to epimorphic families, $S_u \ne \varnothing$ for every $u \in A(U)$. The next lemma shows that the images of all arrows in $S_u$ intersect in a single knot.

\begin{lemma} \label{lemma_determines} $\bigcap_{(\Psi:V \to U) \in S_u} \Psi(V)=\{K\}$ for some knot $K \in U$.
\end{lemma}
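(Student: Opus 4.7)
The plan is to translate the data $(A,u)$ into a completely prime filter of open subsets of $U$ and then invoke sobriety of $U$, which follows from $\mathbb{K}(\Sigma)$ being Hausdorff, to recover a unique point $K \in U$. Throughout I write $I_V : V \hookrightarrow U$ for the inclusion arrow of an open subset $V \subseteq U$, which is an arrow of $\textbf{VK}$.

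Set $\mathcal{F}_u := \{\, V \subseteq U \text{ open} \, | \, I_V \in S_u \,\}$. I will verify that $\mathcal{F}_u$ is a proper filter on $U$. For non-degeneracy, the empty sieve covers the initial object $\varnothing$ of $\textbf{VK}$, so continuity of $A$ forces $A(\varnothing) = \varnothing$ and hence $\varnothing \notin \mathcal{F}_u$. Upward closure is immediate by functoriality: if $V \subseteq V' \subseteq U$ and $v$ witnesses $V \in \mathcal{F}_u$, then $I_{V,V'} \cdot v$ witnesses $V' \in \mathcal{F}_u$. Closure under binary intersections is the main obstacle. Given $V_1, V_2 \in \mathcal{F}_u$ with witnesses $v_1, v_2$, filtering axiom (ii) produces an object $D$, an element $d \in A(D)$, and arrows $f : D \to V_1$, $g : D \to V_2$ with $f \cdot d = v_1$ and $g \cdot d = v_2$. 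Then $I_{V_1} \circ f$ and $I_{V_2} \circ g$ are parallel arrows $D \to U$ that both send $d$ to $u$, so axiom (iii) yields $e : B \to D$ and $b \in A(B)$ with $I_{V_1} \circ f \circ e = I_{V_2} \circ g \circ e$ and $e \cdot b = d$. This common arrow has image contained in $V_1 \cap V_2$, so restricting its codomain gives an arrow $\Phi' : B \to V_1 \cap V_2$ with $I_{V_1 \cap V_2} \circ \Phi' = I_{V_1} \circ f \circ e$; then $\Phi' \cdot b$ witnesses $V_1 \cap V_2 \in \mathcal{F}_u$.

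Complete primeness of $\mathcal{F}_u$ now follows from continuity: if $V = \bigcup_\alpha V_\alpha$ is in $\mathcal{F}_u$ with witness $v$, then $\{V_\alpha \hookrightarrow V\}_\alpha$ generates a covering sieve of $V$, so some $v_\alpha \in A(V_\alpha)$ maps to $v$, and composing with $I_V$ shows $V_\alpha \in \mathcal{F}_u$. I also identify the collections $\{\,\Psi(V') \, | \, (\Psi : V' \to U) \in S_u\,\}$ and $\mathcal{F}_u$ as families of subsets of $U$: by Lemma \ref{lemma_factors} any $\Psi \in S_u$ factors as $I_{\Psi(V')} \circ \Psi'$ with $\Psi'$ an isomorphism, so $\Psi(V') \in \mathcal{F}_u$, and conversely every $V \in \mathcal{F}_u$ is its own image under $I_V$.

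Finally, $\mathbb{K}(\Sigma)$ is Hausdorff and hence sober, and its open subspace $U$ inherits sobriety. In a sober space, completely prime filters of opens correspond bijectively to points via $K \mapsto \{\, V \text{ open} \, | \, K \in V\,\}$, so $\mathcal{F}_u$ is the open neighborhood filter of a unique $K \in U$. Since $U$ is Hausdorff, the intersection of all open neighborhoods of $K$ equals $\{K\}$, and therefore $\bigcap_{(\Psi : V' \to U) \in S_u} \Psi(V') = \bigcap_{V \in \mathcal{F}_u} V = \{K\}$, as required. The main difficulty is the binary-intersection clause, where one must chain together axioms (ii) and (iii) and then restrict the codomain to produce a witness living over $V_1 \cap V_2$.
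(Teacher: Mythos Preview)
Your proof is correct and takes a genuinely different route from the paper. The paper argues directly by contradiction in two stages: first, if the intersection were empty it builds for each $K \in U$ an arrow $\Phi_K$ missing $u$, and the sieve these generate covers $U$ yet cannot hit $u$, contradicting continuity; second, if two distinct knots $K_1,K_2$ survived in the intersection it uses the Hausdorff property to produce a three-set open cover $\{V_1,V_2,U\smallsetminus\{K_1,K_2\}\}$ of $U$ through which no $\Psi \in S_u$ can factor. Your argument instead packages the data $(A,u)$ into a completely prime filter on the frame of opens of $U$ and appeals to sobriety to recover a point; the filtering axioms (ii) and (iii) do the work of closure under binary meets, and continuity yields complete primeness. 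What your approach buys is a clean conceptual identification of why a point must exist---continuous filtering functors into $\textbf{Sets}$ are essentially points of a locale---whereas the paper's proof is more self-contained and avoids invoking the point--filter correspondence for sober spaces. Both rely on Hausdorffness of $\mathbb{K}(\Sigma)$ at the end, but the paper uses it to separate two candidate points explicitly, while you use it only to collapse the intersection of the neighborhood filter to a singleton.
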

\begin{proof} Suppose that $\bigcap_{\Psi \in S_u} \Psi(V)=\varnothing$. Then for every $K \in U$, there is a map $\Phi_K:V_K \to U$ such that $K \in \Phi_K(V_K)$ and $u$ is not in the image of $A(\Phi_K)$. Let $S$ be the sieve generated by these maps $\Phi_K$, i.e. the set of arrows of the form $\Phi_K \circ \Upsilon$ where $\Upsilon:W \to V_K$. Since $K \in \Phi_K(V_K)$ for all $K \in U$, $S \in J_{\textbf{VK}}(U)$. But $A$ does not map $S$ to an epimorphic family. Hence $\bigcap_{\Psi\in S_u} \Psi(V) \ne \varnothing$.
\newline
\newline
Now suppose that $K_1,K_2 \in \bigcap_{\Psi\in S_u} \Psi(V)$ and $K_1 \ne K_2$. Since $\mathbb{K}(\Sigma)$ is Hausdorff, there are disjoint open neighborhoods $V_1 \subseteq U$ of $K_1$ and $V_2 \subseteq U$ of $K_2$. Furthermore, since one-point sets are closed, $\{V_1,V_2, V_3=U\smallsetminus \{K_1,K_2\}\}$ is an open cover of $U$. Consider the covering sieve $S$ of $U$ generated by the three inclusion arrows $I_j:V_j \to U$ for $j=1,2,3$. Now, $A$ maps covering sieves to epimorphic families, so that there must be an arrow $\Psi:W \to U$ in $S$ such that $\Psi \cdot w=u$ for some $w \in A(W)$. Hence, $\Psi \in S_u$. Since $\Psi \in S$, it must factor through one of $V_1,V_2,V_3$. But $\Psi$ cannot factor through $V_3$, because its image must contain both $K_1$ and $K_2$. Likewise, $\Psi$ cannot factor through $V_1$ because its image must contain $K_2$. Similarly, $\Psi$ cannot factor through $V_2$. This is a contradiction, so we must have that $K_1=K_2$.
\end{proof}

\begin{definition} \label{defn_determines} For a continuous filtering functor $A:\textbf{VK} \to \textbf{Sets}$, $U \subseteq \mathbb{K}(\Sigma)$, $u \in A(U) \ne \varnothing$, the knot in $\bigcap_{\Psi \in S_u} \Psi(V)$ will be called the \emph{knot in $U$ determined by $u \in A(U)$}.
\end{definition}

Therefore, it must be shown that the knots determined by a continuous filtering functor form a connected component of $\smallint_{\textbf{VK}} F$. This is stated below, but the proof will be momentarily delayed. 

\begin{lemma} \label{lemma_meta_determines} Let $A:\textbf{VK} \to \textbf{Sets}$ be continuous and filtering. Let $\textbf{V}$ be the full subcategory of $\smallint_{\textbf{VK}} F$ consisting of objects $(U,K)$ such that $K$ is the knot in $U$ determined by some $u \in A(U)$. Then $\textbf{V}$ is a variable-space knot. 
\end{lemma}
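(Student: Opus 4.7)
The plan is to verify the two defining properties of a connected component: $\textbf{V}$ is connected, and it is closed under arrows of $\smallint_{\textbf{VK}} F$ (equivalently, maximal). Before either, I would establish a naturality statement for Definition \ref{defn_determines}: \emph{if $\Psi: U_1 \to U_2$ is an arrow of $\textbf{VK}$ and $u_1 \in A(U_1)$ determines the knot $K_1 \in U_1$, then $\Psi \cdot u_1 \in A(U_2)$ determines $\Psi(K_1)$.} This is immediate from Lemma \ref{lemma_determines}: $\Psi$ itself lies in $S_{\Psi \cdot u_1}$ (witnessed by $u_1$), so the knot $K_2$ determined by $\Psi \cdot u_1$ lies in $\Psi(U_1)$; and for every $\Phi \in S_{u_1}$ the composite $\Psi \circ \Phi$ lies in $S_{\Psi \cdot u_1}$, forcing $\Psi^{-1}(K_2) \in \bigcap_{\Phi \in S_{u_1}} \Phi(\mathrm{dom}\,\Phi) = \{K_1\}$ and hence $K_2 = \Psi(K_1)$.

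Connectedness and non-emptiness then follow from the filtering axioms without further work. Axiom (i) guarantees some $u \in A(U)$, which determines a knot and thus supplies an object of $\textbf{V}$. For any two objects $(U_1, K_1), (U_2, K_2) \in \textbf{V}$ with witnesses $u_i$ determining $K_i$, axiom (ii) yields a span $\xymatrix{U_1 & \ar[l]_-{\Phi_1} W \ar[r]^-{\Phi_2} & U_2}$ and $w \in A(W)$ with $\Phi_i \cdot w = u_i$. The naturality lemma applied to $\Phi_1$ and $\Phi_2$ shows that the knot $K_W$ in $W$ determined by $w$ satisfies $\Phi_i(K_W) = K_i$, so $(U_1,K_1) \leftarrow (W, K_W) \rightarrow (U_2, K_2)$ is a path of arrows inside $\textbf{V}$.

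For maximality, it suffices to show that any arrow $\Psi: (U_1, K_1) \to (U_2, K_2)$ of $\smallint_{\textbf{VK}} F$ with one endpoint in $\textbf{V}$ has both endpoints in $\textbf{V}$. The easy case is $(U_1, K_1) \in \textbf{V}$: if $u_1 \in A(U_1)$ determines $K_1$, the naturality lemma gives that $\Psi \cdot u_1$ determines $\Psi(K_1) = K_2$, so $(U_2, K_2) \in \textbf{V}$. The main obstacle is the reverse direction: given $(U_2, K_2) \in \textbf{V}$ with witness $u_2 \in A(U_2)$, manufacture some $u_1 \in A(U_1)$ determining $K_1$. This is where continuity of $A$ enters for the first time. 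By Lemma \ref{lemma_factors} factor $\Psi = I \circ \Psi'$, with $\Psi'$ an isomorphism onto an object $U_{1.5}$ and $I: U_{1.5} \to U_2$ an inclusion arrow. Since $U_2 \subseteq \mathbb{K}(\Sigma_2)$ is Hausdorff and $I(U_{1.5})$ is open in $U_2$, the pair $\{I(U_{1.5}),\, U_2 \setminus \{K_2\}\}$ generates a covering sieve in $J_{\textbf{VK}}(U_2)$. Continuity of $A$ then produces some $\Phi$ in this sieve with $\Phi \cdot v = u_2$ for some $v$. Since $\Phi \in S_{u_2}$ forces $K_2 \in \Phi(\mathrm{dom}\,\Phi)$, $\Phi$ cannot factor through $U_2 \setminus \{K_2\}$, so $\Phi = I \circ \Phi''$ for some $\Phi''$ into $U_{1.5}$. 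Setting $\widetilde{\Phi} = (\Psi')^{-1} \circ \Phi''$ gives $\Phi = \Psi \circ \widetilde{\Phi}$, and then $u_1 := \widetilde{\Phi} \cdot v$ satisfies $\Psi \cdot u_1 = u_2$. By naturality, the knot in $U_1$ determined by $u_1$ is mapped by $\Psi$ to $K_2 = \Psi(K_1)$; injectivity of $\Psi$ identifies it with $K_1$, so $(U_1, K_1) \in \textbf{V}$, completing the maximality step.
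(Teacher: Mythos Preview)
Your proof is correct and follows essentially the same strategy as the paper: the paper isolates your naturality statement as Lemma~\ref{lemma_determines_3}, your backward-lifting step as Lemma~\ref{lemma_determines_1}, and then proves connectedness via filtering axiom~(ii) exactly as you do. One point worth noting: your proof of the naturality statement is actually cleaner than the paper's, since you exploit injectivity of $\Psi$ directly and never invoke the equalizer axiom~(iii), whereas the paper's Lemma~\ref{lemma_determines_3} routes through~(iii); likewise your reduction of maximality to closure under single arrows is a tidy simplification of the paper's inductive walk along a zigzag.
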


\begin{proof} See Section \ref{sec_proof_meta_determines}
\end{proof}

Since virtual knots are the points of $\text{Sh}(\textbf{VK})$ considered equivalent up to natural isomorphism, it must also be proved that isomorphic continuous filtering functors determine the same variable-space knot. This is the next result, which should be compared with the proof of Proposition \ref{prop_convert_to_2_cat}.

\begin{lemma} \label{lemma_natural_imp_same_knots} If $A,B:\textbf{VK} \to \textbf{Sets}$ are continuous filtering functors and there is a natural transformation $\alpha:A \to B$, then $A$ and $B$ determine the same variable-space knot.
\end{lemma}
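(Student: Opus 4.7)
The plan is to argue that $\textbf{V}_A$ and $\textbf{V}_B$ (the full subcategories of $\smallint_{\textbf{VK}} F$ associated to $A$ and $B$ by Lemma \ref{lemma_meta_determines}) share an object, and then invoke maximality of connected components. By Lemma \ref{lemma_meta_determines}, each is a connected component of $\smallint_{\textbf{VK}}F$, so any two connected components that share an object are identical as subcategories.

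To exhibit a common object, pick (using condition $(i)$ for $A$) some $U \subseteq \mathbb{K}(\Sigma)$ with $A(U) \ne \varnothing$ and choose $u \in A(U)$. By Lemma \ref{lemma_determines}, $u$ determines a unique knot $K \in U$, so $(U,K)$ is an object of $\textbf{V}_A$. I claim that $\alpha_U(u) \in B(U)$ determines the same knot $K$, so that $(U,K)$ is also an object of $\textbf{V}_B$. Writing $S^A_u = \{\Psi : V \to U \mid A(\Psi)(v) = u \text{ for some } v \in A(V)\}$ and similarly $S^B_{\alpha_U(u)}$, the naturality square
\[
\xymatrix{
A(V) \ar[r]^-{A(\Psi)} \ar[d]_-{\alpha_V} & A(U) \ar[d]^-{\alpha_U} \\
B(V) \ar[r]_-{B(\Psi)} & B(U)
}
\]
shows that whenever $\Psi \in S^A_u$ with witness $v \in A(V)$, one has $B(\Psi)(\alpha_V(v)) = \alpha_U(A(\Psi)(v)) = \alpha_U(u)$, so $\Psi \in S^B_{\alpha_U(u)}$ with witness $\alpha_V(v)$. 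Hence $S^A_u \subseteq S^B_{\alpha_U(u)}$, and therefore
\[
\bigcap_{\Psi \in S^B_{\alpha_U(u)}} \Psi(V) \;\subseteq\; \bigcap_{\Psi \in S^A_u} \Psi(V) \;=\; \{K\}.
\]
By Lemma \ref{lemma_determines} applied to $B$, the left-hand intersection is a (nonempty) singleton, so it must equal $\{K\}$. Thus $\alpha_U(u)$ determines $K$, and $(U,K)$ lies in $\textbf{V}_B$ as claimed.

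Consequently $\textbf{V}_A$ and $\textbf{V}_B$ are connected components of $\smallint_{\textbf{VK}}F$ containing the common object $(U,K)$, so $\textbf{V}_A = \textbf{V}_B$; that is, $A$ and $B$ determine the same variable-space knot. The only potential obstacle is verifying that the naturality square really does give the required containment of sieves—but this is immediate once the witnesses $v \in A(V)$ are pushed forward to $\alpha_V(v) \in B(V)$, and no further use of continuity or the filtering conditions on $B$ beyond the conclusion of Lemma \ref{lemma_determines} is needed.
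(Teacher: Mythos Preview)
Your proof is correct and follows essentially the same approach as the paper: pick $u \in A(U)$ determining $K$, use the naturality square to show $S^A_u \subseteq S^B_{\alpha_U(u)}$, and conclude via Lemma \ref{lemma_determines} that $\alpha_U(u)$ determines the same knot $K$, so $(U,K)$ lies in both variable-space knots. The paper's proof is slightly terser but the argument is identical.
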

\begin{proof} Suppose that $K \in U$ is determined by $u \in A(U)$. By Lemma \ref{lemma_meta_determines}, it suffices to show that $(U,K)$ is an object of the connected component of $\smallint_{\textbf{VK}} F$ determined by $B$. Define: 
\begin{align*}
S_u^A &=\{\Phi:W \to U|\exists a \in A(W) \text{ s.t. } A(\Phi)(a)=u\}, \text{ and} \\
S_{\alpha_U(u)}^B &= \{\Phi:W \to U|\exists b \in B(W) \text{ s.t. } B(\Phi)(b)=\alpha_U(u)\}.
\end{align*}
Now, if $\Phi \in S_u^A$, the naturality of $\alpha$ implies that:
\[
B(\Phi)\circ \alpha_W (a)=\alpha_U \circ A(\Phi)(a)=\alpha_U(u).
\]
Then $\Phi \in S_{\alpha_U(u)}^B$ and it follows that $S_u^A \subseteq S_{\alpha_U(u)}^B$. Hence, $\bigcap_{\Phi \in S_{\alpha_U(u)}^B} \Phi(W) \subseteq \bigcap_{\Phi \in S_u^A} \Phi(W)=\{K\}$. Thus, $K 
\in U$ is determined by $\alpha_U(u) \in B(U)$.
\end{proof}

Conversely, every variable-space knot uniquely (up to isomorphism) determines a continuous filtering functor. We state the result but again delay its proof until after the main theorem is established.

\begin{lemma} \label{lemma_inverse_determines} Let $K \in O \subseteq \mathscr{K}(\Sigma)$ be a knot and $\textbf{V}_K$ its variable-space knot. 
\begin{enumerate}
\item There is a continuous filtering functor $A_K:\textbf{VK} \to \textbf{Sets}$ that determines $\textbf{V}_K$.
\item If $A: \textbf{VK} \to \textbf{Sets}$ determines $\textbf{V}_K$, then $A \cong A_K$.
\end{enumerate}
\end{lemma}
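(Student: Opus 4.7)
For part (1), I would start with the natural candidate $A_K(U) = \{K' \in U : (U,K') \in \mathrm{obj}(\textbf{V}_K)\}$, with $A_K(\Psi)(K') = \Psi(K')$. Functoriality and continuity with respect to $J_{\textbf{VK}}$ are immediate: any covering sieve $S$ hits $K' \in A_K(U)$ as $K' = \Psi(K_V)$ for some $(\Psi: V \to U) \in S$, placing $(V, K_V)$ in $\textbf{V}_K$ via the arrow $\Psi$, so $K_V \in A_K(V)$. Filtering axiom (i) follows from $K \in A_K(O)$. Axiom (ii) is the geometric heart: given $K'_j \in A_K(U_j)$ for $j=1,2$, both $(U_j, K'_j)$ lie in the connected subcategory $\textbf{V}_K$ and hence are linked by a zig-zag in $\smallint_{\textbf{VK}} F$, which by iterated application of Theorem \ref{lemma_vk_pullbacks} folds into a common lower bound $(D, K_D) \in \textbf{V}_K$ with arrows to each $U_j$.

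Axiom (iii) requires more care. Distinct orientation-preserving embeddings $\psi,\phi:\Sigma\to\Sigma'$ may agree on the $1$-dimensional projection of $K'$ to $\Sigma$ without agreeing on any open subsurface, so $\Psi(K') = \Phi(K')$ while the naive $A_K$ offers no equalizing $e: W \to U$. To handle this I would refine the definition to the colimit of representable Hom-sets,
\[
A_K(U) \;=\; \varinjlim_{(U_i,K_i)\in \textbf{V}_K^{\mathrm{op}}} \mathrm{Hom}_{\textbf{VK}}(U_i, U),
\]
whose elements are classes $[\Upsilon: U_i \to U]$ for $(U_i, K_i) \in \textbf{V}_K$. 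The knot determined by $[\Upsilon]$ in the sense of Definition \ref{defn_determines} is $\Upsilon(K_i)$, and these sweep out precisely $\mathrm{obj}(\textbf{V}_K)$, so $A_K$ still determines $\textbf{V}_K$. With this refinement, distinct arrows $\Psi \ne \Phi$ in $\textbf{VK}$ typically give distinct classes $[\Psi\circ\Upsilon] \ne [\Phi\circ\Upsilon]$, so axiom (iii) is only triggered when some $\alpha: (U_j, K_j) \to (U_i, K_i)$ in $\textbf{V}_K$ genuinely equalizes $\Psi\circ\Upsilon$ and $\Phi\circ\Upsilon$; then $W = U_j$, $E = \Upsilon\circ\alpha$, $K_W = [\mathrm{id}_{U_j}]$ furnish the required witness.

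For part (2), given any continuous filtering $A$ determining $\textbf{V}_K$, I would define $\alpha: A \to A_K$ on $u \in A(U)$ by using axiom (ii) of $A$ to produce a span $(f:W\to U, v \in A(W))$ with $A(f)(v)=u$ and setting $\alpha_U(u) = [f] \in A_K(U)$. Independence of the choice of $(f,v)$, and thus well-definedness, follows by comparing two witnesses through a common refinement, available because axioms (ii)--(iii) for $A$ together make $\smallint_{\textbf{VK}} A$ cofiltered. The inverse sends $[\Upsilon:U_i\to U]$ to $A(\Upsilon)(b)$ for the canonical $b \in A(U_i)$ determining $K_i$; mutual inverseness follows from Lemma \ref{lemma_determines} and the bijection between classes $[\Upsilon]$ and their determined knots. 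The main obstacle throughout is axiom (iii) for the colimit: showing that any zig-zag of identifications in $\textbf{V}_K$ certifying $A_K(\Psi)(u) = A_K(\Phi)(u)$ can be condensed to a single arrow of $\textbf{V}_K$. This is exactly where the pullback structure from Theorem \ref{lemma_vk_pullbacks} converts each cospan in the zig-zag into a span, and the factorization into isomorphism-then-inclusion from Lemma \ref{lemma_factors} makes iteratively shortening the zig-zag well-behaved.
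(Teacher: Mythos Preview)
Your refinement does not actually refine: the colimit $\varinjlim_{(U_i,K_i)\in \textbf{V}_K^{\mathrm{op}}} \mathrm{Hom}_{\textbf{VK}}(U_i, U)$ collapses back to the naive set $\{K'\in U : (U,K')\in\mathrm{obj}(\textbf{V}_K)\}$. The reason is that $\textbf{V}_K$ contains, for every representative $((U_i,K_i),\Upsilon)$, the arrow $\Upsilon:(U_i,K_i)\to(U,\Upsilon(K_i))$ itself; in $\textbf{V}_K^{\mathrm{op}}$ this identifies $((U_i,K_i),\Upsilon)$ with $((U,\Upsilon(K_i)),\mathrm{id}_U)$. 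So the assignment $[(U_i,K_i),\Upsilon]\mapsto \Upsilon(K_i)$ is a bijection onto the naive set, and your claim that ``distinct arrows $\Psi\ne\Phi$ typically give distinct classes $[\Psi\circ\Upsilon]\ne[\Phi\circ\Upsilon]$'' is false: the class depends only on $\Psi(\Upsilon(K_i))$. Your own counterexample (embeddings $\psi,\phi$ agreeing on the projected curve but on no open subsurface, e.g.\ $\psi=\mathrm{id}$ and $\phi(x,y)=(x,y+y^3)$ on $\mathbb{R}^2$) then kills axiom~(iii) outright: $A_K(\Psi)(K')=A_K(\Phi)(K')$ holds, yet any equalizing $e:W\to U$ in $\textbf{VK}$ would force the underlying surface embedding to land in the one-dimensional locus $\{\psi=\phi\}$, which is impossible.

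The paper circumvents this by indexing over a much smaller category: the $(O,K)$-\emph{neighborhoods} $\mathscr{O}_K$, i.e.\ open sets $O'\subseteq\mathbb{K}(\Sigma')$ for subsurfaces $\Sigma'\subseteq\Sigma$ containing $K$, with only the inclusion arrows between them. This is a genuine poset closed under pullbacks, so $\mathscr{O}_K^{\mathrm{op}}$ is filtered for free (no nontrivial parallel arrows to equalize). The resulting $A_K(U)$ consists of germs-at-$K$ of arrows $O'\to U$, and is strictly finer than your colimit: Example~\ref{example_A_K} exhibits $\Psi_2,\Psi_3$ with $\Psi_2(K)=\Psi_3(K)$ but $[\Psi_2]\ne[\Psi_3]$. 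With this $A_K$, whenever $A_K(\Psi)([\Gamma])=A_K(\Phi)([\Gamma])$ the very definition of $\sim$ hands you a smaller $(O,K)$-neighborhood on which $\Psi\circ\Gamma$ and $\Phi\circ\Gamma$ agree, furnishing the equalizer directly. Your sketch for part~(2) would then need to be rewritten against this finer $A_K$; the paper does this in Lemma~\ref{lemma_same_knots_imp_nat_iso} by using filtering of $A$ to span any $u\in A(U)$ to the fixed $o\in A(O)$, then factoring the leg into $O$ via Lemma~\ref{lemma_factors} to land in an $(O,K)$-neighborhood.
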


\begin{proof} See Section \ref{sec_proof_inverse_determines}.
\end{proof}

\begin{theorem}\label{thm_points_sh_vk}  Variable-space knots are in one-to-one correspondence with virtual knots.
\end{theorem}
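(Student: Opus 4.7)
The plan is to assemble the theorem directly from the three preceding lemmas, using the equivalence recalled at the start of Section~\ref{sec_points} between the category $\underline{\text{Hom}}(\textbf{Sets}, \text{Sh}(\textbf{VK}))$ of points and the category of continuous filtering functors $\textbf{VK} \to \textbf{Sets}$ (see \cite{mac_moer}, VII.5--VII.6). Under this equivalence, isomorphism classes of points correspond bijectively to isomorphism classes of continuous filtering functors, so the task reduces to establishing a bijection between iso classes of such functors and variable-space knots.

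I would then define a map $\Phi$ sending the iso class of a continuous filtering functor $A$ to the variable-space knot $\textbf{V}(A)$ produced by Lemma~\ref{lemma_meta_determines} --- namely the full subcategory of $\smallint_{\textbf{VK}} F$ on objects $(U,K)$ where $K$ is the knot in $U$ determined by some $u \in A(U)$ in the sense of Definition~\ref{defn_determines}. By Lemma~\ref{lemma_natural_imp_same_knots}, any natural transformation $A \to B$ (in particular, any isomorphism) forces $\textbf{V}(A) = \textbf{V}(B)$, so $\Phi$ descends to iso classes. For surjectivity, Lemma~\ref{lemma_inverse_determines}(1) provides, for an arbitrary variable-space knot $\textbf{V}_K$, a continuous filtering functor $A_K$ with $\Phi([A_K]) = \textbf{V}_K$. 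For injectivity, if $\Phi([A]) = \Phi([B]) = \textbf{V}_K$, then Lemma~\ref{lemma_inverse_determines}(2) yields $A \cong A_K \cong B$, whence $[A] = [B]$. Composing the bijection $\Phi$ with the topos-theoretic equivalence gives the desired correspondence between virtual knots and variable-space knots.

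The substance of the theorem therefore rests entirely on the three lemmas; the argument above is a bookkeeping step, and the main obstacle is located upstream. The hardest part will be Lemma~\ref{lemma_inverse_determines}, which demands an explicit continuous filtering functor $A_K$ from a given knot $K$. A natural construction takes $A_K(U)$ to be the set of ``germs'' at $K$ of arrows $\Psi \colon V \to U$ of $\textbf{VK}$ whose images contain $K$ --- equivalently, a filtered colimit indexed by the connected component $\textbf{V}_K$ of $\smallint_{\textbf{VK}} F$. Verifying the filtering conditions (ii) and (iii) will invoke the existence of pullbacks in $\textbf{VK}$ (Theorem~\ref{lemma_vk_pullbacks}) together with the factorisation of Lemma~\ref{lemma_factors}, while continuity against $J_{\textbf{VK}}$ will exploit the Hausdorffness of each $\mathbb{K}(\Sigma)$ in essentially the same way as the uniqueness argument in Lemma~\ref{lemma_determines}. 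Uniqueness of $A_K$ up to isomorphism should then follow by showing that any functor determining $\textbf{V}_K$ admits a natural comparison map to $A_K$ that is forced to be bijective on each $A(U)$ by the very definition of ``determines''.
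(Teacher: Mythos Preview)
Your proposal is correct and follows essentially the same route as the paper: reduce to isomorphism classes of continuous filtering functors via the equivalence with $\underline{\text{Hom}}(\textbf{Sets},\text{Sh}(\textbf{VK}))$, then use Lemma~\ref{lemma_meta_determines} to define the map, Lemma~\ref{lemma_natural_imp_same_knots} for well-definedness, Lemma~\ref{lemma_inverse_determines}(1) for surjectivity, and Lemma~\ref{lemma_inverse_determines}(2) for injectivity. Your sketch of how $A_K$ should be built (as germs of arrows near $K$, with filtering verified via pullbacks and the factorisation lemma) also matches the paper's construction in Section~\ref{sec_proof_inverse_determines}.
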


\begin{proof} 
Since the category of geometric morphisms $\underline{\text{Hom}}(\textbf{Sets},\text{Sh}(\textbf{VK}))$ is equivalent to the category of continuous filtering functors $A:\textbf{VK} \to \textbf{Sets}$, it suffices to show such a correspondence between variable-space knots and continuous filtering functors. By Lemma \ref{lemma_natural_imp_same_knots}, isomorphic continuous filtering functors determine the same variable-space knot. By Lemma \ref{lemma_inverse_determines}(1), this correspondence is surjective. To see that it is injective, suppose that $A,B:\textbf{VK} \to \textbf{Sets}$ determine the same variable-space knot $\textbf{V}$. If $(O,K) \in \text{obj}(\textbf{V})$, then Lemma \ref{lemma_same_knots_imp_nat_iso}(2) implies that $A \cong A_K \cong B$.
\end{proof}

\subsection{Proof of Lemma \ref{lemma_meta_determines}} \label{sec_proof_meta_determines} By Lemma \ref{lemma_determines}, each $u \in A(U)$ determines a knot $K \in U$. The next three lemmas describe how the arrows of $\textbf{VK}$ act on $u$ and $K$. Lemma \ref{lemma_meta_determines} is proved afterwards.

\begin{lemma} \label{lemma_determines_1} If $K \in U \subseteq \mathbb{K}(\Sigma)$ is the knot determined by $u \in A(U)$, $\Phi:W \to U$ is an arrow, and $\Phi(J)=K$ for some $J \in W$, then there is a $w \in W$ such that $\Phi\cdot w=u$.
\end{lemma}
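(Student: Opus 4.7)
The plan is to apply the continuity axiom of $A$ to a carefully chosen covering sieve of $U$ built from $\Phi$ and the complement of $\{K\}$. Since $\mathbb{K}(\Sigma)$ is Hausdorff, the set $V_2 := U \smallsetminus \{K\}$ is open in $\mathbb{K}(\Sigma)$ and so is an object of $\textbf{VK}$; the set-theoretic inclusion $I_2 : V_2 \to U$ arises from the identity $\Sigma \to \Sigma$ and is therefore an arrow of $\textbf{VK}$. Let $S$ be the sieve on $U$ generated by the two arrows $\Phi : W \to U$ and $I_2 : V_2 \to U$, i.e.\ the set of all arrows into $U$ that factor through either $\Phi$ or $I_2$.

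Because $\Phi(J) = K$ by hypothesis, $K \in \Phi(W)$, and hence the total image of $S$ is $\Phi(W) \cup V_2 = U$. Thus $S \in J_{\textbf{VK}}(U)$. Continuity of $A$ guarantees that $A$ sends $S$ to a jointly epimorphic family of functions, so there exist some $\Psi : X \to U$ in $S$ and some $x \in A(X)$ with $\Psi \cdot x = u$.

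It remains to rule out one of the two possible factorizations of $\Psi$. If $\Psi = \Phi \circ \Upsilon$ for some $\Upsilon : X \to W$, then setting $w := A(\Upsilon)(x) \in A(W)$ yields $\Phi \cdot w = \Psi \cdot x = u$, finishing the argument. Suppose instead that $\Psi = I_2 \circ \Upsilon'$ for some $\Upsilon' : X \to V_2$. Then $\Psi \in S_u$ by construction, yet $\Psi(X) \subseteq V_2 = U \smallsetminus \{K\}$, contradicting $K \in \bigcap_{\Phi' \in S_u} \Phi'(\mathrm{dom}(\Phi'))$ from Lemma \ref{lemma_determines}. The main obstacle is simply setting up the correct two-element cover; once that is in place the proof is a straightforward case analysis driven by the continuity of $A$ and the Hausdorffness of the knot space.
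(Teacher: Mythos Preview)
Your proof is correct and follows essentially the same strategy as the paper: build a covering sieve on $U$ from $\Phi$ together with open sets that miss $K$, use continuity of $A$ to hit $u$, and then use Lemma~\ref{lemma_determines} to rule out the factorization through the complement. The only difference is cosmetic: the paper phrases the cover as ``an open cover of $U$ containing $\Phi(W)$ such that $\Phi(W)$ is the only open set in the cover that contains $K$,'' whereas you make the minimal explicit choice $\{\Phi, I_2: U\smallsetminus\{K\}\hookrightarrow U\}$.
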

\begin{proof} Choose an open cover of $U$ containing $\Phi(W)$ such that $\Phi(W)$ is the only open set in the cover that contains $K$. Consider the sieve $S$ generated by the inclusion maps from elements of the open cover to $U$, where in place of the inclusion $\Phi(W) \to U$, we use the arrow $\Phi:W \to U$. Clearly, $S$ is a covering sieve. Since $A$ maps $S$ to an epimorphic family, there is some arrow $\Upsilon:Y \to U$ such that $\Upsilon \cdot y=u$ for some $y \in A(Y)$ and hence $\Upsilon \in S_u$. Now, $\Upsilon$ must factor through either $\Phi$ or one of the other elements of the open cover. Suppose that $\Upsilon$ factors through some inclusion $\Omega:O \to U$ in the open cover, so that $\Omega \ne \Phi$. Then $K \not\in \Upsilon(Y)$, which contradicts the fact that $\Upsilon \in S_u$ and $\bigcap_{\Psi \in S_u} \Psi(\text{dom}(\Psi))=\{K\}$. It follows that $\Upsilon$ must factor through $\Phi$. Then $\Upsilon=\Phi \circ \Psi$ for some map $\Psi:Y \to W$ and $\Phi \cdot (\Psi \cdot y)=\Upsilon \cdot y=u$. Setting $w=\Psi \cdot y \in A(W)$ proves the lemma. 
\end{proof}

\begin{lemma} \label{lemma_determines_2} Suppose that $\Phi:W \to U$ and $\Phi \cdot w=u$ for some $w \in A(W)$ and $u \in A(U)$. If $K \in U$ is the knot determined by $u$ and $L \in W$ is the knot determined by $w$, then $\Phi(L)=K$.
\end{lemma}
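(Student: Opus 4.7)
The plan is to exploit the fact that composing arrows in $S_w$ with $\Phi$ gives arrows in $S_u$, and then use the injectivity of the capitalized map $\Phi$ to push the defining intersection for $L$ forward through $\Phi$ and compare it to the defining intersection for $K$.

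First I would observe that for every arrow $\Upsilon:Y\to W$ in $S_w$, there is some $y\in A(Y)$ with $A(\Upsilon)(y)=w$, and consequently
\[
A(\Phi\circ\Upsilon)(y)=A(\Phi)\bigl(A(\Upsilon)(y)\bigr)=A(\Phi)(w)=\Phi\cdot w=u,
\]
so that $\Phi\circ\Upsilon\in S_u$. Hence $\{\Phi\circ\Upsilon:\Upsilon\in S_w\}\subseteq S_u$, and since an intersection over a larger indexing set is contained in the intersection over a smaller one,
\[
\{K\}=\bigcap_{\Psi\in S_u}\Psi(\mathrm{dom}(\Psi))\;\subseteq\;\bigcap_{\Upsilon\in S_w}(\Phi\circ\Upsilon)(Y)=\bigcap_{\Upsilon\in S_w}\Phi\bigl(\Upsilon(Y)\bigr).
\]

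Next I would invoke the fact that every arrow of $\textbf{VK}$ is capitalized from a smooth orientation-preserving embedding, so $\Phi$ is injective as a set map. For any injective function, taking images commutes with intersections, giving
\[
\bigcap_{\Upsilon\in S_w}\Phi\bigl(\Upsilon(Y)\bigr)=\Phi\!\left(\bigcap_{\Upsilon\in S_w}\Upsilon(Y)\right)=\Phi(\{L\})=\{\Phi(L)\},
\]
where the middle equality uses Lemma \ref{lemma_determines} applied to $w\in A(W)$, i.e.\ that $L$ is the unique knot in $\bigcap_{\Upsilon\in S_w}\Upsilon(Y)$. Combining this with the previous inclusion yields $\{K\}\subseteq\{\Phi(L)\}$, so $K=\Phi(L)$.

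There is no real obstacle here; the only point requiring any care is the commutation of the image under $\Phi$ with the intersection, which is exactly why injectivity of capitalized arrows (guaranteed by Notation \ref{notate_cap} and the standing assumption that our surfaces have empty boundary) is needed. Everything else is a direct unpacking of the definitions of $S_u$, $S_w$, and ``knot determined by'' (Definition \ref{defn_determines}).
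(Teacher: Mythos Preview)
Your proof is correct and follows essentially the same approach as the paper: both arguments observe that $\Phi\circ\Upsilon\in S_u$ for every $\Upsilon\in S_w$, then use injectivity of $\Phi$ to conclude. The paper phrases the final step via preimages (pulling $K$ back through $\Phi$ into $\bigcap_{\Upsilon\in S_w}\Upsilon(\widebar{W})=\{L\}$) whereas you push $\{L\}$ forward through $\Phi$ using that injective maps commute with intersections, but these are two sides of the same coin.
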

\begin{proof} Write $S_w=\{\Upsilon:\widebar{W} \to W|\exists \widebar{w} \in A(\widebar{W}) \text{ s.t. } \Upsilon \cdot \widebar{w}=w\}$. Thus, $\{L\}=\bigcap_{\Upsilon_i} \Upsilon(\widebar{W})$. By hypothesis, $\Phi \cdot \Upsilon \cdot \widebar{w}=u$. Then $K$ is in the image of $\Phi \circ \Upsilon$ for all $\Upsilon \in S_w$. Hence, the preimage of $K$ (by $\Phi$) is in the image of $\Upsilon$ for all $\Upsilon \in S_w$. Then $\Phi^{-1}(\{K\})=\{L\}$ and the result follows.
\end{proof}

\begin{lemma} \label{lemma_determines_3}Suppose that $\Phi:V \to U$, $v \in A(V)$, and $J \in V$ is the knot determined by $v$. Then $\Phi(J)$ is the knot determined by $\Phi \cdot v$. 
\end{lemma}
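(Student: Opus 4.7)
The plan is to derive Lemma \ref{lemma_determines_3} as an immediate corollary of Lemma \ref{lemma_determines_2}, which has already done the substantive work. Set $u = \Phi \cdot v \in A(U)$. Since $u \in A(U)$ and $A(U) \ne \varnothing$, Lemma \ref{lemma_determines} guarantees that there is a unique knot $K \in U$ determined by $u$, namely the single point of $\bigcap_{(\Psi: W \to U) \in S_u} \Psi(W)$. All that remains is to identify $K$ with $\Phi(J)$.

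For this identification, I would apply Lemma \ref{lemma_determines_2} directly, taking the source as $V$, the element as $v$, so that $\Phi \cdot v = u$ by construction, with $L = J$ playing the role of the knot determined by $v$ and $K$ the knot determined by $u$. Lemma \ref{lemma_determines_2} then yields $\Phi(J) = K$, which is exactly the assertion that $\Phi(J)$ is the knot determined by $\Phi \cdot v$.

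There is no real obstacle to surmount: the hypothesis $\Phi \cdot v = u$ holds by the very definition of $u$, and existence/uniqueness of the knot determined by an element in a nonempty $A(U)$ is already codified in Lemma \ref{lemma_determines}. Conceptually, Lemma \ref{lemma_determines_3} is the forward-symmetric version of Lemma \ref{lemma_determines_2}: whereas Lemma \ref{lemma_determines_2} begins with two elements already known to correspond to knots on either side of $\Phi$, Lemma \ref{lemma_determines_3} starts only with $v$ (and hence $J$), and observes that pushing $v$ forward by $\Phi$ automatically produces an element whose determined knot is $\Phi(J)$. One should also double check that there is no circularity, but inspection of the proof of Lemma \ref{lemma_determines_2} shows it relies only on Lemma \ref{lemma_determines} and the description of $S_w$, not on Lemma \ref{lemma_determines_3}.
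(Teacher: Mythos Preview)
Your proposal is correct and, in fact, more economical than the paper's argument. You observe that Lemma~\ref{lemma_determines_3} is an immediate corollary of Lemma~\ref{lemma_determines_2}: once $u=\Phi\cdot v$ is set and $K$ is the knot determined by $u$ (via Lemma~\ref{lemma_determines}), Lemma~\ref{lemma_determines_2} applied verbatim with $W=V$, $w=v$, $L=J$ gives $\Phi(J)=K$. Your check for circularity is also sound: the proof of Lemma~\ref{lemma_determines_2} uses only the definition of $S_w$ and the characterization of determined knots from Lemma~\ref{lemma_determines}, not Lemma~\ref{lemma_determines_3}.

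By contrast, the paper gives an independent argument that does not invoke Lemma~\ref{lemma_determines_2}. It works directly with $S_{\Phi\cdot v}$: for each $\Psi:\widebar{U}\to U$ in $S_{\Phi\cdot v}$ it uses the second filtering axiom to produce a span $\xymatrix{V & W \ar[l]_-{\Upsilon} \ar[r]^-{\Gamma} & \widebar{U}}$ with a common witness $w$, then the third filtering axiom (equalizers) to force $\Phi\circ\Upsilon\circ\Xi=\Psi\circ\Gamma\circ\Xi$ on some $X$, from which it deduces $\Phi(J)\in\Psi(\widebar{U})$ for every $\Psi\in S_{\Phi\cdot v}$. This is a self-contained verification from the filtering axioms, but it duplicates work already packaged in Lemma~\ref{lemma_determines_2}. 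Your route is shorter and makes the logical dependence among Lemmas~\ref{lemma_determines}--\ref{lemma_determines_3} clearer; the paper's route has the minor virtue of exhibiting explicitly how the second and third filtering axioms enter.
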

\begin{proof} Write $S_{\Phi \cdot v}=\{\Psi:\widebar{U} \to U|\exists \widebar{u} \in A(\widebar{U}) \text{ s.t. } \Psi \cdot \widebar{u}=\Phi \cdot v\}$. Since $A$ is filtered and $A(\widebar{U}),A(V)$ are nonempty, there is for each $\Psi \in S_{\Phi \cdot v}$ a diagram $\xymatrix{V & \ar[l]_-{\Upsilon} W \ar[r]^-{\Gamma} & \wbar{U}}$ such that $\Upsilon \cdot w=v$ and $\Gamma\cdot w=\widebar{u}$ for some $w \in A(W)$. Then for each $\Psi\in S_{\Phi \cdot v}$, we have the parallel arrows $\Phi \circ \Upsilon,\Psi \circ \Gamma:W \to U$. These arrows satisfy $\Phi \cdot \Upsilon \cdot w=\Phi \cdot v=\Psi \cdot \Gamma \cdot w$. By the third filtering property of $A$, there is an arrow $\Xi:X \to W$ such that $\Phi \circ \Upsilon \circ \Xi=\Psi \circ \Gamma \circ \Xi$ and $\Xi \cdot x=w$ for some $x \in A(X)$. Then $\Upsilon \cdot \Xi \cdot x=v$ and hence $J \in \Upsilon \circ \Xi(X)$. This implies $\Phi(J) \in \Phi \circ \Upsilon \circ \Xi(X)=\Psi \circ \Gamma \circ \Xi(X)$. Hence, $\Phi(J) \in \Psi(\widebar{U})$ for all $\Psi \in S_{\Phi \cdot v}$ and $\Phi(J)$ is the knot determined by $\Phi \cdot v$.
\end{proof}

\begin{proof}[Proof of Lemma \ref{lemma_meta_determines}] Choose some object $U$ of $\textbf{VK}$ such that $A(U) \ne \varnothing$. For $u \in A(U)$, let $K\in U$ be the knot determined by $u$. Let $J \in V$ be another knot in the same connected component of $\smallint_{\textbf{VK}} F$. Then there is a path of arrows in $\smallint_{\textbf{VK}} F$ connecting $(U,K)=(U_1,K_1)$ and $(V,J)=(U_{n},K_{n})$:
\small
\[
\xymatrix{
(U,K)=(U_1,K_1) \ar[d]_-{\Upsilon_1} & (U_2,K_2) \ar[dl]_-{\Delta_2} \ar[d]^-{\Upsilon_2} & \ar[dl]_-{\Delta_3} \cdots & (U_{n-1},K_{n-1}) \ar[dl]_-{\Delta_{n-1}} \ar[d]^-{\Upsilon_{n-1}} & (U_n,K_n)=(V,J) \ar[dl]_-{\Delta_{n}} \\
(U_{1.5},K_{1.5}) & (U_{2.5},V_{2.5}) & \cdots & (U_{n-.5},K_{n-.5}) & 
}
\]
\normalsize 
Set $u_1=u$. By Lemma \ref{lemma_determines_3}, $K_{1.5}=\Upsilon_1(K_1)$ is the knot determined by $\Upsilon_1 \cdot u_1$. By Lemma \ref{lemma_determines_1}, there is a $u_2 \in A(U_2)$ such that $\Delta_2 \cdot u_2=\Upsilon_1 \cdot u_1$ and $K_2$ is the knot determined by $u_2 \in U_2$. Continuing inductively, it follows that for all $2 \le i \le n$ that there is a $u_{i} \in A(U_{i})$ such that $\Delta_i\cdot u_i=\Upsilon_{i-1} \cdot u_{i-1}$ and $K_{i} \in U_i$ is the knot determined by $u_i$. Thus, $J \in V$ is a knot determined by $A$.
\newline
\newline
Lastly, it must be shown that if $K \in U$ is determined $u \in A(U)$ and $J \in V$ is determined by $v \in A(V)$, then $(U,K)$ and $(V,J)$ are in the same connected component of $\smallint_{\textbf{VK}} F$. Since $A$ is filtered, there is a diagram $\xymatrix{U & \ar[l]_-{\Phi} W \ar[r]^-{\Psi} & V}$ such that for some $w \in A(W)$, $\Phi \cdot w =u$ and $\Psi \cdot w =v$. Then if $L \in W$ is the knot determined by $w \in A(W)$, Lemma \ref{lemma_determines_3} implies that $\Phi(L)=K$ and $\Psi(L)=J$. Thus, $(U,K),(V,J)$ are in the same connected component of $\smallint_{\textbf{VK}} F$.
\end{proof}

\subsection{Proof of Lemma \ref{lemma_inverse_determines}} \label{sec_proof_inverse_determines} Fix a knot $K \in O \subseteq \mathbb{K}(\Sigma)$ and let $\textbf{V}_K$ be the connected component of $\smallint_{\textbf{VK}} F$ containing $(O,K)$. We will show there is a continuous filtering functor $A_K:\textbf{VK} \to \textbf{Sets}$ that determines $\textbf{V}_K$. Lemmas \ref{lemma_determines} , \ref{lemma_determines_1} , \ref{lemma_determines_2}, and \ref{lemma_determines_3} provide some insight into how $A_K$ must be defined. Suppose that $\Psi_1,\Psi_2:O \to U$ are arrows, $\Psi_1(K)$ is determined by some $u_1 \in A_K(U)$, and $\Psi_2(K)$ determined by some $u_2 \in A_K(U)$. If $u_2=u_1$, then $\Psi_1(K)=\Psi_2(K)$ and there is an $o \in A_K(O)$ such that $\Psi_1 \cdot o=u_1=u_2=\Psi_2 \cdot o$. The third filtering property applied to $A_K$ means that $\Psi_1$ and $\Psi_2$ can be equalized. In other words, they agree on some open set $O' \subseteq O$. Each $u \in A_K(U)$ therefore represents a set of arrows $ \Psi: O \to U$ that agree on some (generalized) neighborhood $O'$ of $K$. Intuitively, $A_K(U)$ is the set of all the ways $K$ gets mapped into $U$, with two maps equivalent if they agree on a neighborhood of $K$. We make this precise with a definition.

\begin{definition}[$(O,K)$-neighborhood] Suppose that $O' \subseteq \mathbb{K}(\Sigma')$ and $\Sigma' \subseteq \Sigma$ is a subsurface. Suppose that $K:\mathbb{S}^1 \to \Sigma \times \mathbb{R}$ factors as $\xymatrix{\mathbb{S}^1 \ar[r]^-{K'} & \Sigma' \times \mathbb{R} \ar[r]^{\iota \times \text{id}} & \Sigma \times \mathbb{R}}$, where $\iota:\Sigma' \to \Sigma$ is inclusion and $K' \in O'$. Then $O'$ is called and \emph{$(O,K)$-neighborhood} of $K$. By abuse of notation, the knot $K' \in O'$ will also be denoted by $K$. Let $\mathscr{O}_K$ be the set of $(O,K)$-neighborhoods of $K$. By Theorem \ref{lemma_vk_pullbacks}, $\mathscr{O}_K$ is closed under pullbacks: if $O_1',O_2' \in \mathscr{O}_K$ with inclusions $I_1:O_1' \to O$ and $I_2:O_2' \to O$, then the pullback $O_1' \times_{O} O_2'$ of the inclusions is also an $(O,K)$-neighborhood. 
\end{definition}

Now, let $U$ be any object of $\textbf{VK}$ and let $O_1',O_2' \in \mathscr{O}_K$, where $O_1' \subseteq \mathbb{K}(\Sigma_1')$, $O_2' \subseteq \mathbb{K}(\Sigma_2')$. Suppose $\Phi_1:O_1' \to U$ and $\Phi_2:O_2' \to U$ are arrows. Write $\Phi_1 \sim \Phi_2$ if there is an $O_3' \in \mathscr{O}_K$ and inclusions $I_1:O_3' \to O_1'$, $I_2:O_3' \to O_2'$ such that $\Phi_1 \circ I_1=\Phi_2 \circ I_2$, where $O_3' \subseteq \mathbb{K}(\Sigma_3')$ and $\Sigma_3' \subseteq \Sigma_1' \cap \Sigma_2'$. Note $\sim$ is an equivalence relation. We will write $\sim$-equivalence class of $\Phi:O' \to U$ for $O' \in \mathscr{O}_K$ by $[\Phi]$. 
\newline
\newline
The functor $A_K: \textbf{VK} \to \textbf{Sets}$ can now be defined as follows. For $U$ an object of $\textbf{VK}$, define $A_K(U)$ to be the set of $\sim$-equivalence classes $[\Phi]$, $\Phi:O'\to U$ for $O' \in \mathscr{O}_K$. For an arrow $\Psi:U \to V$, define $A_K(\Psi):A_K(U) \to A_K(V)$ by $A_K(\Psi)([\Phi])=[\Psi \circ \Phi]$. A simple example is given below.

\begin{example} \label{example_A_K} Let $\Sigma$ be the open unit disc in $\mathbb{C}$ and $K:\mathbb{S}^1\to\Sigma \times\mathbb{R}$ be given by $K(e^{i \theta})=(\tfrac{1}{2} e^{i \theta},0)$. A diagram of $K$ on $\Sigma$ is depicted in Figure \ref{fig_example_A_K}. Take $O$ be the path component of $\mathbb{K}(\Sigma)$ containing $K$. Let $\Xi=\mathbb{S}^1 \times \mathbb{S}^1$ and set $U=\mathbb{K}(\Xi)$. Let $\Sigma'$ be the open annulus $\{re^{i \theta}|\tfrac{1}{4} <r<1\}$ and $O'$ be the path component of $\mathbb{K}(\Sigma')$ containing $K$. Then $O'$ is an $(O,K)$-neighborhood. Consider the maps $\Psi_1:O \to U$ and $\Psi_2 ,\Psi_3,\Psi_4:O' \to U$ depicted schematically in Figure \ref{fig_example_A_K}. Clearly, $[\Psi_1] \neq [\Psi_4]$ in $A_K(U)$, since $\Psi_1(O) \cap \Psi_4(O)=\varnothing$. For $\Psi_2=\psi_2 \times \text{id}:\mathbb{K}(\Sigma') \to \mathbb{K}(\Xi)$, take $\psi_2(re^{i \theta})=(e^{i r\pi/8},e^{i \theta})$. For $\Psi_3=\psi_3 \times \text{id}:\mathbb{K}(\Sigma') \to \mathbb{K}(\Xi)$, let $\psi_3$ be the composition of $\psi_2$ with a Dehn twist in the image of the annulus $\{re^{i \theta}|\tfrac{1}{2} \le r\le \tfrac{3}{4}\}$. This annulus is painted cyan in Figure \ref{fig_example_A_K}. Note that $[\Psi_2],[\Psi_3]$ are both distinct from $[\Psi_1]$ and $[\Psi_4]$ in $A_K(U)$. Furthermore, $[\Psi_2]\ne [\Psi_3]$, even though $\Psi_2(K)=\Psi_3(K)$. Perturbing $K$ an arbitrarily small amount into the cyan region forces $\Psi_2$ and $\Psi_3$ to disagree.
\end{example}

\begin{figure}[htb]
\def\svgwidth{4in}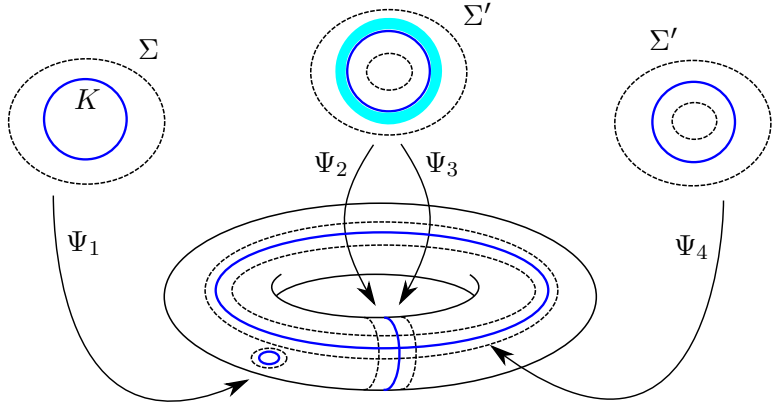
\caption{The arrows $\Psi_1,\Psi_2,\Psi_3,\Psi_4$  are not $\sim$-equivalent. See Example \ref{example_A_K}.} \label{fig_example_A_K}
\end{figure}

\begin{lemma} \label{lemma_A_K_filtering} The functor $A_K: \textbf{VK} \to \textbf{Sets}$ is filtering and continuous.
\end{lemma}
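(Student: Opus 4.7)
The plan is to verify directly that $A_K$ satisfies the three filtering conditions (i)--(iii) together with continuity. The two main ingredients will be closure of $\mathscr{O}_K$ under pullbacks in $\textbf{VK}$ and the openness plus injectivity of capitalized arrows, which lets one restrict to arbitrarily small $(O,K)$-neighborhoods of $K$. For (i), the class $[\mathrm{id}_O] \in A_K(O)$ suffices. For (ii), given $[\Phi_i] \in A_K(U_i)$ represented by $\Phi_i : O_i' \to U_i$ with $O_i' \in \mathscr{O}_K$, I would form the pullback $O_3' = O_1' \times_O O_2' \in \mathscr{O}_K$ with canonical inclusions $I_j : O_3' \to O_j'$, then set $D = O_3'$, $f = \Phi_1 \circ I_1$, $g = \Phi_2 \circ I_2$, and $d = [\mathrm{id}_{O_3'}]$; the identity $f \cdot d = [\Phi_1 \circ I_1] = [\Phi_1]$ holds because $I_1$ is an inclusion of $(O,K)$-neighborhoods, which is already built into the relation $\sim$.

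For the equalizer condition (iii), suppose $\Psi_1 \cdot [\Phi] = \Psi_2 \cdot [\Phi]$ for parallel $\Psi_1, \Psi_2 : U \to V$ and $\Phi : O' \to U$. Unpacking $\sim$ produces an $(O,K)$-neighborhood $O''$ and an inclusion $I : O'' \to O'$ such that $\Psi_1 \circ \Phi \circ I = \Psi_2 \circ \Phi \circ I$ (when both maps have the common domain $O'$, the two ``witness'' inclusions in the definition of $\sim$ can be merged into one by passing to their pullback in $\mathscr{O}_K$). Then $e := \Phi \circ I : O'' \to U$ together with $b := [\mathrm{id}_{O''}] \in A_K(O'')$ equalizes $\Psi_1, \Psi_2$ and satisfies $e \cdot b = [\Phi \circ I] = [\Phi]$, as required.

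Continuity is the main technical step. Given a covering sieve $S \in J_{\textbf{VK}}(U)$ and $[\Phi] \in A_K(U)$ with $\Phi : O' \to U$, pick an arrow $(\Psi : V \to U) \in S$ whose image contains $\Phi(K)$; such a $\Psi$ exists because $S$ covers $U$. Writing the underlying surface embeddings as $\phi : \Sigma_{O'} \to \Sigma_U$ and $\psi : \Sigma_V \to \Sigma_U$, the open subsurface $\Sigma'' := \phi^{-1}(\psi(\Sigma_V)) \subseteq \Sigma_{O'}$ contains the image of $K$, and $\chi := \psi^{-1} \circ \phi|_{\Sigma''} : \Sigma'' \to \Sigma_V$ is a well-defined orientation preserving open embedding by the injectivity of $\psi$. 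Taking $O'' \subseteq \mathbb{K}(\Sigma'')$ to be an appropriate $(O,K)$-neighborhood and letting $X : \mathbb{K}(\Sigma'') \to \mathbb{K}(\Sigma_V)$ be the arrow capitalized from $\chi$, the restriction $\Gamma := X|_{O''} : O'' \to V$ satisfies $\Psi \circ \Gamma = \Phi \circ I_{O''}$, where $I_{O''} : O'' \to O'$ is the inclusion, so $A_K(\Psi)([\Gamma]) = [\Phi \circ I_{O''}] = [\Phi]$. I expect the main obstacle to be exactly this extraction of a subsurface on which $\Phi$ factors through the injective capitalized map $\Psi$; once that factorization is in hand, the remaining verifications follow formally from the behaviour of pullbacks and inclusions in $\textbf{VK}$.
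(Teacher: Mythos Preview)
Your proposal is correct and follows essentially the same argument as the paper: property (i) via the identity on an $(O,K)$-neighborhood, property (ii) via the pullback $O_1' \times_O O_2'$ in $\mathscr{O}_K$, property (iii) by unpacking $\sim$ to a single inclusion on which the compositions agree, and continuity by restricting to a subsurface $\Sigma''$ on which $\phi$ factors through the injective $\psi$. Your parenthetical about merging the two witness inclusions via a pullback is in fact unnecessary here---when $O_1'=O_2'$ both inclusions are capitalized from the same subsurface inclusion and hence coincide---but this does no harm.
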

\begin{proof} Property $(i)$ is satisfied because $A_K(W') \ne \varnothing$ for all $W' \in \mathscr{O}_K$. Now suppose there are parallel arrows $\Phi_1,\Phi_2:U \to V$ such that $A(\Phi_1)([\Psi])=A(\Phi_2)([\Psi])$ for some $\Psi:W' \to U$ with $W' \in \mathscr{O}_K$. Then $[\Phi_1 \circ \Psi]=[\Phi_2 \circ \Psi]$ and hence there is $(O,K)$-neighborhood $O'$ such that $\Phi_1 \circ \Psi$ and $\Phi_2 \circ \Psi$ have the same restriction to $O'$. Let $I:O' \to W'$ denote this inclusion. Then $\Phi_1 \circ \Psi \circ I=\Phi_2 \circ \Psi \circ I$, so that $\Psi \circ I$ equalizes $\Phi_1,\Phi_2$. Denote by $\Lambda:O' \to O'$ the identity map. Then $A_K(\Psi\circ I)([\Lambda])=[\Psi \circ I \circ \Lambda]=[\Psi]$, so that $[\Psi]$ is in the image of $A_K(\Psi \circ I)$.  This proves $(ii)$.
\newline
\newline
For $(iii)$, suppose that $A_K(U_1),A_K(U_2)$ are nonempty and let $[\Psi_1] \in A_K(U_1)$, $[\Psi_2] \in A_K(U_2)$ for $\Psi_1:O_1' \to U_1$, $\Psi_2:O_2' \to U_2 $ where $O_1',O_2' \in \mathscr{O}_K$. Let $O_1' \times_O O_2' \in \mathscr{O}_K$ be the pullback of the inclusions  $O_1' \to O$, $O_2' \to O$. Let $I_1: O_1' \times_O O_2' \to O_1'$, $I_2: O_1' \times_O O_2' \to O_2'$ denote the inclusion maps and $\Lambda:O_1' \times_O O_2' \to O_1' \times_O O_2'$ the identity. Then $[\Lambda] \in A_K(O_1' \times_O O_2')$ and we have a diagram:
\[
\xymatrix{U_1 & \ar[l]_-{\Psi_1 \circ I_1} O_1' \times_o O_2' \ar[r]^-{\Psi_2 \circ I_2} & U_2},
\]
such that $A_K(\Psi_1 \circ I_1)([\Lambda])=[\Psi_1]$ and $A_K(\Psi_2 \circ I_2)([\Lambda])=[\Psi_2]$. This proves $(iii)$.
\newline
\newline 
For the continuity property, suppose that $S=\{\Psi_i=\psi_i \times \text{id}:U_i \to U\}$ is a covering sieve for $U$ where $U_i \subseteq \mathbb{K}(\Sigma_i)$. To prove that this is mapped to an epimorphic family, it must be shown that every $[\Phi] \in A_K(U)$ is in the image of some $A_K(\Psi_i)$. Write $\Phi=\phi \times \text{id}:O' \to U$ for $O' \in \mathscr{O}_K$, $O' \subseteq \mathbb{K}(\Sigma')$. Since $S$ is a covering sieve, there is a $\Psi_i\in S$ such that $\Phi(K) \in \Psi_i(U_i)$. Then we may choose a supporting surface $\Sigma'' \subseteq \Sigma$ of $K$ and an $(O,K)$-neighborhood $O'' \subseteq \mathbb{K}(\Sigma'')$ such that $\phi(\Sigma'') \subseteq \psi_i(\Sigma_i)$ and $\Phi(O'') \subseteq \Psi_i(U_i)$. Let $\psi_i^{-1}:\psi_i(\Sigma_i) \to \Sigma_i$ be inverse to the diffeomorphism $\psi_i: \Sigma_i \to \psi_i(\Sigma_i)$. Set $\Upsilon_i=(\psi_i^{-1} \circ \phi|_{\Sigma''}) \times \text{id} : O'' \to U_i$. Then $A(\Psi_i)([\Upsilon_i])=[\phi|_{\Sigma''} \times \text{id}]=[\Phi]$.
\end{proof}

\begin{lemma} \label{lemma_A_K_determines_V_K} The continuous filtering functor $A_K$ determines the variable-space knot $\textbf{V}_K$.
\end{lemma}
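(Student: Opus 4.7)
The plan is to apply Lemma \ref{lemma_meta_determines} to the continuous filtering functor $A_K$: it produces a variable-space knot $\textbf{V}$ consisting of exactly those pairs $(U,J) \in \smallint_{\textbf{VK}} F$ for which $J$ is the knot in $U$ determined by some element of $A_K(U)$. Since $\textbf{V}_K$ is by definition the unique connected component of $\smallint_{\textbf{VK}} F$ containing $(O,K)$, it suffices to exhibit an element $u \in A_K(O)$ whose determined knot in $O$ is $K$; then both $\textbf{V}$ and $\textbf{V}_K$ contain $(O,K)$ and must coincide as connected components.

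The natural candidate is $u = [\text{id}_O] \in A_K(O)$. Here I use that $O$ is itself an $(O,K)$-neighborhood (take $\Sigma' = \Sigma$ and $K' = K$), so the identity arrow $\text{id}_O : O \to O$ represents a well-defined class in $A_K(O)$. To verify that $K$ is the knot in $O$ determined by $[\text{id}_O]$, Definition \ref{defn_determines} together with Lemma \ref{lemma_determines} reduces the task to showing $K \in \Psi(\text{dom}(\Psi))$ for every $\Psi \in S_{[\text{id}_O]}$, since the intersection over such $\Psi$ is already known to be a singleton.

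Fix then $\Psi : V \to O$ in $S_{[\text{id}_O]}$, so that $A_K(\Psi)([\Phi]) = [\Psi \circ \Phi] = [\text{id}_O]$ in $A_K(O)$ for some $\Phi : O' \to V$ with $O' \in \mathscr{O}_K$. Unwinding the definition of the relation $\sim$, there exist an $(O,K)$-neighborhood $O'' \subseteq \mathbb{K}(\Sigma'')$ with $\Sigma'' \subseteq \Sigma'$ and inclusions $I_1 : O'' \to O'$, $I_2 : O'' \to O$ such that $\Psi \circ \Phi \circ I_1 = I_2$. Because $K \in O''$ and, under the standing abuse of notation, every inclusion in this chain sends $K$ to $K$, evaluating the identity at $K$ yields $\Psi(\Phi(K)) = I_2(K) = K$, so $K \in \Psi(V)$, as required.

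The only real subtlety, and the one potential obstacle, is to track carefully that the equality $[\Psi \circ \Phi] = [\text{id}_O]$ forces the two composites to agree on an $(O,K)$-neighborhood that \emph{contains} $K$, rather than on some arbitrary open subset of $O$ disjoint from $K$. This is built directly into the definition of $\mathscr{O}_K$ and the relation $\sim$, so the argument amounts to the bookkeeping above; once it is in place, Lemma \ref{lemma_meta_determines} together with the uniqueness of connected components closes the proof.
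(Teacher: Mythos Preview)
Your proof is correct and follows essentially the same approach as the paper: both take the identity arrow $\Lambda = \text{id}_O$ as the distinguished element of $A_K(O)$, unwind the relation $\sim$ to see that every $\Psi \in S_{[\Lambda]}$ has $K$ in its image, and conclude via Lemma~\ref{lemma_determines} that $K$ is the knot determined by $[\Lambda]$. Your write-up is slightly more explicit than the paper's in invoking Lemma~\ref{lemma_meta_determines} and the uniqueness of connected components to finish, but the argument is the same.
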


\begin{proof} Let $\textbf{V}$ be the variable-space knot determined by $A_K$. Let $\Lambda:O \to O$ be the identity map. We will show that $K$ is the knot determined by $[\Lambda] \in A_K(O)$. Suppose is in $\Psi:V \to O$ in $S_{[\Lambda]}$. Then $A_K(\Psi)([\Phi])=[\Lambda]$ for some $[\Phi] \in A_K(V)$, where $\Phi:W' \to V$ and $W'$ is an $(O,K)$-neighborhood. It must be proved that $K \in \Psi(V)$. Since $A_K(\Psi)([\Phi])=[\Lambda]$, we have $[\Psi \circ \Phi]=[\Lambda]$. Then there is an $(O,K)$-neighborhood $O'$ and inclusions $I_1:O' \to W'$, $I_2:O' \to O$ such that $\Psi \circ \Phi \circ I_1=\Lambda \circ I_2$. Then since $K \in I_2(O')$, we have $K \in \Psi(V)$. It follows that $\{K\}=\bigcap_{\Psi \in S_{[\Lambda]}} \Psi(W')$.
\end{proof}

Lemmas \ref{lemma_A_K_filtering} and \ref{lemma_A_K_determines_V_K} combine to prove Lemma \ref{lemma_inverse_determines} (1). The following result proves Lemma \ref{lemma_inverse_determines} (2). It has been restated here in a more explicit form that will be needed later.

\begin{lemma} \label{lemma_same_knots_imp_nat_iso} Let $A:\textbf{VK} \to \textbf{Sets}$ be a continuous filtering functor determining the variable-space knot $\textbf{V}_K$. If $o \in A(O)$ determines $K$, there is a natural isomorphism $\alpha^o:A \to A_K$ satisfying:
\begin{enumerate}
\item $\alpha^o_O(o)=[\Lambda]$, where $\Lambda:O \to O$ is the identity map, and
\item for any $u \in A(U)$,  $\alpha^o_U(u) \in A_K(U)$ determines the same knot $J$ in $U$ as $u\in A(U)$.
\end{enumerate}
\end{lemma}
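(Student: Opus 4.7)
The plan is to construct the natural transformation $\alpha^o$ explicitly from the filtering properties of $A$ and Lemma~\ref{lemma_factors}, and then verify that it is a natural isomorphism satisfying (1) and (2). For each $u \in A(U)$, I apply filtering property (ii) to $o \in A(O)$ and $u \in A(U)$ to obtain an object $W$, arrows $\Phi\colon W\to O$ and $\Psi\colon W\to U$, and an element $w \in A(W)$ with $\Phi \cdot w = o$ and $\Psi \cdot w = u$. Factoring $\Phi = I \circ \Phi'$ via Lemma~\ref{lemma_factors}, with $\Phi' \colon W \to W'$ an isomorphism and $I \colon W' \to O$ the inclusion of the image subsurface, Lemma~\ref{lemma_determines_2} forces $K \in W'$ (so $W' \in \mathscr{O}_K$), since $I \cdot (\Phi' \cdot w) = o$ determines $K$ and $I$ is an inclusion. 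Set $\alpha^o_U(u) = [\Psi \circ (\Phi')^{-1}]$. Condition (1) follows from the choice $(W, \Phi, \Psi, w) = (O, \Lambda, \Lambda, o)$; for condition (2), Lemma~\ref{lemma_determines_3} identifies the knot determined by $u$ in $U$ as $\Psi(L)$, where $L = (\Phi')^{-1}(K)$ by Lemma~\ref{lemma_determines_2}, while on the $A_K$ side I verify (paralleling Lemma~\ref{lemma_A_K_determines_V_K}) that $[\Lambda_{W'}]$ determines $K$ in $W'$ for each $W' \in \mathscr{O}_K$, and then apply Lemma~\ref{lemma_determines_3} to $A_K$ to identify the knot determined by $[\Psi \circ (\Phi')^{-1}]$ as the same $\Psi((\Phi')^{-1}(K))$.

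The engine driving well-definedness and bijectivity is a \emph{uniqueness of lifts} principle: for any arrow $\Phi \colon W \to U$ of $\textbf{VK}$ and $x_1, x_2 \in A(W)$ with $\Phi \cdot x_1 = \Phi \cdot x_2$, one has $x_1 = x_2$. To prove this I apply (ii) to $x_1, x_2$ to get $W \xleftarrow{\gamma_1} Y \xrightarrow{\gamma_2} W$ and $y \in A(Y)$ with $\gamma_k \cdot y = x_k$, then (iii) to the parallel arrows $\Phi \circ \gamma_1, \Phi \circ \gamma_2 \colon Y \to U$ (which agree on $y$), producing $e \colon B \to Y$ and $b \in A(B)$ with $\Phi \circ \gamma_1 \circ e = \Phi \circ \gamma_2 \circ e$ and $e \cdot b = y$. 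Since every arrow of $\textbf{VK}$ is of the form $\psi \times \text{id}$ with $\psi$ an embedding and hence injective on knots, the $\textbf{VK}$-convention on parallel arrows cancels $\Phi$ to give $\gamma_1 \circ e = \gamma_2 \circ e$, whence $x_1 = x_2$. Well-definedness of $\alpha^o_U$ then follows by comparing two choices $(W_i, \Phi_i, \Psi_i, w_i)$: iterating (ii) and (iii) produces $Z'$ and an arrow $H \colon Z' \to Y \to W_i$ simultaneously equalizing $\Phi_1 \circ \Gamma_1 \circ H = \Phi_2 \circ \Gamma_2 \circ H$ and $\Psi_1 \circ \Gamma_1 \circ H = \Psi_2 \circ \Gamma_2 \circ H$; factoring the common arrow $Z' \to O$ via Lemma~\ref{lemma_factors} yields a common $(O,K)$-neighborhood $O''$ with inclusions $j_i \colon O'' \to W_i'$ such that $\Theta_1 \circ j_1 = \Theta_2 \circ j_2$, where $\Theta_i = \Psi_i \circ (\Phi_i')^{-1}$, witnessing $[\Theta_1] = [\Theta_2]$ in $A_K(U)$. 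Naturality is immediate because $(W, \Phi, f \circ \Psi, w)$ is valid filtering data for $f \cdot u$ whenever $(W, \Phi, \Psi, w)$ is valid for $u$.

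For surjectivity, given $[\Phi] \in A_K(U)$ with $\Phi \colon O' \to U$ and inclusion $I \colon O' \to O$, Lemma~\ref{lemma_determines_1} produces $\wbar{o} \in A(O')$ with $I \cdot \wbar{o} = o$, and the data $(O', I, \Phi, \wbar{o})$ returns $\alpha^o_U(\Phi \cdot \wbar{o}) = [\Phi]$ directly (the factorization in Lemma~\ref{lemma_factors} is trivial because $I$ is already an inclusion). For injectivity, suppose $\alpha^o_U(u_1) = \alpha^o_U(u_2)$ with witnessing data $(W_i, \Phi_i, \Psi_i, w_i)$; the $\sim$-equivalence provides $(O'', j_1, j_2)$ as above, and the common composition $\iota = I_i \circ j_i \colon O'' \to O$ is an inclusion along which Lemma~\ref{lemma_determines_1} plus uniqueness of lifts yields a unique $\wbar{o} \in A(O'')$ with $\iota \cdot \wbar{o} = o$. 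Setting $\eta_i = (\Phi_i')^{-1} \circ j_i$, one checks $\Phi_i \circ \eta_i = \iota$, so $\Phi_i \cdot (\eta_i \cdot \wbar{o}) = o = \Phi_i \cdot w_i$; uniqueness of lifts along $\Phi_i$ then forces $\eta_i \cdot \wbar{o} = w_i$, and therefore $u_i = \Psi_i \cdot w_i = (\Theta_i \circ j_i) \cdot \wbar{o}$, a value independent of $i$. The main obstacle I anticipate is the delicate interplay between filtering properties (ii), (iii) and Lemma~\ref{lemma_factors} in the well-definedness step, where one must simultaneously coordinate $(O,K)$-neighborhood structure in $O$ with parallel-arrow coincidences downstream in $U$; once the uniqueness-of-lifts principle is secured, the remaining verifications reduce to routine bookkeeping.
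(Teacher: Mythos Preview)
Your proposal is correct and follows the same overall construction as the paper: apply filtering property~(ii) to $u$ and $o$, factor the $O$-leg via Lemma~\ref{lemma_factors}, and set $\alpha^o_U(u)$ to be the class of the resulting arrow from the $(O,K)$-neighborhood. The substantive difference is your explicit \emph{uniqueness-of-lifts} principle---that $A(\Phi)$ is injective for every arrow $\Phi$ in $\textbf{VK}$---which you prove from~(ii),~(iii), and the fact that all arrows of $\textbf{VK}$ are monic (injective on points, hence left-cancellable by the parallel-arrow convention of Section~\ref{sec_site}). This is essentially the observation that filtering functors preserve monomorphisms, and it buys you a cleaner injectivity argument: you work directly with the two witnessing quadruples and the $\sim$-equivalence without first funneling $u_1,u_2$ through a common intermediate object. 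The paper instead handles injectivity by applying~(ii) to $u_1,u_2$ to obtain a common $W$, computing $\alpha^o_W(w)$ there so that both $\Psi_{u_i}$ share the domain $W'$, and then invoking Lemma~\ref{lemma_determines_1}; this avoids isolating a separate lemma but is slightly more roundabout. The two well-definedness arguments are essentially the same double application of~(iii) followed by a factorization, though the paper organizes the endgame via a pullback $V_1'\times_O V_2'$ rather than your direct construction of the inclusions $j_i$.
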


\begin{proof} For each object $U$ of $\textbf{VK}$, define $\alpha_U^o:A(U) \to A_K(U)$ as follows. First note that since $A$ and $A_K$ both determine $\textbf{V}_K$, $A(U) \ne \varnothing$ if and only if $A_K(U) \ne \varnothing$. Let $u \in A(U)$ and let $J$ be the knot in $U$ determined by $u$. Since $A$ is filtered, there is an object $W$, a diagram $\xymatrix{U & \ar[l]_-{\Psi} W \ar[r]^-{\Phi} & O}$, and a $w \in A(W)$ such that $\Psi \cdot w=u$ and $\Phi \cdot w=o$. By Lemma \ref{lemma_factors}, $\Phi$ factors as $\xymatrix{W \ar[r]^-{\Phi'} & V' \ar[r]^-{I} & O}$ for some isomorphism $\Phi'$ and inclusion $I$. Let $L \in W$ be the knot determined by $w \in A(W)$. By Lemma \ref{lemma_determines_2}, $\Phi(L)=K$ and $\Psi(L)=J$. Since $I(K)=K$, we have $\Phi'(L)=K$ and hence $V'$ is an $(O,K)$-neighborhood. Define $\Psi_u=\Psi \circ (\Phi')^{-1}:V' \to U$ and set $v'=\Phi'\cdot w$. By Lemma \ref{lemma_determines_3}, $K \in V'$ is the knot determined by $v' \in A(V')$. Also, we have that $\Psi_u \cdot v'=\Psi\cdot w=u$. Hence, by Lemma \ref{lemma_determines_2}, $\Psi_u(K)=J$. Define $\alpha_U^o(u)=[\Psi_u]\in A_K(U)$.
\newline
\newline
It must be proved that $\alpha_U^o(u)$ is well defined. Suppose that for $i=1,2$, there is an object $W_i$, a diagram $\xymatrix{U & \ar[l]_-{\Psi_i} W_i \ar[r]^-{\Phi_i} & O}$, and $w_i \in A(W_i)$ such that $\Psi_i \cdot w_i=u$ and $\Phi_i \cdot w_i=o$. Since $A$ is filtered, $W_1,W_2$ are connected by a diagram $\xymatrix{W_1 & \ar[l]_-{\Upsilon_1} T \ar[r]^-{\Upsilon_2} & W_2}$ such that for some $t \in A(T)$ $\Upsilon_i \cdot t= w_i$ for $i=1,2$. Furthermore, since $\Psi_i \cdot \Upsilon_i \cdot t=u$ for $i=1,2$, the parallel arrows $\Psi_1 \circ \Upsilon _1$ and $\Psi_2 \circ \Upsilon_2$ have an equalizer $\Upsilon:\widebar{T} \to T$ such that $\Upsilon \cdot \bar{t}=t$ for some $\bar{t} \in A(\widebar{T})$. Likewise, the equalizer property for the parallel arrows $\Phi_1 \circ \Upsilon_1 \circ \Upsilon$ and $\Phi_2 \circ \Upsilon_2 \circ \Upsilon$ gives a map $\widebar{\Upsilon}:\wbar{\wbar{T}} \to \wbar{T}$ and a $\bar{\bar{t}} \in A(\wbar{\wbar{T}})$ such that $\widebar{\Upsilon}\cdot \bar{\bar{t}}=\bar{t}$. Set $\Delta_i=\Upsilon_i \circ \Upsilon\circ \widebar{\Upsilon}$ and as before write $\Phi_i=I_i \circ \Phi_i'$. This gives the following commutative diagram, where $V_1' \times_O V_2'$ is a pullback.
\[
\xymatrix{ & \ar[dl]_-{\Psi_1} W_1 \ar[r]^-{\Phi_1'} & V_1'  \ar[dr]^{I_1} & \\ 
          U & \wbar{\wbar{T}} \ar[ur]|-{\Phi_1' \circ \Delta_1} \ar[dr]|-{\Phi_2' \circ \Delta_2} \ar[u]^{\Delta_1} \ar[d]_{\Delta_2} \ar@{-->}[r] & V_1' \times_O V_2' \ar[u] \ar[d] & O  \\
            & \ar[ul]^-{\Psi_2} W_2 \ar[r]_-{\Phi_2'} & V_2' \ar[ur]_{I_2} &
}
\] 
Then there is an arrow $\Delta:\wbar{\wbar{T}} \to V_1' \times_O V_2'$ making the right diamond commute. Tracing the diagram, we that $(\Psi_1)_u$ and $(\Psi_2)_u$ agree on the $(O,K)$-neighborhood $\Delta(\wbar{\wbar{T}})$. Hence, $\alpha_U^o$ is well-defined. 
\newline
\newline
To see that $\alpha_U^o$ is injective, suppose $\alpha_U^o(u_1)=\alpha_U^o(u_2)$ for some $u_1,u_2 \in A(U)$. Since $A$ is filtered, there is a diagram $\xymatrix{U & \ar[l]_-{\Phi_{1}} W \ar[r]^-{\Phi_{2}} & U}$ and a $w \in A(W)$ such that $\Phi_{1} \cdot w=u_1$ and $\Phi_{2}\cdot w=u_2$. Then $\alpha_W^o(w)=[\Psi_w]$ for some arrow $\Psi_w:W' \to W$ with $W' \in \mathscr{O}_K$ and there is a $w' \in A(W')$ such that $\Psi_w \cdot w'=w$. Since $\alpha_U^o$ is well-defined, we may choose $\Psi_{u_1}$ to be $\Phi_{1} \circ \Psi_{w}$ and $\Psi_{u_2}$ to be $\Phi_{2} \circ \Psi_{w}$. Then we have $[\Psi_{u_1}]=\alpha_U^o(u_1)=\alpha_U^o(u_2)=[\Psi_{u_2}]$. Thus, there is some $Y' \in \mathscr{O}_K$ with inclusion map $I:Y' \to W'$ such that $\Psi_{u_1} \circ I=\Psi_{u_2} \circ I$. Since $Y' \in \mathscr{O}_K$, Lemma \ref{lemma_determines_1} implies there is a $y' \in A(Y')$ such that $I \cdot y'=w'$. Then $u_1=\Psi_{u_1} \cdot I \cdot y'=\Psi_{u_2} \cdot I \cdot y'=u_2$ and it follows that $\alpha_U^o$ is injective.
\newline
\newline
Next, it will be shown that $\alpha_U^o$ is surjective. Suppose $[\Psi] \in A_K(U)$ with $\Psi:W' \to U$ for some $W' \in \mathscr{O}_K$. Consider the diagram $\xymatrix{U & \ar[l]_-{\Psi} W' \ar[r]^-{I} & O}$, where $I$ is inclusion. Since $o \in A(O)$ determines $K$ and $I(K)=K$, Lemma \ref{lemma_determines_1} implies that there is a $w' \in W'$ such that $I \cdot w'=o$. Let $u=\Psi \cdot w'$. Then we may take $\Psi_u=\Psi$, so that $\alpha_U(u)=[\Psi]$. Hence, $\alpha_U^o$ is surjective. 
\newline
\newline
To prove that $\alpha^o$ is natural, let $\Upsilon:U \to W$ be an arrow. If $u \in A(U)$, and $\Psi_u:V' \to U$ is as above, then $\Upsilon \cdot u$ determines the knot $\Upsilon(\Psi_u(K))$ (see Lemma \ref{lemma_determines_3}). This implies that $\alpha_W^o$ sends $\Upsilon \cdot u$ to $[\Upsilon \circ \Psi_u]$. Hence, $\alpha_W^o(\Upsilon \cdot u)=[\Upsilon \circ \Psi_u]=A_K(\Upsilon)(\alpha_U^o(u))$.  
\newline
\newline
Lastly we prove claims (1) and (2). For (1), consider the diagram $\xymatrix{O & \ar[l]_-{\Lambda} O \ar[r]^-{\Lambda} & O}$. Since $O$ is an $(O,K)$ neighborhood, and $\Lambda \cdot o=o$, it follows that $\alpha_O^o(o)=[\Lambda]$. For claim (2), if $\alpha_U^o(u)=[\Psi_u]$, then $\Psi_u(J)=K$. The proof that $\{J\}=\bigcap_{\Upsilon \in S_{[\Psi_u]}} \Upsilon(\text{dom}(\Upsilon))$ then follows as in Lemma \ref{lemma_A_K_determines_V_K}. 
\end{proof}

\section{Virtual isotopy} \label{sec_virtual_isotopy}

\subsection{Variable-space isotopy} \label{sec_virtual_isotopy_var} Having defined variable-space knots in Section \ref{sec_var_ambient}, we now define isotopies of variable-space knots.  These will be shown in Section \ref{sec_virtual_isotopy_paths} to be in one-to-one correspondence with virtual isotopies (Definition \ref{defn_virtual_isotopy_paths}). As before, we first analyze the knot space $\mathbb{K}$.
\newline
\newline
Denote the set of connected components of a small category $\textbf{C}$ by $\pi_0(\textbf{C})$. Proposition \ref{prop_connected_comp} states that $\pi_0(\smallint_{\textbf{K}} F) \cong \mathbb{K}$ in $\textbf{Sets}$. Let $\sigma^{\mathbb{I}}_{\mathbb{K}}:\mathbb{I} \to \mathbb{K}$ be a path in $\mathbb{K}$ and let $\textbf{K}_t$ be the connected component of $\smallint_{\textbf{K}} F$ containing the object $(\mathbb{K},\sigma^{\mathbb{I}}_{\mathbb{K}}(t))$. Define a function $\sigma:\mathbb{I} \to \pi_0(\smallint_{\textbf{K}} F)$ by $\sigma(t)=\textbf{K}_t$. Since $\sigma^{\mathbb{I}}_{\mathbb{K}}$ is continuous, $\sigma$ satisfies the following local continuity property (with $\textbf{C}=\textbf{K}$ and $\sigma^N_U=\sigma^{\mathbb{I}}_{\mathbb{K}}|_N$):
\begin{center}$(\star)$ \,\,\, 
\begin{minipage}{6in}
For all $U \in \text{obj}(\textbf{C})$ and $t\in \mathbb{I}$, if $(U,K) \in \text{obj}(\sigma(t))$, there is a neighborhood $N \subseteq \mathbb{I}$ of $t$ and a continuous function $\sigma^N_U:N \to U$ s.t. $\sigma^N_U(t)=K$ and $(U,\sigma^N_U(s)) \in \text{obj}(\sigma(s))$ for all $s\in N$.
\end{minipage}
\end{center}
Conversely, suppose $\sigma:\mathbb{I} \to \pi_0(\smallint_{\textbf{K}} F)$ satisfies $(\star)$. Define $\sigma^{\mathbb{I}}_{\mathbb{K}}: \mathbb{I} \to \mathbb{K}$ by $\sigma^{\mathbb{I}}_{\mathbb{K}}(t)=K_t$, where $(\mathbb{K},K_t) \in \text{obj}(\sigma(t))$. Then $\sigma^{\mathbb{I}}_{\mathbb{K}}$ is a well-defined path in $\mathbb{K}$. Hence, $\sigma^{\mathbb{I}}_{\mathbb{K}}(0),\sigma^{\mathbb{I}}_{\mathbb{K}}(1)$ are isotopic in $\mathbb{R}^3$.

\begin{definition}[Variable-space isotopy] \label{defn_virtual_isotopy_var} Let $\textbf{V}_0, \textbf{V}_1 \in \pi_0(\smallint_{\textbf{VK}} F)$ be variable-space knots. A \emph{variable-space isotopy} from $\textbf{V}_0$ to  $\textbf{V}_1$ is a function $\sigma:\mathbb{I} \to \pi_0(\smallint_{\textbf{VK}} F)$ satisfying property $(\star)$ (with $\textbf{C}=\textbf{VK}$) such that $\sigma(0)=\textbf{V}_0$ and $\sigma(1)=\textbf{V}_1$. 
\end{definition}

The following technical lemma will be used to relate variable-space isotopy to both stable Reidemeister equivalence and virtual isotopy. It describes a variable-space isotopy as a finite sequence of paths in knot spaces $\mathbb{K}(\Sigma)$ that are patched together by arrows in $\textbf{VK}$.  

\begin{lemma} \label{lemma_cont_implies_isotop_var_amb} Let $\sigma:\mathbb{I} \to \pi_0(\smallint_{\textbf{VK}} F)$ be a virtual isotopy from $\textbf{V}_0$ to $\textbf{V}_1$. Then for some $n \ge 1$, there is a partition $l_1=0<l_2<r_1<l_3<r_2< \ldots <l_n <r_{n-1} <r_n=1$ of $\mathbb{I}$, a diagram:
\[
\xymatrix{ & \ar[dl]_-{\Delta_1} C_{1.5} \ar[dr]^-{\Upsilon_2} &  & \ar[dl]_-{\Delta_2} C_{2.5} \ar[dr]^-{\Upsilon_3} & & \cdots & & \ar[dl]_-{\Delta_{n-1}} C_{n-.5} \ar[dr]^-{\Upsilon_n} &  \\
          C_1 & & C_2 & & C_3 & \cdots & C_{n-1} & & C_n,                     
} 
\]
in $\textbf{VK}$ of path components $C_i \subseteq \mathbb{K}(\Sigma_i)$, and continuous functions: 
\[
\sigma_1:\underbrace{[l_1,r_1)}_{N_1} \to C_1,\,\, \sigma_{1.5}:\underbrace{(l_2,r_1)}_{N_{1.5}} \to C_{1.5},\,\, \sigma_2:\underbrace{(l_2,r_2)}_{N_2} \to C_2, \ldots, \sigma_{n}:\underbrace{(l_n,r_n]}_{N_n} \to C_{1.5} 
\]
such that the following properties hold:
\begin{enumerate}
\item $\Delta_i \circ \sigma_{i+.5}=\sigma_i|(l_{i+1},r_i)$ and $\Upsilon_{i+1}\circ \sigma_{i+.5}=\sigma_{i+1}|(l_{i+1},r_i)$ for all $i \in \{1,2,\ldots, n-1\}$, and
\item $(C_j,\sigma_j(t)) \in \text{obj}(\sigma(t))$ for all $j \in \{1,1.5,2,\ldots,n\}$ and $t \in N_j$.
\end{enumerate}
\end{lemma}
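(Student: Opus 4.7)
The plan is to combine property $(\star)$ with compactness of $\mathbb{I}$ to produce a finite family of local continuous lifts of $\sigma$, then use the pullback structure of $\textbf{VK}$ (Theorem \ref{lemma_vk_pullbacks}) to convert the arbitrary zigzags in $\smallint_{\textbf{VK}} F$ that arise at overlap times into the span-shaped bridges demanded by the lemma.

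First, for each $t \in \mathbb{I}$ I would choose a representative $(C_t, K_t) \in \text{obj}(\sigma(t))$ with $C_t$ taken to be the path component of $\mathbb{K}(\Sigma_t)$ containing $K_t$ (any initial choice can be replaced by such a component via the inclusion arrow, keeping everything in the same connected component of $\smallint_{\textbf{VK}} F$). Property $(\star)$ supplies an open neighborhood $N_t \ni t$ and a continuous lift $\tau_t\colon N_t \to C_t$. By compactness of $\mathbb{I}$, finitely many of these intervals cover $\mathbb{I}$; after reordering the centers and shrinking, they produce path components $C_1, \ldots, C_n$ and intervals $N_1 = [0, r_1)$, $N_i = (l_i, r_i)$ for $2 \le i \le n-1$, and $N_n = (l_n, 1]$, with consecutive intervals overlapping on $(l_{i+1}, r_i)$.

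At any overlap time $s_i \in (l_{i+1}, r_i)$, both $(C_i, \tau_i(s_i))$ and $(C_{i+1}, \tau_{i+1}(s_i))$ lie in the single connected component $\sigma(s_i)$ of $\smallint_{\textbf{VK}} F$, and so are joined by a finite zigzag of arrows in $\textbf{VK}$. Composing consecutive same-direction arrows and repeatedly replacing any cospan $\cdot \to \cdot \leftarrow \cdot$ by its pullback span, such a zigzag collapses to a single span $C_i \xleftarrow{\Delta_i} P \xrightarrow{\Upsilon_{i+1}} C_{i+1}$ together with an element $L_i \in P$ projecting to $\tau_i(s_i)$ and $\tau_{i+1}(s_i)$. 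Taking $C_{i.5}$ to be the path component of the enclosing knot space containing $L_i$ (and restricting the two arrows accordingly), property $(\star)$ applied to $(C_{i.5}, L_i)$ at time $s_i$ yields a continuous bridge path, which after shrinking the overlap window becomes $\sigma_{i.5}\colon (l_{i+1}, r_i) \to C_{i.5}$.

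Finally, to enforce the pointwise equalities $\Delta_i \circ \sigma_{i.5} = \sigma_i|_{N_{i.5}}$ and $\Upsilon_{i+1} \circ \sigma_{i.5} = \sigma_{i+1}|_{N_{i.5}}$, I would \emph{redefine} $\sigma_i$ on the overlap regions of $N_i$ to be exactly the bridge compositions $\Upsilon_i \circ \sigma_{i-.5}$ on the left and $\Delta_i \circ \sigma_{i.5}$ on the right, rather than using the original $\tau_i$. On the interior segment $[r_{i-1}, l_{i+1}] \subseteq N_i$ between the two overlaps, $\sigma_i$ is then built by chaining finitely many $(\star)$-continuations inside $C_i$ starting from the boundary values already fixed by the bridges. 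The main obstacle is precisely this last step: the $(\star)$-extensions propagating inward from the two ends need not meet continuously in $C_i$, and where they fail to match one must insert an additional bridge at the mismatch point, further subdividing the partition. Compactness of the middle segment together with the local flexibility guaranteed by $(\star)$ ensures that this refinement terminates after finitely many iterations, and properties (1) and (2) of the lemma are then immediate from the construction.
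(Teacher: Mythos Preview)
Your overall architecture---use $(\star)$ at each $t$, extract a finite cover by compactness, and collapse the overlap zigzags in $\smallint_{\textbf{VK}} F$ to spans via iterated pullback---matches the paper's proof exactly. The divergence, and the gap, is in how you build the bridge paths $\sigma_{i+.5}$ and then patch the $\sigma_i$.

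You obtain $\sigma_{i+.5}$ by applying $(\star)$ afresh at the span apex $(C_{i+.5},L_i)$, and then \emph{redefine} $\sigma_i$ on the overlaps to be $\Delta_i\circ\sigma_{i+.5}$ and $\Upsilon_i\circ\sigma_{i-.5}$. This forces you to manufacture $\sigma_i$ on the interior segment $[r_{i-1},l_{i+1}]$ so as to meet prescribed boundary values that have no a~priori relation to your original $\tau_i$. Your proposed fix---iteratively subdividing and inserting new bridges at mismatch points---is not clearly terminating: each new bridge is again produced by a fresh $(\star)$-lift, creating two new mismatch points of the same kind. Compactness alone does not control this, because the number of pieces grows with each iteration rather than being bounded by a single finite cover. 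Note too that two knots $K\neq K'$ in the same $C_i$ with $(C_i,K),(C_i,K')\in\text{obj}(\sigma(t))$ can genuinely occur (cf.\ Example~\ref{example_A_K}), so the mismatch is not a phantom.

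The paper sidesteps this entirely by exploiting that every arrow of $\textbf{VK}$ is open and injective. Rather than invoking $(\star)$ again, one observes that $\Delta_i(C_{i+.5})$ is open in $C_i$ and contains $\rho_i(t_{i+.5})$, so after shrinking the overlap the original $\rho_i$ factors \emph{uniquely} through $\Delta_i$; likewise $\rho_{i+1}$ through $\Upsilon_{i+1}$. Defining $\sigma_{i+.5}$ piecewise as these two lifts (glued at $t_{i+.5}$ where both hit the apex knot) forces $\Delta_i\circ\sigma_{i+.5}=\rho_i$ on the left half of the overlap and $\Upsilon_{i+1}\circ\sigma_{i+.5}=\rho_{i+1}$ on the right half. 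Now setting $\sigma_i=\rho_i$ on the middle segment gives continuity at the seams automatically, and no iteration is needed.
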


\begin{proof} For each $t \in \mathbb{I}$, choose some $(C_t,K_t) \in \text{obj}(\sigma(t))$ where $C_t \subseteq \mathbb{K}(\Sigma_t)$ is a path component. There is a neighborhood $M_t$ of $t$ and a continuous function $\rho_t:M_t \to C_t$ such that $(C_t,\rho_t(s)) \in \text{obj}(\sigma(s))$ for all $s \in M_t$. It may be assumed that each $M_t$ is an interval. The set of these $M_t$ form an open cover of $\mathbb{I}$ and hence there is a finite subcover. It also may be assumed that the finite subcover $\{M_1=M_{t_1},\ldots,M_n=M_{t_n}\}$ that has the fewest number of elements. After relabeling, this gives a partition $a_1=0<a_2<b_1<a_3<b_2< \ldots <a_n <b_{n-1} <b_n=1$ with $M_{1}=[a_1,b_1)$, $M_{2}=(a_2,b_2)$, $\ldots$, $M_{n}=(a_n,b_n]$. For $1 \le i \le n-1$, choose $t_{i+.5} \in M_{i} \cap M_{i+1}$ and set $t_{.5}=0,t_{n+.5}=1$. Set $\rho_i=\rho_{t_i}$ and $C_i=C_{t_i}$. We have $\rho_{i}|[t_{i-.5},t_{i+.5}]$ is a path between $\rho_{i}(t_{i-.5})$ and $\rho_{i}(t_{i+.5})$. Moreover, $(C_i,\rho_{i}(t)),(C_{i+1},\rho_{i+1}(t))$ are in the same connected component of $\smallint_{\textbf{VK}} F$ for all $t \in M_i \cap M_{i+1}$. Let $\textbf{V}_{t_{i+.5}}$ denote this connected component for $t=t_{i+.5}$.
\newline
\newline
Since $(C_i,\rho_{i}(t_{i+.5})),(C_{i+1},\rho_{i+1}(t_{i+.5})) \in \text{obj}(\textbf{V}_{t_{i+.5}})$, they are connected by a path of arrows. Repeatedly pulling back along each subdiagram of this path having the form $\rightarrow \bullet \leftarrow$, it follows that there is a path of arrows $\xymatrix{(C_i,\rho_i(t_{i+.5})) & \ar[l]_-{\Delta_{i}} (C_{i+.5},K_{i+.5}) \ar[r]^-{\Upsilon_{i+1}} &  (C_{i+1},\rho_{i+1}(t_{i+.5}))}$ with $C_{i+.5}$ a path component. Now, since $\Delta_i,\Upsilon_{i+1}$ are open, there are neighborhoods $J_i,\bar{J}_i$ of $t_{i+.5}$ such that $\rho_{i}(J_i) \subseteq \Delta_i(C_{i+.5})$ and $\rho_{i+1}(\bar{J}_i) \subseteq \Upsilon_{i+1}(C_{i+.5})$. For each $1\le i \le n-1$, choose $l_{i+1},r_i$ so that $ t_{i+.5} \in (l_{i+1},r_i) \subseteq J_i \cap \bar{J}_{i+1}$, and set $l_1=0,r_n=1$. Then since $\Delta_i,\Upsilon_{i+1}$ are injective, $\rho_i|(l_{i+1},r_i)$ and $\rho_{i+1}|(l_{i+1},r_i)$ factor uniquely through $\Delta_i$ and $\Upsilon_{i+1}$, respectively. Hence, $\sigma_{i+.5}:(l_{i+1},r_i) \to C_{i+.5}$ can be defined piecewise so that $\rho_i(t)=\Delta_i \circ \sigma_{i+.5}(t)$ for $l_{i+1}<t \le t_{i+.5}$ and $\rho_{i+1}(t)=\Upsilon_{i+1} \circ \sigma_{i+.5}(t)$ for $t_{i+.5} \le t < r_i$. For $i \in \{2,3,\ldots,n-1\}$, define $\sigma_i:(l_i,r_i)\to C_i$ by $\sigma_i|(l_i,r_{i-1})=\Upsilon_i \circ \sigma_{i-.5}$, $\sigma_i|[r_{i-1},l_{i+1}]=\rho_i|[r_{i-1},l_{i+1}]$, and $\sigma_i|(l_{i+1},r_i)=\Delta_i \circ \sigma_{i+.5}$. Define $\sigma_1:[l_1,r_1) \to C_1$ by $\sigma_1|[l_1,l_2]=\rho_1|[l_1,l_2]$ and $\sigma_1|(l_2,r_1)=\Delta_1 \circ \sigma_{1.5}$. Finally, define $\sigma_n:(l_n,r_n]\to C_n$ by $\sigma_n|(l_n,r_{n-1})=\Upsilon_n \circ \sigma_{n-.5}$ and $\sigma_n|[r_{n-1},r_n]=\rho_n|[r_{n-1},r_n]$. Then $\sigma_1,\ldots,\sigma_n$ satisfy $(1)$ and $(2)$. 
\end{proof}

With this lemma, we can now prove that the equivalence relation generated by variable-space isotopy yields the same equivalence classes as the diagrammatic models of virtual knot theory.

\begin{theorem} \label{lemma_v_knot_type_isotopy} Let $(\Sigma_0,D_0),(\Sigma_1,D_1)\in \mathfrak{D}^{(CKS^*)}$ be knot diagrams and let $K_0 \in \mathbb{K}(\Sigma_0)$, $K_1 \in \mathbb{K}(\Sigma_1)$ be knots with diagrams $D_0,D_1$. Set $\textbf{V}_0=\textbf{V}_{K_0},\textbf{V}_1=\textbf{V}_{K_1} \in \pi_0(\smallint_{\textbf{VK}} F)$. Then $(\Sigma_0,D_0),(\Sigma_1,D_0)$ have the same virtual knot type if and only if $\textbf{V}_0,\textbf{V}_1$ are isotopic variable-space knots.
\end{theorem}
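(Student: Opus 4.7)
The overall plan is to translate between the zigzag data of Lemma \ref{prop_no_more_diagrams} (``same virtual knot type'' as a zigzag of capitalized maps with compatible path components) and the data of Lemma \ref{lemma_cont_implies_isotop_var_amb} (a variable-space isotopy presented as a similar zigzag decorated with continuous paths through each path component). Each direction of the theorem is then a matter of converting one shape of data into the other.

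For the forward direction, assume $(\Sigma_0,D_0)$ and $(\Sigma_1,D_1)$ have the same virtual knot type and apply Lemma \ref{prop_no_more_diagrams} to obtain surfaces $\Gamma_i,\Gamma_{i+.5}$ (renamed to avoid clashing with the theorem's $\Sigma_0,\Sigma_1$), capitalized maps $\Upsilon_i:\mathbb{K}(\Gamma_i)\to\mathbb{K}(\Gamma_{i+.5})$ and $\Delta_{i+1}:\mathbb{K}(\Gamma_{i+1})\to\mathbb{K}(\Gamma_{i+.5})$, and path components $C_i,C_{i+.5}$ with $K_0\in C_1$ and $K_1\in C_n$. Pick representatives $L_i\in C_i$ with $L_1=K_0$ and $L_n=K_1$. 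Since $\Upsilon_i(L_i),\Delta_{i+1}(L_{i+1})\in C_{i+.5}$ and $C_{i+.5}$ is path-connected, choose a path $\gamma_{i+.5}$ in $C_{i+.5}$ joining them. Partition $\mathbb{I}$ into alternating ``stay at $L_i$'' and ``traverse $\gamma_{i+.5}$'' subintervals and define $\sigma(s)\in\pi_0(\smallint_{\textbf{VK}}F)$ to be the variable-space knot of the current knot along the path. Agreement at each transition is witnessed by the arrows $\Upsilon_i$ and $\Delta_{i+1}$ acting on $L_i$, $L_{i+1}$; property $(\star)$ follows because capitalized maps are open and injective, which lets one locally invert any zigzag from the chosen representative of $\sigma(t)$ to an arbitrary $(U,K)\in\text{obj}(\sigma(t))$ and thereby lift the constructed path into $U$.

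For the reverse direction, assume $\sigma:\mathbb{I}\to\pi_0(\smallint_{\textbf{VK}}F)$ is a variable-space isotopy from $\textbf{V}_{K_0}$ to $\textbf{V}_{K_1}$ and apply Lemma \ref{lemma_cont_implies_isotop_var_amb} to obtain path components $C_i\subseteq\mathbb{K}(\Gamma_i)$, $C_{i+.5}\subseteq\mathbb{K}(\Gamma_{i+.5})$, capitalized maps $\Delta_i,\Upsilon_{i+1}$, and continuous paths $\sigma_i,\sigma_{i+.5}$. By Proposition \ref{prop_iso_iff_path}, each $\sigma_i$ yields an ambient isotopy in $\Gamma_i\times\mathbb{R}$ and hence a Reidemeister equivalence of its endpoints' diagrams on $\Gamma_i$. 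At each transition, the underlying embedding $\Gamma_{i+.5}\hookrightarrow\Gamma_i$ of $\Delta_i$ sends a diagram of $\sigma_{i+.5}(t)$ to the current diagram of $\sigma_i(t)$; using $\Gamma_i$ itself as the intermediate of a stable Reidemeister move converts this into an $\mathfrak{R}^{(CKS^*)}$-equivalence, and likewise for $\Gamma_{i+1}$. Chaining along the entire zigzag produces an $\mathfrak{R}^{(CKS^*)}$-equivalence between a diagram of $\sigma_1(0)$ on $\Gamma_1$ and one of $\sigma_n(1)$ on $\Gamma_n$. To bridge the endpoints, use that $(C_1,\sigma_1(0))\in\text{obj}(\textbf{V}_{K_0})$: there is a zigzag of arrows in $\smallint_{\textbf{VK}}F$ joining it to $(C_0^\sharp,K_0)$, with $C_0^\sharp\subseteq\mathbb{K}(\Sigma_0)$ the path component of $K_0$. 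Converting each such arrow to a stable move (via the same larger-intermediate device) yields an $\mathfrak{R}^{(CKS^*)}$-equivalence from $(\Sigma_0,D_0)$ to a diagram of $\sigma_1(0)$ on $\Gamma_1$. Proceed symmetrically on the right for $K_1$ and concatenate.

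The main technical obstacle is the endpoint-bridging step just described: Lemma \ref{lemma_cont_implies_isotop_var_amb} a priori only joins $\sigma_1(0)\in\mathbb{K}(\Gamma_1)$ to $\sigma_n(1)\in\mathbb{K}(\Gamma_n)$, with $\Gamma_1,\Gamma_n$ unrelated to the theorem's $\Sigma_0,\Sigma_1$; this is repaired by unpacking $\sigma(0)=\textbf{V}_{K_0}$ as a zigzag of arrows in the category of elements and converting each arrow into a stable Reidemeister move via its underlying embedding of surfaces. A subtler issue is a direction mismatch between the two lemmas — the intermediate surface in Lemma \ref{lemma_cont_implies_isotop_var_amb}'s zigzag is \emph{smaller} than its neighbors, whereas a stable Reidemeister move requires a \emph{larger} intermediate — which is handled throughout by taking the neighboring main surface itself as the intermediate in each such move.
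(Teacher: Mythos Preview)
Your proposal is correct and follows essentially the same strategy as the paper: both directions are handled by translating between the zigzag of Lemma~\ref{prop_no_more_diagrams} and the zigzag-with-paths of Lemma~\ref{lemma_cont_implies_isotop_var_amb}, with property~$(\star)$ verified by the lift-along-the-zigzag argument using openness and injectivity of capitalized maps.

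One remark worth making: in the direction ``variable-space isotopic $\Rightarrow$ same virtual knot type'' the paper simply invokes Lemmas~\ref{lemma_cont_implies_isotop_var_amb} and~\ref{prop_no_more_diagrams} in a single sentence, whereas you have correctly flagged and resolved two points the paper leaves implicit. First, the zigzag produced by Lemma~\ref{lemma_cont_implies_isotop_var_amb} has the half-integer surfaces as the \emph{small} ones ($C_{i+.5}\to C_i$, $C_{i+.5}\to C_{i+1}$), opposite to the shape required by Lemma~\ref{prop_no_more_diagrams}; your fix of taking the neighboring integer surface as the intermediate of a stable move (equivalently, inserting identity arrows to flip the zigzag orientation, as the paper remarks in Section~\ref{sec_var_ambient}) is exactly what is needed. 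Second, Lemma~\ref{lemma_cont_implies_isotop_var_amb} only lands you at $\sigma_1(0)\in C_1\subseteq\mathbb{K}(\Gamma_1)$ and $\sigma_n(1)\in C_n\subseteq\mathbb{K}(\Gamma_n)$, with no a~priori relation between $\Gamma_1,\Gamma_n$ and $\Sigma_0,\Sigma_1$; your endpoint-bridging step, converting the zigzag in $\smallint_{\textbf{VK}}F$ that witnesses $(C_1,\sigma_1(0))\in\text{obj}(\textbf{V}_{K_0})$ into a chain of stable moves, fills this gap. So your reverse direction is in fact more carefully argued than the paper's.
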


\begin{proof} If $\textbf{V}_0$, $\textbf{V}_1$ variable-space isotopic, then $(\Sigma_0,D_0),(\Sigma_1,D_1)$ represent the same virtual knot type by Lemmas \ref{lemma_cont_implies_isotop_var_amb} and \ref{prop_no_more_diagrams}. Conversely, suppose that $((\Sigma_0,D_0),(\Sigma_1,D_1))\in \mathfrak{R}^{(CKS^*)}$, so that the two diagrams have the same virtual knot type. By Lemma \ref{prop_no_more_diagrams}, there are path components $C_1$, $C_{1.5}$, $C_2$ , $\ldots$, $C_{n-.5}$, $C_n$ and arrows $\Upsilon_i:C_i \to C_{i+.5}$, $\Delta_{i+1}:C_{i+1} \to C_{i+.5}$ for $i \in \{1,\ldots,n-1\}$ such that $K_0 \in C_1 \subseteq \mathbb{K}(\Sigma_0)$ and $K_1 \in C_n \subseteq \mathbb{K}(\Sigma_1)$. For each $i \in \{2,3,\ldots,n-1\}$, choose any knot $K_{C_i}\in C_i$ and set $K_{C_1}=K_0,K_{C_n}=K_1$. For $i \in \{1,2,\ldots,n-1\}$, let $\sigma_{i+.5}:[\tfrac{i-1}{n-1},\tfrac{i}{n-1}] \to C_{i+.5}$ be a path from $\Upsilon_{i}(K_{C_i})$ to $\Delta_{i+1}(K_{C_{i+1}})$. For $t \in [\tfrac{i-1}{n-1},\tfrac{i}{n-1}]$, let $\textbf{V}_t \in \pi_0(\smallint_{\textbf{VK}} F)$ be the variable-space knot containing $(C_{i+.5},\sigma_{i+.5}(t))$. This is well-defined for all $t\in \mathbb{I}$ since $(C_{i-.5},\Delta_{i}(K_{C_i}))$ and $(C_{i+.5},\Upsilon_i(K_{C_i}))$ are in the same connected component of $\smallint_{\textbf{VK}} F$. Define $\sigma:\mathbb{I} \to \pi_0(\smallint_{\textbf{VK}} F)$ by $\sigma(t)=\textbf{V}_t$.
\newline
\newline
It remains to show that $\sigma$ satisfies property $(\star)$. Suppose that $t \in \mathbb{I}$ and $(U,L) \in \text{obj}(\sigma(t))$. First suppose there is an $i \in \{1,\ldots,n\}$ such that $t \in (\tfrac{i-1}{n-1},\tfrac{i}{n-1})$. Then $(U,L)$ and $(C_{i+.5},\sigma_{i+.5}(t))$ are in the same connected component of $\smallint_{\textbf{VK}}F$. This means that in $\textbf{VK}$ there is a diagram :
\[
\xymatrix{ & & \ar[dl]_-{\Phi_1} U_{1.5} \ar[dr]^-{\Psi_2} &  & \ar[dl]_-{\Phi_2} U_{2.5} \ar[dr]^-{\Psi_3} & & \cdots & & \ar[dl]_-{\Phi_{m-1}} U_{m-.5} \ar[dr]^-{\Psi_m} &  &\\
          C_{i+.5} \ar@{=}[r]& U_1 & & U_2 & & & \cdots & & & U_m \ar@{=}[r] & U.                     
} 
\]
Since $\sigma_{i+.5}$ is continuous, there is an open set $M \subseteq (\tfrac{i-1}{n-1},\tfrac{i}{n-1})$ such that $t \in M$ and $\sigma_{i+.5}(M)\subseteq \Phi_1(U_{1.5})$. Then $\sigma_{i+.5}$ factors through $\Phi_1$ to give a continuous function $\rho_{1.5}:M \to U_{1.5}$ such that $\Phi_1 \circ \rho_{1.5}=\sigma_{i+.5}|M$. Thus, $(U_{1.5},\rho_{1.5}(s)) \in \text{obj}(\sigma(s))$ for all $s \in M$. Also $\rho_2:=\Psi_2 \circ \rho_{1.5}:M \to U_2$ satisfies $(U_2,\rho_2(s)) \in \text{obj}(\sigma(s))$ for all $s \in M$. Continuing in this manner, we eventually obtain a continuous function $\sigma^N_U=\rho_m:N \to U_m=U$ such that $t \in N$ and $(U,\sigma^N_U(s)) \in \text{obj}(\sigma(s))$ for all $s \in N$.  
\newline
\newline
The same argument works for $t=0$ and $t=1$, so suppose $t=\tfrac{i}{n-1}$ for some $i \in \{1,\ldots,n-2\}$. Since $\sigma_{i+.5},\sigma_{i+1.5}$ are continuous, there is an $r_i<\tfrac{i+1}{n-1}$ and an $l_i> \tfrac{i-1}{n-1}$ such that $\sigma_{i+1.5}([t,r_i))\subseteq \Upsilon_{i+1}(C_{i+1})$ and $\sigma_{i+.5}((l_i,t]) \subseteq \Delta_{i+1}(C_{i+1})$. Then define $\sigma_{i+1}:(l_i,r_i) \to C_{i+1}$ piecewise so that $\sigma_{i+1.5}|[t,r_i)=\Upsilon_{i+1} \circ \sigma_{i+1}|[t,r_i)$ and $\sigma_{i+.5}|(l_i,t]=\Delta_{i+1} \circ \sigma_{i+1}|(l_i,t]$. This is continuous since $\sigma_{i+1.5}(t)=\Upsilon_{i+1}(K_{C_{i+1}})$, $\sigma_{i+.5}(t)=\Delta_{i+1}(K_{C_{i+1}})$, so that $\sigma_{i+1}(t)=K_{C_{i+1}}$. Then $(U,L)$ and $(C_{i+1},K_{C_{i+1}})$ are in the same connected component of $\smallint_{\textbf{VK}} F$. Thus, we may apply the same argument as in the preceding paragraph, using $(C_{i+1},K_{i+1})$ in place of $(C_{i+.5},\sigma_{i+.5}(t))$ and $\sigma_{i+1}$ in place of $\sigma_{i+.5}$. This implies that $\sigma$ satisfies property $(\star)$ and we conclude that $\textbf{V}_0$ and $\textbf{V}_1$ are variable-space isotopic.
\end{proof}

\subsection{Virtual isotopy $\&$ variable-space isotopy} \label{sec_virtual_isotopy_paths} Here we prove both the one-to-one correspondence between variable-space isotopies and virtual isotopies (Theorem \ref{thm_paths_sh_vk}) and the one-to-one correspondence between virtual knot types and virtual isotopy equivalence classes (Theorem \ref{thm_recovers}). The proof of Theorem \ref{thm_paths_sh_vk} follows the strategy of Section \ref{sec_points}. The category of geometric morphisms $\text{Sh}(\textbf{I}) \to \text{Sh}(\mathbf{VK})$ is equivalent to the category of continuous filtering functors $A:\textbf{VK} \to \text{Sh}(\textbf{I})$ (see \cite{mac_moer}, Chapter VII, Section 9). Each such $A$ corresponds to a variable-space isotopy $\sigma_A: \mathbb{I} \to \pi_0(\smallint_{\textbf{VK}} F)$ and any variable-space isotopy $\sigma: \mathbb{I} \to \pi_0(\smallint_{\textbf{VK}} F)$ corresponds a continuous filtering functor $A_{\sigma}:\textbf{VK} \to \text{Sh}(\textbf{I})$. Furthermore, $\sigma_{A_{\sigma}}=\sigma$ and $A_{\sigma_A} \cong A$.
\newline
\newline
Let $A: \textbf{VK} \to \text{Sh}(\textbf{I})$ be a continuous filtering functor. The reader is referred to \cite{mac_moer}, Theorem VII.10.1, for the list of equivalent conditions for continuous filtering functors that will be used below. Define $\sigma_A: \mathbb{I} \to \pi_0(\smallint_{\textbf{VK}} F)$ as follows. For each $t \in \mathbb{I}$, the continuous function $\tau_t:\mathbbm{1} \to \mathbb{I}$ given by $\tau_t(0)=t$ corresponds to a geometric morphism $t=\mathscr{T}(\tau_t):\textbf{Sets} \to \text{Sh}(\textbf{I})$. Composing $A$ with the left adjoint $t^*$ gives a continuous filtering functor $t^* \circ A:\textbf{VK} \to \text{Sh}(\textbf{I}) \to \textbf{Sets}$, as in Section \ref{sec_points}. By Theorem \ref{thm_points_sh_vk}, this determines a variable-space knot $\textbf{V}_t \in \pi_0(\smallint_{\textbf{VK}} F)$. Define $\sigma_{A}(t)=\textbf{V}_t$. 

\begin{lemma} \label{lemma_cff_is_vi} For $A: \textbf{VK} \to \text{Sh}(\textbf{I})$ as above,  $\sigma_A: \mathbb{I} \to \pi_0(\smallint_{\textbf{VK}} F)$ is a variable-space isotopy.
\end{lemma}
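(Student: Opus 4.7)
The plan is to verify property $(\star)$ at an arbitrary $t \in \mathbb{I}$ and $(U,K) \in \text{obj}(\sigma_A(t))$ by exploiting the sheaf structure on $A(U)$ to lift the germ determining $K$ at $t$ to a local section, and then reading off a knot in $U$ at each nearby $s$ via the pointwise determination procedure of Section \ref{sec_points}.

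First I would unpack $(U,K) \in \text{obj}(\sigma_A(t))$: by construction $\sigma_A(t)$ is the variable-space knot determined by the continuous filtering functor $t^* \circ A : \textbf{VK} \to \textbf{Sets}$, so Lemma \ref{lemma_meta_determines} supplies an element $u_t \in (t^* A)(U) = A(U)_t$ that determines $K$. Since $A(U)$ is a sheaf on $\mathbb{I}$, this germ is represented by a section $u \in A(U)(N)$ on some open neighborhood $N$ of $t$. For each $s \in N$, the germ $u_s$ of $u$ at $s$ lies in $(s^* A)(U)$, and because $s^* \circ A$ is again continuous filtering (inverse image functors are left exact and take jointly epimorphic families to jointly epimorphic families), Lemma \ref{lemma_determines} provides a unique knot $K_s \in U$ determined by $u_s$. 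Setting $\sigma^N_U(s) := K_s$, the identities $\sigma^N_U(t) = K$ and $(U, \sigma^N_U(s)) \in \text{obj}(\sigma_A(s))$ both follow immediately from Lemma \ref{lemma_meta_determines}; only continuity of $\sigma^N_U$ remains.

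The hard part will be this continuity. To prove it at $s_0 \in N$, I would fix an open set $W \subseteq U$ containing $K_{s_0}$, view $W$ as an object of $\textbf{VK}$, and let $I : W \to U$ be the inclusion arrow. Applying Lemma \ref{lemma_determines_1} to $s_0^* \circ A$ produces $w_{s_0} \in A(W)_{s_0}$ with $(s_0^* A)(I)(w_{s_0}) = u_{s_0}$. Lifting $w_{s_0}$ to a section $w \in A(W)(N')$ on a neighborhood $N' \subseteq N$ of $s_0$, the sheaf morphism $A(I) : A(W) \to A(U)$ satisfies $A(I)(w)_{s_0} = u_{s_0}$ as germs, so after shrinking $N'$ we may assume $A(I)(w) = u|_{N'}$ as sections. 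Then $(s^* A)(I)(w_s) = u_s$ for every $s \in N'$, and Lemma \ref{lemma_determines_2} forces $K_s = I(L_s) = L_s \in W$, where $L_s \in W$ is the knot determined by $w_s$ via $s^* \circ A$. Hence $\sigma^N_U(N') \subseteq W$, which yields continuity at $s_0$ and completes the verification of $(\star)$. The principal subtlety is exactly this last passage: the knots $K_s$ are constructed stalk by stalk in $\textbf{Sets}$, and what makes $s \mapsto K_s$ continuous is the sheaf-theoretic fact that the germ-level equalities output by Lemmas \ref{lemma_determines_1} and \ref{lemma_determines_2} automatically extend to equalities of sections on an open neighborhood.
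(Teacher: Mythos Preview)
Your proof is correct and follows essentially the same architecture as the paper's: lift the germ $u_t$ to a section $u \in A(U)(N)$, define $\sigma^N_U(s)$ as the knot determined by $\text{germ}_s u$, and then prove continuity by showing $\sigma^N_U(s) \in W$ for all $s$ near $s_0$ via Lemma~\ref{lemma_determines_2}. The one noteworthy difference is in how you produce the local lift into $A(W)$: the paper invokes the sheaf-level continuity axiom for $A$ (Theorem VII.10.1(iv) of Mac Lane--Moerdijk) against a covering sieve of $U$ in which $V$ is the only member containing $\sigma^N_U(s)$, whereas you work stalkwise, applying Lemma~\ref{lemma_determines_1} to the \textbf{Sets}-valued functor $s_0^*\circ A$ and then lifting the resulting germ to a section. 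Your route is arguably cleaner, since it reuses the pointwise machinery of Section~\ref{sec_points} directly and avoids the auxiliary covering-sieve construction; the paper's route has the mild advantage of exercising the sheaf-valued continuity hypothesis explicitly, which it will need again in the proof of Lemma~\ref{lemma_meta_isotopy_inverse_determines}.
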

\begin{proof} Let $t \in \mathbb{I}$ and $(U,K) \in \text{obj}(\sigma_A(t))$. It must be shown that there is a neighborhood $N$ of $t$ and a continuous function $\sigma^N_U:N \to U$ such that $\sigma^N_U(t)=K$ and $(U,\sigma^N_U(s)) \in \text{obj}(\sigma_A(s))$ for all $s \in N$. Since $(U,K) \in \text{obj}(\sigma_A(t))$, $K$ is the knot determined by some $u_t \in t^* \circ A(U)$. Now, $A(U)$ is a sheaf on $\mathbb{I}$. Every sheaf on a topological space can be identified with its sheaf of germs. Furthermore, for $P \in \text{Sh}(\textbf{I})$, applying the left adjoint $t^*:\text{Sh}(\textbf{I}) \to \textbf{Sets}$ gives the stalk of $P$ at $t$. This implies that there is a neighborhood $N$ of $t$ such that $u_t=\text{germ}_t u$ for some $u \in A(U)(N)$. For any $s \in N$, define $u_s \in s^* \circ A(U)$ by $u_s=\text{germ}_s u$ and let $\sigma^N_U(s)$ be the knot in $U$ determined by $u_s$ (see Definition \ref{defn_determines}). 
\newline
\newline
By construction, $(U,\sigma^N_U(s)) \in \sigma_A(s)$. Now we show that $\sigma^N_U:N \to U$ is continuous. Let $V \subseteq U, V \ne U$ be an open set such that $\sigma^N_U(s) \in V$ for some $s \in N$. We will prove that there is a neighborhood $M \subseteq N$ of $s$ such that $\sigma^N_U(M) \subseteq V$. First take an open cover $\{U_i\}$ of $U$ such that $U_1=V$ is the only open set containing $\sigma^N_U(s)$. Let $S$ be the covering sieve on $U$ generated by $\{U_i\}$. The continuity property of $A$ applied to $u \in A(U)(N)$ means that $N$ is covered by arrows in $\textbf{I}$ of the form $j:M \to N$ such that there is a $\Psi: W \to U$ of $S$ and a $w \in A(W)(M)$ satisfying $A(\Psi)_{M}(w)=A(U)(j)(u)$. Then we may choose some such $j$ ,$\Psi$, and $w$ so that $s\in M$. Taking the stalk over $s$ for the equation $A(\Psi)_{M}(w)=A(U)(j)(u)$ gives $(s^* \circ A)(\Psi)(\text{germ}_s w)=u_s$. Then by Lemma \ref{lemma_determines}, the knot $\sigma^N_U(s)$ determined by $u_s$ must be in $\Psi(W)$. The choice of $S$ then implies that $\Psi$ factors through the inclusion $I:V \subseteq U$. Write $\Psi=I \circ \Phi$ for some $\Phi:W \to V$ and set $v=A(\Phi)_M(w) \in A(V)(M)$. Thus we have that for all $r \in M$, $\text{germ}_r v \in r^* \circ A(V)$. Also note that $A(I)_M(v)=A(U)(j)(u)$. Hence, $(r^* \circ A)(I)(\text{germ}_r v)=u_r$. By Lemma \ref{lemma_determines_2}, the knot in $V$ determined by $\text{germ}_r v$ is mapped by $I$ to the knot in $U$ determined by $u_r$, i.e. $\sigma^N_U(r)$. But $I$ is merely the subset map $V \subseteq U$, so we have $\sigma^N_U(r) \in V$ for all $r \in M$ and it follows that $\sigma^N_U$ is continuous.
\end{proof}

\begin{lemma} \label{lemma_nat_cont_filter_determine_isotopy} If $A_1,A_2:\textbf{VK} \to \text{Sh}(\textbf{I})$ are continuous filtering functors and $\beta:A_1 \to A_2$ is a natural transformation, then $\sigma_{A_1}=\sigma_{A_2}$.
\end{lemma}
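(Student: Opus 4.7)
The plan is to reduce this to the analogous pointwise statement already proved in Section \ref{sec_points}, namely Lemma \ref{lemma_natural_imp_same_knots}. Recall that the variable-space isotopy $\sigma_{A}$ is defined by setting $\sigma_A(t)$ equal to the variable-space knot determined by the continuous filtering functor $t^* \circ A : \textbf{VK} \to \textbf{Sets}$, where $t^* : \text{Sh}(\textbf{I}) \to \textbf{Sets}$ is the stalk functor at $t$ (the inverse image of the geometric morphism induced by $\tau_t : \mathbbm{1} \to \mathbb{I}$). So it suffices to show that for each $t \in \mathbb{I}$, the functors $t^* \circ A_1$ and $t^* \circ A_2$ determine the same variable-space knot.

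First I would observe that a natural transformation $\beta : A_1 \to A_2$ of functors $\textbf{VK} \to \text{Sh}(\textbf{I})$ can be whiskered on the left by the functor $t^*$ to produce a natural transformation
\[
t^*\beta : t^* \circ A_1 \to t^* \circ A_2,
\]
whose component at an object $U$ of $\textbf{VK}$ is simply $t^*(\beta_U) : t^*(A_1(U)) \to t^*(A_2(U))$; naturality in $U$ follows immediately from functoriality of $t^*$ and naturality of $\beta$. Both $t^* \circ A_1$ and $t^* \circ A_2$ are continuous filtering functors $\textbf{VK} \to \textbf{Sets}$, as already used in the definition of $\sigma_{A_i}$.

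Next I would apply Lemma \ref{lemma_natural_imp_same_knots} to the pair $(t^* \circ A_1, t^* \circ A_2)$ with the natural transformation $t^*\beta$. That lemma gives that the two functors determine the same connected component of $\smallint_{\textbf{VK}} F$, i.e.\ the same variable-space knot. By definition of $\sigma_{A_i}$, this is exactly the statement $\sigma_{A_1}(t) = \sigma_{A_2}(t)$. Since $t \in \mathbb{I}$ was arbitrary, $\sigma_{A_1} = \sigma_{A_2}$ as functions $\mathbb{I} \to \pi_0(\smallint_{\textbf{VK}} F)$.

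There is essentially no obstacle here: the entire content is already contained in Lemma \ref{lemma_natural_imp_same_knots}, and the only point to verify is that stalk functors transport natural transformations in $\text{Sh}(\textbf{I})$ to natural transformations in $\textbf{Sets}$, which is formal. The only thing worth being slightly careful about is distinguishing the $t$ appearing as an element of $\mathbb{I}$ from the associated geometric morphism and its inverse image $t^*$, and confirming that $t^*$ (being left exact and preserving colimits) sends continuous filtering functors $\textbf{VK} \to \text{Sh}(\textbf{I})$ to continuous filtering functors $\textbf{VK} \to \textbf{Sets}$; but this is precisely what was already invoked in defining $\sigma_A$ in the first place, so no new work is required.
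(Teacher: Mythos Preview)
Your proposal is correct and takes essentially the same approach as the paper: both reduce to Lemma \ref{lemma_natural_imp_same_knots} by producing, for each $t$, a natural transformation $t^*\circ A_1 \to t^*\circ A_2$. The paper writes out the whiskering explicitly in terms of germs and verifies naturality by a direct computation, whereas you invoke the formal fact that post-composing with a functor preserves natural transformations; the content is the same.
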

\begin{proof} For each object $U$ of $\textbf{VK}$, there is a natural transformation $\beta_U:A_1(U) \to A_2(U)$ of sheaves on $\mathbb{I}$. Then for each $N \subseteq \mathbb{I}$, there is a function $\beta_{U,N}: A_1(U)(N) \to A_2(U)(N)$ that is natural in both $U$ and $N$. These functions will be used to define a natural transformation $\beta^t:t^*\circ A_1 \to t^*\circ A_2$. Recall that every element of $t^*\circ A_1(U)$ can be represented as $\text{germ}_t u$ for some $u \in A_1(U)(N)$. Define $\beta^t_U\left(\text{germ}_t u\right)=\text{germ}_t \beta_{U,N}(u)$. It must be shown that $\beta^t_U$ is natural in $U$. Let $\Psi:U \to V$ be an arrow in $\textbf{VK}$. First note that $A_2(\Psi)_N \circ \beta_{U,N}=\beta_{V,N} \circ A_1(\Psi)_N$ because $\beta$ is natural in $U$. Then:
\begin{align*}
\beta^t_V\left(t^* \circ A_1(\Psi)\left(\text{germ}_t u\right) \right) &= \beta^t_V \left( \text{germ}_t A_1(\Psi)_N(u)\right) \\
&= \text{germ}_t \beta_{V,N} \circ A_1(\Psi)_N(u) \\
&= \text{germ}_t A_2(\Psi)_N \circ \beta_{U,N} (u) \\
&=t^*\circ A_2(\Psi)\left(\text{germ}_t \beta_{U,N}(u)\right) \\
&=(t^* \circ A_2)(\Psi) \circ \beta_U^t\left(\text{germ}_t u \right)
\end{align*}
Thus, $\beta^t$ is a natural transformation. By Lemma \ref{lemma_natural_imp_same_knots}, $t^* \circ A_1$ and $t^* \circ A_2$ determine the same variable-space knot. Hence, $\sigma_{A_1}(t)=\sigma_{A_2}(t)$ for all $t \in \mathbb{I}$ and the proof is complete. 
\end{proof}

\begin{lemma} \label{lemma_meta_isotopy_inverse_determines} Let $\sigma:\mathbb{I} \to \pi_0(\smallint_{\textbf{VK}}F)$ be a variable-space isotopy.
\begin{enumerate}
\item There is a continuous filtering functor $A_{\sigma}:\textbf{VK} \to \text{Sh}(\textbf{I})$ such that $\sigma_{A_{\sigma}}=\sigma$. 
\item If $A: \textbf{VK} \to \text{Sh}(\textbf{I})$ is a continuous filtering functor and $\sigma_A=\sigma$, then $A \cong A_{\sigma}$.
\end{enumerate}
\end{lemma}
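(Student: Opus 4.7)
To prove $(1)$, I would parametrize the construction of Lemma \ref{lemma_inverse_determines} by $t \in \mathbb{I}$. For each object $U$ of $\textbf{VK}$, define a presheaf $P_U$ on $\mathbb{I}$ whose sections over an open $N \subseteq \mathbb{I}$ are equivalence classes of pairs $(\Psi, \gamma)$, where $\Psi: O' \to U$ is an arrow in $\textbf{VK}$ and $\gamma: N \to O'$ is continuous with $(O', \gamma(s)) \in \text{obj}(\sigma(s))$ for every $s \in N$. Declare $(\Psi_1, \gamma_1) \sim (\Psi_2, \gamma_2)$ when there exist a third pair $(\Psi_3, \gamma_3)$ and arrows $I_j: O'_3 \to O'_j$ in $\textbf{VK}$ satisfying $\Psi_3 = \Psi_j \circ I_j$ and $\gamma_j = I_j \circ \gamma_3$ for $j=1,2$; restriction along $M \subseteq N$ sends $[\Psi, \gamma] \mapsto [\Psi, \gamma|_M]$, and an arrow $\Phi: U \to V$ of $\textbf{VK}$ acts by $[\Psi, \gamma] \mapsto [\Phi \circ \Psi, \gamma]$. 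Let $A_\sigma(U)$ be the sheafification of $P_U$. Sheafification is needed because the supporting surface of $O'$ may have to change as $s$ varies across $N$, so a single global representative need not exist.

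Next I would identify the stalk $t^* \circ A_\sigma(U)$ with the $\textbf{Sets}$-valued functor $A_{K_t}$ of Lemma \ref{lemma_inverse_determines}, where $K_t$ is any knot with $(O_t, K_t) \in \text{obj}(\sigma(t))$. Germs at $t$ of sections of $P_U$ are pairs $(\Psi: O' \to U, \gamma(t))$ with $(O', \gamma(t)) \in \text{obj}(\sigma(t))$, and the $\sim$-relation localises at $t$ to the equivalence in Lemma \ref{lemma_inverse_determines}. This yields a natural isomorphism $t^* \circ A_\sigma \cong A_{K_t}$. Since $A_{K_t}$ is continuous and filtering (Lemma \ref{lemma_A_K_filtering}) and determines $\textbf{V}_{K_t} = \sigma(t)$ (Lemma \ref{lemma_A_K_determines_V_K}), and since the filtering and continuity conditions for $\text{Sh}(\textbf{I})$-valued functors can be checked stalk-wise using left-exactness of $t^*$ and the universal property of sheafification, it follows that $A_\sigma$ is a continuous filtering functor with $\sigma_{A_\sigma}(t) = \sigma(t)$ for all $t$.

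For $(2)$, given a continuous filtering functor $A$ with $\sigma_A = \sigma$, at each $t$ both $t^* \circ A$ and $t^* \circ A_\sigma$ determine $\sigma(t)$. Lemma \ref{lemma_same_knots_imp_nat_iso} then produces, for a chosen germ $o_t \in t^* \circ A(O_t)$ corresponding to $K_t$, a natural isomorphism $\alpha^{o_t}: t^* \circ A \to A_{K_t} \cong t^* \circ A_\sigma$. The remaining task is to assemble these stalkwise identifications into a single natural isomorphism $\alpha: A \cong A_\sigma$ of sheaf-valued functors. Given $u \in A(U)(N)$, I would use the explicit formula for $\alpha^{o_t}$ from Lemma \ref{lemma_same_knots_imp_nat_iso} together with the local sections of $A$ guaranteed by its continuity as a filtering functor to construct a section of $A_\sigma(U)$ over $N$ whose germ at each $t$ matches $\alpha^{o_t}(\text{germ}_t u)$, then check naturality stalk-wise.

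The main obstacle, in my view, is this globalising step in $(2)$: at each stalk the isomorphism $\alpha^{o_t}$ depends on a choice of distinguished germ, and a coherent family of choices over $N$ is required. The piecewise straightening of a variable-space isotopy provided by Lemma \ref{lemma_cont_implies_isotop_var_amb} should supply the local data needed to carry out the gluing on each subinterval of $N$, but verifying that the resulting assignment respects restriction along $M \subseteq N$ and naturality in $U$ simultaneously, while remaining independent of the intermediate choices, is where the bulk of the bookkeeping will lie.
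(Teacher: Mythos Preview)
Your strategy is correct and aligns with the paper's: build $A_\sigma$ so that each stalk $t^*\circ A_\sigma$ is naturally isomorphic to $A_{K_t}$, verify the filtering and continuity conditions stalkwise via \cite{mac_moer}, Theorem~VII.10.1, and for part~(2) assemble the fiberwise isomorphisms of Lemma~\ref{lemma_same_knots_imp_nat_iso} into a global one using coherent base-point choices supplied by Lemma~\ref{lemma_cont_implies_isotop_var_amb}. The paper carries out exactly this program for~(2), fixing elements $c_i\in A(C_i)(N_i)$ along the finite decomposition, defining $\eta^U$ fiberwise as $\alpha^{\text{germ}_t c_i}$, and checking compatibility on the overlaps $N_i\cap N_{i+1}$; you have correctly located this as the main bookkeeping burden.

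The genuine difference is in the \emph{construction} of $A_\sigma$ in part~(1). The paper invokes Lemma~\ref{lemma_cont_implies_isotop_var_amb} at the very start, fixing reference paths $\sigma_1,\ldots,\sigma_n$ once and for all, and then builds an explicit \'etale space $\Lambda^U\to\mathbb{I}$ whose fiber at $t\in N_i$ is \emph{by definition} $A_{\sigma_i(t)}(U)$, with an explicit gluing relation $\stackrel{t}{\sim}$ on overlaps. Your approach is coordinate-free: you take $A_\sigma(U)$ to be the sheafification of a presheaf of $(\Psi,\gamma)$-pairs with no preferred reference path. Both yield isomorphic functors (indeed, part~(2) forces this), and your version is arguably cleaner conceptually. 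However, your stalk identification is not as immediate as you suggest. You write that ``the $\sim$-relation localises at $t$ to the equivalence in Lemma~\ref{lemma_inverse_determines}'', but in $A_{K_t}$ the domains $O'$ are required to be $(O_t,K_t)$-neighborhoods living in subsurfaces of a \emph{fixed} $\Sigma_t$, whereas your germs at $t$ carry an arbitrary object $(O',\gamma(t))\in\text{obj}(\sigma(t))$, possibly in an unrelated surface. Bridging this gap requires either the connectedness of $\sigma(t)$ together with pullbacks, or an appeal to Lemma~\ref{lemma_same_knots_imp_nat_iso} --- precisely the wrinkle the paper sidesteps by pinning down the fiber via the fixed $\sigma_i(t)$. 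This is repairable, but it is where the paper's ``fix choices first'' approach buys concreteness.
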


\begin{proof} The proof is delayed until Section \ref{sec_proof_meta_isotopy_inverse_determines} below.
\end{proof}
 
\begin{theorem} \label{thm_paths_sh_vk} Variable-space and virtual isotopies are in one-to-one correspondence.
\end{theorem}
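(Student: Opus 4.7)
The plan is to imitate verbatim the strategy used in Theorem \ref{thm_points_sh_vk}, since all four supporting lemmas needed for isotopies have already been established and are the exact analogues of those used for variable-space knots. First I would invoke the standard equivalence (\cite{mac_moer}, Chapter VII) between the category $\underline{\text{Hom}}(\text{Sh}(\textbf{I}),\text{Sh}(\textbf{VK}))$ of geometric morphisms and the category of continuous filtering functors $A:\textbf{VK} \to \text{Sh}(\textbf{I})$, which reduces the theorem to establishing a bijection between isomorphism classes of such continuous filtering functors and variable-space isotopies $\sigma:\mathbb{I} \to \pi_0(\smallint_{\textbf{VK}} F)$.

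Next, I would use the two assignments already constructed in Section \ref{sec_virtual_isotopy_paths}: $A \mapsto \sigma_A$ (defined via stalks $t^* \circ A$ and Theorem \ref{thm_points_sh_vk}) and $\sigma \mapsto A_\sigma$ (Lemma \ref{lemma_meta_isotopy_inverse_determines}(1)). Lemma \ref{lemma_cff_is_vi} ensures $\sigma_A$ really is a variable-space isotopy, so $A \mapsto \sigma_A$ is well defined on objects. Lemma \ref{lemma_nat_cont_filter_determine_isotopy} then shows that naturally isomorphic $A$'s yield equal isotopies, so the assignment descends to isomorphism classes.

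For surjectivity, Lemma \ref{lemma_meta_isotopy_inverse_determines}(1) states directly that every variable-space isotopy $\sigma$ satisfies $\sigma_{A_\sigma} = \sigma$, so every isotopy lies in the image. For injectivity, suppose $A$ and $B$ are continuous filtering functors with $\sigma_A = \sigma_B = \sigma$. Then Lemma \ref{lemma_meta_isotopy_inverse_determines}(2) applied to each yields $A \cong A_\sigma \cong B$, so $A$ and $B$ determine the same isomorphism class. This completes the correspondence.

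There is no real obstacle remaining at this level: all the heavy lifting has been offloaded to Lemmas \ref{lemma_cff_is_vi}, \ref{lemma_nat_cont_filter_determine_isotopy}, and especially Lemma \ref{lemma_meta_isotopy_inverse_determines}, whose proof (deferred to Section \ref{sec_proof_meta_isotopy_inverse_determines}) is the genuinely technical part, as it requires constructing the sheaf-valued functor $A_\sigma$ from the pointwise data $\sigma(t) \in \pi_0(\smallint_{\textbf{VK}} F)$ and verifying continuity, the filtering axioms, and the uniqueness up to natural isomorphism—analogous to, but more intricate than, the $\textbf{Sets}$-valued construction of $A_K$ in Section \ref{sec_proof_inverse_determines}. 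Consequently the proof of Theorem \ref{thm_paths_sh_vk} itself is only a few lines of bookkeeping assembling these ingredients.
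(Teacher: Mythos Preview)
Your proposal is correct and follows essentially the same approach as the paper: reduce to continuous filtering functors via \cite{mac_moer}, Corollary VII.10.2, use Lemma \ref{lemma_nat_cont_filter_determine_isotopy} to show the assignment $A\mapsto\sigma_A$ is well defined on isomorphism classes, and then invoke the two parts of Lemma \ref{lemma_meta_isotopy_inverse_determines} for surjectivity and injectivity exactly as you describe.
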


\begin{proof} Since $\underline{\text{Hom}}(\text{Sh}(\textbf{I}),\text{Sh}(\textbf{VK}))$ is equivalent to the category of continuous filtering functors $A:\textbf{VK} \to \text{Sh}(\textbf{I})$ (\cite{mac_moer}, Corollary VII.10.2), it is enough to prove that the theorem holds for continuous filtering functors. By Lemma \ref{lemma_nat_cont_filter_determine_isotopy}, isomorphic continuous filtering functors determine the same variable space isotopy. Lemma \ref{lemma_meta_isotopy_inverse_determines}$(1)$ implies the correspondence is surjective. It is injective by Lemma \ref{lemma_meta_isotopy_inverse_determines}$(2)$: if $A,B:\textbf{VK} \to \text{Sh}(\textbf{I})$ satisfy $\sigma_A=\sigma_B$, then $A \cong A_{\sigma_A}=A_{\sigma_B}\cong B$. \end{proof}

Every knot diagram on a surface $\Sigma$ is the projection of the image of some knot $K \in \mathbb{K}(\Sigma)$. By Theorem \ref{thm_points_sh_vk}, there is a virtual knot $\text{pt}(K):\textbf{Sets} \to \text{Sh}(\textbf{VK})$ corresponding to the variable-space knot $\textbf{V}_K$. In contrast to the minimal genus model, all knot diagrams on surfaces are thus represented in our geometric model $VG$. Combining Theorems \ref{lemma_v_knot_type_isotopy} and \ref{thm_paths_sh_vk}, we also have that the equivalence classes of the relation on virtual knots generated by virtual isotopy are in one-to-one correspondence with virtual knot types. In this sense, the geometric model $VG$ of virtual knot theory fully recovers any of the diagrammatic models. We record these observations below for future reference.

\begin{theorem} \label{thm_recovers} Let $(\Sigma_0,D_0),(\Sigma_1,D_1)\in \mathfrak{D}^{(CKS^*)}$ be knot diagrams. Let $K_0 \in \mathbb{K}(\Sigma_0)$, $K_1 \in \mathbb{K}(\Sigma_1)$ be knots with diagrams $D_0,D_1$ and let $\text{pt}(K_0),\text{pt}(K_1):\textbf{Sets} \to \text{Sh}(\textbf{VK})$ denote the virtual knots corresponding to the variable-space knots $\textbf{V}_{K_0},\textbf{V}_{K_1}$. Then $(\Sigma_0,D_0),(\Sigma_1,D_1)$ have the same virtual knot type if and only if there is a virtual isotopy $\sigma:\text{Sh}(\textbf{I}) \to \text{Sh}(\textbf{VK})$ from $\text{pt}(K_0)$ to $\text{pt}(K_1)$.
\end{theorem}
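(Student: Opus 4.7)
The plan is to assemble Theorem~\ref{thm_recovers} directly from three already-established correspondences: Theorem~\ref{lemma_v_knot_type_isotopy} (virtual knot types versus variable-space isotopy of the associated variable-space knots), Theorem~\ref{thm_points_sh_vk} (variable-space knots versus virtual knots), and Theorem~\ref{thm_paths_sh_vk} (variable-space isotopies versus virtual isotopies). The only real content beyond citing these is bookkeeping at the endpoints, i.e.\ checking that under the equivalence between $\underline{\text{Hom}}(\text{Sh}(\textbf{I}),\text{Sh}(\textbf{VK}))$ and continuous filtering functors $A:\textbf{VK}\to\text{Sh}(\textbf{I})$, a variable-space isotopy $\sigma_A$ whose value at $t\in\{0,1\}$ is the variable-space knot $\textbf{V}_{K_t}$ corresponds to a geometric morphism $\widetilde{\sigma}$ such that $\widetilde{\sigma}\circ t\cong \text{pt}(K_t)$.

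For the forward direction, I would assume $(\Sigma_0,D_0)$ and $(\Sigma_1,D_1)$ have the same virtual knot type, then apply Theorem~\ref{lemma_v_knot_type_isotopy} to obtain a variable-space isotopy $\sigma:\mathbb{I}\to\pi_0(\smallint_{\textbf{VK}}F)$ with $\sigma(0)=\textbf{V}_{K_0}$ and $\sigma(1)=\textbf{V}_{K_1}$. Lemma~\ref{lemma_meta_isotopy_inverse_determines}(1) produces a continuous filtering functor $A_\sigma:\textbf{VK}\to\text{Sh}(\textbf{I})$ with $\sigma_{A_\sigma}=\sigma$, and the equivalence of categories converts this to a geometric morphism $\widetilde{\sigma}:\text{Sh}(\textbf{I})\to\text{Sh}(\textbf{VK})$. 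The endpoint check goes through the naturality of the equivalence: composing $\widetilde{\sigma}$ with $t:\textbf{Sets}\to\text{Sh}(\textbf{I})$ corresponds to composing $A_\sigma$ with the left adjoint $t^*$, and by the very construction of $\sigma_{A_\sigma}$ in Lemma~\ref{lemma_cff_is_vi}, the continuous filtering functor $t^*\circ A_\sigma$ determines the variable-space knot $\sigma(t)$. For $t=0$ this variable-space knot is $\textbf{V}_{K_0}$, so by Theorem~\ref{thm_points_sh_vk} and Lemma~\ref{lemma_same_knots_imp_nat_iso}(2), $\widetilde{\sigma}\circ 0\cong\text{pt}(K_0)$; likewise $\widetilde{\sigma}\circ 1\cong\text{pt}(K_1)$. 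Thus $\widetilde{\sigma}$ is a virtual isotopy from $\text{pt}(K_0)$ to $\text{pt}(K_1)$ in the sense of Definition~\ref{defn_virtual_isotopy_paths}.

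For the reverse direction, I would start with a virtual isotopy $\widetilde{\sigma}:\text{Sh}(\textbf{I})\to\text{Sh}(\textbf{VK})$ satisfying $\widetilde{\sigma}\circ 0\cong\text{pt}(K_0)$ and $\widetilde{\sigma}\circ 1\cong\text{pt}(K_1)$, pass through the same equivalence to obtain a continuous filtering functor $A:\textbf{VK}\to\text{Sh}(\textbf{I})$, and form the variable-space isotopy $\sigma_A$ of Lemma~\ref{lemma_cff_is_vi}. By the same endpoint argument, $\sigma_A(0)$ is the variable-space knot corresponding to $0^*\circ A$, which (via the 2-cell identification) is isomorphic to the filtering functor for $\text{pt}(K_0)$; injectivity in Theorem~\ref{thm_points_sh_vk} then gives $\sigma_A(0)=\textbf{V}_{K_0}$, and analogously $\sigma_A(1)=\textbf{V}_{K_1}$. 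Theorem~\ref{lemma_v_knot_type_isotopy} now supplies the conclusion that $(\Sigma_0,D_0)$ and $(\Sigma_1,D_1)$ have the same virtual knot type.

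The hard part is not the logical structure, which is just a diagram chase through the three equivalences, but verifying the endpoint compatibility precisely. The subtlety is that the 2-categorical equivalence only determines $\widetilde{\sigma}\circ t$ up to natural isomorphism, so one must invoke the uniqueness in Lemma~\ref{lemma_same_knots_imp_nat_iso} (which in turn uses that the variable-space knot determined by a continuous filtering functor is invariant under natural isomorphism, Lemma~\ref{lemma_natural_imp_same_knots}) to pin the composite to the correct variable-space knot. Once this bookkeeping is in place, the rest is a citation.
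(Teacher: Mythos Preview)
Your proposal is correct and follows the same approach as the paper: the paper's proof is the sentence ``Combining Theorems \ref{lemma_v_knot_type_isotopy} and \ref{thm_paths_sh_vk}'' in the paragraph preceding the theorem statement, together with the correspondence of Theorem \ref{thm_points_sh_vk}. You have simply made explicit the endpoint bookkeeping that the paper leaves implicit, and your invocation of Lemmas \ref{lemma_cff_is_vi}, \ref{lemma_natural_imp_same_knots}, and \ref{lemma_same_knots_imp_nat_iso} to handle the natural-isomorphism ambiguity at $t=0,1$ is exactly what is needed to make the combination rigorous.
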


\subsection{Proof of Lemma \ref{lemma_meta_isotopy_inverse_determines}} \label{sec_proof_meta_isotopy_inverse_determines} Suppose that $\sigma:\mathbb{I} \to \pi_0(\smallint_{\textbf{VK}} F)$ is a variable-space isotopy. By the proof of Lemma \ref{lemma_cff_is_vi}, $A_{\sigma}: \mathbf{VK} \to \text{Sh}(\textbf{I})$ should be defined so that, for all $t \in \mathbb{I}$, $t^* \circ A_{\sigma} \cong A_{K_t}$, where $(O,K_t) \in \text{obj}(\sigma(t))$ and $A_{K_t}:\textbf{VK} \to \textbf{Sets}$ is the continuous filtering functor from Section \ref{sec_proof_inverse_determines}. Thus, for each object $U$, we will define an \'{e}tale space $p^U:\Lambda^U \to \mathbb{I}$ such that the fiber over $t$ is in one-to-one correspondence with the set $A_{K_t}(U)$. Then we set $A_{\sigma}(U)$ to be the sheaf of sections of $p^U$. The topology of $\Lambda^U$ is obtained from the local continuity property of $\sigma$. By $(\star)$, there is a continuous function $\sigma^N_U:N \to O$ such that $\sigma^N_O(t)=K_t$ and $\sigma^N_O(s) \in \text{obj}(\sigma(s))$ for all $s \in N$. Suppose that $[\Psi] \in A_{K_t}(U)$ where $\Psi:O' \to U$ and $O'$ is an $(O,K_t)$-neighborhood. Since $\sigma^N_U$ is continuous, there is an $M \subseteq N$ such that $O'$ is an $(O,\sigma^N_U(s))$-neighborhood for all $s \in M$. Hence, $\Psi$ represents an equivalence class $[\Psi]_s \in A_{\sigma^N_U(s)}(U)$ for all $s \in M$. The sets $\dot{\Psi}=\{[\Psi]_s|s \in M\}$ form a basis for the topology on $\Lambda^U$ so that the projection $p^U:\Lambda^U \to \mathbb{I}$, $p([\Psi]_s)=s$ is \'{e}tale.
\newline
\newline
To make this precise, first fix objects $(C_t,K_t) \in \text{obj}(\sigma(t))$ for each $t \in \mathbb{I}$. This is done using Lemma \ref{lemma_cont_implies_isotop_var_amb}. Then there are continuous functions $\sigma_1:N_1 \to C_1$, $\sigma_{1.5}:N_{1.5} \to C_{1.5}$, $\ldots$, $\sigma_n:N_n \to C_n$ that match on overlaps and have the property that $(C_j,\sigma_j(t)) \in \text{obj}(\sigma(t))$ for all $t \in N_j$. For $t \in N_i$, $i \in \{1,2,\ldots,n\}$, fix $(C_i,\sigma_i(t))$. The two knots fixed for $t \in N_{i+.5}$ will be managed with the intermediate knot $(C_{i+.5},\sigma_{i+.5}(t))$. 
\newline
\newline
Next, following Section \ref{sec_proof_inverse_determines}, we define an equivalence relation $\stackrel{t}{\sim}$ on the arrows from the fixed representatives at $t \in \mathbb{I}$ to $U$. Let $H_t^U$ be the (possibly empty) set of arrows in $\textbf{VK}$ to $U$ from all objects in the sets $\mathscr{O}_{\sigma_i(t)}$ with $t \in N_i$. In particular, if $t \in N_i \cap N_{i+1}$ for some $i$, $H_t^U$ is the set of arrows from objects in $\mathscr{O}_{\sigma_i(t)} \cup \mathscr{O}_{\sigma_{i+1}(t)}$. Suppose $\Psi:O_i' \to U$ and $\Phi:O_j' \to U$ for $O_i' \in \mathscr{O}_{\sigma_i(t)}$ and $O_j' \in \mathscr{O}_{\sigma_j(t)}$. If $i=j$, write $\Phi \stackrel{t}{\sim} \Psi$ if there is an $O' \in \mathscr{O}_{\sigma_i(t)}$ and inclusions $I_i:O' \to O_i'$, $I_j:O' \to O_j'$ such that $\Psi \circ I_i=\Phi \circ I_j$. In other words, $\Phi \stackrel{t}{\sim} \Psi$ if they represent the same element of $A_{\sigma_i(t)}(U)$. For $j=i+1$, write $\Phi \stackrel{t}{\sim} \Psi$ if there is an $O' \in \mathscr{O}_{\sigma_{i+.5}(t)}$ and arrows $\Xi_i:O' \to O_i'$, $\Xi_{i+1}: O' \to O_{i+1}'$ such that $\Psi \circ \Xi_i=\Phi \circ \Xi_{i+1}$ and the following diagram commutes, where all vertical arrows are inclusions.
\[
\xymatrix{O_{i}' \ar[d] & \ar[l]^-{\Xi_i} O' \ar[d] \ar[r]_{\Xi_{i+1}}& O_{i+1}' \ar[d]\\ 
C_i & \ar[l]_-{\Delta_i} C_{i+.5} \ar[r]^-{\Upsilon_{i+1}} \ar[r] & C_{i+1}
}
\]
It can be quickly checked that $\stackrel{t}{\sim}$ is reflexive, symmetric, and transitive. Denote by $F_t^U$ the set of $\stackrel{t}{\sim}$-equivalence classes of $H_t^U$. These sets will serve as the fibers of the bundle $p^U:\Lambda^U \to \mathbb{I}$. If $[\Psi] \in F_t^U$, we write $[\Psi]_t$ to denote the fiber in which it lies. Let $\Lambda^U=\bigsqcup_{t \in \mathbb{I}} F_t^U$ and let $p^U:\Lambda^U \to \mathbb{I}$ be the projection $p^U([\Psi]_t)=t$. As discussed above, the basic open sets of $\Lambda^U$ are of the form $\dot{\Psi}=\{[\Psi]_t \in F_t^U| t \in M \}$ where $M \subseteq \mathbb{I}$ is open, $\Psi:O' \to U$, and for all $t \in M$, $O' \in \mathscr{O}_{\sigma_i(t)}$ for some $i$. The verification that $p^U:\Lambda^U \to \mathbb{I}$ is \'{e}tale is left as an exercise.
\newline
\newline
Set $A_{\sigma}(U) \in \text{Sh}(\textbf{I})$ to be the sheaf of sections of $p^U$. If $\Phi: U \to V$ is an arrow of $\textbf{VK}$, there is an \'{e}tale space map $\Lambda(\Phi): \Lambda^U \to \Lambda^V$ defined by $\Lambda(\Phi)([\Psi]_t)=[\Phi \circ \Psi]_t$. The sections functor $\Gamma$ sends $\Lambda(\Phi)$ to an arrow $\Gamma (\Lambda(\Phi))$ in $\text{Sh}(\textbf{I})$, which is a natural transformation from $A_{\sigma}(U)$ to $A_{\sigma}(V)$. Define $A_{\sigma}(\Phi):A_{\sigma}(U)\to A_{\sigma}(V)$ by $A_{\sigma}(\Phi)=\Gamma(\Lambda(\Phi))$. This defines a functor $A_{\sigma}:\textbf{VK} \to \text{Sh}(\textbf{I})$.

\begin{lemma} \label{lemma_pts_of_A_sigma} For a variable-space isotopy $\sigma:\mathbb{I} \to \pi_0(\smallint_{\textbf{VK}} F)$ and $(U,K) \in \text{obj}(\sigma(t))$, $t^* \circ A_{\sigma}\cong A_K$. 
\end{lemma}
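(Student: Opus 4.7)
The plan is to recognize $t^* \circ A_\sigma$ as a continuous filtering functor $\textbf{VK} \to \textbf{Sets}$ that determines the same variable-space knot as $A_K$, and then invoke Lemma \ref{lemma_same_knots_imp_nat_iso} to extract the natural isomorphism.

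First I would identify $t^* \circ A_\sigma$ pointwise. By construction $A_\sigma(U)$ is the sheaf of sections of the \'{e}tale space $p^U:\Lambda^U \to \mathbb{I}$, whose stalk at $t$ is canonically isomorphic to the fiber $F_t^U$. Since $t^*:\text{Sh}(\textbf{I}) \to \textbf{Sets}$ is the stalk functor at $t$, this gives a natural identification $t^* \circ A_\sigma(U) \cong F_t^U$. Because $t^*$ is a left exact left adjoint, composing the continuous filtering functor $A_\sigma$ with $t^*$ again yields a continuous filtering functor.

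Next I would show that $t^* \circ A_\sigma$ determines the variable-space knot $\sigma(t)$. The key computation is that for any $i$ with $t \in N_i$ and any $O' \in \mathscr{O}_{\sigma_i(t)}$, the identity arrow $\text{id}_{O'}:O' \to O'$ yields a class $[\text{id}_{O'}]_t \in F_t^{O'}$ that determines the knot $\sigma_i(t)$; indeed, unpacking the definition of $\stackrel{t}{\sim}$, any $\Xi:W \to O'$ lying in $S_{[\text{id}_{O'}]_t}$ via a class $w=[\psi]_t$ must have $\Xi \circ \psi$ agreeing with $\text{id}_{O'}$ on some sub-neighborhood of $\sigma_i(t)$, whence $\sigma_i(t) \in \Xi(W)$, and then Lemma \ref{lemma_determines} pins the determined knot down uniquely as $\sigma_i(t)$. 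Given any $(V,J) \in \text{obj}(\sigma(t))$, the pair $(V,J)$ is connected to $(C_i,\sigma_i(t))$ by a path of arrows in $\smallint_{\textbf{VK}}F$; iteratively pulling this path back as in the proof of Lemma \ref{lemma_inverse_determines}, I would extract a $(C_i,\sigma_i(t))$-neighborhood $O'$ together with an arrow $\Phi:O' \to V$ satisfying $\Phi(\sigma_i(t))=J$. Lemma \ref{lemma_determines_3} then implies that $[\Phi]_t = \Phi \cdot [\text{id}_{O'}]_t$ determines $J$ in $V$. Conversely, every class in $F_t^V$ arises as $\Phi \cdot [\text{id}_{O'}]_t$ for some such $\Phi$, so every determined knot is $\Phi(\sigma_i(t))$, which by construction lies in $\sigma(t)$. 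Together with Lemma \ref{lemma_meta_determines}, this identifies $\sigma(t)$ as the variable-space knot determined by $t^* \circ A_\sigma$.

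Finally, Lemma \ref{lemma_A_K_determines_V_K} says that $A_K$ also determines $\sigma(t)=\textbf{V}_K$ when $(U,K) \in \text{obj}(\sigma(t))$. Applying Lemma \ref{lemma_same_knots_imp_nat_iso} with $A = t^* \circ A_\sigma$ then produces the natural isomorphism $t^* \circ A_\sigma \cong A_K$. The hard part is the middle step: when $t \in N_i \cap N_{i+1}$, the relation $\stackrel{t}{\sim}$ fuses arrows from $\mathscr{O}_{\sigma_i(t)}$ with arrows from $\mathscr{O}_{\sigma_{i+1}(t)}$ through the intermediate component $C_{i+.5}$, so care is needed to verify that the determined knot is independent of which representative $\sigma_i(t)$ or $\sigma_{i+1}(t)$ is chosen, and that the pullback argument producing $\Phi$ actually terminates inside $\mathscr{O}_{\sigma_i(t)}$ rather than drifting into some unrelated neighborhood system.
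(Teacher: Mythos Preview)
Your overall strategy---identify the stalk $t^* \circ A_\sigma(V)$ with the fiber $F_t^V$ and then invoke Lemma~\ref{lemma_same_knots_imp_nat_iso}---is the right one and matches the paper. However, there is a circularity in your justification. You write that ``composing the continuous filtering functor $A_\sigma$ with $t^*$ again yields a continuous filtering functor,'' but at this point in the paper $A_\sigma$ is \emph{not yet known} to be continuous and filtering. In fact, Lemma~\ref{lemma_pts_of_A_sigma} is precisely what the paper uses (in the proof of Lemma~\ref{lemma_meta_isotopy_inverse_determines}(1)) to establish that $A_\sigma$ is continuous and filtering. So you cannot appeal to that fact here, and consequently your middle step---which invokes Lemmas~\ref{lemma_determines}, \ref{lemma_determines_3}, and \ref{lemma_meta_determines}, all of which require a continuous filtering functor---is not justified as written.

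The paper sidesteps this entirely by a much more direct observation: the fiber $F_t^V$ is, by construction of the equivalence relation $\stackrel{t}{\sim}$, literally the set $A_{\sigma_i(t)}(V)$ for any $i$ with $t \in N_i$ (the relation $\stackrel{t}{\sim}$ restricted to arrows from $\mathscr{O}_{\sigma_i(t)}$ is exactly the $\sim$ used to define $A_{\sigma_i(t)}$, and the extra identifications across $\mathscr{O}_{\sigma_{i+1}(t)}$ add no new classes). This identification is manifestly natural in $V$, so $t^* \circ A_\sigma \cong A_{\sigma_i(t)}$ immediately. Since $A_{\sigma_i(t)}$ is already known to be continuous and filtering by Lemma~\ref{lemma_A_K_filtering}, and since $(U,K)$ and $(C_i,\sigma_i(t))$ lie in the same component $\sigma(t)$, Lemma~\ref{lemma_same_knots_imp_nat_iso} gives $A_{\sigma_i(t)} \cong A_K$. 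Your detour through the ``determines'' machinery is thus unnecessary; once you recognize $F_t^V = A_{\sigma_i(t)}(V)$, the result is a two-line composition of known isomorphisms, and the circularity disappears.
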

\begin{proof} For any object $V$ of $\textbf{VK}$, $A_{\sigma}(V)$ is the sheaf of sections of the \'{e}tale space $\Lambda^V$. Applying the inverse image functor $t^*:\text{Sh}(\textbf{I}) \to \textbf{Sets}$ gives the stalk of $\Lambda^V$ over $t$. But the stalk over $t$ is an equivalence class of arrows to $V$ from objects in $\mathscr{O}_{\sigma_i(t)}$ where $t \in N_i$. Hence, $t^* \circ A_{\sigma}(V)$ can be identified with $A_{\sigma_i(t)}(V)$. This is natural in $V$, so that $t^* \circ A_{\sigma}$ is naturally isomorphic to $A_{\sigma_i(t)}$. Since $K$ and $\sigma_i(t)$ are in the same connected component of $\smallint_{\textbf{VK}} F$, it follows from Lemma \ref{lemma_same_knots_imp_nat_iso} that $A_K$ and $A_{\sigma_i(t)}$ are naturally isomorphic. Hence, $A_K$ and $t^* \circ A_{\sigma}$ are naturally isomorphic.   
\end{proof}

Now we are ready to prove Lemma \ref{lemma_meta_isotopy_inverse_determines}. The two parts are proved separately below.

\begin{proof}[Proof of Lemma \ref{lemma_meta_isotopy_inverse_determines}(1)] If $A_{\sigma}$ is continuous and filtering, $\sigma_{A_{\sigma}}(t)$ is the connected component $\textbf{V}_t$ of $\smallint_{\textbf{VK}} F$ corresponding to the continuous filtering functor $t^* \circ A_{\sigma}:\textbf{VK} \to \textbf{Sets}$. By Lemma \ref{lemma_pts_of_A_sigma}, this is naturally isomorphic to $A_K:\textbf{VK} \to \textbf{Sets}$ for any $(U,K) \in \text{obj}(\sigma(t))$. Since $\textbf{V}_K=\textbf{V}_t$, it follows that $\sigma_{A_{\sigma}}(t)=\sigma(t)$ for all $t \in \mathbb{I}$. Hence, the second part of the lemma follows from the first.
\newline
\newline
To prove the first claim, it suffices to show that $A_{\sigma}$ satisfies properties $(i)-(iv)$ of \cite{mac_moer}, Theorem VII.10.1. Briefly, they are: $(i)$ $A_{\sigma}$ is nonempty, $(ii)$ pairs of objects of $\textbf{VK}$ are suitably connected, $(iii)$ parallel arrows have equalizers, and $(iv)$ $A_{\sigma}$ maps covering sieves to epimorphic families. This is now a routine task since $t^* \circ A_{\sigma}:\textbf{VK} \to \textbf{Sets}$ is continuous and filtering by Lemma \ref{lemma_pts_of_A_sigma}. As an illustration, we verify $(iii)$. Suppose $\Phi_1,\Phi_2:U \to V$, $N \subseteq \mathbb{I}$, $u \in A_{\sigma}(U)(N)$, and $A_{\sigma}(\Phi_1)_N(u)=A_{\sigma}(\Phi_2)_N(u)$. It must be proved that there is an open cover of $N$ by arrows $j:M \to N$ (in $\textbf{I}$) such that there exists an arrow $\Upsilon:W \to U$ (in $\textbf{VK}$) and an $w \in A_{\sigma}(W)(M)$ such that $\Phi_1 \circ \Upsilon=\Phi_2 \circ \Upsilon$ and $A_{\sigma}(\Upsilon)_{M}(w)=A_{\sigma}(U)(j)(u)$. Let $t \in N$. Then $(t^* \circ A_{\sigma})(\Phi_1)(\text{germ}_t u)=(t^* \circ A_{\sigma})(\Phi_2)(\text{germ}_t u)$. Since $t^* \circ A_{\sigma}$ is filtering and hence parallel arrows have equalizers, it follows that there is an arrow $\Upsilon:W \to U$ and a $w_t \in (t^* \circ A_{\sigma})(W)$ such that $\Phi_1 \circ \Upsilon=\Phi_2 \circ \Upsilon$ and $(t^*\circ A_{\sigma})(\Upsilon)(w_t)=\text{germ}_t u$. By choosing a sufficiently small neighborhood $M \subseteq N$ of $t$, it follows there is $w \in A_{\sigma}(W)(M)$ such that $w_t=\text{germ}_t w$ and $A_{\sigma}(\Upsilon)_M(w)$ is a section of $p^U$ on $M$ that agrees with the restriction of $u$ to $M$. Hence, $A_{\sigma}(\Upsilon)_{M}(w)=A_{\sigma}(U)(j)(u)$, where $j:M \to N$ is inclusion. 
\end{proof}

\begin{proof}[Proof of Lemma \ref{lemma_meta_isotopy_inverse_determines}(2)] Since $\sigma=\sigma_A$, $A_{\sigma} =A_{\sigma_A}$ and it suffices to prove that $A \cong A_{\sigma_A}$. For $U$ an object of $\textbf{VK}$, let  $q^U:E^U \to \mathbb{I}$ be the \'{e}tale space of germs of the sheaf $A(U)$ on $\mathbb{I}$. It will be shown that $q^U:E^U \to \mathbb{I}$ is equivalent as an \'{e}tale space to the bundle $p^U:\Lambda^U \to \mathbb{I}$ constructed above from the variable-space isotopy $\sigma_A$.
\newline
\newline
Recall that $\sigma_A(t)$ is the variable-space knot determined by $t^* \circ A:\textbf{VK} \to \textbf{Sets}$. By Lemma \ref{lemma_cff_is_vi}, if $u \in A(U)(N)$ for some $N \subseteq \mathbb{I}$, the knots in $U$ determined by $\text{germ}_t u \in t^* \circ A(U)$ form a path $\sigma^N_U:N \to U$. The \'{e}tale space $\Lambda^U$ is built from a fixed set of paths $\sigma_1:N_1 \to C_1,\sigma_{1.5}:N_{1.5} \to C_{1.5},\ldots,\sigma_n:N_n \to C_n$ obtained from applying Lemma \ref{lemma_cont_implies_isotop_var_amb} to $\sigma_A$. Combining these two results, it may be assumed that for each $j=1,1.5,2,\ldots,n$ there is a $c_j \in A(C_j)(N_j)$ such that $\sigma_j(t)$ is the knot determined by $\text{germ}_t c_j \in t^* \circ A(C_j)$ for all $t \in N_j$. Furthermore, it may be assumed that for $i=1,2,\ldots,n-1$ and $t \in N_i \cap N_{i+1}$, $\Upsilon_{i+1} \cdot \text{germ}_t c_{i+.5}= \text{germ}_t c_{i+1}$ and $\Delta_{i} \cdot \text{germ}_t c_{i+.5}=\text{germ}_t c_i$. This follows from the fact that $t^* \circ A$ is filtering and hence the path component $C_{i+.5}$ can be chosen to fit into a diagram $\xymatrix{C_{i} & \ar[l]_-{\Delta_i} C_{i+.5} \ar[r]^{\Upsilon_{i+1}} & C_{i+1}}$ for which there is a $c_t \in t^* \circ A (C_{i+.5})$ such that $\Upsilon_{i+1} \cdot c_t=\text{germ}_t c_{i+1}$ and $\Delta_i \cdot c_t=\text{germ}_t c_i$. Then $c_t=\text{germ}_t c_{i+.5}$ for some $c_{i+.5} \in A(C_{i+.5})(M)$ and hence $\sigma_{i+.5}$ can be defined as the path determined in this way by the fixed elements $c_{i+.5}$.
\newline
\newline
By Lemma \ref{lemma_same_knots_imp_nat_iso}, for each $i=1,2,\ldots,n$ and $t \in N_i$, there is a natural isomorphism $\alpha^{\text{germ}_t c_i}:t^* \circ A \to A_{\sigma_i(t)}$ given by our fixed choice of element $\text{germ}_t c_i \in t^*\circ A(C_i)$. For each object $U$ of $\textbf{VK}$ and $u_t \in t^* \circ A (U)$, $\alpha^{\text{germ}_t c_i}(u_t)=[\Psi_{u_t}]$, where $\Psi_{u_t}:O_t' \to U$ and $O_t' \in \mathscr{O}_{\sigma_i(t)}$. Since $t^*\circ A(U)$ is the stalk at $t$ of the sheaf $A(U)$, the maps $\alpha^{\text{germ}_t c_i}$ combine to yield a fiber-preserving bijection $\eta^U:E^U \to \Lambda^U$ of \'{e}tale spaces. Assuming for the moment that $\eta^U$ is well-defined, we observe that $\eta^U$ is continuous with continuous inverse. When $A(U)$ is identified with an \'{e}tale space, the basic open sets are of the form $\{\text{germ}_t u| t \in N\}$ where $N \subseteq \mathbb{I}$ and $u \in A(U)(N)$. But the basic open sets of $\Lambda^U$ are of the form $\{[\Psi]_t| t\in N\}$. For $N$ a sufficiently small neighborhood of $t$, $\eta^U([\text{germ}_s u])=[\Psi_{\text{germ}_t u}]_s$ for all $s \in N$. Thus, $\eta^U$ maps basic open sets bijectively from $E^U$ to $\Lambda^U$ and we conclude that $\eta^U$ is continuous with continuous inverse. Similarly, $\eta^U$ is natural in $U$, since $\alpha^{\text{germ}_t c_i}$ is natural in $U$ for all $t$.
\newline
\newline
It now must be shown $\eta^U$ is well-defined. It may be assumed that $t \in N_i \cap N_{i+1}=N_{i+.5}$ for some $i$ as all other cases are trivial. Suppose that $\alpha^{\text{germ}_t c_{i+.5}}(u_t)=[\Psi_{u_t}]$ for some $u_t \in t^* \circ A(U)$ and $\Psi:O' \to U$ with $O' \in \mathscr{O}_{\sigma_{i+.5}(t)}$. Recall that $[\Psi_{u_t}]$ is defined by a diagram $\xymatrix{U & \ar[l]_-{\Psi_{u_t}} O' \ar[r]^-{I_{i+.5}} & C_{i+.5}}$ such that there is an $o_t' \in t^* \circ A(O')$ satisfying $\Psi_{u_t} \cdot o_t'=u_t$ and $I_{i+.5}\cdot o_t'=\text{germ}_t c_{i+.5}$. Set $\Omega_i=\Delta_i \circ I_{i+.5}$, $\Omega_{i+1}=\Upsilon_{i+1} \circ I_{i+.5}$. By Lemma \ref{lemma_factors}, $\Omega_j$ factors as $\xymatrix{O' \ar[r]^-{\Omega_j'} & O_j' \ar[r]^-{I_j} & C_j }$ for $j=i,i+1$, where $\Omega_j'$ is an isomorphism and $I_j$ is inclusion. Hence, $\Psi_{u_t}$ factors through $\Omega_j'$ for $j=i,i+1$. Write $\Psi_{u_t}=\Theta_{u_t} \circ \Omega_{i+1}'$ and $\Psi_{u_t}=\Phi_{u_t} \circ \Omega_i'$ for some arrows $\Theta_{u_t}$, $\Phi_{u_t}$. This gives the commutative diagram below:
\[
\xymatrix{
C_{i+1} & \ar[l]_-{I_{i+1}} O_{i+1}' \ar@{-->}[dr]^{\Theta_{u_t}} & \\
C_{i+.5} \ar[u]^-{\Upsilon_{i+1}} \ar[d]_-{\Delta_i} & \ar[l]_-{I_{i+.5}} O' \ar[r]^-{\Psi_{u_t}}\ar[u]^-{\Omega_{i+1}'} \ar[d]_-{\Omega_i'} & U \\
C_i & \ar[l]^-{I_i} O_i' \ar@{-->}[ur]_{\Phi_{u_t}} &
}
\]
Then $(I_j \circ \Omega_j') \cdot o_t'=\text{germ}_t c_j$ for $j=i,i+1$. Hence, $\alpha^{\text{germ}_t c_i}(u_t)=[\Phi_{u_t}]$ and $\alpha^{\text{germ}_t c_{i+1}}(u_t)=[\Theta_{u_t}]$. But the above diagram means exactly that $\Phi_{u_t} \stackrel{t}{\sim} \Theta_{u_t}$. Thus, $\eta^U$ is a well-defined equivalence of \'{e}tale spaces. This implies that $A$ and $A_{\sigma_A}$ are naturally isomorphic.   
\end{proof}

\section{Virtual knot invariants} \label{sec_invar}

\subsection{Variable-space knot invariants} \label{sec_cohom} Let $\mathbb{G}$ be a nonempty set with the discrete topology. A knot invariant valued in $\mathbb{G}$ is a continuous function $\nu:\mathbb{K} \to \mathbb{G}$, so that $\nu$ is constant on the path components of $\mathbb{K}$. This requirement can be translated into $\pi_0(\smallint_{\textbf{K}} F)$. A function $\nu:\pi_0(\smallint_{\textbf{K}} F) \to \mathbb{G}$ is a knot invariant if for every function $\sigma:\mathbb{I} \to \pi_0(\smallint_{\textbf{K}} F)$ satisfying the local continuity property $(\star)$, we have $\nu(\sigma(0))=\nu(\sigma(1))$. This motivates the following.

\begin{definition}[Variable-space knot invariant] \label{defn_var_space_knot_invar} A \emph{variable-space knot invariant valued in }$\mathbb{G}$ is a function $\nu:\pi_0(\smallint_{\textbf{VK}} F) \to \mathbb{G}$ such that for every variable-space isotopy $\sigma:\mathbb{I} \to \pi_0(\smallint_{\textbf{VK}} F)$, we have $\nu(\sigma(0))=\nu(\sigma(1))$. 
\end{definition}

This notion of invariant for virtual knots is closely related to sheaf cohomology. Recall that the constant presheaf $\Delta_{\mathbb{G}}:\textbf{VK}^{\text{op}} \to \textbf{Sets}$ is defined by $\Delta_{\mathbb{G}}(U)=\mathbb{G}$ for all objects $U$ and $\Delta_{\mathbb{G}}(\Psi)=1_{\mathbb{G}}$ for arrows $\Psi:U \to V$. Applying the associated sheaf functor $\textbf{a}:\textbf{Sets}^{\textbf{VK}^{\text{op}}} \to \text{Sh}(\textbf{VK})$ (see \cite{mac_moer}, Section III.5) to $\Delta_{\mathbb{G}}$ gives a sheaf $\Gamma_{\mathbb{G}}$ that can be described explicitly as follows. For each surface $\Sigma$, consider the covering space $\zeta^{\mathbb{G} \times \Sigma}_{\Sigma}:\mathbb{G} \times \Sigma \to \Sigma$ defined by $\zeta^{\mathbb{G}\times \Sigma}_{\Sigma}(g,z)=z$. This induces a covering space $Z_{\mathbb{K}(\Sigma)}^{\mathbb{G} \times \mathbb{K}(\Sigma)}:\mathbb{G} \times \mathbb{K}(\Sigma) \to \mathbb{K}(\Sigma)$. For $U \subseteq \mathbb{K}(\Sigma)$, let $\Gamma_{\mathbb{G}}(U)$ be the set of sections $s:U \to \mathbb{G} \times \mathbb{K}(\Sigma)$ of $Z^{\mathbb{G}\times \mathbb{K}(\Sigma)}_{\mathbb{K}(\Sigma)}$. For an arrow $\Psi:\widebar{U} \to U$, with $U \subseteq \mathbb{K}(\Sigma),\widebar{U} \subseteq \mathbb{K}(\widebar{\Sigma})$, and any $s \in \Gamma_{\mathbb{G}}(U)$, observe that there is a unique section $\widebar{s} \in \Gamma_{\mathbb{G}}(\widebar{U})$ such that the following diagram commutes:
\[
\xymatrix{
\mathbb{G} \times \mathbb{K}(\widebar{\Sigma}) \ar[r]^-{1_{\mathbb{G}} \times \Psi} & \mathbb{G} \times \mathbb{K}(\Sigma) \\
\wbar{U} \ar[r]^-{\Psi} \ar[u]^-{\bar{s}} & U \ar[u]_-{s}}.
\]
A presheaf $\Gamma_{\mathbb{G}}:\textbf{VK}^{\text{op}} \to \textbf{Sets}$ is obtained by setting $\Gamma_{\mathbb{G}}(\Psi)(s)=\widebar{s}$. Moreover, $\Gamma_{\mathbb{G}}$ is a sheaf. To see this, let $\{s_{\Psi}:\widebar{U} \to \mathbb{G} \times \mathbb{K}(\Sigma)|\Psi \in S\}$ be a matching family for a covering sieve $S=\{\Psi:\widebar{U} \to U\}$ on $U$. For each $s_{\Psi}$, there is a section $(s|\Psi(\widebar{U})):\Psi(\widebar{U}) \to \mathbb{G} \times \mathbb{K}(\Sigma)$ defined by $(s|\Psi(\widebar{U}))(\Psi(x))=(p_1(s_{\Psi}(x)),\Psi(x))$, where $p_1:\mathbb{G} \times \mathbb{K}(\Sigma)\to \mathbb{G}$ is projection onto the first factor. Since $\{s_{\Psi}\}$ is a matching family, if $\Psi,\Phi\in S$ have overlapping images, $(s|\Psi(\text{dom}(\Psi)))(\Psi(x))=(s|\Phi(\text{dom}(\Phi))(\Phi(y))$ whenever $\Psi(x)=\Phi(y)$. Since $\bigcup_{\Psi \in S} \Psi(\text{dom}(\Psi))=U$, this implies that the restrictions patch together to give a section $s:U \to \mathbb{G} \times \mathbb{K}(\Sigma)$ such that $\Gamma_{\mathbb{G}}(\Psi)(s)=s_{\Psi}$ for all $\Psi \in S$. Hence, $\Gamma_{\mathbb{G}}$ is a sheaf. We will follow the usual convention and denote $\Gamma_{\mathbb{G}} \cong \textbf{a} (\Delta_{\mathbb{G}})$ by $\Delta_{\mathbb{G}}$. 

\begin{lemma} \label{thm_hom_v_knot_invar} Variable-space knot invariants valued in $\mathbb{G}$ are in one-to-one correspondence with $\text{Hom}_{\text{Sh}(\textbf{VK})}(1,\Delta_{\mathbb{G}})$, where $1$ is a terminal object of $\text{Sh}(\textbf{VK})$.
\end{lemma}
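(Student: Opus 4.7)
The plan is to identify $\text{Hom}_{\text{Sh}(\textbf{VK})}(1,\Delta_{\mathbb{G}})$ with compatible families of locally constant $\mathbb{G}$-valued functions on the objects of $\textbf{VK}$, and to match such families with $\mathbb{G}$-valued functions on $\pi_0(\smallint_{\textbf{VK}} F)$ that are invariant under variable-space isotopies. A morphism $1 \to \Delta_{\mathbb{G}}$ is a global section of $\Delta_{\mathbb{G}}$, i.e.\ a matching family $\{s_U\}$ with $s_U \in \Delta_{\mathbb{G}}(U)$ compatible under every arrow of $\textbf{VK}$. By the explicit description of $\Delta_{\mathbb{G}}$ given just before the lemma, each $s_U$ is a continuous section $U \to \mathbb{G}\times \mathbb{K}(\Sigma)$ of the projection, and because $\mathbb{G}$ is discrete this is precisely a locally constant function $\nu_U: U \to \mathbb{G}$; unwinding the compatibility diagram for $\Gamma_{\mathbb{G}}$, a matching family is equivalent to a system $\{\nu_U\}_U$ of locally constant functions satisfying $\nu_V \circ \Psi = \nu_U$ for every arrow $\Psi:U \to V$ of $\textbf{VK}$.

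Given such a system, define $\widehat{\nu}:\pi_0(\smallint_{\textbf{VK}} F) \to \mathbb{G}$ by $\widehat{\nu}(\textbf{V}) := \nu_U(K)$ for any $(U,K) \in \text{obj}(\textbf{V})$. Well-definedness is immediate: any two objects in the same connected component are joined by a zigzag of arrows of $\smallint_{\textbf{VK}} F$, and the compatibility $\nu_V \circ \Psi = \nu_U$ forces the value to be preserved along each arrow. To verify that $\widehat{\nu}$ is a variable-space knot invariant, let $\sigma:\mathbb{I}\to \pi_0(\smallint_{\textbf{VK}} F)$ be a variable-space isotopy and apply Lemma \ref{lemma_cont_implies_isotop_var_amb} to obtain continuous paths $\sigma_1,\sigma_{1.5},\ldots,\sigma_n$ with images in path components $C_j$, patched by arrows $\Upsilon_i,\Delta_i$. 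Each $\nu_{C_j}$ is locally constant, hence constant on the connected set $\sigma_j(N_j)$, so $\widehat{\nu}\circ\sigma$ is constant on each $N_j$; at every overlap the patching arrows force the values to agree, yielding $\widehat{\nu}(\sigma(0))=\widehat{\nu}(\sigma(1))$.

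Conversely, starting from a variable-space knot invariant $\nu$, define $\nu_U(K) := \nu(\textbf{V}_K)$ for $K \in U$. Compatibility is automatic because an arrow $\Psi: U \to V$ with $\Psi(K)=K'$ furnishes an arrow $(U,K)\to (V,K')$ in $\smallint_{\textbf{VK}} F$, whence $\textbf{V}_K = \textbf{V}_{K'}$. The step I expect to be the main obstacle is the local constancy of $\nu_U$ on $U\subseteq \mathbb{K}(\Sigma)$: this reduces, for any continuous path $\rho:\mathbb{I}\to U$, to showing that $\tilde\sigma(t):=\textbf{V}_{\rho(t)}$ is a variable-space isotopy, after which constancy follows from invariance of $\nu$. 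To check property $(\star)$ at $t$ for a witness $(U',K') \in \text{obj}(\textbf{V}_{\rho(t)})$, I would fix a zigzag in $\smallint_{\textbf{VK}} F$ joining $(U,\rho(t))$ to $(U',K')$ and push the path $\rho$ through it arrow by arrow; because every capitalized map is continuous, open, and injective, shrinking $N$ about $t$ makes $\rho|_N$ factor uniquely through each intermediate object, producing the required continuous $\sigma^N_{U'}:N\to U'$ (an argument paralleling the factorization step in the proof of Theorem \ref{lemma_v_knot_type_isotopy}). Mutual inverseness of the two constructions is then routine: the round trip $\nu \mapsto \{\nu_U\} \mapsto \widehat{\nu}$ returns $\widehat{\nu}(\textbf{V}_K)=\nu_U(K)=\nu(\textbf{V}_K)$, and the opposite round trip recovers the original family by the same identity.
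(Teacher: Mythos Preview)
Your proposal is correct and follows essentially the same route as the paper: identify global sections of $\Delta_{\mathbb{G}}$ with compatible locally constant $\mathbb{G}$-valued functions, pass to a function on $\pi_0(\smallint_{\textbf{VK}} F)$, and use Lemma~\ref{lemma_cont_implies_isotop_var_amb} for invariance; the converse via $\nu_U(K)=\nu(\textbf{V}_K)$ is identical. You are in fact more careful than the paper about the local constancy step in the converse direction --- the paper simply asserts that knots in the same path component of $U$ are variable-space isotopic, whereas you correctly sketch the verification of property $(\star)$ by pushing a path through a zigzag, exactly as in Theorem~\ref{lemma_v_knot_type_isotopy}.
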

\begin{proof} Take as terminal object the sheaf which sends every object to the one-point set $\mathbbm{1}=\{0\}$ and every arrow to the identity. A natural transformation $\eta:1 \to \Delta_{\mathbb{G}}$ gives for each pair $U,V$ of objects and each arrow $\Psi:U \to V$ a commutative triangle:
\[
\xymatrix{ & \Delta_{\mathbb{G}}(V) \ar[d]^-{\Delta_{\mathbb{G}}(\Psi)} \\
1 \ar[ur]^-{\eta_V} \ar[r]_-{\eta_U} & \Delta_{\mathbb{G}}(U)
}
\] 
For each object $U$, $\eta$ chooses a section $\eta_U(0):U \to \mathbb{G} \times \mathbb{K}(\Sigma)$ of $Z^{\mathbb{G} \times \mathbb{K}(\Sigma)}_{\mathbb{K}(\Sigma)}$. Then if $C \subseteq \mathbb{K}(\Sigma)$ is a path component, there is a $g_C \in \mathbb{G}$ such that $\eta_C(0)(K)=(g_C,K)$ for all $K \in C$. Suppose that two path components $C,\widebar{C}$ are connected by a path of arrows in $\textbf{VK}$:
\begin{equation*} \label{eqn_thm_hom_v_knot_invar}
\xymatrix{C=C_1 & \ar[l] C_{1.5} \ar[r] & C_2 & \ar[l] \cdots \ar[r] & C_{n-1} & \ar[l] C_{n-.5} \ar[r] & C_n=\widebar{C},}
\end{equation*}
where each $C_j \subseteq \mathbb{K}(\Sigma_j)$ is a path component. Apply $\Delta_{\mathbb{G}}$ and $\eta$ to this path of arrows. Then the commutative triangle implies that $g_{C_i}=g_{C_j}$ for all $i,j$. Thus, there is a well-defined function $\nu:\pi_0(\smallint_{\textbf{VK}} F) \to \mathbb{G}$ given by $\nu(\textbf{V})=g_C$ if there is a path component $C \subseteq \mathbb{K}(\Sigma)$ such that $(C,K) \in \text{obj}(\textbf{V})$ for some $K\in C$. Now, let $\sigma:\mathbb{I} \to \pi_0(\smallint_{\textbf{VK}} F)$ be a variable-space isotopy. By Lemma \ref{lemma_cont_implies_isotop_var_amb}, there are path components $C_1,C_{1.5},\ldots,C_n$ as above and a sequence of continuous functions $\sigma_1:N_1 \to C_1, \sigma_{1.5} \to C_{1.5},\ldots,\sigma_n:N_n \to C_n$ such that $\sigma_j(s) \in \text{obj}(\sigma(s))$ whenever $s \in N_j$. Hence, $\nu(\sigma(0))=g_{C_1}=g_{C_n}=\nu(\sigma(1))$ and $\eta$ uniquely determines a variable-space knot invariant.
\newline
\newline
Conversely, suppose that $\nu:\pi_0(\smallint_{\textbf{VK}} F) \to \mathbb{G}$ is a variable-space knot invariant. For $U \subseteq \mathbb{K}(\Sigma)$, define $s_U:U \to \mathbb{G} \times \mathbb{K}(\Sigma)$ by $s_U(K)=(\nu(\textbf{V}_K),K)$. Then $Z_{\mathbb{G}}^{\Sigma} \circ s_U=1_U$. But $s_U$ is also continuous: if $K_1,K_2$ are in the same path component of $U$, $\textbf{V}_{K_1}$ and $\textbf{V}_{K_2}$ are variable-space isotopic and hence $\nu(\textbf{V}_{K_1})=\nu(\textbf{V}_{K_2})$. Let $\eta:1 \to \Delta_{\mathbb{G}}$ be defined on components by $\eta_U(0)=s_U$. Now, if $\Psi:\widebar{U} \to U$ is an arrow and $K \in \widebar{U}$, we have $\textbf{V}_{K}=\textbf{V}_{\Psi(K)}$ and hence $\nu(\textbf{V}_K)=\nu(\textbf{V}_{\Psi(K)})$. Thus, $(1_{\mathbb{G}} \times \Psi) \circ s_{\widebar{U}}(K)=(\nu(\textbf{V}_{K}),\Psi(K))=(\nu(\textbf{V}_{\Psi(K)}),\Psi(K))=s_U(\Psi(K))$. This implies $s_{\widebar{U}}=\Delta_{\mathbb{G}}(\Psi)(s_U)$ and we conclude that $\eta:1 \to \Delta_{\mathbb{G}}$ is a natural transformation.   
\end{proof}

Now let $\mathbb{G}$ be an abelian group with the discrete topology. Then $H^0(\mathbb{K},\mathbb{G})$ is the group of knot invariants valued in $\mathbb{G}$. Here $H^0$ denotes the usual singular cohomology group. Translating this to sheaf cohomology, this means that classical knot invariants in $\mathbb{G}$ are identified with the group $H^0(\text{Sh}(\textbf{K}),\Delta_{\mathbb{G}})$, where $\Delta_{\mathbb{G}}$ is the constant sheaf in $\text{Sh}(\textbf{K})$ (\cite{johnstone}, Example 8.15.(i)). We now show that the same result holds for variable-space knot invariants.

\begin{theorem} $H^0(\text{Sh}(\textbf{VK}),\Delta_{\mathbb{G}})$ is the group of variable-space knot invariants valued in $\mathbb{G}$. 
\end{theorem}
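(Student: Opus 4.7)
The plan is to reduce the theorem to the already-established Lemma \ref{thm_hom_v_knot_invar} and then verify that the bijection constructed there is compatible with the natural abelian group structures on both sides. Since $\mathbb{G}$ is a discrete abelian group, the constant sheaf $\Delta_{\mathbb{G}}$ is a sheaf of abelian groups in $\text{Sh}(\textbf{VK})$, i.e.\ an abelian group object of $\text{Sh}(\textbf{VK})$. By the standard definition of sheaf cohomology in a Grothendieck topos (see e.g.\ \cite{johnstone}, Section 8.15 or \cite{mac_moer}, Section III.8), $H^0(\text{Sh}(\textbf{VK}),\Delta_{\mathbb{G}})$ is computed as the global sections functor applied to $\Delta_{\mathbb{G}}$, which in turn is naturally isomorphic to $\text{Hom}_{\text{Sh}(\textbf{VK})}(1,\Delta_{\mathbb{G}})$, where $1$ is the terminal sheaf. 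The group operation on $H^0(\text{Sh}(\textbf{VK}),\Delta_{\mathbb{G}})$ is the pointwise sum of natural transformations: if $\eta,\eta':1\to\Delta_{\mathbb{G}}$ are two natural transformations, their sum $(\eta+\eta')_U$ sends $0\in 1(U)$ to $\eta_U(0)+\eta'_U(0)$ using the componentwise addition of sections of $\Delta_{\mathbb{G}}(U)$ coming from the group structure on $\mathbb{G}$.

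Next, I would invoke Lemma \ref{thm_hom_v_knot_invar} to obtain a bijection $\Phi$ between $\text{Hom}_{\text{Sh}(\textbf{VK})}(1,\Delta_{\mathbb{G}})$ and variable-space knot invariants $\nu:\pi_0(\smallint_{\textbf{VK}} F)\to\mathbb{G}$. Now the group structure on the collection of variable-space knot invariants is the pointwise group operation: $(\nu+\nu')(\textbf{V})=\nu(\textbf{V})+\nu'(\textbf{V})$ in $\mathbb{G}$. I would then verify that $\Phi$ is a group homomorphism. Tracing the construction in Lemma \ref{thm_hom_v_knot_invar}: given $\eta$, the corresponding invariant sends a variable-space knot $\textbf{V}_K$ (with $(C,K)\in \text{obj}(\textbf{V}_K)$ and $C\subseteq\mathbb{K}(\Sigma)$ a path component) to the value $g_C\in\mathbb{G}$ determined by $\eta_C(0)(K)=(g_C,K)$. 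For $\eta+\eta'$, the corresponding section of $\mathbb{G}\times\mathbb{K}(\Sigma)\to\mathbb{K}(\Sigma)$ at $K$ is $(g_C+g_C',K)$, yielding the invariant $\textbf{V}_K\mapsto g_C+g_C'$, which is exactly $\Phi(\eta)(\textbf{V}_K)+\Phi(\eta')(\textbf{V}_K)$. Hence $\Phi(\eta+\eta')=\Phi(\eta)+\Phi(\eta')$, and $\Phi$ is a group isomorphism.

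I do not expect a serious obstacle here, since the main content lies in Lemma \ref{thm_hom_v_knot_invar}. The only real care point is making sure the pointwise group operation on sections of the étale space $\mathbb{G}\times\mathbb{K}(\Sigma)\to\mathbb{K}(\Sigma)$ really does correspond to the group operation in the stalk-level description of $\Delta_{\mathbb{G}}$, and that this is in turn the same as the group operation on natural transformations $1\to\Delta_{\mathbb{G}}$. This is standard: since $\Delta_{\mathbb{G}}=\textbf{a}(\underline{\mathbb{G}})$ where $\underline{\mathbb{G}}$ is the constant presheaf with value $\mathbb{G}$, and since the associated sheaf functor preserves the abelian group structure, all three descriptions of addition coincide, completing the identification.
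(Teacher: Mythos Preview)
Your proposal is correct and follows essentially the same approach as the paper: identify $H^0(\text{Sh}(\textbf{VK}),\Delta_{\mathbb{G}})$ with $\text{Hom}_{\text{Sh}(\textbf{VK})}(1,\Delta_{\mathbb{G}})$ via the definition of sheaf cohomology as a right-derived functor, and then invoke Lemma~\ref{thm_hom_v_knot_invar}. The paper's proof stops there, whereas you additionally verify that the bijection of Lemma~\ref{thm_hom_v_knot_invar} respects the pointwise abelian group structures; this extra check is a reasonable piece of rigor that the paper leaves implicit.
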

\begin{proof} Since $\mathbb{G}$ is an abelian group, $\Delta_{\mathbb{G}}$ is an abelian sheaf. By definition, $H^j(\text{Sh}(\textbf{VK}),\Delta_{\mathbb{G}})$ is the $j$-th right-derived functor of $\text{Hom}_{\text{Sh}(\textbf{VK})}(1,\Delta_{\mathbb{G}})$ (see \cite{johnstone}, Definition 8.14). Lemma \ref{thm_hom_v_knot_invar} then implies that $H^0(\text{Sh}(\textbf{VK}),\Delta_{\mathbb{G}})$ is the group of variable-space knot invariants valued in $\mathbb{G}$. 
\end{proof}

\subsection{Virtual $\&$ variable space knot invariants} \label{sec_invar_geom} Here we prove that variable-space knot invariants (Definition \ref{defn_var_space_knot_invar}) are in one-to-one correspondence with virtual knot invariants. The argument proceeds as in Sections \ref{sec_points} and \ref{sec_virtual_isotopy_paths}. First it is shown that every continuous filtering functor $A: \textbf{G} \to \text{Sh}(\textbf{VK})$ determines a variable-space knot invariant $\nu_A:\pi_0(\smallint_{\textbf{VK}} F) \to \mathbb{G}$ that is unique up to isomorphism. Then it is shown that every variable-space knot invariant $\nu:\pi_0(\smallint_{\textbf{VK}} F) \to \mathbb{G}$ determines a continuous filtering functor $A_{\nu}:\textbf{G} \to \text{Sh}(\textbf{VK})$ such that $\nu_{A_{\nu}}=\nu$. We begin by calculating $A(H)$ for $H \subseteq \mathbb{G}$ and $A:\textbf{G} \to \text{Sh}(\textbf{VK})$ continuous and filtering.

\begin{lemma}\label{prop_calculate_A} For $H \subseteq \mathbb{G}$ and $C_1,C_2\ldots,C_n \subseteq \mathbb{K}(\Sigma)$ pairwise disjoint path components,
\[
A(H)\left(\bigsqcup_{j=1}^n C_j \right) \cong \left\{\begin{array}{cl} \varnothing & \text{if } (\exists j) (\forall h \in H) A(\{h\})(C_j)=\varnothing \\ \mathbbm{1} & \text{else}\end{array} \right.
\]
Furthermore, for every path component $C$ in $\textbf{VK}$, there is a unique $h \in \mathbb{G}$ such that $A(\{h\})(C) \ne \varnothing$.
\end{lemma}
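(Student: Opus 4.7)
The strategy is to exploit the fact that a continuous filtering functor $A:\textbf{G}\to\text{Sh}(\textbf{VK})$ is the inverse-image part of a geometric morphism $\nu:\text{Sh}(\textbf{VK})\to\text{Sh}(\textbf{G})$, so that $A$ preserves finite limits. Since the empty sieve covers $\varnothing\in\textbf{G}$, continuity forces $A(\varnothing)$ to be the initial sheaf $\mathbf{0}$, and since $\mathbb{G}$ is terminal in $\textbf{G}$, $A(\mathbb{G})=1$. For distinct $h_1,h_2\in\mathbb{G}$, the pullback $\{h_1\}\cap\{h_2\}=\varnothing$ is sent by $A$ to $A(\{h_1\})\cap A(\{h_2\})=\mathbf{0}$, so the subsheaves $A(\{h\})\hookrightarrow 1$ are pairwise disjoint. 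The singleton sieve $\{\{h\}\hookrightarrow H:h\in H\}$ covers $H$, so continuity yields $A(H)=\coprod_{h\in H}A(\{h\})$ as subsheaves of $1$. Each $A(\{h\})(V)$ is thus a subset of $1(V)=\mathbbm{1}$ and has cardinality at most one; the uniqueness assertion in the ``Furthermore'' is then immediate from disjointness, since if both $A(\{h_1\})(C)$ and $A(\{h_2\})(C)$ were non-empty with $h_1\ne h_2$, we would have $\mathbf{0}(C)=(A(\{h_1\})\cap A(\{h_2\}))(C)\ne\varnothing$, contradicting $C\ne\varnothing$.

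Next I reduce to a single path component. The family $\{C_j\hookrightarrow U\}$ is a covering sieve of $U=\bigsqcup_j C_j$ whose pairwise pullbacks in $\textbf{VK}$ are empty, so the sheaf condition gives $A(H)(U)=\prod_j A(H)(C_j)$, which is non-empty iff each factor is. It therefore suffices to show, for a single path component $C$, that $A(H)(C)\ne\varnothing$ iff $A(\{h\})(C)\ne\varnothing$ for some $h\in H$. The $(\Leftarrow)$ direction is the subsheaf inclusion $A(\{h\})\hookrightarrow A(H)$. For $(\Rightarrow)$, let $s\in A(H)(C)$; the jointly epimorphic family $\{A(\{h\})\hookrightarrow A(H)\}$ from continuity supplies a covering sieve $R$ on $C$ such that for each $\Psi:V\to C$ in $R$, the restriction $A(H)(\Psi)(s)$ lifts uniquely through some $A(\{h_\Psi\})\hookrightarrow A(H)$, and by disjointness the label $h_\Psi$ is itself uniquely determined whenever $V\ne\varnothing$. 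Partitioning $R$ by label gives subsets $R_h$ whose images $U_h:=\bigcup_{\Psi\in R_h}\Psi(V)\subseteq C$ are pairwise disjoint and jointly cover $C$. Connectedness of the path component $C$ then forces exactly one $U_{h_0}$ to be non-empty; the associated lifts in $A(\{h_0\})$ form a matching family for the covering sieve $R_{h_0}$ of $C$ and glue via the sheaf property into an element of $A(\{h_0\})(C)$. The existence of some $h$ with $A(\{h\})(C)\ne\varnothing$ in the ``Furthermore'' is then obtained by taking $H=\mathbb{G}$, since $A(\mathbb{G})(C)=1(C)=\mathbbm{1}\ne\varnothing$.

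The main obstacle I anticipate is verifying the pairwise disjointness of the images $U_{h_1}$ and $U_{h_2}$ for $h_1\ne h_2$: a common point $K\in U_{h_1}\cap U_{h_2}$ would, by Theorem \ref{lemma_vk_pullbacks}, give a non-empty pullback $V_1\times_C V_2$ in $\textbf{VK}$ carrying compatible lifts of the restricted section into both $A(\{h_1\})$ and $A(\{h_2\})$, forcing $(A(\{h_1\})\cap A(\{h_2\}))(V_1\times_C V_2)\ne\varnothing$ and contradicting $A(\{h_1\})\cap A(\{h_2\})=\mathbf{0}$. The delicate point is that this translation from topos-level disjointness to set-theoretic disjointness of images relies on the concrete computation of pullbacks in $\textbf{VK}$; once in hand, connectedness of $C$ in $\mathbb{K}(\Sigma)$ finishes the argument classically.
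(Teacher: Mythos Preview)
Your proof is correct and follows a genuinely more categorical route than the paper's. The paper works directly with the filtering axioms of \cite{mac_moer}, Theorem~VII.10.1: it first uses axiom~(ii) to prove $|A(H)(U)|\le 1$ and that at most one $h$ satisfies $A(\{h\})(U)\ne\varnothing$, then uses continuity together with an explicit path-and-compactness argument in $C$ to propagate the label $h$ along overlapping patches of a cover, and finally glues. You instead observe once and for all that, since $\textbf{G}$ has finite limits, a filtering functor $A$ preserves them, so each $A(\{h\})\hookrightarrow A(\mathbb{G})=1$ is subterminal and the pairwise intersections are $A(\varnothing)=\mathbf{0}$; uniqueness of the label and the bound $|A(H)(U)|\le1$ are then immediate lattice facts. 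For existence on a path component you replace the paper's path-compactness argument with a clopen-partition argument: the label map $\Psi\mapsto h_\Psi$ partitions $C$ into disjoint open pieces $U_h$, and connectedness forces a single $h_0$. What your approach buys is a shorter, more structural proof that makes transparent why the result is really about the frame of subterminals; what the paper's approach buys is that it never invokes the equivalence ``filtering $\Leftrightarrow$ finite-limit-preserving'' for lex domains, staying closer to the raw filtering axioms used elsewhere in the paper.

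One small imprecision worth fixing: $A$ is not literally the inverse image $\nu^*$ but rather $\nu^*\circ\textbf{a}\circ\textbf{y}$; the conclusion that $A$ preserves finite limits is nonetheless correct, either by composing these three finite-limit-preserving functors or by the standard fact that filtering functors out of a lex category are exactly the finite-limit-preserving ones. Also, when you say ``the singleton sieve covers $H$'', you mean the sieve it generates (which also contains $\varnothing\to H$); this is harmless but should be stated precisely.
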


\begin{proof} The proof is delayed to Section \ref{sec_prop_calculate_A}.
\end{proof}

Then $\nu_A:\pi_0(\smallint_{\textbf{VK}} F) \to \mathbb{G}$ may be defined as follows. Let $\textbf{V} \in \pi_0(\smallint_{\textbf{VK}} F)$ and suppose that $(C,K) \in \text{obj}(\textbf{V})$ where $C \subseteq \mathbb{K}(\Sigma)$ is a path component. By Proposition \ref{prop_calculate_A}, there is a unique $h \in \mathbb{G}$ such that $A(\{h\})(C) \ne \varnothing$. Define $\nu_A(\textbf{V})=h$. To see that this is well-defined, suppose that $\Psi:U \to C$ is an arrow in $\textbf{VK}$. If $A(\{h\})(C) \ne \varnothing$, then is must be that $A(\{h\})(U) \ne \varnothing$. Moreover, if $(C,K)$, $(\widebar{C},\widebar{K}) \in \text{obj}(\textbf{V})$, then they are connected by a path of arrows. By repeatedly applying the preceding observation, it follows that $A(\{h\})(\widebar{C}) \ne \varnothing$. That $\nu_A$ is a variable-space knot invariant follows as in the proof of Lemma \ref{thm_hom_v_knot_invar}. Uniqueness follows from the next pair of lemmas.

\begin{lemma} \label{lemma_nat_iso_implies_same_invariant} If $A_1,A_2: \textbf{G} \to \text{Sh}(\textbf{VK})$ are continuous filtering functors and $\eta:A_1 \to A_2$ is a natural isomorphism, then $\nu_{A_1}=\nu_{A_2}$.
\end{lemma}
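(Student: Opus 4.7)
The plan is to unwind the definition of $\nu_A$ using Lemma \ref{prop_calculate_A} and then observe that a natural isomorphism of functors $\textbf{G} \to \text{Sh}(\textbf{VK})$ preserves the key ``non-emptiness'' property that determines $\nu_A$. By construction, for any $\textbf{V} \in \pi_0(\smallint_{\textbf{VK}} F)$ and any choice of $(C,K) \in \text{obj}(\textbf{V})$ with $C \subseteq \mathbb{K}(\Sigma)$ a path component, $\nu_A(\textbf{V})$ is characterized as the unique $h \in \mathbb{G}$ for which $A(\{h\})(C) \neq \varnothing$.

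First I would record that since $\eta: A_1 \to A_2$ is a natural isomorphism in the functor category $\text{Sh}(\textbf{VK})^{\textbf{G}}$, for each object $H$ of $\textbf{G}$ the component $\eta_H: A_1(H) \to A_2(H)$ is an isomorphism of sheaves on $\textbf{VK}$. Isomorphisms in $\text{Sh}(\textbf{VK})$ are evaluated objectwise: for each object $U$ of $\textbf{VK}$ the map $(\eta_H)_U: A_1(H)(U) \to A_2(H)(U)$ is a bijection of sets. In particular, for any $h \in \mathbb{G}$ and any path component $C \subseteq \mathbb{K}(\Sigma)$, $A_1(\{h\})(C) \neq \varnothing$ if and only if $A_2(\{h\})(C) \neq \varnothing$.

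Fix now any variable-space knot $\textbf{V}$ and pick $(C,K) \in \text{obj}(\textbf{V})$ with $C$ a path component. By the uniqueness clause of Lemma \ref{prop_calculate_A} applied to $A_1$, there is a single $h_1 \in \mathbb{G}$ with $A_1(\{h_1\})(C) \neq \varnothing$, namely $h_1 = \nu_{A_1}(\textbf{V})$; likewise the unique $h_2$ with $A_2(\{h_2\})(C) \neq \varnothing$ equals $\nu_{A_2}(\textbf{V})$. The objectwise bijection from the previous paragraph forces $h_1 = h_2$, so $\nu_{A_1}(\textbf{V}) = \nu_{A_2}(\textbf{V})$. Since $\textbf{V}$ was arbitrary, $\nu_{A_1} = \nu_{A_2}$.

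There is essentially no obstacle: everything reduces to ``isomorphisms of sheaves preserve emptiness of sections,'' combined with the uniqueness part of Lemma \ref{prop_calculate_A}. The only detail to double-check is that $\nu_A$ does not depend on the chosen representative $(C,K)$ of $\textbf{V}$, but this independence is already established in the construction preceding the lemma (it follows by propagating along paths of arrows in $\textbf{VK}$ and using that $A(\{h\})(C) \neq \varnothing$ implies $A(\{h\})(U) \neq \varnothing$ for any arrow $\Psi: U \to C$), so it applies verbatim to both $A_1$ and $A_2$.
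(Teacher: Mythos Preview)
Your proof is correct and follows essentially the same approach as the paper: both argue that the component $(\eta_H)_U$ of the natural isomorphism is a bijection of sets, so $A_1(\{h\})(C)$ and $A_2(\{h\})(C)$ are simultaneously empty or nonempty, and then invoke the uniqueness clause of Lemma~\ref{prop_calculate_A} to conclude $\nu_{A_1}=\nu_{A_2}$.
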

\begin{proof} For every $H \subseteq \mathbb{G}$, there is a natural transformation $\eta_H:A_1(H) \to A_2(H)$ such that for each object $U$ of $\textbf{VK}$, there is an isomorphism $\eta_{H,U}:A_1(H)(U) \to A_2(H)(U)$. By Lemma \ref{prop_calculate_A}, if $C \subseteq \mathbb{K}(\Sigma)$ is a path component, $A_i(H)(C)$ is either empty or a one-point set. This implies that $A_1(H)(C)$ and $A_2(H)(C)$ are either both empty or nonempty. Hence, $\nu_{A_1}=\nu_{A_2}$.
\end{proof}

\begin{lemma} \label{lemma_same_invariant_implies_nat_iso} If $A_1,A_2: \textbf{G} \to \text{Sh}(\textbf{VK})$ are continuous filtering functors and $\nu_{A_1}=\nu_{A_2}$, then $A_1,A_2$ are naturally isomorphic.
\end{lemma}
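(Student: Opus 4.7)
The plan is to leverage Lemma \ref{prop_calculate_A} to show that each sheaf $A_i(H)$ takes values only in $\{\varnothing, \mathbbm{1}\}$ on objects of $\textbf{VK}$; together with the hypothesis $\nu_{A_1}=\nu_{A_2}$, this will force $A_1(H) \cong A_2(H)$ in an essentially unique way, yielding a canonical natural isomorphism $\eta:A_1 \to A_2$. First, I would extend Lemma \ref{prop_calculate_A} from finite disjoint unions of path components to arbitrary objects $U$ of $\textbf{VK}$. Given $U \subseteq \mathbb{K}(\Sigma)$ open, local path-connectedness of the smooth manifold $\mathbb{K}(\Sigma)$ yields a decomposition $U = \bigsqcup_\alpha C_\alpha$ into its open path components. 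The inclusions $C_\alpha \hookrightarrow U$ generate a covering sieve of $U$ in $J_{\textbf{VK}}$, and because the components are pairwise disjoint, the sheaf condition on $A_i(H)$ reduces to a product $A_i(H)(U) \cong \prod_\alpha A_i(H)(C_\alpha)$. By Lemma \ref{prop_calculate_A}, each factor is either $\varnothing$ or $\mathbbm{1}$; hence $A_i(H)(U)$ is $\varnothing$ if any factor is empty, and $\mathbbm{1}$ otherwise.

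Next I would verify that $A_1(H)(U)$ is nonempty if and only if $A_2(H)(U)$ is nonempty. By the second assertion of Lemma \ref{prop_calculate_A}, for each path component $C_\alpha$ one has $A_i(H)(C_\alpha) \ne \varnothing$ iff $\nu_{A_i}(\textbf{V}_\alpha) \in H$, where $\textbf{V}_\alpha$ is the variable-space knot containing $(C_\alpha,K)$ for any $K \in C_\alpha$. Since $\nu_{A_1}=\nu_{A_2}$, the conditions $A_1(H)(C_\alpha)\ne\varnothing$ and $A_2(H)(C_\alpha)\ne\varnothing$ coincide for every $\alpha$, and this equivalence transfers to $U$ via the product decomposition. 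For each $H$ and $U$, I then define $\eta_{H,U}:A_1(H)(U) \to A_2(H)(U)$ to be the unique function between the two sides (either the empty function, or the unique function between singletons).

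Naturality of $\eta$ in $U$ (for arrows $\Psi:U \to V$ in $\textbf{VK}$) and in $H$ (for inclusions $H \subseteq H'$ in $\textbf{G}$) is then automatic, since every $A_i(H)(W)$ is either empty or a singleton, so any two parallel functions between such sets must coincide. Each component $\eta_{H,U}$ is evidently a bijection, so $\eta: A_1 \to A_2$ is a natural isomorphism.

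The main obstacle is the extension of Lemma \ref{prop_calculate_A} from finite disjoint unions of path components to arbitrary open subsets of $\mathbb{K}(\Sigma)$, which requires invoking local path-connectedness of $\mathbb{K}(\Sigma)$ and applying the sheaf axiom to the cover by path components. Once this is in place, the remainder of the argument is essentially formal: each $A_i(H)$ is a subsheaf of the terminal object of $\text{Sh}(\textbf{VK})$, and subterminal objects in a topos are rigid, so a morphism (and \emph{a fortiori} an isomorphism) between them is uniquely determined whenever one exists.
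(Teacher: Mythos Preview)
Your proposal is correct and follows essentially the same approach as the paper: use Lemma \ref{prop_calculate_A} to see that each $A_i(H)(U)$ is either $\varnothing$ or $\mathbbm{1}$, use $\nu_{A_1}=\nu_{A_2}$ to conclude that $A_1(H)(U)$ and $A_2(H)(U)$ are simultaneously empty or nonempty, and then note that the unique maps $\eta_{H,U}$ are automatically natural and bijective. Your explicit justification for passing from finite disjoint unions of path components (as stated in Lemma \ref{prop_calculate_A}) to arbitrary open $U$ via local path-connectedness of $\mathbb{K}(\Sigma)$ and the sheaf axiom is a detail the paper leaves implicit when it says ``breaking any object $U$ into its path components,'' and your closing remark about subterminal rigidity is a clean conceptual gloss on why naturality comes for free.
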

\begin{proof} Let $C\subseteq \mathbb{K}(\Sigma)$ be a path component, $K \in C$ and $\textbf{V}$ the connected component of $\smallint_{\textbf{VK}} F$ containing $(C,K)$. Since $\nu_{A_1}=\nu_{A_2}$, we have $A_1(\{\nu_{A_1}(\textbf{V})\})(C) \ne \varnothing$ if and only if $A_2(\{\nu_{A_2}(\textbf{V}) \})(C) \ne \varnothing$. Breaking any object $U$ of $\mathbf{VK}$ into its path components, Lemma \ref{prop_calculate_A} implies that for any $H \subseteq \mathbb{G}$, $A_1(H)(U) \ne \varnothing$ if and only if $A_2(H)(U) \ne \varnothing$. Since $A_i(H)(U)$ is either a one point set or the empty set, there is a unique choice of maps $\eta_{H,U}:A_1(H)(U) \to A_2(H)(U)$ that are natural in both $H$ and $U$. Hence, $A_1$ and $A_2$ are naturally isomorphic.
\end{proof}

Conversely, if $\nu:\pi_0(\smallint_{\textbf{VK}} F) \to \mathbb{G}$ is a virtual knot invariant, define a continuous filtering functor $A_{\nu}:\textbf{G} \to \text{Sh}(\textbf{VK})$ as follows. For $U$ an object of $\textbf{VK}$, write:
\[
\nu(U)=\{g \in \mathbb{G}| \exists \textbf{V}\in \pi_0(\smallint\hspace{0cm}_{\small\textbf{VK}} F) \text{ and } K \in U \text{ s.t. } (U,K) \in \text{obj}(\textbf{V}) \text{ and }  \nu(\textbf{V})=g \}.
\]
In other words, $\nu(U)$ is the set of values of the invariant taken over all variable-space knots having a representative in $U$. Then $A_{\nu}:\textbf{G} \to \text{Sh}(\textbf{VK})$ is defined for objects $U$ of $\textbf{VK}$ and $H \subseteq \mathbb{G}$ by:
\[
A_{\nu}(H)(U)=\left\{\begin{array}{cl} \varnothing & \text{if } \nu(U) \not\subseteq H \\ \mathbbm{1} & \text{else}\end{array} \right.
\]
For an arrow $\Psi:U \to V$ of $\textbf{VK}$, note that $\nu(V) \subseteq H$ implies $\nu(U) \subseteq H$. Hence, $A_{\nu}(H)(U)=\varnothing$ implies that $A_{\nu}(H)(V)=\varnothing$. Then there is a unique function $A_{\nu}(H)(\Psi):A_{\nu}(H)(V) \to A_{\nu}(H)(U)$, which is either $1_{\mathbbm{1}}:\mathbbm{1} \to \mathbbm{1}$, $1_{\varnothing}:\varnothing \to \varnothing$, or $!:\varnothing \to \mathbbm{1}$. Likewise, if $i:\widebar{H} \to H$ is an arrow in $\textbf{G}$, define a natural transformation $A_{\nu}(i):A_{\nu}(\widebar{H}) \to A_{\nu}(H)$ on components by $A_{\nu}(i)_U=\text{id}$ if $\nu(U) \subseteq \widebar{H} \subseteq H$ and otherwise $A_{\nu}(i)_U$ is the unique map $!:\varnothing \to A_{\nu}(H)(U)$. 

\begin{lemma} \label{lemma_nu_A_nu_eq_nu} $A_{\nu}:\textbf{G} \to \text{Sh}(\textbf{VK})$ is a continuous filtering functor such that $\nu_{A_{\nu}}=\nu$.
\end{lemma}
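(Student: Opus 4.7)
The plan is to verify four items: (a) $A_\nu(H)$ is a sheaf on $(\textbf{VK}, J_{\textbf{VK}})$ for each $H \subseteq \mathbb{G}$; (b) $A_\nu$ is functorial; (c) $A_\nu$ is continuous and satisfies the internal filtering conditions required of a flat functor into a Grothendieck topos (\cite{mac_moer}, Sections VII.8--VII.9); and (d) $\nu_{A_\nu} = \nu$. A preliminary observation used throughout is that $\nu$ takes a single value on every path component $C \subseteq \mathbb{K}(\Sigma)$: any two knots $K_1, K_2 \in C$ are ambient isotopic in $\Sigma \times \mathbb{R}$ by Proposition~\ref{prop_iso_iff_path}, and the resulting path in $C$ induces a variable-space isotopy between $\textbf{V}_{K_1}$ and $\textbf{V}_{K_2}$, along which $\nu$ is constant. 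Hence $\nu(C) = \{g_C\}$ is a singleton.

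For (a), because $A_\nu(H)(U) \in \{\varnothing, \mathbbm{1}\}$, any matching family on a covering sieve $S \in J_{\textbf{VK}}(U)$ is automatically compatible, and the sheaf condition collapses to the single implication: if $\nu(V) \subseteq H$ for every $(\Psi: V \to U) \in S$, then $\nu(U) \subseteq H$. Given $K \in U$, pick $\Psi: V \to U$ in $S$ with $K = \Psi(K')$ for some $K' \in V$; since $\Psi$ is an arrow $(V, K') \to (U, K)$ in $\smallint_{\textbf{VK}} F$, we have $\textbf{V}_K = \textbf{V}_{K'}$, and so $\nu(\textbf{V}_K) \in \nu(V) \subseteq H$. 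For (b), $\textbf{G}$ is a poset, so for each inclusion $i: \bar H \hookrightarrow H$ and each $U$ there is a unique map $A_\nu(i)_U$ (the identity on $\mathbbm{1}$ when $\nu(U) \subseteq \bar H$, and the empty-to-anything map otherwise); these automatically assemble into a natural transformation and compose correctly.

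For continuity in (c), a covering sieve on $H$ in $\textbf{G}$ corresponds (since $\mathbb{G}$ is discrete) to a family $\{H_i\}_{i \in I}$ with $\bigcup_i H_i = H$. To show $\{A_\nu(H_i) \to A_\nu(H)\}$ is jointly epimorphic in $\text{Sh}(\textbf{VK})$, take any $U$ with $\nu(U) \subseteq H$ and cover it by its path components $\{C_\alpha\}$, which are open in $\mathbb{K}(\Sigma)$ since the latter is locally path connected (as an open set in a smooth manifold). On each $C_\alpha$, $\nu(C_\alpha) = \{g_{C_\alpha}\}$ with $g_{C_\alpha} \in H$, so $g_{C_\alpha} \in H_{i_\alpha}$ for some $i_\alpha$, giving $A_\nu(H_{i_\alpha})(C_\alpha) = \mathbbm{1}$. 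For the filtering axioms: (i) $\{A_\nu(H) \to 1\}_{H \subseteq \mathbb{G}}$ is epi because $A_\nu(\mathbb{G}) = 1$; (ii) for $H_1, H_2 \subseteq \mathbb{G}$, the inclusions $H_1 \cap H_2 \hookrightarrow H_j$ induce an isomorphism $A_\nu(H_1 \cap H_2) \xrightarrow{\sim} A_\nu(H_1) \times A_\nu(H_2)$, since both sides equal $\mathbbm{1}$ on $U$ precisely when $\nu(U) \subseteq H_1 \cap H_2$; (iii) the parallel-arrows condition is vacuous because $\textbf{G}$ is posetal.

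For (d), let $(C, K) \in \text{obj}(\textbf{V})$ with $C$ a path component. Then $\nu_{A_\nu}(\textbf{V})$ is the unique $h \in \mathbb{G}$ with $A_\nu(\{h\})(C) \neq \varnothing$, i.e.\ with $\nu(C) \subseteq \{h\}$; since $\nu(C) = \{\nu(\textbf{V})\}$, we get $h = \nu(\textbf{V})$. The principal obstacle is the continuity step in (c): it is the only place where both the topology of $\mathbb{K}(\Sigma)$ (local path connectivity) and the isotopy-invariance of $\nu$ enter essentially, reducing the question to the combinatorial fact $g_{C_\alpha} \in \bigcup_i H_i$. Everything else is routine bookkeeping between $\varnothing$ and $\mathbbm{1}$.
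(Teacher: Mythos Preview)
Your proof is correct and follows essentially the same route as the paper's. The only cosmetic difference is that you verify the filtering axioms in the ``internal'' form of \cite{mac_moer}, VII.8--9 (joint epis, $A_\nu(H_1\cap H_2)\cong A_\nu(H_1)\times A_\nu(H_2)$), whereas the paper uses the equivalent ``local'' form of Theorem VII.10.1 (covering sieves of $U$); both arguments reduce the continuity step to covering $U$ by its path components and using that $\nu$ is single-valued there.
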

\begin{proof} Observe that $A_{\nu}(H)$ is indeed a sheaf for all $H \subseteq \mathbb{G}$. If $\{\Psi_i:U_i \to U\}$ is a covering sieve, a matching family consists of the choice of the single element in the one point set $A_{\nu}(H)(U_i)$ for all $i$. Since $\bigcup_i \Psi_i(U_i)=U$, this implies that $\nu(U) \subseteq H$. Then $A_{\nu}(H)(U) \ne \varnothing$ and hence every matching family has a unique amalgamation. An easy diagram chase shows that $A_{\nu}$ is a functor. For the second claim, note that if $C\subseteq \mathbb{K}(\Sigma)$ is a path component, then $\nu(C)=\{g\}$ for some $g \in \mathbb{G}$ and $A_{\nu}(\nu(C))(C)\ne \varnothing$. Hence, $\nu_{A_{\nu}}(\textbf{V})=g=\nu(\textbf{V})$ whenever $(C,K) \in \text{obj}(\textbf{V})$ for some $K\in C$.
\newline
\newline
To show that $A_{\nu}$ is continuous and filtering, we again use \cite{mac_moer}, Theorem VII.10.1. The first filtering property follows from the fact that $A_{\nu}(\nu(U))(U) \ne \varnothing$. For the second property, suppose that $A_{\nu}(H)(U) \ne \varnothing$ and $A_{\nu}(\widebar{H})(U) \ne \varnothing$. Then $\nu(U) \subseteq H \cap \widebar{H}$ and $A_{\nu}(H \cap \widebar{H})(U) \ne \varnothing$. Hence, the identity arrow $I:U \to U$ satisfies the property that there is a diagram $\xymatrix{H & \ar[l]_{i} H \cap \wbar{H} \ar[r]^{\bar{i}} & \wbar{H}}$ and an element $0 \in A_{\nu}(H \cap \widebar{H})(U)$ such that $A_{\nu}(i)_U(0)=A_{\nu}(H)(I)(0)$ and $A_{\nu}(\widebar{i})_U(0)=A_{\nu}(\widebar{H})(I)(0)$. Thus, the maximal sieve is a covering of $U$ with the required property. The third filtering condition is trivially satisfied since parallel arrows in $\textbf{G}$ are identical. 
\newline
\newline
For continuity, suppose that $S$ is a covering sieve of $H \subseteq \mathbb{G}$ and $A_{\nu}(H)(U) \ne \varnothing$. Since $S$ is a sieve, it contains all the one-point subsets of $H$. Since $A_{\nu}(H)(U) \ne \varnothing$, there is for every path component $\widebar{U} \subseteq U$ an $h \in H$ such that $A_{\nu}(\{h\})(\widebar{U}) \ne \varnothing$. Then the sieve generated by the inclusions of path components $j:\widebar{U} \to U$ is a covering of $U$ such that $A_{\nu}(i)_{\widebar{U}}(0)=A_{\nu}(H)(j)(0)$, where $i:\{h\} \to H$ is also inclusion. Thus, $A_{\nu}$ is continuous and the proof is complete.
\end{proof}

\begin{lemma} \label{lemma_A_nu_A_equals_A} For any continuous filtering functor $A:\textbf{G} \to \text{Sh}(\textbf{VK})$, $A_{\nu_{A}} \cong A$.
\end{lemma}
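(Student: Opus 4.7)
The proof will be an immediate consequence of the two lemmas just established. My plan is to observe that $A_{\nu_A}$ and $A$ are both continuous filtering functors $\textbf{G} \to \text{Sh}(\textbf{VK})$ (the first by Lemma \ref{lemma_nu_A_nu_eq_nu} and the second by hypothesis), and that they induce the same variable-space knot invariant. Thus the natural isomorphism will come directly from Lemma \ref{lemma_same_invariant_implies_nat_iso}.

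First I would compute $\nu_{A_{\nu_A}}$. By Lemma \ref{lemma_nu_A_nu_eq_nu} applied to the variable-space knot invariant $\nu_A: \pi_0(\smallint_{\textbf{VK}} F) \to \mathbb{G}$, the continuous filtering functor $A_{\nu_A}$ satisfies $\nu_{A_{\nu_A}} = \nu_A$. Consequently, setting $A_1 = A_{\nu_A}$ and $A_2 = A$, we have two continuous filtering functors $\textbf{G} \to \text{Sh}(\textbf{VK})$ with $\nu_{A_1} = \nu_{A_2}$.

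Next I would invoke Lemma \ref{lemma_same_invariant_implies_nat_iso}, which asserts that any two continuous filtering functors inducing the same variable-space knot invariant are naturally isomorphic. Applying this to $A_1 = A_{\nu_A}$ and $A_2 = A$ yields $A_{\nu_A} \cong A$.

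There is no real obstacle here, since all the content was placed into Lemmas \ref{lemma_same_invariant_implies_nat_iso} and \ref{lemma_nu_A_nu_eq_nu}: the former encodes the fact, via Lemma \ref{prop_calculate_A}, that the values $A(H)(U)$ are either $\varnothing$ or $\mathbbm{1}$, which is precisely what makes the passage $A \mapsto \nu_A$ injective up to isomorphism, and the latter ensures that $\nu \mapsto A_\nu$ is a genuine right inverse on the level of variable-space knot invariants. The present lemma merely composes these two facts to conclude that $A \mapsto \nu_A$ and $\nu \mapsto A_\nu$ are mutually inverse equivalences between continuous filtering functors $\textbf{G} \to \text{Sh}(\textbf{VK})$ (up to natural isomorphism) and variable-space knot invariants valued in $\mathbb{G}$.
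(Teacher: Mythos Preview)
Your proposal is correct and follows exactly the same approach as the paper: apply Lemma \ref{lemma_nu_A_nu_eq_nu} to obtain $\nu_{A_{\nu_A}}=\nu_A$, then invoke Lemma \ref{lemma_same_invariant_implies_nat_iso} to conclude $A_{\nu_A}\cong A$. The paper's proof is a two-sentence version of what you wrote.
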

\begin{proof} Applying Lemma \ref{lemma_nu_A_nu_eq_nu} to $\nu_A$ gives $\nu_{A_{\nu_A}}=\nu_A$. Then use Lemma \ref{lemma_same_invariant_implies_nat_iso}.
\end{proof}

\begin{theorem} \label{thm_invariants} Variable-space and virtual knot invariants are in one-to-one correspondence.
\end{theorem}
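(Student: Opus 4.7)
The plan is to follow the template established by Theorems \ref{thm_points_sh_vk} and \ref{thm_paths_sh_vk}. The starting point is the standard equivalence between $\underline{\text{Hom}}(\text{Sh}(\textbf{VK}),\text{Sh}(\textbf{G}))$ and the category of continuous filtering functors $A:\textbf{G} \to \text{Sh}(\textbf{VK})$ (see \cite{mac_moer}, Section VII.9). So it suffices to exhibit a bijection between isomorphism classes of such functors $A$ and variable-space knot invariants $\nu:\pi_0(\smallint_{\textbf{VK}} F) \to \mathbb{G}$.

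First I would note that the assignment $A \mapsto \nu_A$, defined just before Lemma \ref{lemma_nat_iso_implies_same_invariant} using the unique $h\in\mathbb{G}$ with $A(\{h\})(C) \ne \varnothing$ from Lemma \ref{prop_calculate_A}, is well-defined on isomorphism classes by Lemma \ref{lemma_nat_iso_implies_same_invariant}. Hence we get a function from isomorphism classes of continuous filtering functors $\textbf{G}\to\text{Sh}(\textbf{VK})$ to variable-space knot invariants.

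Next I would verify surjectivity and injectivity directly from the preparatory lemmas. Surjectivity is immediate from Lemma \ref{lemma_nu_A_nu_eq_nu}: given any variable-space knot invariant $\nu$, the continuous filtering functor $A_{\nu}$ defined in that lemma satisfies $\nu_{A_\nu}=\nu$. For injectivity, suppose $A_1,A_2:\textbf{G}\to\text{Sh}(\textbf{VK})$ are continuous filtering functors with $\nu_{A_1}=\nu_{A_2}$; then Lemma \ref{lemma_same_invariant_implies_nat_iso} gives a natural isomorphism $A_1\cong A_2$.

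There is no real obstacle at this stage, since every ingredient has already been isolated in the preceding lemmas; the only task is to assemble them. If one wanted a fully symmetric statement, one could alternatively observe the identities $\nu_{A_\nu}=\nu$ (Lemma \ref{lemma_nu_A_nu_eq_nu}) and $A_{\nu_A}\cong A$ (Lemma \ref{lemma_A_nu_A_equals_A}), which express the correspondence as mutually inverse assignments. Either formulation yields the stated one-to-one correspondence, and the proof would consist of no more than a couple of lines citing these results together with the $2$-categorical equivalence $\underline{\text{Hom}}(\text{Sh}(\textbf{VK}),\text{Sh}(\textbf{G}))\simeq$ continuous filtering functors $\textbf{G}\to\text{Sh}(\textbf{VK})$.
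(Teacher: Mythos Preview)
Your proposal is correct and matches the paper's own proof essentially line for line: the paper invokes the equivalence of $\underline{\text{Hom}}(\text{Sh}(\textbf{VK}),\text{Sh}(\textbf{G}))$ with continuous filtering functors $\textbf{G}\to\text{Sh}(\textbf{VK})$ (citing \cite{mac_moer}, Corollary VII.10.2) and then simply cites Lemmas \ref{lemma_nat_iso_implies_same_invariant}, \ref{lemma_same_invariant_implies_nat_iso}, \ref{lemma_nu_A_nu_eq_nu}, and \ref{lemma_A_nu_A_equals_A}. Your more explicit unpacking of these citations into well-definedness, surjectivity, and injectivity is exactly the intended argument.
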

\begin{proof} Again by \cite{mac_moer}, Corollary VII.10.2, the category $\underline{\text{Hom}}(\text{Sh}(\textbf{VK}),\text{Sh}(\textbf{G}))$ of geometric morphisms is equivalent to the category of continuous filtering functors $A:\textbf{G} \to \text{Sh}(\textbf{VK})$. The result then follows from Lemmas \ref{lemma_nat_iso_implies_same_invariant}, \ref{lemma_same_invariant_implies_nat_iso}, \ref{lemma_nu_A_nu_eq_nu}, and \ref{lemma_A_nu_A_equals_A}.
\end{proof}

\subsection{Proof of Lemma \ref{prop_calculate_A}} \label{sec_prop_calculate_A} Suppose that $A:\textbf{G} \to \text{Sh}(\textbf{VK})$ is continuous and filtering. First note that $A(\mathbb{G})$ and $A(\varnothing)$ are, respectively, terminal and initial objects of $\text{Sh}(\textbf{VK})$. This is due to the fact that the category of continuous filtering functors is equivalent to the category of geometric morphisms $N:\text{Sh}(\textbf{VK}) \to \text{Sh}(\textbf{G})$, so that $A$ may be identified with $N^* \circ \textbf{a} \circ \textbf{y}$ for some geometric morphism $N$, where $\textbf{y}$ denotes the Yoneda embedding, and $\textbf{a}:\textbf{Sets}^{\textbf{G}^{\text{op}}} \to \text{Sh}(\textbf{G})$ is the sheafification functor (see e.g. \cite{mac_moer}, Section VII.7). The claim then follows from the fact that $N^*$ and $\textbf{a}$ preserve finite limits and arbitrary colimits, since they are both left exact and left adjoints. 

\begin{lemma} \label{lemma_unique_u} For $U$ an object of $\textbf{VK}$ and $H \subseteq \mathbb{G}$, if $u_1,u_2 \in A(H)(U)$, then $u_1=u_2$
\end{lemma}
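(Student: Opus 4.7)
The plan is to deduce the claim from the second filtering axiom of \cite{mac_moer}, Theorem VII.10.1, applied to the pair of sections $u_1, u_2 \in A(H)(U)$ (both sourced at the same object $H$ of $\textbf{G}$), combined with the separation axiom for the sheaf $A(H)$ on the site $(\textbf{VK}, J_{\textbf{VK}})$. Specifically, externalizing condition (ii) of loc.\ cit.\ over $U$ produces a covering sieve $\{\Phi_i: U_i \to U\} \in J_{\textbf{VK}}(U)$ such that for each index $i$, there exists a span $\xymatrix{H & \ar[l]_-{j_i} H_i \ar[r]^-{k_i} & H}$ in $\textbf{G}$ and a section $d_i \in A(H_i)(U_i)$ satisfying $A(j_i)_{U_i}(d_i) = A(H)(\Phi_i)(u_1)$ and $A(k_i)_{U_i}(d_i) = A(H)(\Phi_i)(u_2)$.

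The key structural observation is that $\textbf{G}$ is the poset of open subsets of the discrete space $\mathbb{G}$, so between any two of its objects there is at most one arrow, namely an inclusion. Thus the two legs $j_i$ and $k_i$ of any span in $\textbf{G}$ must coincide as arrows; this is the formal analog of the fact that $\textbf{G}$ itself is a ``filtered poset'' and collapses all spans. Applying $A$ gives $A(j_i) = A(k_i)$ as natural transformations, and in particular $A(H)(\Phi_i)(u_1) = A(j_i)_{U_i}(d_i) = A(k_i)_{U_i}(d_i) = A(H)(\Phi_i)(u_2)$ for every $i$.

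Hence $u_1$ and $u_2$ restrict to the same section along every arrow of a covering sieve of $U$. Since $A(H)$ is a sheaf on $(\textbf{VK}, J_{\textbf{VK}})$, the separation half of the sheaf axiom then yields $u_1 = u_2$. I do not anticipate any real obstacle: the only content is recognizing that the indexing category $\textbf{G}$ is a poset, which reduces the filtering condition to an equality of restrictions, after which the sheaf property closes the argument. No analysis of the site $(\textbf{VK}, J_{\textbf{VK}})$ beyond the definition of a sheaf is required.
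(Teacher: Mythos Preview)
Your proof is correct and follows essentially the same approach as the paper: both invoke the second filtering condition of \cite{mac_moer}, Theorem VII.10.1, to obtain a cover on which the restrictions of $u_1$ and $u_2$ agree, then appeal to the sheaf property. You make explicit the observation that $\textbf{G}$ is a poset (so the two legs of the span coincide), which the paper leaves implicit by simply labeling both arrows $i$.
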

\begin{proof} Since $A$ is filtered, Theorem VII.10.1.(ii) of \cite{mac_moer} implies that $U$ is covered by a sieve $S$ consisting of arrows of the form $\Psi:V \to U$ for which there is a diagram $\xymatrix{H & \ar[l]_{i} M \ar[r]^{i} & H}$ and an $m \in A(M)(V)$ such that $A(i)_{V}(m)=A(H)(\Psi)(u_1)$ and $A(i)_{V}(m)=A(H)(\Psi)(u_2)$. Then $A(H)(\Psi)(u_1)=A(H)(\Psi)(u_2)$. The restrictions of $u_1$ and $u_2$ to $A(H)(V)$ thus define the same matching family for $S$. Since $A(H)$ is a sheaf, it follows that $u_1=u_2$.
\end{proof}

\begin{lemma} \label{lemma_at_most_one_h} If $U \ne \varnothing$ is an object of $\textbf{VK}$ and there are $h_1,h_2 \in \mathbb{G}$ such that $A(\{h_1\})(U)\ne \varnothing$ and $A(\{h_2\})(U) \ne \varnothing$, then $h_1=h_2$.
\end{lemma}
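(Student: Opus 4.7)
The plan is to derive a contradiction by assuming $h_1 \ne h_2$ and applying the second filtering property to sections $u_1 \in A(\{h_1\})(U)$ and $u_2 \in A(\{h_2\})(U)$, which exist by hypothesis. By the topos-theoretic analog of this property (\cite{mac_moer}, Theorem VII.10.1.(ii)), used in exactly the same style as in Lemma \ref{lemma_unique_u}, there exists a covering sieve $S$ of $U$ such that every arrow $\Psi:V \to U$ in $S$ fits into a diagram $\{h_1\} \xleftarrow{i_1} M \xrightarrow{i_2} \{h_2\}$ in $\textbf{G}$ admitting an element $m \in A(M)(V)$ with $A(i_1)_V(m) = A(\{h_1\})(\Psi)(u_1)$ and $A(i_2)_V(m) = A(\{h_2\})(\Psi)(u_2)$.

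Since arrows in $\textbf{G}$ are inclusions of open subsets of the discrete space $\mathbb{G}$, such an $M$ must satisfy $M \subseteq \{h_1\} \cap \{h_2\} = \varnothing$; hence $M = \varnothing$. As noted at the beginning of this subsection, $A$ preserves finite colimits, so $A(\varnothing)$ is the initial object of $\text{Sh}(\textbf{VK})$. The initial sheaf assigns $\varnothing$ to every nonempty object of $\textbf{VK}$, so $A(\varnothing)(V) = \varnothing$ whenever $V \ne \varnothing$.

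On the other hand, since $S$ is a covering sieve of $U$, the defining condition $U = \bigcup_{\Psi \in S} \Psi(\text{dom}(\Psi))$ together with $U \ne \varnothing$ forces at least one arrow $\Psi:V \to U$ in $S$ to have $V \ne \varnothing$. For this $\Psi$, the required element $m$ would have to belong to $A(\varnothing)(V) = \varnothing$, which is impossible. Hence $h_1 = h_2$.

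The only delicate point is the identification of $A(\varnothing)$ with the initial sheaf and its explicit values; this is handled by the cocontinuity of $A$ established at the start of the section, and once in hand the contradiction is immediate.
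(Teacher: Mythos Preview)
Your proof is correct and follows essentially the same approach as the paper: both argue by contradiction via the second filtering property (\cite{mac_moer}, Theorem VII.10.1.(ii)), forcing $M=\varnothing$ and then invoking $A(\varnothing)(V)=\varnothing$ for nonempty $V$ to contradict the existence of a covering sieve. Your version is slightly more explicit about why the covering sieve must contain an arrow with nonempty domain, but the argument is otherwise identical.
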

\begin{proof} As in the preceding lemma, this also uses the second filtering property of $A$. Suppose $h_1 \ne h_2$. The second filtering property implies that $U$ is covered by arrows of the form $\Psi:V \to U$ such that there is a diagram  $\xymatrix{\{h_1\} & \ar[l] M \ar[r] & \{h_2\}}$ in $\textbf{G}$ with $A(M)(V) \ne \varnothing$. Since $h_1 \ne h_2$, we must have that $M=\varnothing$. But $A(\varnothing)(V)=\varnothing$ whenever $V \ne \varnothing$. Thus, no such covering of $U$ is possible unless $U=\varnothing$. This contradiction implies that $h_1=h_2$.
\end{proof}

\begin{lemma} \label{lemma_unique_h} For $C \subseteq \mathbb{K}(\Sigma)$ a path component, there is a unique $h \in \mathbb{G}$ such that $A(\{h\})(C) \ne \varnothing$.
\end{lemma}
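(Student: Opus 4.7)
The plan is to exploit the continuity of $A$ applied to the singleton cover of the discrete space $\mathbb{G}$, and then use connectedness of $C$ to force the resulting local value to be globally constant. Since $\mathbb{G}$ is discrete, the family $\{\{g\} \hookrightarrow \mathbb{G}\}_{g \in \mathbb{G}}$ generates a covering sieve on $\mathbb{G}$ in the site $(\textbf{G}, J_{\textbf{G}})$. Continuity of $A$ therefore makes the induced family $\{A(i_g) : A(\{g\}) \to A(\mathbb{G})\}_{g \in \mathbb{G}}$ jointly epimorphic in $\text{Sh}(\textbf{VK})$. Because $A(\mathbb{G})$ is a terminal object, the local-surjectivity characterization of epimorphic families in $\text{Sh}(\textbf{VK})$ yields a covering sieve $T \in J_{\textbf{VK}}(C)$ with the property that for every arrow $\Psi : V \to C$ in $T$ with $V \ne \varnothing$, there is some $g \in \mathbb{G}$ with $A(\{g\})(V) \ne \varnothing$. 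By Lemma \ref{lemma_at_most_one_h}, this $g$ is uniquely determined by $V$, and I denote it $g_V$.

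The next step is to show that $g_V$ does not depend on $\Psi \in T$. For each $g \in \mathbb{G}$, set $U_g = \bigcup\{\Psi(V) : \Psi \in T,\ g_V = g\} \subseteq C$; each $U_g$ is open in $C$ because arrows in $\textbf{VK}$ are open maps (Notation \ref{notate_cap}), and the $U_g$ together cover $C$ since $T$ does. If some knot $K$ lay in $U_g \cap U_{g'}$ with $g \ne g'$, I would pick $\Psi_i : V_i \to C$ in $T$ ($i = 1, 2$) with $K \in \Psi_1(V_1) \cap \Psi_2(V_2)$; the pullback $W = V_1 \times_C V_2$ from Theorem \ref{lemma_vk_pullbacks} is nonempty (its construction shows it contains the common preimage of $K$) and admits arrows to both $V_1$ and $V_2$, forcing both $A(\{g\})(W)$ and $A(\{g'\})(W)$ to be nonempty, which contradicts Lemma \ref{lemma_at_most_one_h}. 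Hence the $U_g$ are pairwise disjoint. Since path components of the infinite-dimensional smooth manifold $\mathbb{K}(\Sigma)$ are open and connected (as used throughout Section \ref{sec_virtual_isotopy}), the partition $C = \bigsqcup_g U_g$ into open pieces must be trivial, so exactly one $U_h$ is nonempty and $g_V = h$ for every nontrivial $\Psi \in T$.

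Finally, amalgamation produces the desired section. For each $\Psi : V \to C$ in $T$, Lemma \ref{lemma_unique_u} says $A(\{h\})(V)$ contains at most one element, and by construction it contains at least one; let $u_\Psi$ denote this unique element. Because $A(\{h\})(W)$ also contains at most one element for every $W$ mapping into the domain of any $\Psi \in T$, the family $\{u_\Psi\}_{\Psi \in T}$ is automatically a matching family, and the sheaf property of $A(\{h\})$ produces an amalgamation in $A(\{h\})(C)$, proving existence of $h$. Uniqueness of $h$ is immediate from Lemma \ref{lemma_at_most_one_h}.

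The step I expect to demand the most care is the passage from continuity of $A$ to the covering sieve $T$, since this requires unpacking the local-surjectivity description of jointly epimorphic families in $\text{Sh}(\textbf{VK})$ rather than in $\textbf{Sets}$; once this dictionary is in hand, the partition-and-amalgamate argument combines Lemmas \ref{lemma_unique_u} and \ref{lemma_at_most_one_h} with the pullback construction and the connectedness of path components in a largely formal manner.
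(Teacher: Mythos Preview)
Your proof is correct and follows essentially the same strategy as the paper's: use continuity of $A$ on the singleton cover of $\mathbb{G}$ to obtain a local assignment $g_V$, use pullbacks together with Lemma \ref{lemma_at_most_one_h} to show consistency on overlaps, and then amalgamate via the sheaf property and Lemma \ref{lemma_unique_u}. The only minor variation is that where the paper chains overlapping pieces together by covering a path in $C$ with finitely many images and using compactness of $\mathbb{I}$, you instead partition $C$ into the disjoint open sets $U_g$ and invoke connectedness directly; both devices accomplish the same propagation step.
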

\begin{proof} Let $S$ be the covering sieve of the object $\mathbb{G}$ of $\textbf{G}$ consisting of its one point sets and the empty set. Since $A(\mathbb{G})(C) \ne \varnothing$, the continuity property of $A$ (\cite{mac_moer}, Theorem VII.10.1.(iv)) implies that for each $c \in A(\mathbb{G})(C)$,  there is a covering of $C$ by arrows $\Psi: U \to C$ such that for some $i:\{h\} \to \mathbb{G}$ in $S$ and $u \in A(\{h\})(U)$, $A(i)_U(u)=A(\mathbb{G})(\Psi)(c)$. In particular, $A(\{h\})(U) \ne \varnothing$ for some $h \in \mathbb{G}$. First suppose that $A(\{h_1\})(U_1) \ne \varnothing$ and $A(\{h_2\})(U_2)\ne \varnothing$ for some arrows $\Psi_1:U_1 \to C$, $\Psi_2:U_2 \to C$ of this cover of $C$ and $\Psi_1(U_1) \cap \Psi_2(U_2) \ne \varnothing$. Then the pullback $U$ of $\Psi_1$ over $\Psi_2$ is nonempty. Restricting to $U$ then implies that $A(\{h_1\})(U) \ne \varnothing$ and $A(\{h_2\})(U) \ne \varnothing$.  By Lemma \ref{lemma_at_most_one_h}, $h_1=h_2$.
\newline
\newline
Now consider two arbitrary arrows $\Psi:U \to C$, $\widebar{\Psi}: \widebar{U} \to C$ of the cover. Then for some $h,\widebar{h} \in \mathbb{G}$, $A(\{h\})(U) \ne \varnothing$ and $A(\{\widebar{h}\})(\widebar{U}) \ne \varnothing$. Any point of $\Psi(U)$ is connected to any point in $\widebar{\Psi}(\widebar{U})$ by a path $\sigma:\mathbb{I} \to C$. By compactness, the image of $\sigma$ is covered by finitely many arrows $\Psi_1:U_1 \to C,\ldots,\Psi_n:U_n \to C$ in the given cover of $C$, where $\Psi_1=\Psi$ and $\Psi_n=\widebar{\Psi}$. Each of these arrows must have an image with a nonempty intersection with at least one other arrow in the list. By hypothesis, for each $i=1,\ldots n$, there is an $h_i \in \mathbb{G}$ such that $A(\{h_i\})(U_i) \ne \varnothing$. From these two observations, it follows as in the previous paragraph that $h_i=h_j$ for all $i,j$. In particular, $h=\widebar{h}$.
\newline
\newline
Together with Lemma \ref{lemma_at_most_one_h}, this implies that there is a unique $h \in \mathbb{G}$ such that $A(\{h\})(U) \ne \varnothing$ for all arrows $\Psi:U \to C$ of the given covering of $C$. Since $A(\{h\})(U)$ is a one point set, choosing the unique element of each $A(\{h\})(U)$ defines a matching family on the given covering sieve. Since $A(\{h\})$ is a sheaf, this matching family has a unique amalgamation and hence $A(\{h\})(C) \ne \varnothing$.
\end{proof}

The proof of Lemma \ref{prop_calculate_A} will now be completed with the following lemma.

\begin{lemma} For $H \subseteq \mathbb{G}$, $A(H)(\bigsqcup_{j=1}^n C_j) \ne \varnothing$ if and only if for all $j=1,\ldots, n$, there is an $h \in H$ such that $A(\{h\})(C_j) \ne \varnothing$.
\end{lemma}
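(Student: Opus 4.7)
The strategy is to reduce to a single path component and then invoke Lemmas \ref{lemma_unique_h} and \ref{lemma_at_most_one_h}. First I would apply the sheaf axiom for $A(H)$ to the covering sieve on $\bigsqcup_{j=1}^n C_j$ generated by the inclusions $I_j\colon C_j\hookrightarrow\bigsqcup_{j=1}^n C_j$. Because the $C_j$ are pairwise disjoint, the only fiber products $C_j\times_{\bigsqcup_k C_k}C_{j'}$ are empty for $j\ne j'$, so all compatibility conditions on a matching family are trivial, and the gluing axiom yields a bijection
\[
A(H)\!\left(\bigsqcup_{j=1}^n C_j\right)\;\cong\;\prod_{j=1}^n A(H)(C_j).
\]
This set is nonempty iff every factor is. The statement therefore reduces to the following claim for a single path component $C\subseteq\mathbb{K}(\Sigma)$: $A(H)(C)\ne\varnothing$ iff there exists $h\in H$ with $A(\{h\})(C)\ne\varnothing$.

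The backward direction is immediate: the inclusion $i_h\colon\{h\}\hookrightarrow H$ is an arrow in $\textbf{G}$, so applying $A$ gives a natural transformation $A(i_h)\colon A(\{h\})\to A(H)$ in $\mathrm{Sh}(\textbf{VK})$, whose component at $C$ is a function $A(\{h\})(C)\to A(H)(C)$; since its domain is nonempty, so is its codomain. For the forward direction, suppose $A(H)(C)\ne\varnothing$ and let $h_C\in\mathbb{G}$ be the unique element provided by Lemma \ref{lemma_unique_h} with $A(\{h_C\})(C)\ne\varnothing$. The family $\{i_h\colon\{h\}\hookrightarrow H\mid h\in H\}$ generates a covering sieve $S\in J_{\textbf{G}}(H)$ since $H=\bigcup_{h\in H}\{h\}$. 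By the continuity property of $A$ applied to $S$ and any $c\in A(H)(C)$, there is a covering of $C$ by arrows $\Psi\colon V\to C$ such that for some $h\in H$ (depending on $\Psi$) and some $u\in A(\{h\})(V)$, $A(i_h)_V(u)=A(H)(\Psi)(c)$. Since $\bigcup_{\Psi}\Psi(V)=C\ne\varnothing$, at least one such $\Psi\colon V\to C$ satisfies $V\ne\varnothing$; fix one, with its associated $h\in H$.

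For this chosen $\Psi$, we have $A(\{h\})(V)\ne\varnothing$. Simultaneously, the arrow $\Psi\colon V\to C$ in $\textbf{VK}$ induces, via the sheaf $A(\{h_C\})$, a function $A(\{h_C\})(C)\to A(\{h_C\})(V)$; since the source is nonempty, so is $A(\{h_C\})(V)$. Thus $V$ is a nonempty object of $\textbf{VK}$ with both $A(\{h\})(V)\ne\varnothing$ and $A(\{h_C\})(V)\ne\varnothing$, and Lemma \ref{lemma_at_most_one_h} forces $h=h_C$. Hence $h_C\in H$, completing the forward direction. The only subtlety is to extract a cover element $\Psi\colon V\to C$ with $V\ne\varnothing$ from the output of the continuity property, which is guaranteed by the fact that $C$ itself is nonempty.
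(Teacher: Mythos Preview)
Your proof is correct and uses the same ingredients as the paper: the sheaf axiom for $A(H)$ on the cover by the $C_j$, the natural transformation $A(\{h\})\to A(H)$ for the backward direction, and continuity of $A$ with respect to the singleton cover of $H$ for the forward direction. The organization differs slightly. You first factor the problem through the product decomposition $A(H)(\bigsqcup_j C_j)\cong\prod_j A(H)(C_j)$ and then work with a single path component; the paper instead applies continuity directly on $\bigsqcup_j C_j$ and only at the end glues via the sheaf condition. More notably, in the forward direction the paper obtains a cover of $C$ by pieces $V$ with $A(\{h\})(V)\ne\varnothing$ for varying $h\in H$ and then says ``arguing as in the proof of Lemma \ref{lemma_unique_h}'' to collapse these to a single $h$ and amalgamate over $C$. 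Your route is tidier: you invoke Lemma \ref{lemma_unique_h} as a black box to produce the canonical $h_C$, and then a single nonempty $V$ from continuity together with Lemma \ref{lemma_at_most_one_h} pins down $h_C\in H$. This avoids repeating the path-connectedness argument inside Lemma \ref{lemma_unique_h}.
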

\begin{proof} The forward direction follows from the continuity of $A$. Let $S$ be the covering sieve of $H$ consisting of its one point sets and $\varnothing$. If $A(H)(\bigsqcup_{j=1}^n C_j) \ne \varnothing$, then there is a cover of $\bigsqcup_{j=1}^n C_j$ by arrows $\Psi: U \to \bigsqcup_{j=1}^n C_j$ such that $A(\{h\})(U) \ne \varnothing$ for some $h \in H$. In particular, there must be a cover of each of the path components $C_j$ consisting of arrows $V \to C_j$ such that $A(\{h\})(V) \ne \varnothing$ for some $h \in H$. Arguing as in the proof of Lemma \ref{lemma_unique_h}, it follows that $A(\{h\})(C_j) \ne \varnothing$ for some $h \in H$. Conversely, suppose that for each $j$ there is an $h \in H$ such that $A(\{h\})(C_j) \ne \varnothing$. The inclusion map $i:\{h\} \to H$ gives a natural transformation $A(i):A(\{h\}) \to A(H)$. Hence,  if $A(\{h\})(C_j) \ne \varnothing$, we must have $A(H)(C_j) \ne \varnothing$. Since $A(H)$ is a sheaf, the matching family given by the one-point sets $A(H)(C_j)$ for the cover generated by the inclusions $C_j \to \bigsqcup_{j=1}^n C_j$   has a unique amalgamation. Hence $A(H)(\bigsqcup_{j=1}^n C_j) \ne \varnothing$.
\end{proof}

\section{Geometric morphisms between sheaf-theoretic knot spaces} \label{sec_other}

\subsection{Projection of knots to virtual knots} \label{sec_project} For the space of knots $\mathbb{K}(\Sigma)$, let $J_{\textbf{K}(\Sigma)}$ denote the open cover topology on the category $\textbf{K}(\Sigma)$ of open sets of $\mathbb{K}(\Sigma)$. Let $\text{Sh}(\textbf{K}(\Sigma))=\text{Sh}(\textbf{K}(\Sigma),J_{\textbf{K}(\Sigma)})$ be the category of sheaves on the site $(\textbf{K}(\Sigma),J_{\textbf{K}(\Sigma)})$. Here we realize the ``projection of knots in $\Sigma \times \mathbb{R}$ to virtual knots'' as a geometric morphism $I_{\Sigma}:\text{Sh}(\textbf{K}(\Sigma)) \to \text{Sh}(\textbf{VK})$. The left adjoint $I^*_{\Sigma}: \text{Sh}(\textbf{VK}) \to \text{Sh}(\textbf{K}(\Sigma))$ can be described as follows.  For $P$ a sheaf on $(\textbf{VK},J_{\textbf{VK}})$, and $O \subseteq \mathbb{K}(\Sigma)$ open, define $I^*_{\Sigma}(P)(O)=P(O)$. For any natural transformation $\eta:P \to Q$ of sheaves $P,Q$ on $(\textbf{VK},J_{\textbf{VK}})$, define $I_{\Sigma}^*(\eta)$ to be restriction of $\eta$ to the objects of $\textbf{K}(\Sigma)$. 

\begin{lemma} If $P$ is a sheaf in $\text{Sh}(\textbf{VK})$, then $I^*_{\Sigma}(P)$ is a sheaf in $\text{Sh}(\textbf{K}(\Sigma))$.
\end{lemma}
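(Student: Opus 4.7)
The plan is to verify the sheaf condition for $I^*_\Sigma(P)$ by transporting each matching family over an open cover in $\mathbb{K}(\Sigma)$ to a matching family for a covering sieve in $\textbf{VK}$, then pulling back the unique amalgamation that $P$ provides.

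First, I would observe that any open cover $\{O_i\}_{i \in \mathcal{I}}$ of $O \subseteq \mathbb{K}(\Sigma)$ in $\textbf{K}(\Sigma)$ yields a covering sieve in $\textbf{VK}$. Indeed, each inclusion $\iota_i\colon O_i \hookrightarrow O$ in $\textbf{K}(\Sigma)$ is the capitalization of $\mathrm{id}_\Sigma\colon \Sigma \to \Sigma$, hence is an arrow in $\textbf{VK}$. Let $S_0$ be the sieve on $O$ generated by $\{\iota_i\}$, i.e. the set of arrows $\Phi\colon V \to O$ in $\textbf{VK}$ such that $\Phi(V) \subseteq O_i$ for some $i$. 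Since $O = \bigcup_i O_i = \bigcup_i \iota_i(O_i)$, we have $S_0 \in J_{\textbf{VK}}(O)$.

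Next, given a matching family $\{s_i \in I^*_\Sigma(P)(O_i)\} = \{s_i \in P(O_i)\}$ for the cover $\{O_i\}$ in $\textbf{K}(\Sigma)$, I would extend it to a matching family for $S_0$ in $\textbf{VK}$. For any $\Phi\colon V \to O$ in $S_0$, choose $i$ with $\Phi(V) \subseteq O_i$; then $\Phi$ factors uniquely as $\iota_i \circ \Phi_i$ where $\Phi_i\colon V \to O_i$ is $\Phi$ with codomain restricted, and set $s_\Phi = P(\Phi_i)(s_i)$. Well-definedness under a different choice $j$ with $\Phi(V) \subseteq O_i \cap O_j$ follows by factoring $\Phi$ through the inclusion $\iota_{ij}\colon O_i \cap O_j \to O_i$ (respectively $O_j$) in $\textbf{VK}$ and using the matching condition $s_i\vert_{O_i \cap O_j} = s_j\vert_{O_i \cap O_j}$, since these restrictions in $\textbf{K}(\Sigma)$ coincide with $P(\iota_{ij})$ applied to $s_i, s_j$. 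The sieve-matching condition $P(\Psi)(s_\Phi) = s_{\Phi \circ \Psi}$ for $\Psi\colon W \to V$ is then immediate from functoriality of $P$.

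Because $P$ is a sheaf on $(\textbf{VK}, J_{\textbf{VK}})$ and $S_0$ is covering, this matching family admits a unique amalgamation $s \in P(O) = I^*_\Sigma(P)(O)$ satisfying $P(\iota_i)(s) = s_i$ for all $i$. Conversely, if $s, s' \in P(O)$ both restrict along each $\iota_i$ to $s_i$, then by the functoriality argument above they induce the same matching family on all of $S_0$, so $s = s'$ by the uniqueness clause of the $\textbf{VK}$-sheaf condition. The only nontrivial step is verifying that the extension $\Phi \mapsto s_\Phi$ is independent of the chosen factorization, but this reduces cleanly to the compatibility of $\{s_i\}$ on pairwise intersections, so there is no serious obstacle.
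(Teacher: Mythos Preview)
Your proposal is correct and follows essentially the same approach as the paper: extend the matching family over the $\textbf{K}(\Sigma)$-cover to the sieve it generates in $\textbf{VK}$, check well-definedness via factorization through intersections, and invoke the $\textbf{VK}$-sheaf property of $P$ for the unique amalgamation. The only cosmetic difference is that the paper phrases the argument directly in terms of covering sieves $S \in J_{\textbf{K}(\Sigma)}(O)$ rather than open covers, but since every such sieve is generated by the open cover $\{\bar{O} : (\bar{O} \subseteq O) \in S\}$ this is equivalent.
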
 
\begin{proof} Let $S=\{f:\widebar{O} \to O\} \in J_{\textbf{K}(\Sigma)}(O)$ be a covering sieve of $O$ and let $\{x_f \in P(\widebar{O})| (f:\widebar{O} \to O) \in S \}$ be a matching family. Denote by $(S)$ the sieve on $O$ in $\textbf{VK}$ generated by $S$. Then $(S)$ covers $O$ in $\textbf{VK}$ since $S$ covers $O$ in $\mathbb{K}(\Sigma)$. The matching family may likewise be extended to $(S)$. If $g=f \circ \Psi \in (S)$, where $f\in S$ and $\Psi:U \to \widebar{O}$ is an arrow in $\textbf{VK}$, define $x_g=P(\Psi)(x_f)$. Since all the arrows in $S$ are inclusions, this is independent of the factorization of $g$ and hence $x_g$ is well-defined. That $x_g$ is a matching family for $(S)$ follows from the following calculation:
\[
P(\Phi)(x_g)=P(\Phi)(P(\Psi)(x_f))=P( \Psi\circ \Phi)(x_f)=x_{f \circ \Psi \circ \Phi}=x_{g \circ \Phi}.
\]
This matching family must have a unique amalgamation $x \in P(O)$ since $P$ is a sheaf of $(\textbf{VK},J_{\textbf{VK}})$. Thus, the matching family $\{x_f\}$ for $S$ also has a unique amalgamation. 
\end{proof}

To show that $I_{\Sigma}^*$ is the inverse image of a geometric morphism $I_{\Sigma}$, we will again use the equivalence between the category of geometric morphisms $\text{Sh}(\textbf{K}(\Sigma)) \to \text{Sh}(\textbf{VK})$ and the category of continuous filtering functors $\textbf{VK} \to \text{Sh}(\textbf{K}(\Sigma))$. First note that the inclusion functor $\text{Sh}(\textbf{VK}) \rightarrowtail \textbf{Sets}^{\textbf{VK}^{\text{op}}}$ has as left adjoint the sheafification functor $\textbf{a}:\textbf{Sets}^{\textbf{VK}^{\text{op}}} \to \text{Sh}(\textbf{VK})$. Since sheafification preserves finite limits, this adjoint situation is a geometric morphism. Composing this with $I_{\Sigma}$ gives a geometric morphism $\text{Sh}(\textbf{K}(\Sigma)) \to \textbf{Sets}^{\textbf{VK}^{\text{op}}}$. The equivalence with the category of continuous filtering functors is obtained by composing the left adjoint of this geometric morphism with the Yoneda embedding $\textbf{y}:\textbf{VK} \to \textbf{Sets}^{\textbf{VK}^{\text{op}}}$ (see \cite{mac_moer}, Lemma VII.7.3, Corollary VII.7.4, and Corollary VII.10.2). Therefore, to show that the functor $I_{\Sigma}^*$ defined above is indeed the left adjoint of some geometric morphism $I_{\Sigma}$, it suffices to show that $I_{\Sigma}^* \circ \textbf{a} \circ \textbf{y}$ is continuous and filtering.

\begin{lemma} \label{lemma_restrict_cont_filtering} $I_{\Sigma}^* \circ \textbf{a} \circ \textbf{y}: \textbf{VK} \to \text{Sh}(\textbf{K}(\Sigma))$ is continuous and filtering.
\end{lemma}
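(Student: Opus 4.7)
The plan is to verify the four conditions of \cite{mac_moer}, Theorem VII.10.1, characterizing continuous filtering functors into the Grothendieck topos $\text{Sh}(\textbf{K}(\Sigma))$. The key preliminary step is to identify $A(U)(O)=(\textbf{a}\textbf{y}(U))(O)$, for $U\in\textbf{VK}$ and $O\in\textbf{K}(\Sigma)$, with (equivalence classes of) matching families $\{\phi_i:O_i\to U\}$ indexed by open covers $\{O_i\}$ of $O$ taken inside $\mathbb{K}(\Sigma)$, where each $\phi_i$ is an arrow of $\textbf{VK}$. The content of this identification is that $J_{\textbf{K}(\Sigma)}$-covers are cofinal among $J_{\textbf{VK}}$-covers of objects of $\textbf{K}(\Sigma)$: by Lemma \ref{lemma_factors}, every arrow $\Psi:V\to O$ in $\textbf{VK}$ factors as $V\cong V'\hookrightarrow O$ with $V'\subseteq\mathbb{K}(\Sigma')$ for some subsurface $\Sigma'\subseteq\Sigma$, and since capitalized maps are open, the image of $V'$ is an open subset of $\mathbb{K}(\Sigma)$.

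Granted this, condition $(i)$ holds trivially: for any $O\in\textbf{K}(\Sigma)$, the identity $1_O$ furnishes a nonempty section of $A(O)$ over $O$, so $\{A(U)\to 1\}_U$ is jointly epimorphic. For condition $(ii)$, given a local section $(b,c)$ of $A(B)\times A(C)$ over $O$, I would refine to a cover on which $b$ and $c$ are represented by single arrows $b':O'\to B$ and $c':O'\to C$; taking $D=O'$ and $f=b'$, $g=c'$, the identity $1_{O'}\in A(O')(O')$ maps to $(b,c)|_{O'}$ under $A(f)\times A(g)$, as required. For condition $(iii)$, a local section $c\in A(C)(O)$ with $A(f)(c)=A(g)(c)$ is, after refinement, represented by an arrow $c':O'\to C$ satisfying $f\circ c'=g\circ c'$; applying Lemma \ref{lemma_factors} to $c'$ yields a factorization $c'=e\circ\iota$ through an inclusion $e:E\hookrightarrow C$ and an isomorphism $\iota:O'\to E$, and cancelling $\iota$ gives $f\circ e=g\circ e$, so $e$ equalizes $f,g$ and $c|_{O'}=A(e)(\iota)$.

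Condition $(iv)$ (continuity) is the main step. Given $S\in J_{\textbf{VK}}(U)$ and a local section of $A(U)$ represented by $\phi:O\to U$, I would pull $S$ back along $\phi$ to obtain a $J_{\textbf{VK}}$-covering sieve $\phi^*(S)$ on $O$, then apply the preliminary observation to refine $\phi^*(S)$ to an open cover $\{O_j\}$ of $O$ within $\mathbb{K}(\Sigma)$ whose inclusions $\iota_j:O_j\hookrightarrow O$ satisfy $\phi\circ\iota_j\in S$. On each $O_j$, the identity $\phi|_{O_j}=A(\phi\circ\iota_j)(1_{O_j})$ exhibits $\phi|_{O_j}$ in the image of an arrow of $S$, yielding the desired joint epimorphism in the sheaf topos.

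The main obstacle is establishing the preliminary observation cleanly, since all four verifications hinge on it. Once the cofinality of $J_{\textbf{K}(\Sigma)}$-covers within $J_{\textbf{VK}}$-covers of objects of $\textbf{K}(\Sigma)$ is secured via Lemma \ref{lemma_factors} and the openness of capitalized maps, each of the remaining conditions reduces to a routine manipulation involving matching families, restrictions, and identity arrows under the Yoneda embedding.
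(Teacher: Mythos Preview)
Your proposal is correct and follows a somewhat more direct route than the paper's. Both arguments rest on the same cofinality fact, established via Lemma \ref{lemma_factors}: every $J_{\textbf{VK}}$-covering sieve of an object $O\in\textbf{K}(\Sigma)$ can be refined to a sieve generated by inclusions of open subsets of $\mathbb{K}(\Sigma)$. The paper uses this to \emph{transfer} the already-known filtering property of $\textbf{a}\circ\textbf{y}:\textbf{VK}\to\text{Sh}(\textbf{VK})$ down to $\text{Sh}(\textbf{K}(\Sigma))$, working explicitly with the plus construction to rewrite the $\textbf{VK}$-covers produced by that filtering property as $\textbf{K}(\Sigma)$-covers. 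You instead verify the four conditions from scratch, exploiting the fact that an open subset $O'\subseteq\mathbb{K}(\Sigma)$ is itself an object of $\textbf{VK}$: for condition $(ii)$ this lets you take the span apex $D=O'$ with witness $d=1_{O'}$, and similarly for $(iii)$ and $(iv)$. Your approach is shorter and avoids the detour through $\text{Sh}(\textbf{VK})$; the paper's approach, on the other hand, makes more visible the general principle that restriction along a morphism of sites with cofinal covers preserves the continuity and filtering conditions. One small simplification: in your treatment of condition $(iii)$, the factorization via Lemma \ref{lemma_factors} is unnecessary, since $c':O'\to C$ already equalizes $f$ and $g$ and $c|_{O'}=A(c')(1_{O'})$; you can take $e=c'$ directly.
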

\begin{proof} The composition $\textbf{a} \circ \textbf{y}:\textbf{VK} \to \text{Sh}(\textbf{VK})$ is  continuous and filtering, since the inclusion $\text{Sh}(\textbf{VK}) \to \textbf{Sets}^{\textbf{VK}^{\text{op}}}$ is a geometric morphism. It must be shown that the continuity and filtering properties (\cite{mac_moer}, Theorem VII.10.1) are preserved under restriction to the category of open sets of $\mathbb{K}(\Sigma)$. The ``connected'' property (i.e. \cite{mac_moer}, Theorem VII.10.1.ii)  will be proved explicitly; the other three properties follow from similar arguments. Let $U,\widebar{U} \in \text{obj}(\textbf{VK})$, $O \in \text{obj}(\textbf{VK})$, and $u \in I_{\Sigma}^* \circ \textbf{a} \circ \textbf{y}(U)(O), \widebar{u} \in I_{\Sigma}^* \circ \textbf{a} \circ \textbf{y}(\widebar{U})(O)$. Since $\textbf{a} \circ \textbf{y}$ is filtering, $O$ is covered by arrows $\Psi:W \to O$ in $\textbf{VK}$ for which there is a diagram:
\[
\xymatrix{U & \ar[l]_-{\Phi} V \ar[r]^-{\bar{\Phi}} & \wbar{U}},
\] 
and a $v \in \textbf{a} \circ \textbf{y}(V)(W)$ such that $\textbf{a} \circ \textbf{y}(\Phi)_W(v)=\textbf{a} \circ \textbf{y}(U)(\Psi)(u)$ and $\textbf{a} \circ \textbf{y}(\widebar{\Phi})_W(v)=\textbf{a} \circ \textbf{y}(U)(\Psi)(\widebar{u})$. By Lemma \ref{lemma_factors}, each $\Psi:W \to O$ factors as $\xymatrix{W \ar[r]^-{\Psi'}& O' \ar[r]^{I} & O}$, where $O' \subseteq \mathbb{K}(\Sigma')$, $\Sigma' \subseteq \Sigma$ is a subsurface, $\Psi'$ is an isomorphism and $I$ is induced by inclusion. Set $\widebar{O}=I(O')$ and let $g:\widebar{O} \to O$ be the inclusion arrow in $\textbf{K}(\Sigma)$. Let $\widebar{I}:O' \to \widebar{O}$ be the inclusion arrow in $\textbf{VK}$ which changes the background surface from $\Sigma'$ to $\Sigma$. Hence, $I=g \circ \widebar{I}$ in $\textbf{VK}$. Since the maps $\Psi$ cover $O$ in $\textbf{VK}$, so also do the maps $g$ in $\textbf{K}(\Sigma)$. Then it only needs to be shown that for some $\widebar{o} \in I_{\Sigma}^* \circ\textbf{a}\circ \textbf{y}(V)(\widebar{O})$: 
\[
I_{\Sigma}^* \circ\textbf{a} \circ \textbf{y}(\Phi)_{\widebar{O}}(\widebar{o})=I_{\Sigma}^* \circ \textbf{a} \circ \textbf{y}(U)(g)(u), \text{ and } I_{\Sigma}^* \circ \textbf{a} \circ \textbf{y}(\widebar{\Phi})_{\widebar{O}}(\widebar{o})=I_{\Sigma}^* \circ\textbf{a} \circ \textbf{y}(U)(g)(\widebar{u}).
\]
Recall that $\textbf{a}\circ \textbf{y}(V)=(\text{Hom}_{\textbf{VK}}(-,V)^+)^+$ where ``$+$'' indicates the Grothendieck plus-construction (see \cite{mac_moer}, Section III.5). Note that $\text{Hom}_{\textbf{VK}}(-,V)$ is already a separated presheaf, since arrows $X \to V$ in $\textbf{VK}$ that agree pointwise on $X$ are identified as the same arrow (see Section \ref{sec_site}). Thus, only one application of ``$+$'' is necessary to sheafify $\text{Hom}_{\textbf{VK}}(-,V)$. Then $v \in \textbf{a} \circ \textbf{y}(V)(W)$ can be represented by a matching family $v=\{\Upsilon_{\Xi}:X \to V| (\Xi:X \to W) \in Q\}$ where $Q=\{\Xi:X \to W\}$ is a covering sieve of $W$. Now, $\widebar{I} \circ \Psi'(Q)=\{\widebar{I} \circ \Psi' \circ \Xi:X\to \widebar{O}| \Xi \in Q\}$ is a covering sieve in $J_{\textbf{VK}}(\widebar{O})$.  Set $\widebar{o} \in I_{\Sigma}^* \circ\textbf{a}\circ \textbf{y}(V)(\widebar{O})$ to be the element represented by the matching family $\{\Upsilon_{\Xi}| \widebar{I} \circ \Psi' \circ \Xi \in\widebar{I} \circ \Psi'(Q)\}$. That this is a matching family follows immediately from the fact that $v$ is a matching family.
\newline
\newline
Let $u \in \textbf{a} \circ \textbf{y} (U)(O)$ be represented by the matching family $\{\Gamma_{\Delta}:Y \to U|(\Delta:Y \to O) \in R\}$ where $R \in J_{\textbf{VK}}(O)$. Consider the equation $\textbf{a} \circ \textbf{y}(\Phi)_W(v)=\textbf{a} \circ \textbf{y}(U)(\Psi)(u)$. The left hand side evaluates to $\{\Phi \circ \Upsilon_{\Xi}: \Xi \in Q\}$. The right hand side evaluates to the restriction along $\Psi$: $\{\Gamma_{\Psi \circ \Omega}| \Omega \in \Psi^*(R)\}$, where $\Psi^*(R)=\{\Omega:Z \to W| \Psi \circ \Omega \in R\}$. The equality means that the two matching families have a common refinement, i.e. there is a cover $T \subseteq Q \cap \Psi^*(R)$ such that $\Phi \circ \Upsilon_{\Lambda}=\Gamma_{\Psi \circ \Lambda}$ for all $\Lambda \in T$. 
\newline
\newline  
Finally, it is time to show that $I_{\Sigma}^* \circ\textbf{a} \circ \textbf{y}(\Phi)_{\widebar{O}}(\widebar{o})=I_{\Sigma}^* \circ \textbf{a} \circ \textbf{y}(U)(g)(u)$. First note that $\widebar{I} \circ \Psi'(T) \subseteq g^*(R) \cap \widebar{I} \circ \Psi'(Q)$. Indeed, if $\Theta \in \Psi^*(R)$, then $I \circ \Psi'\circ \Theta=\Psi \circ \Theta \in R$. Hence, $g \circ \widebar{I} \circ \Psi' \circ \Theta \in R$ and $\widebar{I} \circ \Psi' \circ \Theta \in g^*(R)$. Then since $T \subseteq \Psi^*(R)$, $\widebar{I} \circ \Psi'(T) \subseteq g^*(R)$ and $\widebar{I} \circ \Psi'(T) \subseteq g^*(R) \cap \widebar{I} \circ \Psi'(Q)$ as claimed. Now, for any $\Theta \in T$, $\Gamma_{g \circ \widebar{I} \circ \Psi' \circ \Theta}=\Gamma_{\Psi \circ \Theta}=\Phi \circ \Upsilon_{\Theta}$. Since $\widebar{I} \circ \Psi'(T) \subseteq g^*(R) \cap \widebar{I} \circ \Psi'(Q)$, this implies that the matching families $\{\Phi \circ \Upsilon_{\Xi}| \widebar{I} \circ \Psi' \circ \Xi \in \widebar{I} \circ \Psi'(Q)\}$ and $\{\Gamma_{g \circ \Omega}| \Omega \in g^*(R) \}$ have a common refinement to the cover $\widebar{I} \circ \Psi'(T)$. In other words, $I_{\Sigma}^* \circ\textbf{a} \circ \textbf{y}(\Phi)_{\widebar{O}}(\widebar{o})=I_{\Sigma}^* \circ \textbf{a} \circ \textbf{y}(U)(g)(u)$. Similarly, $I_{\Sigma}^* \circ \textbf{a} \circ \textbf{y}(\widebar{\Phi})_{\widebar{O}}(\widebar{o})=I_{\Sigma}^* \circ\textbf{a} \circ \textbf{y}(U)(g)(\widebar{u})$ and hence the second filtering property holds.
\end{proof}

\begin{theorem} \label{thm_restrict_is_geometric} For every $\Sigma$, there is a geometric morphism $I_{\Sigma}: \text{Sh}(\textbf{K}(\Sigma)) \to \text{Sh}(\textbf{VK})$ whose left adjoint is the restriction functor $I_{\Sigma}^*:\text{Sh}(\textbf{VK}) \to \text{Sh}(\textbf{K}(\Sigma))$.
\end{theorem}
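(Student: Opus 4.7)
The plan is to combine Lemma \ref{lemma_restrict_cont_filtering} with the equivalence (\cite{mac_moer}, Corollary VII.10.2) between the category of geometric morphisms $\text{Sh}(\textbf{K}(\Sigma)) \to \text{Sh}(\textbf{VK})$ and the category of continuous filtering functors $\textbf{VK} \to \text{Sh}(\textbf{K}(\Sigma))$. Since Lemma \ref{lemma_restrict_cont_filtering} establishes that the composite $A := I_\Sigma^* \circ \textbf{a} \circ \textbf{y}: \textbf{VK} \to \text{Sh}(\textbf{K}(\Sigma))$ is continuous and filtering, the equivalence directly produces a geometric morphism $I_\Sigma: \text{Sh}(\textbf{K}(\Sigma)) \to \text{Sh}(\textbf{VK})$ whose inverse image $\widetilde{I}_\Sigma^*$ is characterized (up to natural isomorphism) as the unique colimit-preserving extension of $A$ along the Yoneda--sheafification map $\textbf{a} \circ \textbf{y}: \textbf{VK} \to \text{Sh}(\textbf{VK})$.

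What remains is to verify that this inverse image $\widetilde{I}_\Sigma^*$ is naturally isomorphic to the restriction functor $I_\Sigma^*$ defined explicitly above the theorem. By the density theorem, every sheaf in $\text{Sh}(\textbf{VK})$ is a canonical colimit of representables $\textbf{a}(\textbf{y}(U))$. Consequently, once we know that (i) $I_\Sigma^*$ preserves colimits and (ii) $I_\Sigma^*(\textbf{a}(\textbf{y}(U))) = A(U)$ for all objects $U$ of $\textbf{VK}$, uniqueness of colimit-preserving extensions forces $I_\Sigma^* \cong \widetilde{I}_\Sigma^*$. Point (ii) holds tautologically by the definition of $A$.

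The serious step, and the main obstacle, is (i). Colimits in either sheaf topos are computed by forming the pointwise presheaf colimit and then sheafifying; pointwise colimits plainly commute with restricting from $\textbf{VK}$ to $\textbf{K}(\Sigma)$, so the question reduces to whether restriction commutes with sheafification. Via the plus-construction, this amounts to showing that for every $O \in \text{obj}(\textbf{K}(\Sigma))$, the covering sieves in $J_{\textbf{K}(\Sigma)}(O)$ are cofinal among those in $J_{\textbf{VK}}(O)$. Lemma \ref{lemma_factors} supplies precisely this: any arrow $\Psi: V \to O$ in $\textbf{VK}$ factors through an inclusion $g: \widebar{O} \to O$ of an open subset $\widebar{O} \subseteq O$ in $\mathbb{K}(\Sigma)$, and the collection of these $g$'s covers $O$ in $\mathbb{K}(\Sigma)$ whenever the original $\Psi$'s cover $O$ in $\textbf{VK}$.

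With cofinality in hand, the two sheafifications agree on objects of $\textbf{K}(\Sigma)$, so $I_\Sigma^*$ does preserve colimits. Combining this with (ii) yields $I_\Sigma^* \cong \widetilde{I}_\Sigma^*$, so $I_\Sigma^*$ is the inverse image of the geometric morphism $I_\Sigma$ obtained from the equivalence, completing the proof.
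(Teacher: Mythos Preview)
Your proposal is correct and follows the same strategy as the paper: invoke Lemma \ref{lemma_restrict_cont_filtering} together with the equivalence between geometric morphisms $\text{Sh}(\textbf{K}(\Sigma)) \to \text{Sh}(\textbf{VK})$ and continuous filtering functors $\textbf{VK} \to \text{Sh}(\textbf{K}(\Sigma))$. The paper presents the theorem as an immediate corollary, asserting in the discussion preceding the lemma that ``it suffices to show that $I_{\Sigma}^* \circ \textbf{a} \circ \textbf{y}$ is continuous and filtering'' and giving no further argument; you are more scrupulous in observing that this equivalence only identifies the inverse image \emph{up to natural isomorphism} with the left Kan extension of $A$ along $\textbf{a}\circ\textbf{y}$, so one must still check that the explicitly defined restriction functor $I_\Sigma^*$ agrees with that extension, i.e.\ that $I_\Sigma^*$ preserves colimits. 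Your cofinality argument via Lemma \ref{lemma_factors} handles this cleanly (and indeed the same factorization $\Psi = g \circ \widebar{I} \circ \Psi'$ already appears inside the proof of Lemma \ref{lemma_restrict_cont_filtering}), so your write-up fills a small expository gap that the paper leaves implicit.
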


\subsection{Embedding classical into virtual knots} \label{sec_embed} Goussarov-Polyak-Viro \cite{GPV} proved that if two classical knot diagrams are related by extended Reidemeister moves, then they are related by Reidemeister moves alone. This result can be lifted to the $2$-category $\mathfrak{Top}$ using the results of Section \ref{sec_project}. First note that by identifying $\mathbb{R}^3$ with $\mathbb{R}^2 \times \mathbb{R}$, we obtain a geometric morphism $I_{\mathbb{R}^2}:\text{Sh}(\textbf{K}) \to \text{Sh}(\textbf{VK})$. Next we describe the group of a virtual knot and its peripheral system as geometric morphisms. 
\newline
\newline
For a knot $K \in \mathbb{K}(\Sigma)$, let $\widebar{\Sigma} \subseteq \Sigma$ be the component of $\Sigma$ over which $K$ lives. Define $\pi_1(K)=\pi_1((\widebar{\Sigma} \times \mathbb{R} \smallsetminus K) / (\widebar{\Sigma} \times [N,\infty)),x_0)$, where $N \in \mathbb{R}$ is chosen so large that the image of $K$ is below $\widebar{\Sigma} \times N$ and $x_0 \in \widebar{\Sigma} \times N$. The group $\pi_1(K)$ is a Wirtinger group of deficiency $0$ or $1$ (see e.g. \cite{kim}). Denote by $\mathbb{WG}$ the set of isomorphism classes of such groups with the discrete topology. Define a function $\pi_1:\pi_0(\smallint_{\textbf{VK}} F) \to \mathbb{WG}$ by $\pi_1(\textbf{V})=\pi_1(K)$ if $(U,K)$ is in $\textbf{V}$. This is well-defined because an embedding $\psi: \Sigma \to \Xi$ of surfaces induces an isomorphism $\pi_1(K) \cong \pi_1(\Psi(K))$ on fundamental groups. Hence, $\pi_1$ is a constant isomorphism class on each object in a connected component of $\smallint_{\textbf{VK}} F$. Furthermore, $\pi_1$ is a variable-space isotopy invariant. To see this, note that $\pi_1(K)$ is an isotopy invariant of knots in a fixed $\Sigma \times \mathbb{R}$. Then since $\pi_1:\pi_0(\smallint_{\textbf{VK}} F) \to \mathbb{WG}$ is well-defined, Lemma \ref{lemma_cont_implies_isotop_var_amb} implies that $\pi_1:\pi_0(\smallint_{\textbf{VK}} F) \to \mathbb{WG}$ preserves the isomorphism class of isotopic variable-space knots. By Theorem \ref{thm_invariants}, there is a geometric morphism $\pi_1: \text{Sh}(\textbf{VK}) \to \text{Sh}(\textbf{WG})$. 
\newline
\newline
For $K \in \mathbb{K}(\Sigma)$, choose a meridian-longitude pair $(m,\ell)$ with $m,\ell \in \pi_1(K)$. The triple $(\pi_1(K),m,\ell)$ is a \emph{peripheral system} of $K$ (see \cite{bz}, Chapter 3.C). The elements $m$ and $\ell$ commute and are uniquely determined up to conjugacy. Let $(G,a,b)$ be a triple with $G$ a Wirtinger group, $a, b \in G$, $a \ne 1$, and $ab=ba$. An isomorphism of triples $(G_1,a_1,b_1)$ and $(G_2,a_2,b_2)$ is a group isomorphism $f:G_1 \to G_2$ such that $f(a_1)=a_2$ and $f(b_1)=b_2$. Let $\mathbb{PS}$ be the set of isomorphism classes of these triples with the discrete topology. The isomorphism class is an isotopy invariant of knots in $\Sigma \times \mathbb{R}$. If $\psi:\Sigma \to \Xi$ is a map, then $(\pi_1(\Psi(K)),\Psi_*(m),\Psi_*(\ell))$ is a peripheral system on $\Psi(K)$ that is isomorphic to $(\pi_1(K),m,\ell)$. Thus, the peripheral system also defines a variable-space knot invariant $\varpi_1:\pi_0(\smallint_{\textbf{VK}} F) \to \mathbb{WG}$ with corresponding geometric morphism $\varpi_1:\text{Sh}(\textbf{VK}) \to \text{Sh}(\textbf{PS})$.

\begin{theorem} If $K, \widebar{K}: \textbf{Sets} \to \text{Sh}(\textbf{K})$ are knots and $I_{\mathbb{R}^2} \circ K$, $I_{\mathbb{R}^2} \circ \widebar{K}: \textbf{Sets} \to \text{Sh}(\textbf{VK})$ are virtually isotopic, then $K,\widebar{K}$ are isotopic.
\end{theorem}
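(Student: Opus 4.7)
The plan is to chain three reductions: from the $2$-categorical hypothesis to a variable-space isotopy, then to diagrammatic virtual equivalence of planar representatives, and finally (by Goussarov-Polyak-Viro) to classical isotopy.

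First, I would use Proposition \ref{prop_convert_to_2_cat} to identify $K$ and $\widebar{K}$ with classical knots $K^{\mathbbm{1}}_{\mathbb{K}},\widebar{K}^{\mathbbm{1}}_{\mathbb{K}} \in \mathbb{K}=\mathbb{K}(\mathbb{R}^2)$. Next I would show that the composed virtual knot $I_{\mathbb{R}^2} \circ K$ corresponds, under Theorem \ref{thm_points_sh_vk}, to the variable-space knot $\textbf{V}_{K^{\mathbbm{1}}_{\mathbb{K}}}$. Passing to inverse images, the continuous filtering functor $A:\textbf{VK}\to \textbf{Sets}$ attached to $I_{\mathbb{R}^2} \circ K$ satisfies $A(U)\cong K^{*}\, I_{\mathbb{R}^2}^{*}(\textbf{a}\circ \textbf{y}(U))$. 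Since $I_{\mathbb{R}^2}^{*}$ is the restriction functor described in Section \ref{sec_project} and $K^{*}$ computes the stalk at $K^{\mathbbm{1}}_{\mathbb{K}}$, $A(U)$ is a set of germs that I would identify with $\sim$-equivalence classes of arrows $O'\to U$ of $\textbf{VK}$ whose domain $O'$ is an open neighborhood of $K^{\mathbbm{1}}_{\mathbb{K}}$ in $\mathbb{K}(\mathbb{R}^2)$. Matching this description with the definition of $A_{K^{\mathbbm{1}}_{\mathbb{K}}}$ in Section \ref{sec_proof_inverse_determines} produces a natural isomorphism $A\cong A_{K^{\mathbbm{1}}_{\mathbb{K}}}$, and Lemma \ref{lemma_A_K_determines_V_K} then implies that $I_{\mathbb{R}^2}\circ K$ corresponds to $\textbf{V}_{K^{\mathbbm{1}}_{\mathbb{K}}}$. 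The same argument handles $\widebar{K}$.

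With this identification in hand, Theorem \ref{thm_paths_sh_vk} translates the given virtual isotopy into a variable-space isotopy from $\textbf{V}_{K^{\mathbbm{1}}_{\mathbb{K}}}$ to $\textbf{V}_{\widebar{K}^{\mathbbm{1}}_{\mathbb{K}}}$. Then Theorem \ref{lemma_v_knot_type_isotopy} shows that any planar diagrams $D_0,\widebar{D}_0$ of $K^{\mathbbm{1}}_{\mathbb{K}},\widebar{K}^{\mathbbm{1}}_{\mathbb{K}}$ on $\mathbb{R}^2$ represent the same virtual knot type in $\mathfrak{D}^{(CKS^{*})}$. The Goussarov-Polyak-Viro theorem \cite{GPV} asserts that classical planar diagrams equivalent under extended Reidemeister moves are already equivalent under the Reidemeister moves alone; hence $D_0$ and $\widebar{D}_0$ have the same classical knot type. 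Thus $K^{\mathbbm{1}}_{\mathbb{K}}$ and $\widebar{K}^{\mathbbm{1}}_{\mathbb{K}}$ are ambient isotopic in $\mathbb{R}^3$, and the isotopy criterion established right after Proposition \ref{prop_convert_to_2_cat} delivers an isotopy from $K$ to $\widebar{K}$ in the $2$-categorical sense.

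The main obstacle will be the identification of $I_{\mathbb{R}^2} \circ K$ with $\textbf{V}_{K^{\mathbbm{1}}_{\mathbb{K}}}$. Computing the stalk requires unwinding the plus-construction used in sheafification, as in the proof of Lemma \ref{lemma_restrict_cont_filtering}, and tracking representatives through the $(O,K)$-neighborhood formalism. The crucial simplification is that within $\mathbb{K}(\mathbb{R}^2)$ the $(O,K^{\mathbbm{1}}_{\mathbb{K}})$-neighborhoods of $K^{\mathbbm{1}}_{\mathbb{K}}$ are exactly the ordinary open neighborhoods of $K^{\mathbbm{1}}_{\mathbb{K}}$ in $\mathbb{K}$, so the restriction-then-stalk operation reproduces precisely the germs used to define $A_{K^{\mathbbm{1}}_{\mathbb{K}}}(U)$.
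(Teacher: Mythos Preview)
Your argument is correct but follows a genuinely different route from the paper. The paper never invokes Goussarov--Polyak--Viro directly; instead it constructs the peripheral system as a virtual knot invariant $\varpi_1:\text{Sh}(\textbf{VK})\to\text{Sh}(\textbf{PS})$, composes it with the virtual isotopy to conclude that $K^{\mathbbm{1}}_{\mathbb{K}}$ and $\widebar{K}^{\mathbbm{1}}_{\mathbb{K}}$ have isomorphic peripheral systems, and then cites Waldhausen's theorem (via \cite{bz}, Theorem 3.15) that classical knots with isomorphic peripheral systems are isotopic. This is shorter on paper but relies on a deeper $3$-manifold result. Your approach stays closer to the diagrammatic machinery already developed in the paper (Theorems \ref{thm_points_sh_vk}, \ref{thm_paths_sh_vk}, \ref{lemma_v_knot_type_isotopy}) and then appeals to the combinatorial GPV theorem, which is arguably more in the spirit of the section's stated goal of lifting \cite{GPV} to $\mathfrak{Top}$. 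Both proofs implicitly need the identification of $I_{\mathbb{R}^2}\circ K$ with $\textbf{V}_{K^{\mathbbm{1}}_{\mathbb{K}}}$; you flag this explicitly and outline the stalk computation, whereas the paper absorbs it into the claim that $\varpi_1\circ I_{\mathbb{R}^2}\circ K$ recovers the classical peripheral system. One small correction: the $(O,K^{\mathbbm{1}}_{\mathbb{K}})$-neighborhoods are not literally the open neighborhoods in $\mathbb{K}(\mathbb{R}^2)$, since by definition they may live in $\mathbb{K}(\Sigma')$ for open subsurfaces $\Sigma'\subseteq\mathbb{R}^2$; however, inclusions $\mathbb{K}(\Sigma')\hookrightarrow\mathbb{K}(\mathbb{R}^2)$ make these cofinal with ordinary neighborhoods, so your germ identification goes through unchanged.
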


\begin{proof} Since $\mathbb{PS}$ is Hausdorff, $\varpi_1 \circ I_{\mathbb{R}^2} \circ K,\varpi_1 \circ I_{\mathbb{R}^2} \circ \widebar{K}: \textbf{Sets} \to \text{Sh}(\textbf{PS})$ yield the same continuous function $\mathbbm{1} \to \mathbb{PS}$ (see Proposition \ref{prop_convert_to_2_cat}). Then the corresponding knots $K^{\mathbbm{1}}_{\mathbb{K}},\widebar{K}^{\mathbbm{1}}_{\mathbb{K}}$ in $\mathbb{R}^3$ have isomorphic peripheral systems. Knots in $\mathbb{R}^3$ with isomorphic peripheral systems are isotopic (\cite{bz}, Theorem 3.15). Thus, there is a geometric morphism $\sigma:\text{Sh}(\textbf{I}) \to \text{Sh}(\textbf{K})$ from $K$ to $\widebar{K}$. 
\end{proof}

\section{Further discussion} \label{sec_further}

\subsection{Expanded languages for virtual knot theory} The language $\mathfrak{L}_g$ of geometric knot theory considered here has only ``knots'', ``isotopies'', and ``knot invariants''. The construction of $\text{Sh}(\textbf{VK})$ is sufficiently general that it can be applied to other knot-like things, such as links, framed knots, Seifert surfaces, spanning surfaces, etc. Some details are outlined below.
\newline
\newline
For the case of virtual links, let $\mathbb{L}_n(\Sigma)$ be the space of $n$-component links in $\Sigma \times \mathbb{R}$ and define a category $\textbf{VL}_n$ whose objects are open sets $U \subseteq \mathbb{L}_n(\Sigma)$ and whose arrows are capitalized maps $\Psi:U \to V$ such that $\Psi(U) \subseteq V$. A Grothendieck topology $J_{\textbf{VL}_n}$ is defined in the same way as for knots. The points of $\text{Sh}(\textbf{VL}_n,J_{\textbf{VL}_n})$ are virtual links and virtual isotopies are again geometric morphisms $\text{Sh}(\textbf{I}) \to \text{Sh}(\textbf{VL}_n,J_{\textbf{VL}_n})$. Likewise, there is a site $(\textbf{VK}^{fr},J_{\textbf{VK}^{fr}})$ for framed knots. Rather than using the space of embeddings of $\mathbb{S}^1 \to \Sigma \times \mathbb{R}$, this is constructed from the space of embeddings of ribbons $\mathbb{S}^1 \times \mathbb{I} \to \Sigma \times \mathbb{R}$. Again, the points of $\text{Sh}(\textbf{VK}^{fr})=\text{Sh}(\textbf{VK}^{fr},J_{\textbf{VK}^{fr}})$ are framed virtual knots, framed virtual isotopies are geometric morphisms $\text{Sh}(\textbf{I}) \to \text{Sh}(\textbf{VK}^{fr})$, and framed virtual knot invariants are geometric morphisms $\text{Sh}(\textbf{VK}^{fr}) \to \text{Sh}(\textbf{G})$. Sites for Seifert surfaces of homologically trivial knots in $\Sigma \times \mathbb{R}$, and spanning surfaces of $\mathbb{Z}_2$-homologically trivial knots in $\Sigma \times \mathbb{R}$ may be likewise constructed.
\newline
\newline
Furthermore, if $\mathbb{L}_n(\Sigma)$ is the space of $n$-component links in $\Sigma \times \mathbb{R}$, the result of Section \ref{sec_project} can be modified to obtain a geometric morphism $\text{Sh}(\textbf{L}_n(\Sigma)) \to \text{Sh}(\textbf{VL}_n)$. For $1 \le k \le n$, there is also a geometric morphism $C_k:\text{Sh}(\textbf{VL}_n) \to \text{Sh}(\textbf{VK})$ which takes the $k$-th component of an $n$-component virtual link.  Its left adjoint $C_k^*$ can be described as follows. An object $U$ of $\textbf{VL}_n$, is an open set of links $L:\bigsqcup_{i=1}^n \mathbb{S}^1 \to \Sigma\times \mathbb{R}$ for some $\Sigma$. Let $\kappa_k:\mathbb{S}^1 \to \bigsqcup_{i=1}^n \mathbb{S}^1$ be the inclusion into the $k$-th copy of $\mathbb{S}^1$. Let $U_k$ denote the set of compositions $L \circ \kappa_k$ for $L \in U$. Then $U_k$ is open in $\mathbb{K}(\Sigma)$. For $P \in \text{Sh}(\textbf{VK})$, define $C_k^*(P) \in \text{Sh}(\textbf{VL}_n)$ on objects $U$ by $C_k^*(P)(U)=P(U_k)$. For an arrow $\eta:P \to Q$, $C_k^*(\eta)$ is defined on components by $C_k^*(\eta)_U=\eta_{U_k}$. An argument similar to the proof of Lemma \ref{lemma_restrict_cont_filtering} shows that $C_k^* \circ \textbf{a} \circ \textbf{y}:\textbf{VK} \to \text{Sh}(\textbf{VL}_n)$ is continuous and filtering. Hence, $C_k^*$ is the left adjoint of a geometric morphism $C_k:\text{Sh}(\textbf{VL}_n) \to \text{Sh}(\textbf{VK})$. 
\newline
\newline
Another example of this kind is the geometric morphism $\partial:\text{Sh}(\textbf{VK}^{fr}) \to \text{Sh}(\textbf{VL}_2)$ that takes the boundary of a ribbon. Composing this with $C_1:\text{Sh}(\textbf{VL}_2) \to \text{Sh}(\textbf{VK})$ gives a geometric morphism $C_1 \circ \partial: \text{Sh}(\textbf{VK}^{fr}) \to \text{Sh}(\textbf{VK})$. If $K:\textbf{Sets} \to \text{Sh}(\textbf{VK}^{fr})$ is some framed virtual knot, the geometric morphism $C_1 \circ \partial \circ K$ is the virtual knot obtained by deleting the framing of $K$.

\subsection{Long virtual knots, welded knots, etc.} \label{sec_long_welded_etc} There are many diagrammatic knot theories. Some examples are long virtual knots, welded knots, free knots, and flat virtual knots. It is desirable to have sites for these theories as well. The case of welded knots and long virtual knots are of particular interest due to their applications to other areas of low-dimensional topology. 
\newline
\newline
In the welded case, Satoh \cite{satoh} showed that there is a map $\text{Tube}$ from the set of welded knot diagrams to broken-surface diagrams of ribbon torus knots in $\mathbb{R}^4$. The map $\text{Tube}$ is surjective and welded equivalent diagrams are mapped to isotopic ribbon torus knots. Thus, if $\mathbb{RTK}$ is the space of ribbon torus knots in $\mathbb{R}^4$, there is a Grothendieck topos $\text{Sh}(\textbf{RTK})$ whose points $\textbf{Sets} \to \text{Sh}(\textbf{RTK})$ are ribbon torus knots, and whose paths $\text{Sh}(\textbf{I}) \to \text{Sh}(\textbf{RTK})$ generate the isotopy relation for ribbon torus knots. It is therefore reasonable to expect that Tube itself can be realized as a geometric morphism $\text{Sh}(\textbf{VK}) \to \text{Sh}(\textbf{RTK})$ in $\mathfrak{Top}$. This is an open problem.
\newline
\newline
Long virtual knots appear in the study of finite-type invariants of classical knots \cite{GPV}. For a long virtual knot diagram $D$, there is also a well-defined virtual knot diagram $\text{Cl}(D)$, called its \emph{closure}. As is well-known, there are inequivalent long virtual knots that have equivalent closures (see e.g \cite{bart_fenn_kam_kam_long,sil_wil_long}). The problem in this case is to find a site $(\textbf{VLK},J_{\textbf{LVK}})$ for long virtual knots such that the closure map is realized as a geometric morphism $\text{Cl}:\text{Sh}(\textbf{LVK}) \to \text{Sh}(\textbf{VK})$.
  
\subsection{Sheaf cohomology of the virtual knot space} In \cite{vassiliev}, Vassiliev defined the finite-type knot invariants from a spectral sequence on the cohomology of the knot space. Can the finite-type invariants of virtual knots \cite{GPV,KaV} valued in an abelian group $G$ likewise be obtained by applying the analogous spectral sequence argument to the sheaf cohomology groups $H^i(\text{Sh}(\textbf{VK}),\Delta_{\mathbb{G}})$? Some parts of the construction carry over without difficulty. For example, the spaces $\mathbb{K}^{\bullet}_n(\Sigma)$ of singular knots with $n$ singular crossings can be used to obtain a site $(\textbf{VK}^{\bullet}_n,J_{\textbf{VK}^{\bullet}_n})$ exactly as in Section \ref{sec_site}. The Grothendieck topos $\text{Sh}(\textbf{VK}^{\bullet}_n)$ of sheaves on this site can then be taken as the space of singular virtual knots. Again, geometric morphisms $\textbf{Sets} \to \text{Sh}(\textbf{VK}^{\bullet}_n)$ correspond to singular virtual knots, geometric morphisms $\text{Sh}(\textbf{I}) \to \text{Sh}(\textbf{VK}^{\bullet}_n)$ generate the isotopy relation for singular virtual knots, and geometric morphisms $\text{Sh}(\textbf{VK}^{\bullet}_n) \to \text{Sh}(\textbf{G})$ are invariants of singular virtual knots.
\newline
\newline
The main problem is to identify these sheaf-theoretic singular virtual knots in the spectral sequence calculation. It is important to point out that the calculation in \cite{vassiliev} is that long knots are used in place of closed knots. In the classical case, long and closed knots are in one-to-one correspondence. As mentioned in Section \ref{sec_long_welded_etc}, this is not true in general for non-classical long virtual knots. The conjectured closure arrow $\text{Cl}:\text{Sh}(\textbf{LVK}) \to \text{Sh}(\textbf{VK})$ will thus play a central role in recovering the finite-type invariants of virtual knots from the sheaf cohomology of $\text{Sh}(\textbf{VK})$.

\subsection{Independence theorems} Are there theorems about classical knots that can only be proved using virtual knot theory? There are results about classical knots whose only \emph{known} proof uses virtual knots. Abel \cite{abel} used virtual link diagrams to prove that HOMFLY-PT homology of classical links is not invariant under non-braid-like isotopies. Hence, HOMFLY-PT homology can only be defined on diagrams presented as braid closures. The possibility of a strictly classical proof, however, has not yet been eliminated. A separate technology is needed to establish independence. 
\newline
\newline
Here we have applied topos theory from the perspective of generalized spaces. Alternatively, topoi may be viewed as generalized set theories. As such, they provide a natural context in which to frame questions about independence. For example, topos theory is sufficiently powerful to prove the independence of the axiom of choice and the continuum hypothesis from the Zermelo-Fraenkel axioms of set theory (see \cite{mac_moer}, Chapter VI). Since the virtual knot space $\text{Sh}(\textbf{VK})$ is a Grothendieck topos, these techniques can be applied to solve independence problems in knot theory as well. 


\subsection*{Acknowledgments} The author would like to express his gratitude to: M. Abel, C. Caprau, J. S. Carter, D. Freund, M. Harper, J. Joseph, S. Marshall, S. Mukherjee, D. Penneys, N. Petit, P. Pongtanapaisan, A. Poudel, R. Todd, and P. Ulrickson. The Ohio State University, Marion Campus, provided funding and release time for this paper.

\bibliographystyle{plain}
\bibliography{vks_bib}

\end{document}